\numberwithin{equation}{section}
\newtheorem{theorem}{Theorem}[section]
\newtheorem{lemma}[theorem]{Lemma}
\newtheorem{proposition}[theorem]{Proposition}
\newtheorem{corollary}[theorem]{Corollary}
\newtheorem{conjecture}[theorem]{Conjecture}
\theoremstyle{definition}
\newtheorem{definition}[theorem]{Definition} 
\newtheorem{remark}[theorem]{Remark}
\newtheorem{example}[theorem]{Example}
\newtheorem{question}[theorem]{Question}
\def\Ass{\operatorname{Ass}}
\def\Min{\operatorname{Min}}
\def\height{\operatorname{height}}
\def\C{{\mathcal C}}
\newcommand{\m}{\mathfrak{m}}
\begin{document}


\title[]{Edge ideals: algebraic and combinatorial properties} 

\author{Susan Morey}
\address{Department of Mathematics \\
Texas State University\\
601 University Drive\\ 
San Marcos, TX 78666.}
\email{morey@txstate.edu}
\thanks{The second author was partially supported by CONACyT 
grant 49251-F and SNI}

\author{Rafael H. Villarreal}
\address{
Departamento de
Matem\'aticas\\
Centro de Investigaci\'on y de Estudios
Avanzados del
IPN\\
Apartado Postal
14--740 \\
07000 Mexico City, D.F.
}
\email{vila@math.cinvestav.mx}

\keywords{Edge ideal, regularity, associated prime, sequentially
  Cohen-Macaulay.} 
\subjclass[2000]{Primary 13-02, 13F55; Secondary 05C10, 05C25, 05C65, 05E40.} 

\begin{abstract} Let $\mathcal{C}$ be a clutter and let
$I(\mathcal{C})\subset R$ be its edge ideal. This is a survey paper on
  the algebraic and combinatorial  
properties of $R/I(\mathcal{C})$ and $\mathcal{C}$, respectively. We
  give a criterion to 
estimate the regularity of $R/I(\mathcal{C})$ and apply this criterion
  to give new 
proofs of some formulas for the regularity. If $R/I({\mathcal{C}})$ is sequentially 
Cohen-Macaulay, we present a formula for the regularity of the ideal
of vertex covers of $\mathcal{C}$ and give a formula for the
projective dimension of $R/I(\mathcal{C})$. We also examine the associated
primes of powers of edge ideals, and show that for a graph with a
leaf, these sets form an ascending chain.
\end{abstract}

\maketitle

\section{Introduction}

A {\it clutter\/} $\mathcal{C}$ is a finite ground set $X$ 
together with a family $E$ of subsets of $X$
such that if $f_1, f_2 \in E$, then
$f_1\not\subset f_2$. The ground set $X$ is called the {\em vertex
  set} of $\mathcal{C}$ and $E$  
is called the {\em edge set} of $\mathcal{C}$, denoted by
$V(\mathcal{C})$ 
and $E(\mathcal{C})$  respectively. Clutters are simple hypergraphs
and are sometimes called  
{\em Sperner families} in the literature. We can also think of a clutter 
as the maximal faces of a simplicial complex over a ground set. One 
example of a clutter is a graph with the vertices and edges defined in the 
usual way.

Let $\mathcal{C}$ be a clutter with vertex set $X=\{x_1,\ldots,x_n\}$ and 
with edge set $E(\mathcal{C})$. Permitting an abuse of notation, we
will also denote 
by $x_i$ the $i^{\textup{th}}$ 
variable in the polynomial ring $R=K[x_1,\ldots,x_n]$ over a field $K$. 
The {\it edge ideal\/} of $\mathcal{C}$, denoted by $I(\mathcal{C})$,
is the ideal of $R$ generated by all  
monomials $x_e=\prod_{x_i\in e}x_i$ such 
that $e\in E(\mathcal{C})$. Edge ideals of graphs and clutters were 
introduced in \cite{Vi2} and \cite{Faridi,clutters,Ha-VanTuyl}, respectively. 
The assignment $\mathcal{C} \mapsto I(\mathcal{C})$ establishes a natural 
one-to-one correspondence between the family of clutters and the family of 
square-free monomial ideals. Edge ideals of clutters
are also called {\it facet ideals} \cite{Faridi}. 

This is a survey paper on edge ideals, which includes some new proofs
of known results and some new results. The study of algebraic and
combinatorial  
properties of edge ideals and clutters (e.g., Cohen-Macaulayness,
unmixedness, normality, normally torsion-freeness, shellability,
vertex decomposability, stability of 
associated primes) is of current interest, see
\cite{dochtermann,mfmc,poset,emtander,Faridi,FHVodd,FHV,
reesclu,HaM,herzog-hibi-book,HHTZ,MRV,Woodroofe}
 and the references there. In this paper we will focus on the following algebraic properties: the sequentially 
Cohen-Macaulay property, the stability of associated primes, and the
connection between torsion-freeness and combinatorial problems.    

The numerical invariants of edge ideals 
have attracted a great deal of interest
\cite{barile,bounds,Ha-VanTuyl,kummini,morey,nevo,peeva-stillman,vantuyl,monalg,woodroofe-matchings,zheng}.
 In this paper we focus on
the following invariants: projective dimension, regularity, depth and
Krull dimension. 

We present a few new results on edge ideals. We give a criterion to
estimate the regularity of edge ideals (see
Theorem~\ref{reg-of-families}). We apply this criterion to give new
proofs of some formulas for the regularity of edge ideals (see
Corollary~\ref{chordal-reg}). If $\mathcal{C}$ is a clutter and $R/I({\mathcal{C}})$ is sequentially
Cohen-Macaulay, we present a formula for the regularity of the ideal
of vertex covers of $\mathcal{C}$ (see
Theorem~\ref{reg-vila-morey-scm}) and give a formula for the
projective dimension of $R/I(\mathcal{C})$ (see
Corollary~\ref{vila-morey-pd}). We also give a new class of monomial
ideals for which the sets of associate primes of powers are known to form
ascending chains (Proposition~\ref{leaf}).

For undefined terminology on commutative algebra, edge ideals, graph theory, and the
theory of clutters and hypergraphs we refer to 
\cite{BHer,Eisen,Stanley}, \cite{faridijct,monalg}, \cite{Boll,Har},
\cite{cornu-book,Schr2}, respectively.  
 
\section{Algebraic and combinatorial properties of edge ideals}

Let $\mathcal{C}$ be a clutter with vertex set $X=\{x_1,\ldots,x_n\}$
and let $I=I(\mathcal{C})\subset R$ be its edge ideal.  
A subset $F$ of $X$ is called 
{\it independent\/} or {\it stable\/} if $e\not\subset F$ for any  
$e\in E(\mathcal{C})$. The dual concept of a stable vertex set
is a {\it vertex cover\/}, i.e., a subset $C$ of $X$ is a vertex cover of
$\mathcal{C}$  
if and only if $X\setminus C$ is a stable vertex set. A first hint of
the rich interaction between the combinatorics of $\mathcal{C}$  
and the algebra of $I(\mathcal{C})$ is that the
number of vertices in a minimum vertex cover of $\mathcal{C}$ 
(the {\it covering number\/} $\alpha_0(\mathcal{C})$ of $\mathcal{C}$) coincides with 
 the {\it height\/} ${\rm ht}\, I(\mathcal{C})$ of the ideal
$I(\mathcal{C})$. The number of vertices in a maximum stable
set (the {\it stability number} of $\mathcal{C}$) is denoted by $\beta_0(\mathcal{C})$.
Notice that $n=\alpha_0(\mathcal{C})+\beta_0(\mathcal{C})$.

A less immediate interaction between the two fields comes from passing
to a simplicial complex and relating combinatorial properties of the
complex to algebraic properties of the ideal. 
The Stanley-Reisner complex of
$I(\mathcal{C})$, denoted by $\Delta_\mathcal{C}$, is the simplicial
complex whose faces are the independent vertex sets of $\mathcal{C}$.
The complex $\Delta_\mathcal{C}$ is also called the {\it independence
complex} of $\mathcal{C}$. Recall 
that $\Delta_\mathcal{C}$ is called {\it pure\/} 
if all maximal independent vertex sets of $\mathcal{C}$, with respect
to inclusion, have the same number of
elements. If $\Delta_\mathcal{C}$ is pure (resp. Cohen-Macaulay,
shellable, vertex decomposable), we say that $\mathcal{C}$ is
{\it unmixed\/} (resp. Cohen-Macaulay, shellable, vertex
decomposable). Since minor variations of the definition of
shellability exist in the literature, we state here the definition
used throughout this article. 

\begin{definition}\label{Shellabledefn}\rm 
A simplicial complex $\Delta$ is {\it shellable\/}  if 
the facets (maximal faces) of $\Delta$ can be ordered $F_1,\ldots,F_s$
such that  
for all $1\leq i<j\leq s$, there 
exists some $v\in F_j\setminus F_i$ and some 
$\ell\in \{1,\ldots,j-1\}$ with $F_j\setminus F_\ell= \{v\}$. 
\end{definition}

We are 
interested in determining which families of clutters have the property
that $\Delta_\mathcal{C}$ is pure, Cohen-Macaulay, 
or shellable. These 
properties have been extensively studied, see
\cite{BHer,MRV,plummer-unmixed,plummer-survey,ravindra,ITG,Stanley,monalg,unmixed} 
and the references there.

The above definition of shellable is due to 
Bj\"orner and Wachs \cite{BW} and is usually referred
to as {\it nonpure shellable}, although here we will drop the
adjective ``nonpure''.  Originally, the definition of
shellable also required that 
the simplicial complex be pure, that is, all facets have the same
dimension.  
We will say $\Delta$ is {\it pure shellable} if it also satisfies this
hypothesis. These properties are related to
other important properties \cite{BHer,Stanley,monalg}: 
$$
\begin{array}{ccccccc}
\mbox{pure shellable}&\Rightarrow &\mbox{constructible} &\Rightarrow 
&\mbox{Cohen-Macaulay}&\Leftarrow&\mbox{Gorenstein}. 
\end{array}
$$ 
If a shellable complex is not pure, an implication similar to that
above holds when Cohen-Macaulay is replaced by sequentially Cohen-Macaulay.
\begin{definition}\label{d.seqcm}\rm 
Let $R=K[x_1,\dots,x_n]$. A graded $R$-module $M$ is called 
{\it sequentially Cohen-Macaulay} (over $K$)
if there exists a finite filtration of graded $R$-modules
\[(0) = M_0 \subset M_1 \subset \cdots \subset M_r = M \]
such that each $M_i/M_{i-1}$ is Cohen-Macaulay, and the Krull dimensions of the
quotients are increasing:
\[\dim (M_1/M_0) < \dim (M_2/M_1) < \cdots < \dim (M_r/M_{r-1}).\]
\end{definition}
We call a clutter $\mathcal{C}$ {\it sequentially Cohen-Macaulay\/} 
if $R/I(\mathcal{C})$ is sequentially Cohen-Macaulay. As first shown
by Stanley \cite{Stanley}, 
shellable implies sequentially Cohen-Macaulay. 

A related notion for a
simplicial complex is that of {\it vertex decomposability\/}
\cite{bjorner-topological}. If $\Delta$ is a simplicial complex and
$v$ is a vertex of $\Delta$, then the subcomplex formed by deleting
$v$ is the simplicial complex consisting of the faces of $\Delta$ that
do not contain $v$, and the {\it link} of $v$ is 
$$lk(v)=\{ F \in \Delta | v \not\in F \, {\mbox{\rm and}}\, F\cup\{v\} \in \Delta\}.$$
Suppose $\Delta$  is a (not necessarily pure) simplicial complex. We say that
$\Delta$  is vertex-decomposable if either $\Delta$ is a simplex, or
$\Delta$ contains a vertex $v$ such that both the link of $v$ and the subcomplex formed by deleting $v$ are vertex-decomposable, and
such that every facet of the deletion is a facet of $\Delta$.
 If $\mathcal{C}$ is vertex decomposable,
i.e., $\Delta_\mathcal{C}$ is vertex decomposable, then
$\mathcal{C}$ is shellable and sequentially Cohen-Macaulay
\cite{bjorner-topological,Woodroofe}. Thus, we have:
$$
\begin{array}{ccccccc}
\mbox{vertex decomposable }&\Rightarrow &\mbox{shellable} &\Rightarrow 
&\mbox{sequentially Cohen-Macaulay.}&& 
\end{array}
$$ 

Two additional properties related to the properties above are also of
interest in this area. One is the unmixed property, which is implied
by the Cohen-Macaulay property. The other is balanced. To define
balanced, it is useful to have a matrix that encodes the edges of a graph or
clutter. 
\begin{definition}
Let $f_1,\ldots,f_q$ be the edges of a clutter $\mathcal{C}$. The
{\it incidence matrix\/} or {\it clutter matrix} of 
$\mathcal{C}$ is the $n\times q$ matrix $A=(a_{ij})$ given by 
$a_{ij}=1$ if $x_i\in f_j$ and $a_{ij}=0$ otherwise. We say that
$\mathcal{C}$ is a {\it totally balanced\/} clutter 
(resp. {\it balanced} clutter) if $A$ has no square 
submatrix of order at least $3$ (resp. of odd order) with exactly two $1$'s in 
each row and column. 
\end{definition}

If $G$ is a graph, then $G$ is balanced
if and only if $G$ is bipartite and $G$ is totally balanced if and
only if $G$ is a forest \cite{Schr,Schr2}. 

While the implications between the properties mentioned above are
interesting in their own right, it is useful to identify classes of
ideals that satisfy the various properties. We begin with the
Cohen-Macaulay and unmixed properties. 
There are classifications of the following families in terms of
combinatorial properties of the graph or clutter:

\begin{enumerate} 

\item[($\mathrm{c}_1$)] \cite{ravindra,unmixed} unmixed bipartite graphs,

\item[($\mathrm{c}_2$)] \cite{EV,herzog-hibi} Cohen-Macaulay bipartite graphs,

\item[($\mathrm{c}_3$)] \cite{Vi2} Cohen-Macaulay trees,

\item[($\mathrm{c}_4$)] \cite{faridijct} totally
balanced unmixed clutters,  

\item[($\mathrm{c}_5$)] \cite{MRV} unmixed clutters with the
K\"onig property without cycles of length $3$ or $4$,

\item[($\mathrm{c}_6$)] \cite{MRV} unmixed balanced clutters.   
\end{enumerate}

We now focus on the sequentially Cohen-Macaulay property. 
\begin{proposition}{\cite{FVT2}}\label{only-scm-cycles}
The only
sequentially Cohen-Macaulay cycles are ${C}_3$ and ${C}_5$
\end{proposition}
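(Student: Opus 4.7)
The plan is to handle the two positive cases directly, and to rule out every other cycle via an analysis of pure skeletons of the independence complex.

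For $n=3$ the complex $\Delta_{C_3}$ is three isolated vertices, pure of dimension $0$ and trivially Cohen-Macaulay. For $n=5$ the maximal independent sets are the five pairs $\{x_i,x_{i+2}\}$ (with indices modulo $5$); they form a pure connected $1$-dimensional complex which is combinatorially a $5$-cycle, hence Cohen-Macaulay by Reisner's criterion. In both cases $R/I(C_n)$ is Cohen-Macaulay and therefore sequentially Cohen-Macaulay.

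For the converse, my plan is to apply Duval's criterion, which says that $R/I(\mathcal{C})$ is sequentially Cohen-Macaulay if and only if every pure $i$-skeleton $\Delta^{[i]}$ of $\Delta_\mathcal{C}$ (the subcomplex generated by the $i$-dimensional faces) is Cohen-Macaulay. For $n=2k$ with $k\geq 2$, a short gap-counting argument on cyclic $k$-subsets of $\{1,\dots,2k\}$ shows that the only maximum independent sets of $C_{2k}$ are the two alternating subsets $\{x_1,x_3,\dots,x_{2k-1}\}$ and $\{x_2,x_4,\dots,x_{2k}\}$. Hence the top pure skeleton $\Delta^{[k-1]}$ is the disjoint union of two $(k-1)$-simplices: pure, disconnected, and so not Cohen-Macaulay by Reisner.

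The main obstacle is the odd case $n=2k+1$ with $k\geq 3$. For $n=7$ a direct check shows that every maximal independent set of $C_7$ has size $3$, so $\Delta_{C_7}$ is already pure of dimension $2$; it has $7$ vertices, $14$ edges (the non-edges of $C_7$) and $7$ triangles, so its reduced Euler characteristic equals $-1$, and one verifies that it is homotopy equivalent to $S^1$, in line with Kozlov's computation of the homotopy type of the independence complexes of cycles. Consequently $\widetilde H_1(\Delta_{C_7};K)\neq 0$ while $\dim\Delta_{C_7}=2$, violating Reisner's criterion. For odd $n\geq 9$ the complex $\Delta_{C_n}$ is no longer pure (for instance $\{x_1,x_4,x_7\}$ is a maximal independent set of $C_9$ of size $3$, while the maximum size is $4$), so the pure top skeleton $\Delta^{[k-1]}$ must be analyzed in its own right. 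I would try to show $\widetilde H_{k-2}(\Delta^{[k-1]};K)\neq 0$ via a Mayer--Vietoris decomposition that splits the cycle at a single vertex, thereby reducing to homological computations on independence complexes of paths. Producing a clean uniform argument across all odd $n\geq 9$, rather than handling these cycles case by case, is where I anticipate the bulk of the work to lie.
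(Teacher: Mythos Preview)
The paper does not supply its own proof of this proposition; it is stated as a cited result from Francisco and Van Tuyl \cite{FVT2}. So there is no in-paper argument to compare yours against.

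On the substance of your proposal: the positive cases $C_3$ and $C_5$, the even cycles, and $C_7$ are handled correctly. For even $n=2k$ your gap-count shows the top pure skeleton is two disjoint $(k-1)$-simplices, and for $C_7$ the complex is pure with $\widetilde H_1\neq 0$ in a $2$-dimensional complex, so Reisner applies. These parts are fine.

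The genuine gap is precisely the one you flag: odd $n\geq 9$ is left as a plan, not a proof. Two cautions about that plan. First, the pure top skeleton $\Delta^{[k-1]}$ does \emph{not} decompose under a vertex split the way the full independence complex does, because membership of a face in $\Delta^{[k-1]}$ depends on the existence of a maximum-size extension, a global condition that does not respect the deletion/link decomposition; so the Mayer--Vietoris reduction to paths is not straightforward. Second, $\Delta^{[k-1]}$ for odd $C_n$ is strongly connected (its facets are the $n$ cyclic translates of $\{x_1,x_3,\dots,x_{2k-1}\}$, each meeting its shift-by-two neighbor in a ridge), so no disconnectedness shortcut analogous to the even case is available; you really must produce nontrivial homology in some degree strictly below $k-1$. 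One workable route is to compare $\Delta$ and $\Delta^{[k-1]}$ via the long exact sequence of the pair: the relative chain complex is concentrated in the degrees of the non-maximum maximal faces, and a dimension count against Kozlov's computation of $\widetilde H_*(\mathrm{Ind}(C_n))$ forces $\widetilde H_j(\Delta^{[k-1]})\neq 0$ for some $j<k-1$. Carrying this out uniformly is the missing piece.
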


The next theorem generalizes a result of \cite{EV} (see
$(\mathrm{c}_2)$ above) which
shows that a bipartite graph $G$ is Cohen-Macaulay if and only
if $\Delta_G$ has a pure shelling.

\begin{theorem} \cite{bipartite-scm} \label{SCM=shellable}
Let $G$ be a bipartite graph. Then $G$ is
shellable if and only if $G$ is sequentially Cohen-Macaulay. 
\end{theorem}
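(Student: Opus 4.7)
The forward direction (shellable implies sequentially Cohen-Macaulay) is Stanley's theorem, already recorded in the paper. The real content is the converse, which I would attack by induction on $n=|V(G)|$. The base cases ($n$ small, or $G$ without edges) are immediate since $\Delta_G$ is then a simplex. A preliminary reduction removes isolated vertices: if $v$ is isolated in $G$, then $\Delta_G$ is the cone over $\Delta_{G\setminus\{v\}}$ with apex $v$, and coning preserves both shellability and sequential Cohen-Macaulayness, so one may assume $G$ has no isolated vertex.

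The crux of the argument is the following structural lemma, which I expect to be the main obstacle: \emph{every sequentially Cohen-Macaulay bipartite graph without isolated vertices contains a leaf.} My approach would be to isolate the top-dimensional piece of the sequential filtration of $R/I(G)$, which corresponds to a pure Cohen-Macaulay subcomplex of $\Delta_G$ spanned by the maximum stable sets. By $(\mathrm{c}_2)$ above, a Cohen-Macaulay bipartite graph admits a very restrictive structure (an ordering of the bipartition whose adjacency matrix is essentially lower triangular), which forces a pendant edge. A careful bookkeeping argument using the bipartition $V(G)=V_1\sqcup V_2$ and induction on the lower-dimensional strata of the filtration should then show that such a pendant edge survives as a genuine leaf of $G$, rather than being absorbed by higher-dimensional strata.

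Given a leaf $y$ with neighbor $x$, the facets of $\Delta_G$ split cleanly into two blocks. If a facet $F$ does not contain $y$, then $F\cup\{y\}$ must contain an edge, forcing $x\in F$. Hence the facets of $\Delta_G$ are exactly those of the form $\{y\}\cup F'$ with $F'$ a facet of $\Delta_{G\setminus\{x,y\}}$, together with those of the form $\{x\}\cup F''$ with $F''$ a facet of $\Delta_{G\setminus N[x]}$. Both $G\setminus\{x,y\}$ and $G\setminus N[x]$ are bipartite; their independence complexes are the links in $\Delta_G$ of $y$ and $x$, respectively, and the link of any face in a sequentially Cohen-Macaulay complex is again sequentially Cohen-Macaulay. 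So the inductive hypothesis delivers shellings on both blocks.

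The final step is to concatenate the two shellings, listing the facets through $y$ first and those through $x$ second, and to verify Definition~\ref{Shellabledefn}. Within each block the shelling condition is inherited from the inductive shellings (appending $y$ or $x$ uniformly does not affect the symmetric differences). The interface between the two blocks is controlled by the observation that $x$ belongs to every facet of the second block but to none of the first: given any facet $F_i$ in the first block and any facet $F_j$ in the second, one may take $v=x\in F_j\setminus F_i$ and let $\ell$ be the index of $\{x\}\cup F'_i$, where $F'_i$ is obtained from $F_i$ by replacing $y$ with $x$ after removing any neighbors of $x$ inside $F_i$; this should produce the required $F_\ell$ with $F_j\setminus F_\ell=\{v\}$ and close the argument.
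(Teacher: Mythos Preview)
This survey does not actually prove the theorem; it cites \cite{bipartite-scm}. Your overall strategy---locate a leaf, split the facets of $\Delta_G$ into the $y$-block and the $x$-block via links, apply induction, and concatenate---is exactly the scheme of that paper, as one can infer from the survey's later appeal to \cite[Corollary~2.10]{bipartite-scm} in the proof of Corollary~\ref{chordal-reg}(c). So the architecture is right, but two steps in your write-up are not yet proofs.

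The first is the leaf lemma, which you correctly flag as the crux. Your plan (``look at the top pure skeleton, use the Cohen--Macaulay bipartite classification $(\mathrm{c}_2)$, then bookkeep'') does not obviously work: the Stanley--Reisner ideal of $\Delta_G^{[d]}$ is in general \emph{not} the edge ideal of any subgraph of $G$, so the pendant edge guaranteed by the Estrada--Villarreal structure theorem lives in an auxiliary graph, not in $G$, and there is no evident mechanism that transports it back. The argument in \cite{bipartite-scm} is more delicate than this sketch. The second is your interface verification, which is simply wrong as written: you set $F_\ell=\{x\}\cup F_i'$, but then $x\in F_\ell$ and $F_j\setminus F_\ell$ cannot be $\{x\}$. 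The correct move starts from $F_j$, not $F_i$: write $F_j=\{x\}\cup F''$ with $F''$ a facet of $\Delta_{G\setminus N[x]}$; since the only neighbor of $y$ is $x\notin F''$, the set $F''\cup\{y\}$ is independent, so extend it to a facet $F_\ell$ of $\Delta_G$. Then $y\in F_\ell$ forces $x\notin F_\ell$, placing $F_\ell$ in block one (hence $\ell<j$), and $F_j\setminus F_\ell=\{x\}$ as required.
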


Recently Van Tuyl \cite{vantuyl} has shown that
Theorem~\ref{SCM=shellable} 
remains valid if we replace shellable by vertex decomposable.

Additional examples of sequentially Cohen-Macaulay ideals depend on
the chordal structure of the graph.
A graph $G$ is said to be 
{\it chordal\/} if every cycle of $G$ of length at least $4$ has a 
chord. A {\it chord} of a cycle is an edge joining two 
non-adjacent vertices of the cycle. 
Chordal graphs have been extensively studied, and they can be
constructed according to a result of G.~A.~Dirac, see  
\cite{Dirac,hhz-ejc,Toft}. A chordal graph
is called {\it strongly chordal} if 
every cycle $C$ of even length at least six has a chord that divides
$C$ into two odd length paths. A {\it clique\/} of a graph is a set of mutually adjacent 
vertices.  Totally balanced clutters are precisely
the clutters of maximal cliques of strongly chordal graphs 
by a result of Farber \cite{farber}. Faridi \cite{Faridi} introduced
the notion 
of a simplicial forest. In
\cite[Theorem~3.2]{HHTZ} it is shown that $\mathcal{C}$ is the clutter of
the facets of a simplicial forest if and only if $\mathcal{C}$ is a
totally balanced clutter. Additionally, a clutter $\mathcal{C}$ is
called $d$-{\it uniform} if all its edges have size $d$.

\begin{theorem}\label{scm-clutters} Any of the following clutters is
sequentially Cohen-Macaulay{\rm :}
\begin{enumerate}
\item[(a)] \cite{Woodroofe}  graphs with no chordless 
cycles of length other than $3$ or $5$,
\item[(b)] \cite{FVT2} chordal graphs, 
\item[(c)] \cite{hhz-ejc} clutters whose ideal of covers has
  linear quotients $($see Definitions~\ref{def.linearquotients} and
  \ref{alex-dual}$)$,
\item[(d)] \cite{He-VanTuyl} clutters of paths of length $t$
of directed rooted trees,
\item[(e)] \cite{Faridi}  simplicial forests, i.e., totally balanced
clutters, 
\item[(f)] \cite{linearquotients} uniform admissible clutters whose 
covering number is $3$.
\end{enumerate}
\end{theorem}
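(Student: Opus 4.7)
The plan is to deduce each of the six items via one of the two routes discussed earlier: either through the chain \emph{vertex decomposable} $\Rightarrow$ \emph{shellable} $\Rightarrow$ \emph{sequentially Cohen-Macaulay}, or through the Herzog--Hibi characterization that $R/I(\mathcal{C})$ is sequentially Cohen-Macaulay precisely when the Alexander dual $I(\mathcal{C})^{\vee}$ is componentwise linear. Since the six classes arise from genuinely different combinatorial contexts, no single uniform argument works; the theorem is established case by case, with the unifying feature being only the target property.

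For (a), following Woodroofe, one argues by induction on $|V(\mathcal{C})|$ that a graph $G$ whose only chordless cycles have length $3$ or $5$ admits a shedding vertex $v$, that is, a vertex for which every facet of the deletion $\Delta_G\setminus v$ remains a facet of $\Delta_G$, and for which both $\Delta_G\setminus v$ and the link of $v$ remain in the class. The existence of such a $v$ is the technical core, and uses the structure of induced cycles. Part (b) is the chordal subcase (no chordless cycles of length at least $4$), and the proof in \cite{FVT2} proceeds directly by exhibiting a shelling built from a perfect elimination ordering; alternatively, (b) is now a corollary of (a).

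For (c), one applies the Herzog--Hibi duality: if the cover ideal $I(\mathcal{C})^{\vee}$ has linear quotients, then each graded component inherits linear quotients, hence a linear resolution, so $I(\mathcal{C})^{\vee}$ is componentwise linear and $R/I(\mathcal{C})$ is sequentially Cohen-Macaulay. Part (e) follows as a consequence, since the cover ideal of a totally balanced clutter is known to have linear quotients; Faridi's original approach in \cite{Faridi} instead uses a ``leaf'' recursion, removing a facet containing a vertex that belongs only to that facet and inducting on the resulting smaller simplicial forest.

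For (d) and (f), the cited papers produce explicit shelling orders on the facets of $\Delta_\mathcal{C}$ tailored to the combinatorial structure, namely a traversal of the directed rooted tree in (d) and an ordering driven by the admissibility conditions in (f), and then verify Definition~\ref{Shellabledefn} directly. The principal obstacle in each case is the construction of the appropriate shedding vertex, shelling order, or linear-quotient ordering of generators: the combinatorial mechanisms differ substantially across the six items, exploiting features particular to each family, so the difficulty is concentrated in the combinatorial bookkeeping rather than in any further algebraic manipulation.
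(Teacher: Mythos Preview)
The paper does not prove this theorem; it is stated as a compilation of results from the cited references, and the only justification the paper offers is the remark immediately following the statement that ``the clutters of parts (a)--(f) are in fact shellable, and the clutters of parts (a)--(b) are in fact vertex decomposable.'' Your sketch is consistent with this remark and with the arguments in the cited sources, so there is no genuine gap.

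One small point of comparison: for (c) you go through the Herzog--Hibi characterization (linear quotients $\Rightarrow$ componentwise linear $\Rightarrow$ sequentially Cohen-Macaulay by duality). This is correct, but there is a more direct route that matches the paper's remark: it is a standard fact that the Alexander dual $I(\mathcal{C})^\vee$ has linear quotients if and only if the independence complex $\Delta_{\mathcal{C}}$ is shellable (the shelling order and the linear-quotient order correspond). So (c) is shellable outright, not merely sequentially Cohen-Macaulay via componentwise linearity, and this is what the paper is invoking. Your suggestion that (e) could be deduced from (c) would likewise require knowing that the cover ideal of a simplicial forest has linear quotients; this is true but is itself essentially equivalent to Faridi's shellability argument, so it does not really shortcut anything.
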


The clutters of parts (a)--(f) are in fact shellable, and the clutters
of parts (a)--(b) are in fact vertex decomposable, see
\cite{dochtermann,hhz-ejc,vantuyl,bipartite-scm,woodroofe-chordal,Woodroofe}.
The family of graphs in part (b) is contained in the family of graphs
of part (a) because the only induced cycles of a chordal graph are $3$-cycles.

A useful tool in examining invariants related to resolutions comes
from a carefully chosen ordering of the generators.
\begin{definition} \label{def.linearquotients}
A monomial ideal $I$ has {\it linear quotients} if the
monomials that generate 
$I$ can be ordered $g_1,\ldots,g_q$ such that 
for all $1 \le i \le q-1$, $((g_1, \dots, g_i) :
g_{i+1})$ is generated by linear forms.      
\end{definition}

If an edge ideal $I$ is generated in a single
degree and $I$ has linear quotients, then $I$ has a linear resolution
(cf. \cite[Lemma 5.2]{Faridi}). If $I$ is the edge ideal of a graph,
then $I$ has linear quotients if and only if $I$ has a linear
resolution if and only if each power of $I$ has a linear resolution
\cite{hhz-linear}.

Let $G$ be a graph. Given a subset $A \subset V(G)$, by $G \setminus
A$, we mean 
the graph formed from $G$ by deleting all the vertices in $A$, and all
edges incident to a vertex in $A$. 
A graph $G$ is called {\it vertex-critical}
if $ \alpha_0{(G\setminus\{v\})}< \alpha_0{(G)}$ for all $v\in V(G)$. An
{\it edge critical} graph is defined similarly. The final property
introduced in this section is a combinatorial 
decomposition of the vertex set of a graph.

\begin{definition} \cite{berge1} \rm A graph $G$ without isolated
vertices is called a 
$B$-{\it graph\/} if there is a family ${\mathcal G}$ consisting of independent
sets of $G$ such that $V(G)=\bigcup_{C \in{\mathcal G}} C$
and $|C|=\beta_0(G)$ for all $C\in{\mathcal G}$.
\end{definition}

The notion of a $B$-{\it graph} is at the
center of several interesting families of graphs. One has the
following implications for any graph $G$ without isolated
vertices \cite{berge1,monalg}:
$$
\begin{array}{ccccccc}
&&\mbox{edge-critical} &\Longrightarrow&& & \\
&& & &B\mbox{-graph} &\Longrightarrow &\mbox{vertex-critical} \\
\mbox{Cohen-Macaulay}&\Longrightarrow&\mbox{unmixed}&\Longrightarrow & & & 
\end{array}
$$ 
In \cite{berge1} the integer $\alpha_0(G)$ is called the {\it transversal 
number\/} of $G$.  

\begin{theorem} \cite{erdos-gallai,bounds} \label{teo1}
 If\/ $G$ is a $B$-graph, 
then $\beta_0(G)\leq  \alpha_0(G)$. 
\end{theorem}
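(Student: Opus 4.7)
Since $n=\alpha_0(G)+\beta_0(G)$, the claim $\beta_0(G)\le \alpha_0(G)$ is equivalent to $2\beta_0(G)\le n$. Let $\mathcal{G}=\{C_1,\ldots,C_k\}$ be a covering family of maximum independent sets witnessing that $G$ is a $B$-graph, and for each $v\in V(G)$ set $m(v)=|\{i : v\in C_i\}|$, so that $m(v)\ge 1$ because $V(G)=\bigcup_i C_i$. Counting incidences two ways gives
\[
\sum_{v\in V(G)} m(v) \;=\; \sum_{i=1}^{k} |C_i| \;=\; k\,\beta_0(G),
\]
and since each $C_i$ is independent, no single $C_i$ can contain both endpoints of an edge, yielding the local bound $m(u)+m(v)\le k$ for every edge $\{u,v\}\in E(G)$.

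I would turn these into the global bound $\beta_0(G)\le n/2$ by summing the edge bound against a fractional perfect matching $y\colon E(G)\to \mathbb{R}_{\ge 0}$, i.e., one satisfying $\sum_{e\ni v} y_e = 1$ for every $v$ and hence $\sum_e y_e = n/2$:
\[
k\,\beta_0(G) \;=\; \sum_v m(v) \;=\; \sum_{e=\{u,v\}} y_e\bigl(m(u)+m(v)\bigr) \;\le\; k\sum_e y_e \;=\; k\cdot\frac{n}{2},
\]
so $\beta_0(G)\le n/2\le \alpha_0(G)$, as required.

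The main obstacle is to extract such a fractional perfect matching from the $B$-graph hypothesis, since the star $K_{1,3}$ shows the absence of isolated vertices is by itself insufficient (and correspondingly $K_{1,3}$ fails the inequality). I would argue via LP duality: a fractional perfect matching exists if and only if the fractional vertex-cover optimum equals $n/2$, equivalently if and only if the all-$\tfrac12$ vector is optimal. Given any half-integral fractional vertex cover $x\in\{0,\tfrac12,1\}^{V(G)}$ of weight strictly less than $n/2$, the block $V_0=\{v : x_v=0\}$ is independent with $N_G(V_0)\subseteq V_1=\{v : x_v=1\}$ and $|V_0|>|V_1|$; one then uses the covering family $\mathcal{G}$ to enlarge $V_0$ inside the remaining region into an independent set of $G$ of size exceeding $\beta_0(G)$, contradicting maximality of the members of $\mathcal{G}$. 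A more elementary variant bypasses the LP altogether by applying Hall's theorem to the bipartite subgraph of $G$ between a fixed $C\in\mathcal{G}$ and $V(G)\setminus C$, using the covering family $\mathcal{G}$ to verify Hall's condition from the $C$-side and so producing an injection $C\hookrightarrow V(G)\setminus C$ which yields the inequality directly.
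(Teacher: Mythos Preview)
The paper does not prove this theorem; it is merely quoted from \cite{erdos-gallai,bounds}. So there is no in-paper argument to compare against, and the question is simply whether your proposal stands on its own.

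Your double-counting step is clean and correct: if a fractional perfect matching $y$ exists, then
\[
k\,\beta_0(G)=\sum_v m(v)=\sum_{e=\{u,v\}}y_e\bigl(m(u)+m(v)\bigr)\le k\sum_e y_e=\frac{kn}{2},
\]
and the conclusion follows. The entire weight of the argument therefore rests on producing the fractional perfect matching (equivalently, showing $|N(S)|\ge|S|$ for every independent set $S$, or exhibiting a $C$-saturating matching for some maximum independent set $C$). You flag this yourself as ``the main obstacle,'' and indeed it is: both of the sub-arguments you outline are only asserted, not carried out, and the natural attempts to fill them in do not go through.

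For the LP route, you propose to enlarge $V_0$ inside $V_{1/2}$ using the family $\mathcal{G}$ to obtain an independent set of size exceeding $\beta_0(G)$. The obvious candidate is $V_0\cup\bigl(C_i\cap V_{1/2}\bigr)$ for some $C_i\in\mathcal{G}$; this set \emph{is} independent, but its size is at most $\beta_0(G)$, not more. Concretely, since $N(V_0)\subseteq V_1$, the set $V_0\cup\bigl(C_i\setminus(V_0\cup V_1)\bigr)$ is independent, which forces $|C_i\cap V_{1/2}|\le\beta_0(G)-|V_0|$ for every $i$; so the covering family never overshoots $\beta_0(G)$ in this way, and an averaging argument over $i$ yields only trivial inequalities. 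The same obstruction blocks the Hall-theorem route: if $S\subseteq C$ violates Hall's condition with $T=N_G(S)$, the inequality $|C_i\cap S|+|C_i\cap T|\ge|S|$ (obtained exactly as above) combined with the edge bound gives only $|T|\ge|S|/k$, not the needed $|T|\ge|S|$.

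The gap is genuine: a correct proof must supply a real argument at this point (for instance, via the exchange lemma that any two maximum independent sets $A,B$ admit a perfect matching between $A\setminus B$ and $B\setminus A$, which one then assembles across the covering family). As written, your proposal identifies the right architecture but leaves the load-bearing lemma unproved.
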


\section{Invariants of edge ideals: regularity, projective dimension,
depth}\label{algebraic-invariants}

Let $\mathcal{C}$ be a clutter and let $I=I(\mathcal{C})$ be its edge
ideal. In this section we study the regularity, depth, projective
dimension, and Krull dimension of $R/I(\mathcal{C})$. There are several well-known results relating these invariants that will prove useful. We collect some of them here for ease of reference. 

The first result is a basic relation between the dimension and the depth (see for example 
\cite[Proposition~18.2]{Eisen}):
\begin{equation}
{\rm depth}\, R/I(\mathcal{C})\leq\dim R/I(\mathcal{C}).
\end{equation}
The deviation from equality in the above relationship can be
quantified using the projective dimension, as is seen in a formula
discovered by Auslander and Buchsbaum (see  
\cite[Theorem~19.9]{Eisen}): 
\begin{equation}\label{auslander-buchsbaum-formula}
{\rm pd}_R(R/I(\mathcal{C}))+{\rm depth}\, R/I(\mathcal{C})={\rm depth}(R).
\end{equation}
Notice that since in the setting of this article $R$ is a polynomial ring in $n$ variables, ${\rm depth}( R)=n$.

Another invariant of interest also follows from a closer inspection of
a minimal projective resolution of $R/I$. Consider the minimal graded
free resolution of $M=R/I$ as an 
$R$-module: 
\[
{\mathbb F}_\star:\ \ \ 0\rightarrow 
\bigoplus_{j}R(-j)^{b_{gj}}
\stackrel{}{\rightarrow} \cdots 
\rightarrow\bigoplus_{j}
R(-j)^{b_{1j}}\stackrel{}{\rightarrow} R
\rightarrow R/I \rightarrow 0.
\]
The 
{\it Castelnuovo-Mumford regularity\/} or simply the {\it regularity}
of $M$ is defined as $${\rm reg}(M)=\max\{j-i\vert\,
b_{ij}\neq 0\}.$$ 

An excellent reference for the regularity is the book of Eisenbud
\cite{eisenbud-syzygies}. There are methods to compute the regularity
of $R/I$ avoiding 
the construction of a minimal graded free resolution, see
\cite{bermejo-gimenez} and \cite[p.~614]{singular}. These methods
work for any homogeneous ideal over an arbitrary field.  

We are interested in finding good bounds for the 
regularity. Of particular interest is to be able to express ${\rm
  reg}(R/I(\mathcal{C}))$ in 
terms of the combinatorics of $\mathcal{C}$, at least for
some special families of clutters. Several authors have studied the
regularity of edge ideals of graphs 
and clutters
\cite{corso-nagel-tams,kia-faridi-traves,Ha-VanTuyl,
katzman1,kiani-moradi,kummini,mmcrty,nevo,terai,vantuyl,whieldon}.  The main results are general bounds for the regularity
and combinatorial formulas for the regularity of special families of
clutters. The estimates for the regularity are in terms of matching
numbers and the number of cliques needed to cover the vertex
set. Covers will play a particularly important role since they form
the basis for a duality.  

\begin{definition}\label{alex-dual}
The {\it ideal of covers} of $I(\mathcal{C})$, denoted by
$I_c(\mathcal{C})$, is the ideal of $R$ generated by 
all the monomials $x_{i_1}\cdots x_{i_k}$ such that
$\{x_{i_1},\ldots, x_{i_k}\}$ is a 
vertex cover of
$\mathcal{C}$. The ideal $I_c(\mathcal{C})$ is also called the {\it Alexander
dual\/} of $I(\mathcal{C})$ and is also denoted by
$I(\mathcal{C})^\vee$. The clutter of minimal vertex covers of 
$\mathcal{C}$, denoted by $\mathcal{C}^\vee$, is called the {\it Alexander
dual clutter} or {\it blocker} of $\mathcal{C}$.
\end{definition}
To better understand the Alexander dual, let $e\in E(\C)$ and consider
the monomial prime ideal $(e)=(\{x_i | x_i \in e\})$. Then the duality
is given by: 
\begin{equation}\label{alexander-duality}
\begin{array}{ccccc}
I(\mathcal{C})&=&(x_{e_1}, x_{e_2},\ldots,x_{e_q} )&=&
{\mathfrak p}_1 \cap {\mathfrak p}_2 \cap \cdots \cap {\mathfrak p}_s \\  
& & \updownarrow & & \updownarrow \\ 
I_c(\mathcal{C})&=&(e_1)\cap(e_2)\cap \cdots \cap (e_q)&=&
(x_{{\mathfrak p}_1}, x_{{\mathfrak p}_2},\ldots, x_{{\mathfrak p}_s}
), \\  
\end{array}
\end{equation}
where $\mathfrak{p}_1,\ldots,\mathfrak{p}_s$ are the 
associated primes of $I(\mathcal{C})$ and
$x_{\mathfrak{p}_k}=\prod_{x_i\in\mathfrak{p}_k}x_i$ for $1\leq k\leq
s$. Notice the equality $I_c(\mathcal{C})=I(\mathcal{C}^\vee)$. Since
$(\mathcal{C}^\vee)^\vee=\mathcal{C}$, we have
$I_c(\mathcal{C}^\vee)=I(\mathcal{C})$. 
In many cases $I(\mathcal{C})$ reflects properties of
$I_c(\mathcal{C})$ and viceversa \cite{ER,HeHi1,cca}. The following 
result illustrates this interaction.

\begin{theorem}{\rm \cite{terai}}\label{terai-formula} 
 Let $\mathcal{C}$ be a clutter. If ${\rm ht}(I(\mathcal{C}))\geq 2$, then  
$$
{\rm reg}\, I(\mathcal{C})=1+{\rm reg}\, R/I(\mathcal{C})={\rm pd}\, R/I_c(\mathcal{C}),
$$
where $I_c(\mathcal{C})$ is the ideal of minimal vertex covers of $\mathcal{C}$.
\end{theorem}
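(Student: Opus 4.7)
The proof naturally splits into two equalities, and I would handle them separately.

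\emph{First equality.} The identity ${\rm reg}\, I(\C) = 1 + {\rm reg}\, R/I(\C)$ is a formal consequence of the short exact sequence
$$
0 \longrightarrow I(\C) \longrightarrow R \longrightarrow R/I(\C) \longrightarrow 0.
$$
Since $R$ is free and concentrated in internal degree zero, a minimal graded free resolution of $I(\C)$ is obtained from one of $R/I(\C)$ by dropping the final term $R$ and shifting the homological degree by $-1$. This yields $\beta_{i, j}(I(\C)) = \beta_{i+1, j}(R/I(\C))$ for all $i \geq 0$, so
$$
{\rm reg}\, I(\C) = \max\{j - i : \beta_{i, j}(I(\C)) \neq 0\} = 1 + \max\{j - (i+1) : \beta_{i+1, j}(R/I(\C)) \neq 0\} = 1 + {\rm reg}\, R/I(\C).
$$

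\emph{Second equality.} The identity ${\rm reg}\, I(\C) = {\rm pd}\, R/I_c(\C)$ is the substantive content --- Terai's formula. My plan is to pass to squarefree multigraded Betti numbers. Hochster's formula computes the multigraded Betti numbers of a Stanley--Reisner ring in each squarefree multidegree $\sigma \in \{0, 1\}^n$ as dimensions of reduced simplicial (co)homology of the induced subcomplex on $\sigma$; all non-squarefree multigraded Betti numbers vanish. Applying Hochster's formula to $R/I(\C)$, with Stanley--Reisner complex $\Delta_\C$, and to $R/I_c(\C)$, with complex $\Delta_\C^\vee$, and then invoking combinatorial Alexander duality to match the reduced (co)homology of $\Delta_\C|_\sigma$ with that of the corresponding piece of $\Delta_\C^\vee$, produces a duality of nonzero multigraded Betti numbers: each pair $(i, \sigma)$ on the $I(\C)$-side contributing $|\sigma| - i$ to ${\rm reg}\, I(\C)$ corresponds to homological degree $|\sigma| - i$ on the $R/I_c(\C)$-side, contributing to ${\rm pd}\, R/I_c(\C)$. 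Taking extrema over all nonzero Betti numbers then yields ${\rm reg}\, I(\C) = {\rm pd}\, R/I_c(\C)$.

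\emph{Main obstacle.} The technical heart is the combinatorial Alexander duality step: one must compatibly match induced subcomplexes (or, more canonically, links of complementary faces) of $\Delta_\C$ and $\Delta_\C^\vee$ over every squarefree multidegree, and track the precise dimensional shift --- naive restriction of Alexander duals does not agree with Alexander duality of restrictions, so some care is needed. This is cleanest in the language of Miller's or Yanagawa's squarefree module-theoretic reformulation of Alexander duality. The hypothesis ${\rm ht}\, I(\C) \geq 2$ is used to exclude degenerate cases (such as a degree-one generator of $I(\C)$) in which the extreme Betti numbers need adjustment; once that is set aside, the matching of extrema is a bookkeeping exercise in homological and internal degrees.
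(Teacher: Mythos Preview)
The paper does not actually prove this theorem; it is quoted as a known result due to Terai \cite{terai}, and immediately afterwards the paper extends it (via a separate corollary) to the height-one case. So there is no ``paper's own proof'' to compare against.

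That said, your outline is the standard route to Terai's formula and is essentially correct. The first equality is a formality, as you say. For the second, Hochster's formula on both $R/I(\mathcal{C})$ and $R/I_c(\mathcal{C})$ together with combinatorial Alexander duality (relating reduced cohomology of restrictions $\Delta_\mathcal{C}|_\sigma$ to reduced homology of links in $\Delta_\mathcal{C}^\vee$) is exactly how the identity is established; the bookkeeping you describe is the content of Terai's argument, later refined by Bayer--Charalambous--Popescu to a correspondence of extremal Betti numbers.

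One small correction: your explanation of the hypothesis ${\rm ht}\, I(\mathcal{C}) \geq 2$ is not quite on target. The formula ${\rm reg}\, I(\mathcal{C}) = {\rm pd}\, R/I_c(\mathcal{C})$ in fact holds without that assumption --- the paper itself remarks this and proves the height-one case as a corollary, and the reference \cite[Proposition 8.1.10]{herzog-hibi-book} is given for the general statement. The hypothesis in Terai's original statement is an artifact of his setup rather than a genuine obstruction in the Hochster/duality argument; degree-one generators do not break the Betti-number matching.
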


If $|e|\geq 2$
for all $e\in E(\mathcal{C})$, then this formula says that the
regularity of $R/I(\mathcal{C})$ equals $1$ if and only if
$I_c(\mathcal{C})$ is a Cohen-Macaulay ideal of height $2$. 
This formula will be used to
show that 
regularity behaves well when working with edge ideals with disjoint
sets of variables (see Proposition~\ref{additivity-reg}). This formula also holds for
edge ideals of height one \cite[Proposition 8.1.10]{herzog-hibi-book}.

\begin{corollary} If ${\rm ht}(I(\mathcal{C}))=1$, 
then ${\rm reg}\, R/I(\mathcal{C})={\rm pd}\, R/I_c(\mathcal{C})-1$.
\end{corollary}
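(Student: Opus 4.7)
The plan is to reduce to the height $\geq 2$ case covered by Theorem~\ref{terai-formula} by peeling off a universal vertex of $\mathcal{C}$ and then comparing both sides of the formula under this reduction.

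Since ${\rm ht}(I(\mathcal{C}))=1$, the prime $(x_1)$ is a minimal prime of $I(\mathcal{C})$ for some variable $x_1$; equivalently, $x_1$ belongs to every edge of $\mathcal{C}$. If the only edge of $\mathcal{C}$ is $\{x_1\}$, then $I(\mathcal{C})=I_c(\mathcal{C})=(x_1)$ and the desired equality $0=1-1$ is immediate. Otherwise, the antichain property forces every edge of $\mathcal{C}$ to have size at least two, and the collection $\{e\setminus\{x_1\}:e\in E(\mathcal{C})\}$ is an antichain of nonempty subsets of $X'=X\setminus\{x_1\}$, defining a clutter $\mathcal{C}'$ on $X'$. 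Set $R'=K[X']$. A direct check gives the two identities
\[
I(\mathcal{C}) \;=\; x_1\cdot I(\mathcal{C}')R
\qquad\text{and}\qquad
I_c(\mathcal{C}) \;=\; (x_1)+I_c(\mathcal{C}')R,
\]
where the second is obtained by writing $I_c(\mathcal{C})=\bigcap_{e\in E(\mathcal{C})}(e)=\bigcap_e\bigl((x_1)+(e\setminus\{x_1\})\bigr)$ and separating the $(x_1)$ summand using the decomposition $R=R'\oplus x_1R$.

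These identities yield two numerical reductions. From the first, multiplication by $x_1$ is a graded $R$-module isomorphism $I(\mathcal{C}')R(-1)\cong I(\mathcal{C})$, so ${\rm reg}\,I(\mathcal{C})={\rm reg}_{R'}\,I(\mathcal{C}')+1$, and hence ${\rm reg}\,R/I(\mathcal{C})={\rm reg}\,R'/I(\mathcal{C}')+1$. From the second, $R/I_c(\mathcal{C})\cong\bigl(R/I_c(\mathcal{C}')R\bigr)/(x_1)$, and since $x_1$ acts as a non-zero-divisor on $R/I_c(\mathcal{C}')R$ (the generators of $I_c(\mathcal{C}')$ involve only variables in $X'$), the standard change-of-rings argument yields ${\rm pd}\,R/I_c(\mathcal{C})={\rm pd}\,R'/I_c(\mathcal{C}')+1$.

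To finish, if ${\rm ht}(I(\mathcal{C}'))\geq 2$ then Theorem~\ref{terai-formula} applied to $\mathcal{C}'$ gives ${\rm reg}\,R'/I(\mathcal{C}')={\rm pd}\,R'/I_c(\mathcal{C}')-1$; combining with the two reductions above yields ${\rm reg}\,R/I(\mathcal{C})={\rm pd}\,R/I_c(\mathcal{C})-1$, as desired. If instead ${\rm ht}(I(\mathcal{C}'))=1$, one inducts on the number $n=|X|$ of variables, the base case $n=1$ being the degenerate situation already handled. The main point requiring care is the ideal-theoretic identity $I_c(\mathcal{C})=(x_1)+I_c(\mathcal{C}')R$; once that is in hand, the regularity and projective-dimension reductions are routine consequences of the graded isomorphism and the non-zero-divisor property.
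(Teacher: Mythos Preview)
Your proof is correct and follows essentially the same approach as the paper: factor out the height-one primes (universal vertices) from $I(\mathcal{C})$, identify the corresponding variables as a regular sequence added to $I_c(\mathcal{C})$, and reduce to Terai's formula in height $\geq 2$. The only cosmetic difference is that the paper strips off \emph{all} height-one primes $(x_1),\ldots,(x_r)$ at once via the primary decomposition $I=fL$ with $f=x_1\cdots x_r$ and ${\rm ht}(L)\geq 2$, whereas you peel off one universal vertex at a time and induct on $n$; the underlying identities and numerical reductions are identical.
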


\begin{proof} We set $I=I(\mathcal{C})$. The formula clearly holds if
$I=(x_1\cdots x_r)$ is a principal ideal. Assume that $I$ is not
principal. Consider the primary decomposition of $I$
$$
I=(x_1)\cap\cdots\cap(x_r)\cap\mathfrak{p}_1\cap\cdots\cap\mathfrak{p}_m,
$$
where $L=\mathfrak{p}_1\cap\cdots\cap\mathfrak{p}_m$ is an edge ideal of height
at least $2$. Notice that $I=fL$, where $f=x_1\cdots x_r$. Then the
Alexander dual of $I$ is
$$
I^\vee=(x_1,\ldots,x_r,x_{{\mathfrak p}_1}, x_{{\mathfrak
p}_2},\ldots, x_{{\mathfrak p}_m})=(x_1,\ldots,x_r)+L^\vee.
$$
The multiplication map $L[-r]\stackrel{f}{\rightarrow} fL$ induces an
isomorphism of graded $R$-modules. Thus ${\rm reg}(L[-r])=r+{\rm
reg}(L)={\rm reg}(I)$. By the Auslander-Buchsbaum
formula, one has the equality ${\rm
pd}(R/I^\vee)=r+{\rm pd}(R/L^\vee)$. Therefore, using
Theorem~\ref{terai-formula}, we get
$$
{\rm reg}(R/I)={\rm
reg}(R/L)+r\stackrel{\small\ref{terai-formula}}=({\rm pd}(R/L^\vee)-1)+r={\rm 
pd}(R/I^\vee)-1.
$$  
Thus ${\rm reg}(R/I)={\rm pd}(R/I^\vee)-1$, as required. 
\end{proof}

Next we show some basic properties of regularity. 
The first such property is
that regularity
behaves well when working with the edge ideal of a graph with multiple
disjoint components or with isolated vertices, as can be seen by the
following proposition. 

\begin{proposition} \cite[Lemma~7]{woodroofe-matchings} \label{additivity-reg} 
Let $R_1=K[\mathbf{x}]$ and 
$R_2=K[\mathbf{y}]$ be two polynomial rings over a field $K$ and let 
$R=K[\mathbf{x},\mathbf{y}]$. If $I_1$ and $I_2$ are edge ideals of
$R_1$ and $R_2$ respectively, then  
$$
{\rm reg}\, R/(I_1R+I_2R)={\rm reg}(R_1/I_1) +{\rm reg}(R_2/I_2).
$$
\end{proposition}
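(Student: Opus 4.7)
The plan is to exploit the identification
$$R/(I_1R+I_2R)\;\cong\;(R_1/I_1)\otimes_K (R_2/I_2),$$
which is valid because the variable sets $\mathbf{x}$ and $\mathbf{y}$ are disjoint. Letting $\mathbb{F}_\bullet\to R_1/I_1$ and $\mathbb{G}_\bullet\to R_2/I_2$ be minimal graded free resolutions over $R_1$ and $R_2$ respectively, I would form the (total) tensor-product complex $\mathbb{F}_\bullet\otimes_K \mathbb{G}_\bullet$ of graded free $R$-modules and show that it is a \emph{minimal} graded free resolution of $R/(I_1R+I_2R)$ over $R$.

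For acyclicity, the key observation is that base change by $(-)\otimes_K R_2$ and $(-)\otimes_K R_1$ preserves exactness (since $K$ is a field), so that $\mathbb{F}_\bullet\otimes_K R_2$ and $\mathbb{G}_\bullet\otimes_K R_1$ are graded free $R$-resolutions of $R/I_1R$ and $R/I_2R$ respectively. Tensoring the first of these over $R$ with $R/I_2R$ yields the complex $\mathbb{F}_\bullet\otimes_K (R_2/I_2)$, whose homology, again by $K$-flatness of $R_2/I_2$, is $(R_1/I_1)\otimes_K(R_2/I_2)$ in homological degree $0$ and zero otherwise. Hence $\operatorname{Tor}^R_i(R/I_1R,R/I_2R)=0$ for $i\geq 1$, and a standard iterated-cone (or spectral sequence) argument identifies $\mathbb{F}_\bullet\otimes_K \mathbb{G}_\bullet$ as a resolution of the tensor product. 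Minimality is immediate, since the differentials of $\mathbb{F}_\bullet$ and $\mathbb{G}_\bullet$ have entries in $(\mathbf{x})$ and $(\mathbf{y})$ respectively, both contained in the irrelevant maximal ideal of $R$.

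From this minimal resolution one obtains the K\"unneth-type formula for graded Betti numbers,
$$b^R_{i,j}\bigl(R/(I_1R+I_2R)\bigr)=\sum_{\substack{i_1+i_2=i\\ j_1+j_2=j}} b^{R_1}_{i_1,j_1}(R_1/I_1)\,b^{R_2}_{i_2,j_2}(R_2/I_2).$$
Writing $r_k=\operatorname{reg}(R_k/I_k)$, every nonzero summand contributes $j-i=(j_1-i_1)+(j_2-i_2)\leq r_1+r_2$, and equality is attained by choosing, for each $k$, a pair $(i_k,j_k)$ with $b^{R_k}_{i_k,j_k}\neq 0$ and $j_k-i_k=r_k$. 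Since the summands are nonnegative integers no cancellation occurs, so the maximum is realized and $\operatorname{reg}(R/(I_1R+I_2R))=r_1+r_2$. The only real subtlety in the argument is the acyclicity and minimality of the tensor-product complex; the regularity computation itself is pure bookkeeping on bidegrees.
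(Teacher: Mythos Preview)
Your argument is correct. The tensor-product resolution is the classical route to this additivity formula, and your justification of acyclicity via $K$-flatness (equivalently, the K\"unneth theorem over a field) and of minimality via the entries lying in $(\mathbf{x})+(\mathbf{y})$ is sound. In fact your proof nowhere uses that $I_1$ and $I_2$ are edge ideals; it applies verbatim to arbitrary graded ideals in disjoint variable sets.

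The paper takes a genuinely different route, tailored to square-free monomial ideals. It invokes Alexander duality to rewrite $(I_1+I_2)^\vee=I_1^\vee\cap I_2^\vee$, then applies Terai's formula ${\rm reg}\,R/I={\rm pd}\,R/I^\vee-1$ together with Auslander--Buchsbaum to convert the regularity equality into the depth identity
\[
{\rm depth}\bigl(R/(I_1^\vee\cap I_2^\vee)\bigr)={\rm depth}\bigl(R/(I_1^\vee+I_2^\vee)\bigr)+1,
\]
which it then reads off from the depth lemma applied to the Mayer--Vietoris sequence $0\to R/(I_1^\vee\cap I_2^\vee)\to R/I_1^\vee\oplus R/I_2^\vee\to R/(I_1^\vee+I_2^\vee)\to 0$. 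Your approach is more elementary and more general; the paper's approach stays within the Alexander-duality framework that governs the rest of the section and showcases Terai's formula in action, at the cost of only applying to square-free monomial ideals.
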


\begin{proof} By abuse of notation, we will write $I_i$ in place of
  $I_iR$ for $i=1,2$ when it is clear from context that we are using
  the generators of $I_i$ but extending to an
  ideal of the larger ring. Let $\mathbf{x}=\{x_1,\ldots,x_n\}$ and
$\mathbf{y}=\{y_1,\ldots,y_m\}$ be two disjoint sets of variables. 
Notice that $(I_1+I_2)^\vee=I_1^\vee I_2^\vee=I_1^\vee\cap I_2^\vee$
where $I_i^\vee$ is the Alexander dual of $I_i$ (see
Definition~\ref{alex-dual}). 
Hence by Theorem~\ref{terai-formula} and using the Auslander-Buchsbaum
formula, we get
\begin{eqnarray*}
{\rm reg}(R/(I_1+I_2))&=&n+m-{\rm
depth}(R/(I_1^\vee\cap I_2^\vee))-1,\\
{\rm reg}(R_1/I_1) +{\rm reg}(R_2/I_2)&=&n-{\rm depth}(R_1/I_1^\vee)-1+m-{\rm
depth}(R_2/I_2^\vee)-1
\end{eqnarray*}
Therefore we need only show the equality
$$
{\rm depth}(R/(I_1^\vee\cap I_2^\vee))={\rm depth}(R_1/I_1^\vee)+
{\rm depth}(R_2/I_2^\vee)+1.
$$
Since ${\rm depth}(R/(I_1^\vee+I_2^\vee))={\rm depth}(R_1/I_1^\vee)+
{\rm depth}(R_2/I_2^\vee)$, the
proof reduces to showing the equality
\begin{equation}\label{sept23-10-1}
{\rm depth}(R/(I_1^\vee\cap I_2^\vee))={\rm
depth}(R/(I_1^\vee+I_2^\vee))+1.
\end{equation}
We may assume that ${\rm depth}(R/I_1^\vee)\geq {\rm depth}(R/I_2^\vee)$. 
There is an exact sequence of graded $R$-modules: 
\begin{equation}\label{sept23-10}
0\longrightarrow R/(I_1^\vee\cap I_2^\vee) 
\stackrel{\varphi}{\longrightarrow}
R/I_1^\vee\oplus R/I_2^\vee\stackrel{\phi}{\longrightarrow}
R/(I_1^\vee+I_2^\vee)\longrightarrow 0,
\end{equation}
where $\varphi(\overline{r})=(\overline{r},-\overline{r})$ and 
$\phi(\overline{r}_1,\overline{r}_2)=\overline{r_1+r_2}$. From the
inequality
\begin{eqnarray*}
{\rm depth}(R/I_1^\vee\oplus R/I_2^\vee)&=&\max\{{\rm
depth}(R/I_i^\vee)\}_{i=1}^{2}={\rm
depth}(R/I_1^\vee)={\rm depth}(R_1/I_1^\vee)+m\\
&>&{\rm depth}(R_1/I_1^\vee)+{\rm
depth}(R_2/I_2^\vee)=
{\rm depth}(R/(I_1^\vee+I_2^\vee))
\end{eqnarray*}
and applying the depth lemma (see \cite[Proposition 1.2.9]{BHer} for
example) to Eq.~(\ref{sept23-10}), we obtain 
Eq.~(\ref{sept23-10-1}).
\end{proof}

Another useful property of regularity is that one can delete isolated
vertices of a graph without changing the regularity of the edge ideal. 
The following lemma 
shows that this can be done without significant changes to the
projective dimension as well. 
\begin{lemma}\label{addvariables}
Let $R=K[x_1, \ldots ,x_n]$ and $I$ be an ideal of $R$. If $I \subset
(x_1, \ldots , x_{n-1})$, and $R'=R/(x_n) \cong K[x_1, \ldots ,
x_{n-1}]$, then ${\rm reg}(R/I)={\rm reg}(R'/I)$ and ${\rm
pd}_R(R/I)={\rm pd}_{R'}(R'/I)$. Similarly, if $x_n \in I$ and
$I'=I/(x_n)$, then  ${\rm reg}(R/I)={\rm reg}(R'/I')$ and ${\rm
pd}_R(R/I)={\rm pd}_{R'}(R'/I')+1$.      
\end{lemma}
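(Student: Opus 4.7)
The plan is to reduce both statements to an explicit comparison of graded Betti numbers of $R/I$ over $R$ with those of the corresponding module over $R'=K[x_1,\dots,x_{n-1}]$, since regularity and projective dimension are read off directly from the Betti diagram.

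For the first assertion, I interpret the hypothesis $I\subset(x_1,\dots,x_{n-1})$ as meaning that $I$ is the extension to $R$ of an ideal of $R'$ (equivalently, the minimal generators of $I$ may be chosen in $R'$), so that $R=R'[x_n]$ is a polynomial extension in one variable. The plan is to take a minimal graded free resolution $\mathbb{F}_{\bullet}\to R'/I$ over $R'$ and tensor it with $R$ over $R'$. Since $R$ is free (hence flat) over $R'$, the resulting complex of graded free $R$-modules remains exact with augmentation $R/I$; the ranks and internal shifts are preserved, and the boundary matrix entries remain in the graded maximal ideal of $R$, so minimality is preserved. Reading off the Betti diagram yields $\beta_{i,j}^R(R/I)=\beta_{i,j}^{R'}(R'/I)$ for all $i,j$, and both equalities follow at once.

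For the second assertion, first normalize: replacing each generator of $I$ by its reduction modulo $x_n$ changes it only by an element of $(x_n)\subset I$, so one may write $I=(x_n)+J$ with $J\subset R'$, and then $R/I\cong R'/I'$ as graded rings, where $I'=JR'$. View $M:=R/I$ as an $R$-module (on which $x_n$ acts as zero) and invoke the standard change-of-rings relation
\begin{equation*}
\beta_{i,j}^{R}(M)\;=\;\beta_{i,j}^{R'}(M)\;+\;\beta_{i-1,\,j-1}^{R'}(M).
\end{equation*}
This is obtained either from the change-of-rings spectral sequence for $R'\hookrightarrow R$, or concretely by combining a minimal graded free $R'$-resolution of $M$ with the Koszul resolution $0\to R(-1)\xrightarrow{\,x_n\,}R\to R'\to 0$ of $R'$ over $R$ via a mapping cone. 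Since the shift $(i,j)\mapsto(i+1,j+1)$ preserves $j-i$, the regularity is unchanged; and since the top $R'$-Betti number contributes (without cancellation) in homological position ${\rm pd}_{R'}(R'/I')+1$ over $R$, the projective dimension rises by exactly one.

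The main obstacle is the Betti-shift formula in the second case. Intuitively, although $M$ is already an $R'$-module, its minimal $R$-resolution must also resolve the annihilator $(x_n)$, and the two-term Koszul resolution of $R'$ over $R$ quantifies precisely this discrepancy. Once the formula is in hand, both conclusions fall out mechanically from inspection of the Betti table.
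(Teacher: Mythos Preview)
Your proof is correct and takes a different route from the paper's. For projective dimension, the paper invokes the Auslander--Buchsbaum formula, using ${\rm depth}(R/I)={\rm depth}(R'/I)+1$ in the first case and ${\rm depth}(R/I)={\rm depth}(R'/I')$ in the second. For regularity, the paper appeals to Proposition~\ref{additivity-reg} (additivity of regularity for edge ideals in disjoint variable sets), applied with the second summand equal to $(0)$ or $(x_n)$ in $K[x_n]$. You instead compare graded Betti tables directly: flat base change along $R'\hookrightarrow R=R'[x_n]$ for the first part, and the change-of-rings identity $\beta_{i,j}^{R}(M)=\beta_{i,j}^{R'}(M)+\beta_{i-1,j-1}^{R'}(M)$ for the second. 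Your argument is more self-contained and works for arbitrary graded ideals (under your stated reading of the hypothesis), whereas the paper's regularity step passes through Proposition~\ref{additivity-reg}, whose proof uses Alexander duality and Terai's formula and is stated only for square-free monomial ideals. Your explicit remark that ``$I\subset(x_1,\dots,x_{n-1})$'' should be read as ``$I$ is extended from $R'$'' is also well placed; without it the first assertion can fail for non-monomial ideals such as $(x_1(1+x_n))$.
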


\begin{proof} The first result for projective dimension follows from
the Auslander-Buchsbaum formula since ${\rm depth}(R/I)={\rm
depth}(R'/I)+1$ and ${\rm depth}(R)={\rm depth}(R')+1$. Since ${\rm
depth}(R/I)={\rm depth}(R'/I')$ the second result for projective
dimension holds as well. The results for regularity follow from
Proposition~\ref{additivity-reg} by noting that the regularity of a
polynomial ring $K[x]$ is $0$, as is the regularity of the field $K=K[x]/(x)$.      
\end{proof}

While adding variables to the ring will preserve the regularity,
other changes to the base ring, such as changing the characteristic,
will affect this invariant. The following   
example shows that, even for graphs, a purely combinatorial description
of the regularity might not be possible. Results regarding the role the
characteristic of the field plays in the resolution of the ideal
appear in \cite{kia-manoj, katzman1}. 

\begin{example}\label{Gversusg-v}\rm Consider the edge ideal $I\subset
R=K[x_1,\ldots,x_{10}]$ generated by the monomials  
\[
\begin{array}{lllllllll}
x_1x_3,&x_1x_4,&x_1x_7,&x_1x_{10},&x_1x_{11},&x_2x_4,&x_2x_5,&
x_2x_8,&x_2x_{10},\\ x_2x_{11},& x_3x_5,&x_3x_6,&x_3x_8,&x_3x_{11},&x_4x_6,
&x_4x_9,&x_4x_{11},&x_5x_7,\\ x_5x_9,&x_5x_{11},&
x_6x_8,&x_6x_9,&x_7x_9,&x_7x_{10},&x_8x_{10}.& & 
\end{array}
\]

\noindent Using {\em Macaulay\/}$2$ \cite{mac2} we get that 
${\rm reg}(R/I)=3$ if ${\rm char}(K)=2$, and ${\rm reg}(R/I)=2$ if
${\rm char}(K)=3$.
\end{example}

As mentioned in Theorem~\ref{scm-clutters}(b), chordal graphs provide a key
example of a class of clutters whose edge ideals are sequentially
Cohen-Macaulay. Much work has been done toward finding hypergraph
generalizations of 
chordal graphs, typically by looking at cycles of edges or at tree
hypergraphs \cite{emtander,Ha-VanTuyl,Voloshin}.  The papers
\cite{Ha-VanTuyl,Voloshin,woodroofe-chordal} contribute to the algebraic
approach that is largely motivated by finding hypergraph
generalizations that have edge ideals with linear resolutions. 

It is
useful to consider the homogeneous components of the ideals when using
linear resolutions.
Let $(I_d)$ denote the ideal generated by all degree $d$ elements of
a homogeneous ideal $I$. Then $I$ is called {\it componentwise linear} if
$(I_d)$ has a linear
resolution for all $d$. If $I$ is the edge ideal of a clutter,
we write $I_{[d]}$ for the ideal generated by all the
squarefree monomials of degree $d$ in $I$.

\begin{theorem}\label{several-results}
Let $K$ be a
field and $\mathcal{C}$ be a clutter. Then
\begin{enumerate}
\item[(a)] \cite{ER} $R/I(\mathcal{C})$ is Cohen-Macaulay 
if and only if $I_c(\mathcal{C})$ has a linear resolution. 
\item[(b)] \cite{HeHi1} $R/I(\mathcal{C})$ is sequentially
Cohen-Macaulay if and only if $I_c(\mathcal{C})$ is componentwise linear.
\item[(c)] \cite{HeHi1} $I(\mathcal{C})$ is componentwise linear if and
only if $I(\mathcal{C})_{[d]}$ has a linear resolution for $d \geq 0$.
\item[(d)] \cite{Fro4} If $G$ is a graph, then $I(G)$ has a linear
resolution if and only if $G^c$ is chordal. 
\item[(e)] \cite{terai} If $G$ is a graph, then ${\rm reg}(R/I(G))=1$ if and 
only if $I_c(G)$ is Cohen-Macaulay.
\end{enumerate}
\end{theorem}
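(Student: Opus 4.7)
The common backbone for parts (a), (b), (c), and (e) is Alexander duality, combined with Hochster's formula, which expresses the graded Betti numbers of a squarefree monomial ideal in terms of reduced simplicial homology of restrictions of the corresponding Stanley--Reisner complex. The key identity is
\[
\beta_{i,j}(R/I_\Delta) \,=\, \sum_{W\subset V,\,|W|=j}\dim_K\tilde H_{j-i-1}(\Delta|_W;K),
\]
together with combinatorial Alexander duality $\tilde H_{k}(\Delta|_W;K)\cong \tilde H^{|W|-k-2}((\Delta|_W)^\vee;K)$. I would state these at the outset and then derive each part in turn.

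For (a), I would prove the Eagon--Reiner direction first. The ring $R/I(\mathcal{C})$ is Cohen--Macaulay exactly when $\Delta_{\mathcal{C}}$ is Cohen--Macaulay, i.e.\ by Reisner's criterion the relevant reduced homology of every link (equivalently of every restriction $\Delta_{\mathcal{C}}|_W$ below its top dimension) vanishes. Translating these vanishings through Alexander duality and Hochster's formula for $I_c(\mathcal{C})=I(\Delta_{\mathcal{C}}^\vee)$, one gets $\beta_{i,j}(I_c(\mathcal{C}))=0$ whenever $j-i$ exceeds the common degree of the minimal generators of $I_c(\mathcal{C})$; this is exactly the statement that $I_c(\mathcal{C})$ has a linear resolution. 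For (b), the idea is to use that sequentially Cohen--Macaulayness of $R/I(\mathcal{C})$ is equivalent to all pure skeleta of $\Delta_{\mathcal{C}}$ being Cohen--Macaulay (the Duval criterion), and then apply (a) skeletonwise: the degree-$d$ part $I_c(\mathcal{C})_{\langle d\rangle}$ is (up to polarization/squarefreeness adjustments) the Stanley--Reisner ideal of a pure skeleton of $\Delta_{\mathcal{C}}^\vee$, so componentwise linearity of $I_c(\mathcal{C})$ follows. For (c), since $I(\mathcal{C})$ is squarefree, the homogeneous component $(I(\mathcal{C})_d)$ is generated in degree $d$ together with monomials of the form $x_i m$ for $m\in I(\mathcal{C})$ of smaller degree; a truncation argument plus Alexander duality identifies the generators in each degree with $I(\mathcal{C})_{[d]}$ up to polarization, so that the linearity condition componentwise passes to the squarefree components.

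For (e), I would invoke Theorem~\ref{terai-formula} directly: since $|e|\ge 2$ for every edge of the graph $G$, the height of $I_c(G)$ is $2$, and Terai's formula yields
\[
\operatorname{reg}(R/I(G))+1 \,=\, \operatorname{pd}(R/I_c(G)).
\]
By the Auslander--Buchsbaum formula, $\operatorname{pd}(R/I_c(G))=2$ is equivalent to $\operatorname{depth}(R/I_c(G))=n-2=\dim(R/I_c(G))$, which is exactly the Cohen--Macaulay condition on $I_c(G)$. Hence $\operatorname{reg}(R/I(G))=1$ iff $I_c(G)$ is Cohen--Macaulay of height $2$, as claimed.

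Part (d), Fröberg's theorem, is the genuine obstacle and needs a separate argument. For the ``only if'' direction, I would use Hochster's formula to show that a chordless cycle $C_k$ ($k\ge 4$) in $G^c$ produces a nontrivial $\tilde H_1$ in some restriction $\Delta_G|_W$, hence a Betti number $\beta_{i,j}(I(G))$ with $j-i>2$, violating linearity. For the converse (assuming $G^c$ chordal), I would produce a linear resolution by induction along a perfect elimination ordering of $G^c$: at each step one splits off a simplicial vertex, reducing to a smaller chordal complement, and uses a mapping cone together with the inductive hypothesis to assemble a $2$-linear resolution; the key combinatorial input is that in chordal graphs every minimal vertex separator is a clique, which controls the $\mathrm{Tor}$-modules appearing at each stage. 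Packaging this induction cleanly is the main effort, since it is the only part of the theorem that is not a formal consequence of the Alexander duality machinery set up for the other four parts.
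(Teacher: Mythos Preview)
The paper does not actually prove this theorem. It is a survey, and Theorem~\ref{several-results} is stated as a list of known results, each part carrying a citation to the original source (\cite{ER}, \cite{HeHi1}, \cite{Fro4}, \cite{terai}); no proof environment follows the statement. So there is no ``paper's own proof'' to compare your proposal against.

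That said, your sketches are broadly in line with how these results are proved in the cited literature. Part (e) is the one place where the paper does essentially supply the argument you give: immediately after Theorem~\ref{terai-formula} the authors remark that when $|e|\ge 2$ for all edges, Terai's formula ${\rm reg}\,R/I(\mathcal{C})={\rm pd}\,R/I_c(\mathcal{C})-1$ yields ${\rm reg}(R/I(\mathcal{C}))=1$ iff $I_c(\mathcal{C})$ is Cohen--Macaulay of height $2$, which is exactly your derivation.

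A minor caution on (c): your reduction from $(I_d)$ to $I_{[d]}$ via ``polarization'' is not the right word, since $I$ is already squarefree; the actual mechanism in \cite{HeHi1} compares $(I_d)$ with $\mathfrak{m}\cdot I_{[d-1]}+I_{[d]}$ and uses that multiplication by $\mathfrak{m}$ preserves linearity of the resolution. Your sketch of (b) is also quite loose: the identification of $(I_c(\mathcal{C}))_{\langle d\rangle}$ with the Alexander dual of a pure skeleton is the substantive step and deserves a precise statement rather than a parenthetical. Parts (a) and (d) are accurately outlined.
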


A graph whose complement is chordal is called {\it co-chordal}. A
consequence of this result and Theorem~\ref{terai-formula} is that an
edge ideal $I(\mathcal{C})$ has regularity $2$ if and only 
if $\Delta_\mathcal{C}$ is the independence complex of a co-chordal
graph. In this case the complex $\Delta_\mathcal{C}$ turns out to be a
quasi-forest in the sense of Zheng \cite{zheng}. In
\cite[Theorem~9.2.12]{herzog-hibi-book} it is shown that a complex
$\Delta$ is a quasi-forest if and only if $\Delta$ is the clique
complex of a chordal graph.

Information about the regularity of a clutter can also be found by
examining smaller, 
closely related clutters. Let $S$ be a set of vertices of a clutter $\mathcal{C}$.  The {\it induced
subclutter\/} on $S$, denoted by $\mathcal{C}[S]$, is the maximal
subclutter of $\mathcal{C}$ with vertex set $S$. Thus the vertex set
of $\mathcal{C}[S]$ is $S$ and the edges of $\mathcal{C}[S]$ are
exactly the edges of $\mathcal{C}$ contained in $S$. Notice that
$\mathcal{C}[S]$ may have isolated vertices, i.e., vertices that do
not belong to any edge of $\mathcal{C}[S]$. If $\mathcal{C}$ is a
discrete clutter, i.e., 
all the vertices of $\mathcal{C}$ are
isolated, we set $I(\mathcal{C})=0$ and $\alpha_0(\mathcal{C})=0$. A
clutter of the form $\mathcal{C}[S]$ for some $S\subset
V(\mathcal{C})$ is called an {\it induced subclutter} of
$\mathcal{C}$. 

\begin{proposition}\label{reg-monotonia} If $\mathcal{D}$ is an induced subclutter of
$\mathcal{C}$, then ${\rm reg}(R/I(\mathcal{D}))\leq{\rm
reg}(R/I(\mathcal{C}))$.
\end{proposition}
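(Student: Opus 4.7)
The plan is to use Hochster's formula for the graded Betti numbers of a Stanley--Reisner ring, which reduces the inequality to a comparison of reduced homologies of induced subcomplexes of the independence complex. Recall that for any simplicial complex $\Delta$ on vertex set $V$,
$$
\beta_{i,j}(R/I_\Delta)\;=\;\sum_{W\subseteq V,\ |W|=j}\dim_K \widetilde{H}_{j-i-1}(\Delta_W;K),
$$
where $\Delta_W=\{F\in\Delta\,:\,F\subseteq W\}$ is the restriction of $\Delta$ to $W$. Taking the maximum of $j-i$ over non-vanishing Betti numbers yields, whenever $I_\Delta\neq 0$,
$$
\operatorname{reg}(R/I_\Delta)\;=\;1+\max\bigl\{k\;:\;\widetilde{H}_k(\Delta_W;K)\neq 0\ \text{ for some }\ W\subseteq V\bigr\}.
$$

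The key combinatorial step is the identification $\Delta_\mathcal{D}=(\Delta_\mathcal{C})_S$. Indeed, a subset $F\subseteq S$ is independent in $\mathcal{D}$ exactly when no edge of $\mathcal{D}$ is contained in $F$; since the edges of $\mathcal{D}$ are precisely the edges of $\mathcal{C}$ contained in $S$ and $F\subseteq S$, this is equivalent to $F$ containing no edge of $\mathcal{C}$, i.e., to $F\in\Delta_\mathcal{C}$. Consequently, for every $W\subseteq S$ one has the equality of restricted complexes $(\Delta_\mathcal{D})_W=(\Delta_\mathcal{C})_W$.

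Next I would apply Hochster's formula on both sides. Writing $R'=K[S]$ and using Lemma~\ref{addvariables} to identify $\operatorname{reg}(R/I(\mathcal{D}))=\operatorname{reg}(R'/I(\mathcal{D}))$, the formula gives
$$
\operatorname{reg}(R'/I(\mathcal{D}))\;=\;1+\max_{W\subseteq S}\bigl\{k\;:\;\widetilde{H}_k((\Delta_\mathcal{D})_W;K)\neq 0\bigr\},
$$
and the analogous formula for $\operatorname{reg}(R/I(\mathcal{C}))$ with $W$ ranging over subsets of $V(\mathcal{C})$. Since $S\subseteq V(\mathcal{C})$ and the restricted complexes agree on subsets of $S$, every pair $(W,k)$ contributing to the $\mathcal{D}$-maximum also contributes to the $\mathcal{C}$-maximum, which gives the desired inequality. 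The degenerate case $I(\mathcal{D})=0$ (i.e., $\mathcal{D}$ discrete) is trivial, since then $\operatorname{reg}(R'/I(\mathcal{D}))=0$ is at most any regularity.

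The only real obstacle is invoking Hochster's formula, which is the standard bridge between the algebra of a squarefree monomial ideal and the topology of the associated simplicial complex. A more elementary alternative would induct on $|V(\mathcal{C})\setminus S|$ via the short exact sequence
$$0\to (R/(I(\mathcal{C}):v))(-1)\xrightarrow{\,\cdot v\,}R/I(\mathcal{C})\to R/(I(\mathcal{C})+(v))\to 0$$
to peel off one vertex at a time, but this approach still requires the auxiliary bound $\operatorname{reg}(R/(I(\mathcal{C}):v))\leq\operatorname{reg}(R/I(\mathcal{C}))$, which is itself most cleanly deduced from Hochster's formula.
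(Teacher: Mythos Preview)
Your proof is correct and takes a genuinely different route from the paper's. The paper argues via Alexander duality and Terai's formula $\operatorname{reg}(R/I(\mathcal{C}))=\operatorname{pd}(R/I_c(\mathcal{C}))-1$: it localizes the ideal of covers at the prime $\mathfrak{p}=(S)$, observes that $I_c(\mathcal{C})_\mathfrak{p}=I_c(\mathcal{D})_\mathfrak{p}$ (because intersecting with $(e)$ for $e\not\subseteq S$ becomes vacuous after inverting a variable of $e$), and then uses that projective dimension can only drop under localization. Your argument instead stays on the primal side and reads the inequality straight off Hochster's formula, via the identification $(\Delta_\mathcal{D})_W=(\Delta_\mathcal{C})_W$ for $W\subseteq S$. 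Both proofs ultimately exploit the same combinatorial fact---that induced subclutters correspond to restrictions of the independence complex---but package it differently: the paper's version emphasizes the duality between $I(\mathcal{C})$ and $I_c(\mathcal{C})$ that pervades the section, while yours is more self-contained and avoids invoking Terai's theorem as a black box. Your approach also makes transparent that the statement holds over any field and for any Stanley--Reisner ideal, not just edge ideals.
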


\begin{proof} There is $S\subset V(\mathcal{C})$ such that
$\mathcal{D}=\mathcal{C}[S]$. Let $\mathfrak{p}$ be the prime ideal of
$R$ generated by the variables in $S$. By duality, we have 
$$
I_c(\mathcal{C})=\bigcap_{e\in E(\mathcal{C})}\hspace{-2mm}(e)\ \Longrightarrow\ 
I_c(\mathcal{C})_\mathfrak{p}=\bigcap_{e\in
E(\mathcal{C})}\hspace{-2mm}(e)_\mathfrak{p}=
\bigcap_{e\in
E(\mathcal{D})}\hspace{-2mm}(e)_\mathfrak{p}=I_c(\mathcal{D})_\mathfrak{p}.
$$
Therefore, using Theorem~\ref{terai-formula} and Lemma~\ref{addvariables}, we get
\begin{eqnarray*}
{\rm reg}(R/I(\mathcal{C}))&=&{\rm pd}(R/I_c(\mathcal{C}))-1\\
&\geq &
{\rm pd}(R_\mathfrak{p}/I_c(\mathcal{C})_\mathfrak{p})-1=
{\rm pd}(R_\mathfrak{p}/I_c(\mathcal{D})_\mathfrak{p})-1\\
&=&
{\rm pd}(R'/I_c(\mathcal{D}))-1={\rm pd}(R/I_c(\mathcal{D}))-1={\rm
reg}(R/I(\mathcal{D})),
\end{eqnarray*}
where $R'$ is the polynomial ring $K[S]$. Thus, ${\rm
reg}(R/I(\mathcal{C}))\geq {\rm reg}(R/I(\mathcal{D}))$. 
\end{proof}

Several combinatorially defined invariants that bound the regularity
or other invariants of a clutter are given in terms of subsets of
the edge set of the clutter.  
An {\it induced matching\/} in a clutter $\mathcal{C}$ is a set of
pairwise disjoint edges $f_1,\ldots,f_r$ such that the only edges of 
$\mathcal{C}$ contained in $\cup_{i=1}^rf_i$ are $f_1,\ldots,f_r$. We
let ${\rm im}(\mathcal{C})$ be
the number of edges in the largest induced matching.

The next result was shown in \cite[Theorem~6.5]{Ha-VanTuyl} for the family
of uniform properly-connected hypergraphs. 

\begin{corollary}\label{lower-bound-reg} Let $\mathcal{C}$ be a
clutter and let $f_1,\ldots,f_r$ be an induced matching of 
$\mathcal{C}$ with $d_i=|f_i|$ for $i=1,\ldots,r$.  Then
\begin{enumerate}
\item[(a)] $(\sum_{i=1}^{r}d_i)-r\leq {\rm reg}(R/I(\mathcal{C}))$.
\item[(b)] \cite[Lemma~2.2]{katzman1} ${\rm im}(G)\leq {\rm
reg}(R/I(G))$ for any graph $G$. 
\end{enumerate}
\end{corollary}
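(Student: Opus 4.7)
The plan is to reduce to the induced subclutter $\mathcal{D}=\mathcal{C}[S]$ on $S=\bigcup_{i=1}^{r}f_i$, and then use the fact that on this vertex set the edge ideal becomes a sum of principal ideals in disjoint sets of variables. Specifically, because $f_1,\ldots,f_r$ is an \emph{induced} matching, the only edges of $\mathcal{C}$ contained in $S$ are the $f_i$ themselves, so
\[
I(\mathcal{D})=(x_{f_1})+(x_{f_2})+\cdots+(x_{f_r}),
\]
and the $r$ monomials $x_{f_i}$ involve pairwise disjoint variables (since the $f_i$ are pairwise disjoint).

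First I would apply Proposition~\ref{reg-monotonia} to get the monotonicity inequality
\[
\operatorname{reg}(R/I(\mathcal{D}))\le\operatorname{reg}(R/I(\mathcal{C})).
\]
Next, by Lemma~\ref{addvariables} I may delete the variables of $X\setminus S$ without changing the regularity, so it suffices to compute the regularity of $R'/I(\mathcal{D})$, where $R'=K[S]$. Then I would apply Proposition~\ref{additivity-reg} inductively (once for each $f_i$, splitting off one principal ideal at a time, which is legitimate precisely because the variable sets are disjoint) to obtain
\[
\operatorname{reg}(R'/I(\mathcal{D}))=\sum_{i=1}^{r}\operatorname{reg}\bigl(K[f_i]/(x_{f_i})\bigr).
\]

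Finally, each summand is the regularity of a cyclic module killed by a single monomial of degree $d_i$: the minimal free resolution of $K[f_i]/(x_{f_i})$ is
\[
0\longrightarrow K[f_i](-d_i)\longrightarrow K[f_i]\longrightarrow K[f_i]/(x_{f_i})\longrightarrow 0,
\]
so its regularity is $d_i-1$. Summing gives $\operatorname{reg}(R'/I(\mathcal{D}))=\sum_{i=1}^{r}(d_i-1)=\bigl(\sum_{i=1}^{r}d_i\bigr)-r$, and combined with the monotonicity step this yields (a). Part (b) is then the immediate specialization to a maximum induced matching of a graph, where every $d_i=2$ and $r=\operatorname{im}(G)$, giving $\sum d_i-r=r=\operatorname{im}(G)$.

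The argument is essentially an assembly of Proposition~\ref{reg-monotonia}, Proposition~\ref{additivity-reg}, and Lemma~\ref{addvariables}, so no single step should present real difficulty; the one point requiring care is verifying that the induced matching hypothesis is exactly what is needed to conclude $I(\mathcal{D})$ has no edges other than the $f_i$, which is what lets Proposition~\ref{additivity-reg} apply cleanly with disjoint variable sets.
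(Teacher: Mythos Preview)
Your argument is correct and shares the paper's overall strategy: pass to the induced subclutter $\mathcal{D}=\mathcal{C}[\bigcup_i f_i]$, note that $I(\mathcal{D})=(x_{f_1},\ldots,x_{f_r})$, and invoke Proposition~\ref{reg-monotonia}. The only difference is in how $\operatorname{reg}(R/I(\mathcal{D}))$ is computed: the paper observes that $I(\mathcal{D})$ is a complete intersection, writes down the Hilbert series $\prod_i(1+t+\cdots+t^{d_i-1})/(1-t)^{n-r}$, and reads off the regularity as the degree of the $h$-polynomial, whereas you split the sum via Proposition~\ref{additivity-reg} and compute each $\operatorname{reg}(K[f_i]/(x_{f_i}))=d_i-1$ from its one-step free resolution. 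Both routes are short; yours has the mild advantage of staying entirely within tools already proved in the paper, while the paper's version appeals to the standard (but not restated there) identification of regularity with the degree of the $h$-polynomial for Cohen--Macaulay rings.
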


\begin{proof} Let $\mathcal{D}=\mathcal{C}[\cup_{i=1}^rf_i]$. Notice
that $I(\mathcal{D})=(x_{f_1},\ldots,x_{f_r})$. Thus $I(\mathcal{D})$
is a complete intersection and the regularity of $R/I(\mathcal{D})$ is
the degree of its $h$-polynomial. The Hilbert series of $R/I(\mathcal{D})$
is given by 
$$
HS_{\mathcal{D}}(t)=\frac{\prod_{i=1}^r(1+t+\cdots+t^{d_i-1})}{(1-t)^{n-r}}.
$$
Thus, the degree of the $h$-polynomial equals
$(\sum_{i=1}^{r}d_i)-r$. Therefore, part (a) follows from 
Proposition~\ref{reg-monotonia}. Part (b) follows from part (a).
\end{proof}

\begin{corollary} If $\mathcal{C}$ is a clutter and $R/I_c(\mathcal{C})$
is Cohen-Macaulay, then ${\rm im}(\mathcal{C})=1$.
\end{corollary}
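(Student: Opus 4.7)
The plan is to combine Theorem~\ref{several-results}(a) (Eagon--Reiner) with the induced matching lower bound of Corollary~\ref{lower-bound-reg}(a). The Eagon--Reiner theorem relates the Cohen--Macaulayness of one ideal to the linearity of the resolution of its dual; here we have a hypothesis on the dual ideal, and we want conclusions about the original clutter, so we will apply the theorem to $\mathcal{C}^\vee$ rather than to $\mathcal{C}$.

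First I would invoke the duality $(\mathcal{C}^\vee)^\vee = \mathcal{C}$, which gives $I_c(\mathcal{C}^\vee) = I(\mathcal{C})$ and hence $R/I_c(\mathcal{C}) = R/I(\mathcal{C}^\vee)$. By Theorem~\ref{several-results}(a) applied to the clutter $\mathcal{C}^\vee$, the hypothesis that $R/I(\mathcal{C}^\vee)$ is Cohen--Macaulay is equivalent to $I_c(\mathcal{C}^\vee) = I(\mathcal{C})$ having a linear resolution.

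Second, having a linear resolution forces all minimal generators of $I(\mathcal{C})$ to lie in a single degree $d$, so every edge $f \in E(\mathcal{C})$ has $|f| = d$; moreover ${\rm reg}\, I(\mathcal{C}) = d$, so ${\rm reg}\,R/I(\mathcal{C}) = d - 1$. Now suppose for contradiction that $f_1, \ldots, f_r$ is an induced matching with $r \geq 2$. Since $|f_i| = d$ for each $i$, Corollary~\ref{lower-bound-reg}(a) yields
$$r(d-1) \;=\; \sum_{i=1}^{r} |f_i| - r \;\leq\; {\rm reg}\,R/I(\mathcal{C}) \;=\; d - 1.$$
Under the standing assumption that $d \geq 2$ (edges of $\mathcal{C}$ have cardinality at least two), this forces $r \leq 1$, a contradiction. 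Hence ${\rm im}(\mathcal{C}) \leq 1$, and since $\mathcal{C}$ has at least one edge (otherwise $I(\mathcal{C}) = 0$ and ${\rm im}(\mathcal{C})$ is not defined) we conclude ${\rm im}(\mathcal{C}) = 1$.

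The only delicate point is the passage to $\mathcal{C}^\vee$ in the Eagon--Reiner step; once the linear resolution of $I(\mathcal{C})$ is in hand, the matching bound does all the remaining work, and no further combinatorial input is required.
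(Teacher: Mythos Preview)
Your proof is correct and follows essentially the same route as the paper: establish that $\mathcal{C}$ is $d$-uniform with ${\rm reg}\,R/I(\mathcal{C})=d-1$, then apply the induced-matching lower bound of Corollary~\ref{lower-bound-reg}(a) to obtain $r(d-1)\leq d-1$ and conclude $r=1$. The only cosmetic difference is that the paper cites Terai's formula (Theorem~\ref{terai-formula}) together with Auslander--Buchsbaum to compute the regularity, whereas you invoke Eagon--Reiner (Theorem~\ref{several-results}(a)) applied to $\mathcal{C}^\vee$ to get the linear resolution of $I(\mathcal{C})$ directly; these are equivalent ways of extracting the same numerical information, and you are slightly more explicit than the paper in flagging the need for $d\geq 2$.
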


\begin{proof} Let $r$ be the induced matching number of $\mathcal{C}$
and let $d$ be the cardinality of any edge of $\mathcal{C}$. Using
Theorem~\ref{terai-formula}  and Corollary~\ref{lower-bound-reg}, we
obtain $d-1\geq r(d-1)$. Thus $r=1$, as required.
\end{proof}

The following example shows that the inequality obtained in
Corollary~\ref{lower-bound-reg}(b) can be strict. 

\begin{example} Let $G$ be the complement of a cycle
${C}_6=\{x_1,\ldots,x_6\}$ of length six.
The edge ideal of $G$ is  
$$
I(G)=(x_1x_3,x_1x_5,x_1x_4,x_2x_6,x_2x_4,x_2x_5,x_3x_5,x_3x_6,x_4x_6).
$$
Using {\em Macaulay\/}$2$ \cite{mac2}, we get ${\rm reg}(R/I(G))=2$ and ${\rm im}(G)=1$.
\end{example}

\begin{lemma}{\cite[Corollary~20.19]{Eisen}}\label{reg-lemma} If
$0\rightarrow N\rightarrow M\rightarrow L\rightarrow 0$ is a  
short exact sequence of graded finitely generated $R$-modules, then
\begin{itemize}
\item[(a)] ${\rm reg}(N)\leq\max({\rm reg}(M),{\rm reg}(L)+1)$.
\item[(b)] ${\rm reg}(M)\leq\max({\rm reg}(N),{\rm reg}(L))$.
\item[(c)] ${\rm reg}(L)\leq\max({\rm reg}(N)-1,{\rm reg}(M))$.
\end{itemize}
\end{lemma}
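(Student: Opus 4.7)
The plan is to derive all three inequalities simultaneously from the long exact sequence of $\operatorname{Tor}$ applied to the given short exact sequence, together with the Tor-theoretic description of regularity.

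First I would replace the resolution-based definition of regularity used earlier in the paper by the equivalent intrinsic formula
$$
{\rm reg}(M) = \max\{j - i \,:\, {\rm Tor}_i^R(K, M)_j \neq 0\},
$$
where $K = R/\mathfrak{m}$; this is standard, since $\dim_K {\rm Tor}_i^R(K, M)_j = b_{ij}(M)$ for a minimal graded free resolution of $M$. This formulation is well suited to short exact sequences because $\operatorname{Tor}$ is a connected sequence of functors.

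Next, I would apply ${\rm Tor}_\bullet^R(K, -)$ to $0 \to N \to M \to L \to 0$. Since $\operatorname{Tor}$ respects the internal grading, for every integer $j$ one obtains a long exact sequence
$$
\cdots \to {\rm Tor}_{i+1}(K, L)_j \to {\rm Tor}_i(K, N)_j \to {\rm Tor}_i(K, M)_j \to {\rm Tor}_i(K, L)_j \to {\rm Tor}_{i-1}(K, N)_j \to \cdots.
$$
Each of (a), (b), (c) then reduces to a one-line diagram chase. For (a), suppose ${\rm Tor}_i(K, N)_j \neq 0$; by exactness the class either maps nontrivially into ${\rm Tor}_i(K, M)_j$, giving $j - i \leq {\rm reg}(M)$, or else lies in the image of ${\rm Tor}_{i+1}(K, L)_j$, giving $j - (i+1) \leq {\rm reg}(L)$, hence $j - i \leq {\rm reg}(L) + 1$; taking the maximum over such $(i,j)$ yields (a). Part (b) is immediate: a nonzero class in ${\rm Tor}_i(K, M)_j$ must come from ${\rm Tor}_i(K, N)_j$ or map nontrivially into ${\rm Tor}_i(K, L)_j$, and neither shifts homological degree. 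Part (c) is dual to (a): a nonzero class in ${\rm Tor}_i(K, L)_j$ either lifts to ${\rm Tor}_i(K, M)_j$ (giving $j - i \leq {\rm reg}(M)$) or has nonzero image in ${\rm Tor}_{i-1}(K, N)_j$ (giving $j - (i-1) \leq {\rm reg}(N)$, hence $j - i \leq {\rm reg}(N) - 1$).

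There is no substantive obstacle here; the argument is a straightforward invocation of the long exact sequence once the Tor characterization of regularity is in hand. The only point demanding care is book-keeping of the index shift in parts (a) and (c): the $\pm 1$ terms appear precisely because the connecting homomorphism in the long exact sequence changes the homological index $i$ by one while preserving the internal degree $j$, which translates into a shift by one in the quantity $j-i$ that defines the regularity.
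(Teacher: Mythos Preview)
Your argument is correct and is precisely the standard proof of this fact. Note, however, that the paper does not actually supply a proof of this lemma: it is stated with a citation to \cite[Corollary~20.19]{Eisen} and then used as a black box in the proof of Theorem~\ref{reg-of-families}. So there is nothing to compare against in the paper itself; your Tor long-exact-sequence argument is essentially the proof one finds in Eisenbud's text (where it is phrased, equivalently, in terms of local cohomology or graded Betti numbers).
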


\begin{definition}
If $x$ is a vertex of a graph $G$, then
its {\it neighbor set},  denoted by $N_G(x)$, is
the set of vertices of $G$ adjacent to $x$.   
\end{definition}

The following theorem gives a precise sense in which passing to
induced subgraphs can be used to bound the regularity. Recall that a
discrete graph is one in which all the vertices are isolated.

\begin{theorem}\label{reg-of-families}
Let $\mathcal{F}$ be a family of graphs containing any discrete graph
and let $\beta\colon\mathcal{F}\rightarrow\mathbb{N}$ be a function
satisfying that $\beta(G)=0$ for any discrete graph $G$, and such 
that given $G\in\mathcal{F}$, with $E(G)\neq\emptyset$, there is $x\in V(G)$
such that the following two conditions hold{\rm :} 
\begin{enumerate}
\item[(i)] $G\setminus\{x\}$ and $G\setminus(\{x\}\cup N_G(x))$ are in
$\mathcal{F}$.
\item[(ii)] $\beta(G\setminus(\{x\}\cup N_G(x)))<\beta(G)$ and
$\beta(G\setminus\{x\})\leq \beta(G)$.
\end{enumerate} 
Then ${\rm reg}(R/I(G))\leq \beta(G)$ for any $G\in\mathcal{F}$.
\end{theorem}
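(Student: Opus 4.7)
My plan is to proceed by strong induction on $|V(G)|$. The base case is when $G$ has no edges; then $G$ is discrete, $I(G)=0$, and ${\rm reg}(R/I(G))={\rm reg}(R)=0=\beta(G)$, matching the claim.

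For the inductive step, assume $E(G)\neq\emptyset$ and pick a vertex $x\in V(G)$ satisfying conditions (i) and (ii). I would exploit the standard short exact sequence
\[
0\longrightarrow (R/(I(G):x))[-1]\stackrel{\cdot x}{\longrightarrow} R/I(G)\longrightarrow R/(I(G)+(x))\longrightarrow 0.
\]
Splitting the generators of $I(G)$ according to whether they contain the variable $x$ gives $I(G)=(xy\,:\, y\in N_G(x))+I(G\setminus\{x\})$, whence
\[
(I(G):x)=(N_G(x))+I(G\setminus(\{x\}\cup N_G(x))),\qquad I(G)+(x)=I(G\setminus\{x\})+(x).
\]

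Next I would apply Lemma~\ref{addvariables} repeatedly to strip away the variables in $N_G(x)\cup\{x\}$ from the left-hand quotient and the variable $x$ from the right-hand quotient, without altering regularity. This identifies ${\rm reg}(R/(I(G):x))$ with ${\rm reg}(R''/I(G\setminus(\{x\}\cup N_G(x))))$ and ${\rm reg}(R/(I(G)+(x)))$ with ${\rm reg}(R'/I(G\setminus\{x\}))$, where $R'$ and $R''$ denote the polynomial subrings on the remaining variables. Condition (i) ensures both of these graphs lie in $\mathcal{F}$, so the inductive hypothesis applies, and condition (ii) then yields
\[
{\rm reg}(R/(I(G):x))\leq \beta(G)-1 \quad\text{and}\quad {\rm reg}(R/(I(G)+(x)))\leq \beta(G).
\]
Applying Lemma~\ref{reg-lemma}(b) to the short exact sequence will give
\[
{\rm reg}(R/I(G))\leq \max\bigl({\rm reg}(R/(I(G):x))+1,\; {\rm reg}(R/(I(G)+(x)))\bigr)\leq \beta(G),
\]
completing the induction.

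The only delicate point is computing the colon ideal $(I(G):x)$ correctly and performing the corresponding reduction to smaller polynomial rings via Lemma~\ref{addvariables}; the $+1$ shift produced by multiplication by the degree-one element $x$ is exactly what the strict inequality $\beta(G\setminus(\{x\}\cup N_G(x)))<\beta(G)$ in hypothesis (ii) is designed to absorb, while the weak inequality $\beta(G\setminus\{x\})\leq\beta(G)$ suffices for the quotient side. Everything else is routine homological bookkeeping, so I do not anticipate a serious obstacle.
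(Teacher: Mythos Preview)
Your proposal is correct and follows essentially the same approach as the paper: induction on the number of vertices, the short exact sequence $0\to (R/(I(G):x))[-1]\to R/I(G)\to R/(I(G),x)\to 0$, the identifications $(I(G):x)=(N_G(x),I(G_2))$ and $(I(G),x)=(x,I(G_1))$, reduction to smaller polynomial rings via Lemma~\ref{addvariables}, and the bound from Lemma~\ref{reg-lemma}. The paper's proof is organized identically, so there is nothing to add.
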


\begin{proof} The proof is by induction on the number of vertices. Let $G$ be a graph in
$\mathcal{F}$. If $G$ is a discrete graph, then $I(G)=(0)$ 
and ${\rm reg}(R)=\beta(G)=0$. Assume that $G$ has at least one
edge.  There is a vertex $x\in V(G)$ such that the induced subgraphs 
$G_1=G\setminus\{x\}$ and $G_2=G\setminus(\{x\}\cup N_G(x))$ satisfy
(i) and (ii). There is an exact sequence of graded $R$-modules 
$$
0\longrightarrow R/(I(G)\colon x)[-1]\stackrel{x}{\longrightarrow}
R/I(G)\longrightarrow 
R/(x,I(G))\longrightarrow 0.
$$
Notice that $(I(G)\colon x)=(N_G(x),I(G_2))$ and
$(x,I(G))=(x,I(G_1))$. The graphs $G_1$ and $G_2$ have fewer vertices
than $G$. It follows directly from the definition of
regularity that ${\rm reg}(M[-1])=1+{\rm reg}(M)$ for any graded 
$R$-module $M$. Therefore applying the induction hypothesis to $G_1$
and $G_2$, and using conditions (i) and (ii) and Lemma~\ref{addvariables}, we get 
\begin{eqnarray*}
{\rm reg}(R/(I(G)\colon x)[-1])&=&{\rm reg}(R/(I(G)\colon x))+1 = {\rm reg}(R'/I(G_2))+1  \leq 
\beta(G_2)+1\leq \beta(G),\\ 
{\rm reg}(R/(x,I(G)))&\leq&
\beta(G_1)\leq \beta(G)
\end{eqnarray*}
where $R'$ is the ring in the variables $V(G_2)$.
Therefore from Lemma~\ref{reg-lemma}, we get that the
regularity of $R/I(G)$ is bounded by the maximum of the regularities
of $R/(I(G)\colon x)[-1]$ and $R/(x,I(G))$. Thus 
${\rm reg}(R/I(G))\leq \beta(G)$, as required. 
\end{proof}

As an example of how Theorem~\ref{reg-of-families} can be applied to
obtain combinatorial bounds for the regularity, we provide new proofs
for several previously known results. Let $G$ be a graph. We let
$\beta'(G)$ be the cardinality of any 
smallest maximal matching of $G$.  H$\rm \grave{a}$ and Van Tuyl
proved that the regularity of $R/I(G)$ is bounded from above by the
matching number of $G$ and  Woodroofe improved this result showing
that $\beta'(G)$ is an upper bound for the regularity. 

\begin{corollary}\label{chordal-reg} Let $G$ be a graph and let $R=K[V(G)]$. Then
\begin{enumerate}
\item[(a)] \cite[Corollary~6.9]{Ha-VanTuyl} ${\rm reg}(R/I(G))={\rm
im}(G)$ for any chordal graph $G$.   

\item[(b)] $($\cite[Theorem~6.7]{Ha-VanTuyl},
\cite{woodroofe-matchings}$)$ ${\rm reg}(R/I(G))\leq \beta'(G)$.

\item[(c)] \cite[Theorem~3.3]{vantuyl} ${\rm reg}(R/I(G))={\rm
im}(G)$ if $G$ is bipartite and $R/I(G)$ is sequentially Cohen-Macaulay. 
\end{enumerate}
\end{corollary}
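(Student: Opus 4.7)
All three statements follow from Theorem~\ref{reg-of-families} applied to a suitably chosen family $\mathcal{F}$ of graphs and upper-bound function $\beta$. For parts (a) and (c), the lower bound $\mathrm{im}(G)\leq\mathrm{reg}(R/I(G))$ is already given by Corollary~\ref{lower-bound-reg}(b), so only the matching upper bound needs to be produced.

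For part (a), take $\mathcal{F}$ to be the family of chordal graphs and $\beta(G)=\mathrm{im}(G)$. Condition (i) of Theorem~\ref{reg-of-families} is automatic because chordality is preserved by induced subgraphs. For $G$ chordal with at least one edge, Dirac's theorem supplies a simplicial vertex $w$ of positive degree; I would take $x$ to be an arbitrary neighbor of $w$. The inequality $\mathrm{im}(G\setminus\{x\})\leq\mathrm{im}(G)$ is trivial, and the strict inequality $\mathrm{im}(G\setminus N_G[x])<\mathrm{im}(G)$ follows because, given any maximum induced matching $M$ of $G\setminus N_G[x]$, the set $M\cup\{wx\}$ is an induced matching of $G$: simpliciality of $w$ forces $N_G(w)\subseteq N_G[x]$, so neither endpoint of the edge $wx$ has a neighbor in $V(M)\subseteq V(G)\setminus N_G[x]$.

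Part (c) follows the same template with $\mathcal{F}$ taken to be the family of bipartite sequentially Cohen-Macaulay graphs (together with discrete graphs) and $\beta=\mathrm{im}$. By Theorem~\ref{SCM=shellable} together with Van Tuyl's strengthening to vertex decomposability, every non-discrete $G\in\mathcal{F}$ admits a shedding vertex $x$, and both $G\setminus\{x\}$ and $G\setminus N_G[x]$ stay in $\mathcal{F}$, supplying condition (i). For (ii), I would use the structural classification of bipartite SCM graphs to extract from the shedding vertex $x$ a neighbor $w$ with $N_G(w)\subseteq N_G[x]$; this plays the role of a simplicial vertex, and the induced matching extension $M\mapsto M\cup\{wx\}$ from part (a) then delivers the strict decrease.

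For part (b), take $\mathcal{F}$ to be all finite simple graphs and $\beta(G)=\beta'(G)$. The inequality $\beta'(G\setminus\{x\})\leq\beta'(G)$ holds for every $x$, so the combinatorial core is to produce, for each $G$ with an edge, a vertex $x$ satisfying $\beta'(G\setminus N_G[x])<\beta'(G)$. The natural strategy is to fix a minimum maximal matching $M$ of $G$ and an edge $uv\in M$, set $x=u$, and attempt to exhibit a maximal matching of $G\setminus N_G[u]$ of size at most $|M|-1$, built from $M\setminus\{uv\}$ after discarding any edges incident to $N_G(u)$ and verifying that no further enlargement is forced inside $G\setminus N_G[u]$. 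Engineering the choice of $M$ and $uv$ so that this trimming-and-re-extension procedure lands strictly below $|M|$ is the hardest point and is the combinatorial heart of Woodroofe's argument; once it is in place, Theorem~\ref{reg-of-families} delivers the bound.
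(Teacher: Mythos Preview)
Your treatment of parts (a) and (c) matches the paper's approach closely. For (c), the paper is more concrete than your sketch: rather than invoking vertex decomposability and then extracting a neighbor $w$ with $N_G(w)\subseteq N_G[x]$ from some unspecified structural classification, it cites \cite[Corollary~2.10]{bipartite-scm} directly, which guarantees adjacent vertices $x,y$ with $\deg(y)=1$ such that both $G\setminus N_G[x]$ and $G\setminus N_G[y]$ are sequentially Cohen-Macaulay. The leaf $y$ is then automatically your ``simplicial-type'' vertex, since $N_G(y)=\{x\}\subseteq N_G[x]$, and $G\setminus\{x\}$ differs from $G\setminus N_G[y]=G\setminus\{x,y\}$ only by the now-isolated vertex $y$.

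For part (b), your proposal is explicitly incomplete: you defer the inequality $\beta'(G\setminus N_G[x])<\beta'(G)$ to ``Woodroofe's argument'' and suggest that the choice of $M$ and $uv$ must be engineered. In fact the paper gives a short self-contained proof showing that \emph{no} engineering is needed: take any minimum maximal matching $f_1,\ldots,f_r$ and let $x$ be either endpoint of $f_1$. In $G_2=G\setminus N_G[x]$, keep those $f_i$ ($i\geq 2$) whose both endpoints survive, say $f_2,\ldots,f_m$, and extend to a maximal matching $f_2,\ldots,f_m,f'_{m+1},\ldots,f'_k$ of $G_2$. Each $f'_j$ is an edge of $G$ and must meet some $f_i$ (by maximality of the original matching in $G$); it cannot meet $f_1$ (vertices of $f_1$ lie in $N_G[x]$) nor $f_2,\ldots,f_m$ (they form a matching together), so it meets some degenerated $f_i$ with $i>m$. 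But each degenerated $f_i$ has at most one vertex left in $G_2$, so distinct $f'_j$'s meet distinct $f_i$'s. This gives $k\leq r$, hence $\beta'(G_2)\leq k-1<r=\beta'(G)$. That counting argument is the missing piece in your sketch.
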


\begin{proof} (a) Let $\mathcal{F}$ be the family of chordal graphs
and let $G$ be a chordal graph with $E(G)\neq\emptyset$. 
By Corollary~\ref{lower-bound-reg} and Theorem~\ref{reg-of-families}
it suffices to prove 
that there is $x\in V(G)$ such that
${\rm im}(G_1)\leq {\rm im}(G)$ and
${\rm im}(G_2)<{\rm im}(G)$, where $G_1$ and $G_2$ are the 
subgraphs $G\setminus\{x\}$ and $G\setminus(\{x\}\cup N_G(x))$,
respectively. The inequality ${\rm im}(G_1)\leq {\rm im}(G)$ is clear
because any induced matching of $G_1$ is an induced matching of $G$.
We now show the other inequality. By \cite[Theorem~8.3]{Toft}, there is $y\in V(G)$ such
that $G[N_G(y)\cup\{y\}]$ is a complete subgraph. Pick $x\in N_G(y)$
and set $f_0=\{x,y\}$. Consider an induced matching $f_1,\ldots,f_r$
of $G_2$ with $r={\rm im}(G_2)$. We claim that $f_0,f_1,\ldots,f_r$ is
an induced matching of $G$. Let $e$ be an edge of $G$ contained in
$\cup_{i=0}^rf_i$. We may assume that $e\cap f_0\neq\emptyset$ and
$e\cap f_i\neq\emptyset$ for some $i\geq 1$, otherwise $e=f_0$ or
$e=f_i$ for some $i\geq 1$. Then $e=\{y,z\}$ or $e=\{x,z\}$ for some
$z\in f_i$. If $e=\{y,z\}$, then $z\in N_G(y)$ and $x\in N_G(y)$.
Hence $\{z,x\}\in E(G)$ and $z\in N_G(x)$, a contradiction because the
vertex set of $G_2$ is disjoint from $N_G(x)\cup\{x\}$. If
$e=\{x,z\}$, then $z\in N_G(x)$, a contradiction. This completes the
proof of the claim. Hence ${\rm im}(G_2)<{\rm im}(G)$.

(b) Let $\mathcal{F}$ be the family of all graphs and let $G$ be a
graph with $E(G)\neq\emptyset$. By Theorem~\ref{reg-of-families} it
suffices to prove that there is $x\in V(G)$ such that
$\beta'(G_1)\leq \beta'(G)$ and
$\beta'(G_2)<\beta'(G)$, where $G_1$ and $G_2$ are the 
subgraphs $G\setminus\{x\}$ and $G\setminus(\{x\}\cup N_G(x))$,
respectively. 

Let $f_1,\ldots,f_r$ be a maximal matching of $G$ with $r=\beta'(G)$ and let
$x,y$ be the vertices of $f_1$. Clearly $f_2,\ldots,f_r$ is a 
matching of $G_1$. Thus we can extend it to a maximal 
matching  $f_2,\ldots,f_r,h_1,\ldots,h_s$ of $G_1$. Notice that 
$s\leq 1$. Indeed if $s\geq 2$, then $h_i\cap f_1=\emptyset$ for 
some $i\in\{1,2\}$ (otherwise $y\in h_1\cap h_2$, which is
impossible). Hence $f_1,\ldots,f_r,h_i$ is a matching of 
$G$, a contradiction because $f_1,\ldots,f_r$ is maximal. Therefore
$\beta'(G_1)\leq r-1+s\leq\beta'(G)$.  

The set $f_2,\ldots,f_r$ contains a matching of $G_2$, namely those
edges $f_i$ that do not degenerate. Reorder the edges so that $f_2,
\ldots , f_m$ are the edges that do not degenerate. Then this set can
be extended to a maximal matching $f_2, \ldots , f_m, f_{m+1}',
\ldots , f_k'$ of $G_2$. Now consider $f_{m+1}'$. Since $f_1, \dots ,
f_r$ is a maximal matching of $G$, $f_{m+1}'$ has a nontrivial
intersection with $f_i$ for some $i$. Note that $i\not= 1$ since
$f_{m+1}'$ is an edge of $G_2$, and $i \geq m+1$ since $f_2, \ldots ,
f_m$ and $f_{m+1}'$ are all part of a matching of $G_2$. Reorder so
that $i=m+1$. Repeat the process with $f_{m+2}'$. As before,
$f_{m+2}'$ has a nontrivial intersection with $f_i$ for some $i\geq
m+1$. If $i=m+1$, then since $f_{m+1}'$ and $f_{m+2}'$ are disjoint,
each must share a different vertex with $f_{m+1}$. But $f_{m+1}$
degenerated 
when passing to $G_2$, and $G_2$ is induced on the remaining
vertices, so this is a contradiction. Thus we may reorder so that
$f_{m+2}'$ 
nontrivially intersects $f_{m+2}$. Repeating 
this process, we see that $f_j'$ nontrivially 
intersects $f_j$ for all $m+1 \leq j \leq k$. 
Thus $k \leq r$. Now $\beta'(G_2) \leq k-1 \leq r-1 < \beta'(G)$.

(c) Let $\mathcal{F}$ be the family of all bipartite graphs $G$ 
such that $R/I(G)$ is sequentially Cohen-Macaulay, and let
$\beta\colon\mathcal{F}\rightarrow\mathbb{N}$ be the function
$\beta(G)={\rm im}(G)$. Let $G$ be a graph
in $\mathcal{F}$ with $E(G)\neq\emptyset$. By
Corollary~\ref{lower-bound-reg} and Theorem~\ref{reg-of-families}
it suffices to observe that, according to \cite[Corollary
2.10]{bipartite-scm}, there are adjacent vertices $x$ and $y$ with
${\rm deg}(y) = 1$ such that the bipartite graphs
$G\setminus(\{x\}\cup N_G(x))$ and $G\setminus(\{y\}\cup N_G(y))$ are
sequentially Cohen-Macaulay. Thus conditions (i) and (ii) of
Theorem~\ref{reg-of-families} are satisfied. 
\end{proof}

Corollary~\ref{chordal-reg} shows that the regularity of $R/I(G)$
equals ${\rm im}(G)$ for any forest $G$, which was first proved by
Zheng \cite{zheng}. 
If $G$ is an unmixed graph, Kummini
\cite{kummini} showed that ${\rm reg}(R/I(G))$ equals the induced
matching number of $G$. If $G$ is claw-free and its complement
has no induced $4$-cycles, then ${\rm reg}(R/I(G))\leq 2$ with equality
if its complement is not chordal \cite{nevo} (note that in this case 
${\rm reg}(R/I(G))={\rm im}(G) + 1$). Formulas for the regularity 
of ideals of mixed products are given in \cite{ionescu-rinaldo}. 
The regularity and depth of lex segment edge ideals are computed in
\cite{terai-reg-ex}. The regularity and other algebraic properties of
the edge ideal $I(G)$ associated to a
Ferrers graph $G$ are studied in detail in \cite{corso-nagel-tams}.
If $R/I(G)$ is Cohen-Macaulay, 
then a formula for ${\rm reg}(R/I(G))$ follows 
from \cite[Corollary~4.2]{renteln}.  

The following result about regularity was shown  by
Kalai and Meshulam for square-free monomial ideals and by Herzog for
arbitrary monomial ideals. Similar inequalities hold for the
projective dimension.

\begin{proposition} \cite{kalai-meshulam,herzog-reg} \label{kalai-herzog}
Let $I_1$ and $I_2$ be monomial ideals of $R$. Then 
\begin{enumerate}
\item[(a)] ${\rm reg}\, R/(I_1+I_2)\leq {\rm reg}(R/I_1) +{\rm
reg}(R/I_2)$,
\item[(b)] ${\rm reg}\, R/(I_1\cap I_2)\leq {\rm reg}(R/I_1) +{\rm
reg}(R/I_2)+1$.
\end{enumerate}
\end{proposition}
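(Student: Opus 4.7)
The plan is to exploit the standard Mayer--Vietoris short exact sequence
$$
0 \longrightarrow R/(I_1\cap I_2) \longrightarrow R/I_1 \oplus R/I_2 \longrightarrow R/(I_1+I_2) \longrightarrow 0
$$
in order to show that (a) and (b) are logically equivalent, and then to prove one of them, say (a), using the monomial structure of the ideals.

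For the equivalence, observe that ${\rm reg}(R/I_1\oplus R/I_2)=\max({\rm reg}(R/I_1),{\rm reg}(R/I_2))$, which is bounded above by ${\rm reg}(R/I_1)+{\rm reg}(R/I_2)$ since the regularity of $R/I$ is nonnegative whenever $I\subsetneq R$. Applying Lemma~\ref{reg-lemma}(c) to the sequence above yields
$${\rm reg}(R/(I_1+I_2)) \le \max\bigl({\rm reg}(R/(I_1\cap I_2))-1,\, {\rm reg}(R/I_1),\, {\rm reg}(R/I_2)\bigr),$$
so that (b) implies (a); dually, Lemma~\ref{reg-lemma}(a) gives (a)$\Rightarrow$(b). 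Hence it suffices to establish (a).

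To prove (a), I would proceed by induction on the number of minimal monomial generators of $I_2$. In the base case $I_2=(m)$ for a single monomial $m$ of positive degree, the colon short exact sequence
$$
0 \longrightarrow (R/(I_1\colon m))[-\deg m] \longrightarrow R/I_1 \longrightarrow R/(I_1+(m)) \longrightarrow 0
$$
combined with Lemma~\ref{reg-lemma}(c) gives
$${\rm reg}(R/(I_1+(m))) \le \max\bigl({\rm reg}(R/(I_1\colon m))+\deg m-1,\, {\rm reg}(R/I_1)\bigr),$$
and since ${\rm reg}(R/(m))=\deg m-1$, the desired bound follows from the standard monotonicity ${\rm reg}(R/(I_1\colon m))\le{\rm reg}(R/I_1)$, which holds for monomial ideals $I_1$ and monomials $m$.

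The main obstacle lies in the inductive step: writing $I_2 = J + (m)$ with $J$ having fewer minimal generators, a naive combination of the base case with the induction hypothesis produces the bound ${\rm reg}(R/I_1)+{\rm reg}(R/J)+\deg m -1$, which need not be dominated by ${\rm reg}(R/I_1)+{\rm reg}(R/I_2)$. I would resolve this either by running the induction simultaneously for (a) and (b) and feeding intermediate bounds through the Mayer--Vietoris equivalence established above, or, following Kalai--Meshulam's original approach, by reducing to the square-free case via polarization and appealing to Hochster's formula to translate the inequality into a comparison of reduced simplicial homology groups of the Stanley--Reisner complexes associated to $I_1$, $I_2$, $I_1\cap I_2$, and $I_1+I_2$, where the bound is then supplied by a topological Mayer--Vietoris argument.
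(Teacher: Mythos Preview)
The paper does not supply its own proof of this proposition: it is simply quoted from \cite{kalai-meshulam,herzog-reg} and then used. So there is no in-paper argument to compare your proposal against.

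As for the proposal itself, the Mayer--Vietoris reduction showing that (a) and (b) are equivalent is correct and is indeed the standard first move. However, the remainder is, by your own account, only a sketch: you locate the obstacle in the inductive step and then defer to either ``simultaneous induction'' or ``polarization plus Hochster's formula'' without carrying either one out. That is a roadmap, not a proof.

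There is also a gap hiding in what you call the base case. You invoke the ``standard monotonicity'' ${\rm reg}(R/(I_1\colon m))\le {\rm reg}(R/I_1)$ for a monomial ideal $I_1$ and a monomial $m$. This inequality is true, but it is not a triviality: it is \emph{false} for general homogeneous ideals, and for monomial ideals its usual proofs go through exactly the machinery you postpone---Hochster's formula in the squarefree case, polarization to reduce to that case, or the Taylor-complex arguments of \cite{herzog-reg}. In other words, already your base case is of the same depth as the proposition you are trying to prove, so the induction as set up does not actually isolate an easier subproblem. If you want a self-contained argument, you should either prove this colon-monotonicity directly (e.g.\ via multigraded Betti numbers and Hochster's formula) and then run your induction, or abandon the induction on generators and follow one of the two routes you mention from the outset.
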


\begin{corollary}\label{reg-union} If
  $\mathcal{C}_1,\ldots,\mathcal{C}_s$ are clutters on 
the vertex set $X$, then 
$${\rm reg}(R/I(\cup_{i=1}^s\mathcal{C}_i))\leq {\rm
  reg}(R/I(\mathcal{C}_1))+\cdots+{\rm
  reg}(R/I(\mathcal{C}_s)).  
$$
\end{corollary}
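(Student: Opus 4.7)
The plan is to reduce this immediately to Proposition~\ref{kalai-herzog}(a) by an induction on $s$. The key observation is that the edge ideal of a union of clutters on a common vertex set is the sum of the edge ideals: indeed, a monomial $x_e$ with $e \subset X$ lies in $I(\cup_{i=1}^s \mathcal{C}_i)$ if and only if some edge of some $\mathcal{C}_i$ is contained in $e$, if and only if $x_e \in I(\mathcal{C}_1)+\cdots+I(\mathcal{C}_s)$. So as ideals of $R$,
$$I(\cup_{i=1}^s \mathcal{C}_i) \;=\; I(\mathcal{C}_1) + \cdots + I(\mathcal{C}_s),$$
regardless of whether one prunes non-minimal edges in forming the union (pruning does not change the ideal, only the generating set).

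With this identity, the base case $s=1$ is trivial. For the inductive step, write $J = I(\cup_{i=1}^{s-1}\mathcal{C}_i)$ and $I_s = I(\mathcal{C}_s)$, so that $I(\cup_{i=1}^s\mathcal{C}_i) = J + I_s$. Applying Proposition~\ref{kalai-herzog}(a) to the pair $(J, I_s)$ gives
$$\mathrm{reg}(R/(J+I_s)) \;\leq\; \mathrm{reg}(R/J) + \mathrm{reg}(R/I_s),$$
and then the inductive hypothesis bounds $\mathrm{reg}(R/J) \leq \sum_{i=1}^{s-1}\mathrm{reg}(R/I(\mathcal{C}_i))$, yielding the desired inequality.

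There is essentially no obstacle: the whole content is in Proposition~\ref{kalai-herzog}(a), which is cited from \cite{kalai-meshulam, herzog-reg}. The only minor subtlety worth flagging is the first paragraph above, namely verifying that forming the clutter-theoretic union (which discards non-minimal edges) produces the same ideal as summing the edge ideals; but this is immediate from the fact that $I(\mathcal{C})$ is generated by the edges of $\mathcal{C}$ and that adding redundant (non-minimal) monomial generators does not change an ideal.
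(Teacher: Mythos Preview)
Your proof is correct and follows essentially the same approach as the paper: both identify $I(\cup_{i=1}^s\mathcal{C}_i)=\sum_{i=1}^s I(\mathcal{C}_i)$ and then invoke Proposition~\ref{kalai-herzog}(a). You simply spell out the induction on $s$ and the remark about pruning non-minimal edges more explicitly than the paper's one-line proof does.
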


\begin{proof} The set of edges of
  $\mathcal{C}=\cup_{i=1}^s\mathcal{C}_i$ equals 
$\cup_{i=1}^sE(\mathcal{C}_i)$. By Proposition~\ref{kalai-herzog}, it
  suffices to notice 
the equality $I(\cup_{i=1}^s\mathcal{C}_i)=\sum_{i=1}^{s}I(\mathcal{C}_i)$.
\end{proof}

A clutter $\mathcal{C}$ is called {\it co-CM} if $I_c(\mathcal{C})$
is Cohen-Macaulay. A co-CM clutter is uniform because Cohen-Macaulay
clutters are unmixed. 

\begin{corollary} If $\mathcal{C}_1,\ldots,\mathcal{C}_s$ are co-CM clutters on
the vertex set $X$, then 
$${\rm reg}(R/I(\cup_{i=1}^s\mathcal{C}_i))\leq
(d_1-1)+\cdots+(d_s-1),
$$
where $d_i$ is the number of elements in any edge of $\mathcal{C}_i$.
\end{corollary}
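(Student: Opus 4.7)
The plan is to reduce the statement to a bound on the regularity of a single co-CM clutter and then apply Theorem~\ref{terai-formula} together with the Auslander--Buchsbaum formula. First, by Corollary~\ref{reg-union},
\[
{\rm reg}(R/I(\cup_{i=1}^s\mathcal{C}_i))\leq\sum_{i=1}^{s}{\rm reg}(R/I(\mathcal{C}_i)),
\]
so it suffices to show that ${\rm reg}(R/I(\mathcal{C}_i))\leq d_i-1$ for each $i$. Fix one such co-CM clutter $\mathcal{C}=\mathcal{C}_i$ with common edge-size $d=d_i$.

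Next I would use duality. Since $\mathcal{C}$ is co-CM, the ideal $I_c(\mathcal{C})$ is Cohen-Macaulay, and the Auslander--Buchsbaum formula~(\ref{auslander-buchsbaum-formula}) together with ${\rm depth}(R/I_c(\mathcal{C}))=\dim(R/I_c(\mathcal{C}))=n-{\rm ht}(I_c(\mathcal{C}))$ gives
\[
{\rm pd}(R/I_c(\mathcal{C}))={\rm ht}(I_c(\mathcal{C})).
\]
The Alexander duality displayed in~(\ref{alexander-duality}) shows that the minimal primes of $I_c(\mathcal{C})$ are exactly the ideals $(e)$ for $e\in E(\mathcal{C})$; hence ${\rm ht}(I_c(\mathcal{C}))=\min_{e\in E(\mathcal{C})}|e|=d$, using that $\mathcal{C}$ is uniform. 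Therefore ${\rm pd}(R/I_c(\mathcal{C}))=d$, and applying Theorem~\ref{terai-formula} (or its height-one companion Corollary, stated right after it, when ${\rm ht}(I(\mathcal{C}))=1$) yields
\[
{\rm reg}(R/I(\mathcal{C}))={\rm pd}(R/I_c(\mathcal{C}))-1=d-1.
\]
Summing over $i$ and combining with the first display gives the desired bound.

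The one point requiring a small amount of care is that Theorem~\ref{terai-formula} is stated under the hypothesis ${\rm ht}(I(\mathcal{C}))\geq 2$; when ${\rm ht}(I(\mathcal{C}_i))=1$ we must instead invoke the Corollary immediately following Theorem~\ref{terai-formula}, which gives the same formula ${\rm reg}(R/I(\mathcal{C}))={\rm pd}(R/I_c(\mathcal{C}))-1$ in that regime. Apart from this case split, the argument is routine: the co-CM hypothesis converts the regularity question into a height computation via Alexander duality, and uniformity pins down that height to be $d_i$.
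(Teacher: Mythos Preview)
Your proof is correct and follows essentially the same approach as the paper: reduce to a single co-CM clutter via Corollary~\ref{reg-union}, then use Terai's formula (Theorem~\ref{terai-formula}) to obtain ${\rm reg}(R/I(\mathcal{C}_i))=d_i-1$. You give more detail than the paper does---explicitly computing ${\rm pd}(R/I_c(\mathcal{C}_i))={\rm ht}(I_c(\mathcal{C}_i))=d_i$ via Auslander--Buchsbaum and noting the height-one caveat---whereas the paper simply invokes Theorem~\ref{terai-formula} directly.
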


\begin{proof} By Theorem~\ref{terai-formula}, we get 
that ${\rm reg}\, R/I(\mathcal{C}_i)=d_i-1$ for all $i$. Thus the
result follows from Corollary~\ref{reg-union}. 
\end{proof}

This result is especially useful for 
graphs. A graph $G$ is {\it weakly chordal} if every induced cycle
in both $G$ and $G^c$ has length at most $4$. It was pointed out in
\cite{woodroofe-matchings} that 
a weakly chordal graph $G$ can
be covered by ${\rm im}(G)$ co-CM graphs (this fact was shown in
\cite{co-chordal}). Thus we have:

\begin{theorem} \cite{woodroofe-matchings}
If $G$ is a weakly chordal graph, then ${\rm reg}(R/I(G))={\rm
im}(G)$.
\end{theorem}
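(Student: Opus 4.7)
The plan is to prove the two inequalities separately, sandwiching ${\rm reg}(R/I(G))$ between ${\rm im}(G)$ and ${\rm im}(G)$.

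For the lower bound ${\rm im}(G) \le {\rm reg}(R/I(G))$, I would invoke Corollary~\ref{lower-bound-reg}(b) directly; this holds for any graph $G$ with no weakly chordal hypothesis needed.

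For the upper bound ${\rm reg}(R/I(G)) \le {\rm im}(G)$, the plan is to use the fact (stated in the paragraph preceding the theorem, attributed to \cite{co-chordal}) that a weakly chordal graph $G$ admits a decomposition of its edge set as $E(G) = \bigcup_{i=1}^{s} E(G_i)$ where each $G_i$ is a co-CM graph on the vertex set of $G$ and $s = {\rm im}(G)$. Writing this at the level of clutters, $G$ is the union $\bigcup_{i=1}^{s} G_i$ of co-CM clutters on the common vertex set $V(G)$. Since each $G_i$ is a graph, each edge of $G_i$ has cardinality $d_i = 2$, so the corollary following Corollary~\ref{reg-union} yields
\begin{equation*}
{\rm reg}(R/I(G)) \;=\; {\rm reg}\!\left(R/I\!\left(\textstyle\bigcup_{i=1}^{s} G_i\right)\right) \;\le\; \sum_{i=1}^{s}(d_i-1) \;=\; s \;=\; {\rm im}(G).
\end{equation*}

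Combining the two inequalities gives the claimed equality. The only nontrivial ingredient is the covering statement for weakly chordal graphs, but this is cited from \cite{co-chordal} and can be assumed. The rest of the argument is an immediate application of the regularity bound for unions of co-CM clutters, using that graphs are $2$-uniform so each summand contributes exactly $1$ to the upper bound.
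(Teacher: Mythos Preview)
Your proposal is correct and follows exactly the approach the paper outlines: the lower bound comes from Corollary~\ref{lower-bound-reg}(b), and the upper bound comes from the covering of a weakly chordal graph by ${\rm im}(G)$ co-CM graphs (cited from \cite{co-chordal}) combined with the corollary on unions of co-CM clutters, noting that $d_i=2$ for graphs. The paper presents this as an immediate consequence (``Thus we have'') rather than writing out a formal proof, but your argument is precisely the intended one.
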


There are bounds for the regularity of $R/I$ in terms of some other
algebraic invariants of $R/I$. Recall that the $a$-invariant of
$R/I$, denoted by $a(R/I)$, is the degree (as a
rational function) of the Hilbert series of $R/I$. Also recall that the
independence complex of 
$I(\mathcal{C})$, denoted by $\Delta_\mathcal{C}$, is the simplicial
complex whose faces are the independent vertex sets of $\mathcal{C}$.
The {\it arithmetic
degree} of $I=I(\mathcal{C})$, denoted by ${\rm arith}$-${\rm
  deg}(I)$, is the number of 
{\it facets\/} (maximal faces with respect to inclusion) of
$\Delta_\mathcal{C}$. The {\it arithmetical rank} of $I$, denoted by ${\rm
ara}(I)$, is the least number of elements of $R$ which generate the
ideal $I$ up to radical. 

\begin{theorem}{\rm\cite[Corollary~B.4.1]{Vas1}} $a(R/I)\leq{\rm
reg}(R/I)-{\rm depth}(R/I)$, with equality if $R/I$ is Cohen-Macaulay.
\end{theorem}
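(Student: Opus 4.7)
My plan is to translate all three invariants into local cohomology and compare them. Write $H^i := H^i_{\m}(R/I)$ for the local cohomology of $R/I$ with respect to $\m=(x_1,\dots,x_n)$, set $d={\rm dim}(R/I)$, and let $a_i:=\sup\{j\mid [H^i]_j\neq 0\}$, with $a_i=-\infty$ when $H^i=0$. I rely on three standard ingredients: the local-cohomology description of regularity ${\rm reg}(R/I)=\max_i(a_i+i)$; the graded Grothendieck (non-)vanishing identifications ${\rm depth}(R/I)=\min\{i\mid H^i\neq 0\}$ and ${\rm dim}(R/I)=\max\{i\mid H^i\neq 0\}$; and the Serre comparison formula $H_{R/I}(j)-P_{R/I}(j)=\sum_i(-1)^i\dim_K[H^i]_j$, valid in every degree $j$.

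The crucial step is the bound $a(R/I)\leq \max_i a_i$. I would deduce it from the Serre formula: its right-hand side vanishes for every $j>\max_i a_i$, so $H_{R/I}(j)=P_{R/I}(j)$ in that range. Expressing both $\sum_{j\geq 0}H_{R/I}(j)\,t^j$ and $\sum_{j\geq 0}P_{R/I}(j)\,t^j$ as rational functions with common denominator $(1-t)^d$ and subtracting, one checks that the reduced numerator $h(t)$ of the Hilbert series $HS(R/I;t)=h(t)/(1-t)^d$ has degree at most $\max_i a_i+d$, hence $a(R/I)=\deg h-d\leq \max_i a_i$. This is the main obstacle; it is where a potentially non-Cohen--Macaulay $R/I$ is absorbed and where possible cancellations in the alternating sum have to be controlled. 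The rest is formal.

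With this in hand, pick an index $i_0$ attaining $\max_i a_i$; then $H^{i_0}\neq 0$, so $i_0\geq {\rm depth}(R/I)$. The regularity formula gives $a_{i_0}\leq {\rm reg}(R/I)-i_0$, and stringing the bounds together yields
$$
a(R/I)\ \leq\ a_{i_0}\ \leq\ {\rm reg}(R/I)-i_0\ \leq\ {\rm reg}(R/I)-{\rm depth}(R/I),
$$
the desired inequality. For the equality case, when $R/I$ is Cohen--Macaulay one has ${\rm depth}(R/I)=d$ and $H^i=0$ for all $i<d$, so the Serre formula collapses to $H_{R/I}(j)-P_{R/I}(j)=(-1)^d\dim_K[H^d]_j$. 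This forces $a(R/I)=a_d$, and consequently ${\rm reg}(R/I)=a_d+d=a(R/I)+{\rm depth}(R/I)$, as required.
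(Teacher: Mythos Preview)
The paper does not supply a proof of this statement; it is quoted from Vasconcelos. Your local-cohomology approach is the standard one and the overall strategy is sound, but there is a gap in the key step $a(R/I)\le\max_i a_i$. Your subtraction of $\sum_{j\ge 0}H(j)\,t^j$ and $\sum_{j\ge 0}P(j)\,t^j$ over the common denominator $(1-t)^d$ only yields $\deg h\le\max\bigl(\max_i a_i+d,\ d-1\bigr)$, since the numerator of $\sum_{j\ge 0}P(j)\,t^j$ has degree at most $d-1$. When $\max_i a_i\le -2$ this gives only $a(R/I)\le -1$, which can be strictly weaker than ${\rm reg}-{\rm depth}$. For instance, take $I=(x_1,x_2)\cap(x_5,x_6)\subset K[x_1,\dots,x_6]$: here $\dim R/I=4$, ${\rm depth}\,R/I=3$, ${\rm reg}\,R/I=1$, and $a(R/I)=\max_i a_i=-2$, so ${\rm reg}-{\rm depth}=-2$; the bound $a(R/I)\le -1$ coming from your argument does not reach it, and the chain of inequalities breaks.

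The fix is to apply the Grothendieck--Serre identity for all $j\in\mathbb{Z}$, not just $j\ge 0$: when $\max_i a_i<0$ it forces $P(j)=0$ for every $j$ with $\max_i a_i<j<0$, and these extra roots of $P$ bring the degree of the numerator of $\sum_{j\ge 0}P(j)\,t^j$ down to at most $\max_i a_i+d$, after which $\deg h\le\max_i a_i+d$ follows. Equivalently and more directly, invoke the standard identity $a(R/I)=\max\{\,j\in\mathbb{Z}:H_{R/I}(j)\neq P_{R/I}(j)\,\}$; then $a(R/I)\le\max_i a_i$ is immediate from Grothendieck--Serre. With this correction your chain of inequalities and the Cohen--Macaulay equality case are correct as written.
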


\begin{theorem}{\rm(\cite{lyubeznik-ara}, \cite[Proposition~3]{Lyu3})} ${\rm reg}(R/I^\vee)={\rm
pd}(R/I)-1\leq {\rm ara}(I)-1$.
\end{theorem}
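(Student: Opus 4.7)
The statement splits into an equality and an inequality. The plan is to establish the equality by invoking Terai's formula (Theorem~\ref{terai-formula}) together with its height-one corollary, and then to establish the inequality by combining a Čech-complex argument with Lyubeznik's theorem on the local cohomology of square-free monomial ideals.

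For the equality ${\rm reg}(R/I^\vee) = {\rm pd}(R/I) - 1$, the plan is the following. Combining Theorem~\ref{terai-formula} with the corollary immediately preceding this statement, we obtain, for every square-free monomial ideal $J\subset R$ (of any height),
\[
{\rm reg}(R/J) = {\rm pd}(R/J^\vee) - 1.
\]
Apply this identity to $J = I^\vee$. Because Alexander duality is involutive (as displayed in Eq.~(\ref{alexander-duality}), which shows that $(I^\vee)^\vee = I$), the right-hand side becomes ${\rm pd}(R/I) - 1$, yielding the desired equality. The only point requiring care is that, to invoke Theorem~\ref{terai-formula} directly one needs ${\rm ht}(I^\vee)\geq 2$, i.e.\ every edge of $\mathcal{C}$ of cardinality at least two; the remaining case, when some edge has cardinality one, is handled exactly as in the preceding corollary, by splitting off the principal factor and applying the Auslander-Buchsbaum formula to compare projective dimensions across the isomorphism induced by multiplication.

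For the inequality ${\rm pd}(R/I) - 1 \leq {\rm ara}(I) - 1$, set $r = {\rm ara}(I)$ and choose $f_1,\ldots,f_r \in R$ with $\sqrt{(f_1,\ldots,f_r)} = \sqrt{I} = I$. The Čech complex on $f_1,\ldots,f_r$ computes local cohomology with support in $(f_1,\ldots,f_r)$, which agrees with local cohomology with support in $I$ because the two ideals share the same radical. Consequently $H^i_I(R) = 0$ for all $i > r$, so the cohomological dimension satisfies ${\rm cd}(I) \leq r$. The proof is then completed by invoking Lyubeznik's identity ${\rm pd}(R/I) = {\rm cd}(I)$, valid for any square-free monomial ideal, to conclude ${\rm pd}(R/I) \leq {\rm ara}(I)$.

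The main obstacle is precisely the input from Lyubeznik. The Čech-complex bound ${\rm cd}(I)\leq {\rm ara}(I)$ is elementary, but the equality ${\rm pd}(R/I) = {\rm cd}(I)$ for square-free monomial ideals is nontrivial and relies on Lyubeznik's $D$-module or $\mathrm{Ext}$-based analysis of the local cohomology modules $H^i_I(R)$; this is the deep step that the proof must cite rather than reprove. Everything else is routine manipulation of Alexander duality and Terai's formula.
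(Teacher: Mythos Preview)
Your proposal is correct and aligns with the paper's treatment. The paper does not supply its own proof here: it simply cites Lyubeznik for the inequality and remarks immediately afterward that ``the equality ${\rm reg}(R/I^\vee)={\rm pd}(R/I)-1$ was pointed out earlier in Theorem~\ref{terai-formula}.'' Your argument expands exactly these two ingredients---applying Terai's formula (together with its height-one corollary) to $J=I^\vee$ and using $(I^\vee)^\vee=I$ for the equality, and combining the \v{C}ech-complex bound ${\rm cd}(I)\leq{\rm ara}(I)$ with Lyubeznik's identity ${\rm pd}(R/I)={\rm cd}(I)$ for the inequality---so there is nothing to add. One minor remark: Lyubeznik's proof of ${\rm pd}(R/I)={\rm cd}(I)$ for square-free monomial ideals is not a $D$-module argument but rather a direct computation with the \v{C}ech complex on the variables, exploiting that localizations of $R$ at squarefree monomials are free; this does not affect the validity of your sketch, only the description of the cited input.
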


The equality ${\rm reg}(R/I^\vee)={\rm
pd}(R/I)-1$ was pointed out earlier in
Theorem~\ref{terai-formula}.  There are many instances where the
equality ${\rm pd}(R/I)={\rm
ara}(I)$ holds, see \cite{barile,terai-reg-ex,terai-yoshida-small} and
the references there. For example,
for paths, one has ${\rm pd}(R/I)={\rm ara}(I)$ \cite{barile}. 
Barile \cite{barile} has conjectured that the
equality holds for edge ideals of forests. We also have that $n-\min_i\{{\rm
depth}(R/I^{(i)})\}$ is an upper bound for ${\rm ara}(I)$, see
\cite{Lyu3}. This upper bound tends to be very loose. 
If $I$ is
the edge ideal of a tree, then $I$ is normally torsion free (see
Section~\ref{stability} together with Theorems~\ref{noclu1} and \ref{bipartite}).
Then $\min_i\{{\rm depth}(R/I^{(i)})\}=1$ by \cite[Lemma~2.6]{morey}.  But when
$I$ is the edge ideal of a path with $8$ vertices, then the actual value
of ${\rm ara}(I)$ is $5$. 

\begin{theorem}{\rm\cite[Theorem~3.1]{terai}} If ${\rm ht}(I)\geq 2$,
then ${\rm reg}(I)\leq {\rm arith}$-${\rm deg}(I)$. 
\end{theorem}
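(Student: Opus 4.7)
The plan is to reduce the inequality to a bound on the projective dimension of the Alexander dual via Terai's formula (Theorem~\ref{terai-formula}), and then deduce that bound from Taylor's resolution. The key observation is that the three quantities in play --- $\operatorname{reg}(I)$, $\operatorname{pd}(R/I^\vee)$, and $\operatorname{arith}\text{-}\operatorname{deg}(I)$ --- are linked by classical combinatorial and homological facts, and that $\operatorname{arith}\text{-}\operatorname{deg}(I)$ coincides with the minimal number of generators of $I^\vee$.

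First, I would invoke Theorem~\ref{terai-formula}: since $\operatorname{ht}(I)\geq 2$, we have
\[
\operatorname{reg}(I)\;=\;1+\operatorname{reg}(R/I)\;=\;\operatorname{pd}(R/I_c(\mathcal{C}))\;=\;\operatorname{pd}(R/I^\vee).
\]
Next, I would identify the right-hand side of the claim combinatorially. The facets of $\Delta_\mathcal{C}$ are exactly the maximal independent sets of $\mathcal{C}$, and complementation sets up a bijection between these and the minimal vertex covers of $\mathcal{C}$. By the description of $I_c(\mathcal{C})=I^\vee$ in Definition~\ref{alex-dual}, the minimal vertex covers are in bijection with the minimal monomial generators of $I^\vee$. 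Therefore
\[
\operatorname{arith}\text{-}\operatorname{deg}(I)\;=\;\#\{\text{facets of }\Delta_\mathcal{C}\}\;=\;\mu(I^\vee),
\]
where $\mu$ denotes the minimal number of generators.

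Finally, I would apply Taylor's resolution of the monomial ideal $I^\vee$. Taylor's complex provides a (generally non-minimal) free resolution of $R/I^\vee$ of length exactly $\mu(I^\vee)$, so
\[
\operatorname{pd}(R/I^\vee)\;\leq\;\mu(I^\vee).
\]
Combining the three displayed relations gives $\operatorname{reg}(I)=\operatorname{pd}(R/I^\vee)\leq \mu(I^\vee)=\operatorname{arith}\text{-}\operatorname{deg}(I)$, as desired.

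I do not expect a genuine obstacle here: Theorem~\ref{terai-formula} does all the work of translating regularity into projective dimension, and Taylor's bound is immediate from the existence of the Taylor complex. The only point worth double-checking is the identification of $\operatorname{arith}\text{-}\operatorname{deg}(I)$ with the count of facets of $\Delta_\mathcal{C}$ --- this is just the statement that for a squarefree monomial ideal, all associated primes are minimal and appear with multiplicity one, so the arithmetic degree reduces to the number of minimal primes, which are in bijection with the facets via Stanley-Reisner duality and thus with the minimal generators of $I^\vee$ as in \eqref{alexander-duality}.
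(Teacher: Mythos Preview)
Your argument is correct. The paper does not give its own proof of this statement --- it is quoted from \cite{terai} without proof --- so there is nothing to compare against directly. Your route via Theorem~\ref{terai-formula}, the identification $\operatorname{arith}\text{-}\deg(I)=\mu(I^\vee)$ through the facet/minimal-vertex-cover bijection of Eq.~\eqref{alexander-duality}, and the Taylor bound $\operatorname{pd}(R/I^\vee)\le\mu(I^\vee)$ is clean and complete; each step is standard and no circularity arises, since Theorem~\ref{terai-formula} (also from \cite{terai}) is an independent duality statement.
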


The next open problem is known as the Eisenbud-Goto regularity
conjecture \cite{eisenbud-goto}.

\begin{conjecture} If $\mathfrak{p}\subset
(x_1,\ldots,x_n)^2$ is a prime graded ideal, then 
$${\rm
reg}(R/\mathfrak{p})\leq {\rm
deg}(R/\mathfrak{p})-{\rm codim}(R/\mathfrak{p}).
$$
\end{conjecture}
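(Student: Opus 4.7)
The plan is to reduce the Eisenbud-Goto conjecture to a statement about Artinian graded algebras via generic hyperplane sections, and then to exploit primeness to bound the resulting socle degree. Write $c={\rm codim}(R/\mathfrak{p})$ and $d={\rm deg}(R/\mathfrak{p})$. First I would choose a maximal regular sequence $\ell_1,\ldots,\ell_c$ of sufficiently general linear forms on $R/\mathfrak{p}$ and set $A=R/(\mathfrak{p},\ell_1,\ldots,\ell_c)$. For a linear nonzerodivisor $\ell$ on a graded module $M$, the short exact sequence $0\to M(-1)\to M\to M/\ell M\to 0$ combined with Lemma~\ref{reg-lemma} gives ${\rm reg}(M/\ell M)={\rm reg}(M)$; iterating yields ${\rm reg}(R/\mathfrak{p})={\rm reg}(A)$. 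Since $A$ is Artinian of $K$-vector space dimension $d$, its regularity equals its socle degree $s(A)=\max\{i\,:\,A_i\neq 0\}$, and the conjecture becomes the assertion that $s(A)\leq d-c$.

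The second step is to pack the Hilbert function of $A$ into degrees $0,1,\ldots,d-c$. Non-degeneracy, the hypothesis $\mathfrak{p}\subset(x_1,\ldots,x_n)^2$, immediately gives $\dim_K A_1\geq n-c$, which is a useful first constraint. The primeness of $\mathfrak{p}$ is more subtle to deploy: the idea is to use that $R/\mathfrak{p}$ is a domain to guarantee injectivity of multiplication by generic linear forms at each of the $c$ pre-Artinian stages, and to track how the Hilbert function propagates through the successive hyperplane sections. The goal is to establish the inequality $\sum_{i=0}^{d-c}\dim_K A_i\geq d$, which together with $\sum_{i\geq 0}\dim_K A_i=d$ forces $A_i=0$ for $i>d-c$. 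A natural technical route is through a Noether normalization, writing $R/\mathfrak{p}$ as a finite integral extension of a polynomial subring $K[\ell_{c+1},\ldots,\ell_n]$ of rank $d$, and analyzing the induced module structure on graded pieces.

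The hard part, and the reason this conjecture is notorious, is that primeness must enter in an essential way: for non-prime homogeneous ideals in $(x_1,\ldots,x_n)^2$ the analogous inequality can fail dramatically, since a non-reduced Artinian ideal can have socle degree far exceeding its length minus codimension. Consequently, no purely numerical manipulation of the Hilbert series of $A$ can suffice. One must translate irreducibility of $V(\mathfrak{p})\subset\mathbb{P}^{n-1}$ into control over the intermediate sheaf cohomology $H^i(\mathbb{P}^{n-1},\mathcal{I}_\mathfrak{p}(t))$ for $0<i<\dim V(\mathfrak{p})$, since it is exactly these groups that measure the gap between ${\rm reg}(R/\mathfrak{p})$ and the arithmetically Cohen-Macaulay benchmark $d-c$. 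The known partial results (arithmetically Cohen-Macaulay ideals, smooth curves, smooth surfaces) all proceed by bounding these cohomology groups using geometric input, and it is plausible that a full proof would require a new bound of this type valid in arbitrary dimension, together with a mechanism for reducing the reduced-equidimensional case to the irreducible one.
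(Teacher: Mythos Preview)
This statement is presented in the paper as a \emph{conjecture}, not as a theorem: the paper explicitly introduces it as ``the next open problem \ldots\ known as the Eisenbud--Goto regularity conjecture'' and offers no proof. There is therefore nothing in the paper to compare your proposal against.

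Your write-up is not a proof either, and to your credit you essentially concede this in the final paragraph: you identify the standard reduction (generic hyperplane section to an Artinian quotient, so that regularity becomes socle degree) and then correctly observe that the remaining step---turning primeness into a cohomology or Hilbert-function bound strong enough to force $s(A)\le d-c$---is exactly the missing ingredient that made the conjecture famous. That is an honest assessment of the state of play at the time the paper was written, but it is a discussion, not an argument.

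Two technical points about your reduction sketch are worth flagging. First, to reach an Artinian quotient you must cut by $\dim(R/\mathfrak p)=n-c$ linear forms, not $c={\rm codim}(R/\mathfrak p)$ forms; you have the wrong count. Second, the identity ${\rm reg}(M/\ell M)={\rm reg}(M)$ for a linear nonzerodivisor $\ell$ is fine for the first cut (since $R/\mathfrak p$ is a domain), but after the first section the quotient need not be a domain or even Cohen--Macaulay, so subsequent general linear forms need not be nonzerodivisors and the equality of regularity can fail. The correct general device here is a filter-regular (or almost-regular) sequence together with the local-cohomology characterization of regularity, which still reduces the question to the Artinian range but does not give the clean equality you wrote.

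Finally, you should be aware that the Eisenbud--Goto conjecture in the generality stated here has since been shown to be \emph{false} by McCullough and Peeva (2018), so no valid proof exists; the failure arises precisely because primeness alone does not give enough control over the intermediate cohomology you identify in your last paragraph.
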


A pure $d$-dimensional complex $\Delta$ is
called {\it connected in codimension\/} $1$ if each pair of 
facets $F,G$ can be connected by a sequence of
facets $F=F_0,F_1,\ldots,F_s=G$, such that $\dim(F_{i-1}\cap F_i)=d-1$
for $1\leq i\leq s$. According to \cite[Proposition~11.7]{bjorner-topological}, every
Cohen-Macaulay complex is connected in codimension $1$.

The following gives a partial answer to the monomial version of the
Eisenbud-Goto regularity conjecture.

\begin{theorem}{\cite{terai}} Let $I=I(\mathcal{C})$ be an edge 
ideal. If $\Delta_\mathcal{C}$ is connected in codimension $1$, then  
$${\rm
reg}(R/I)\leq {\rm
deg}(R/I)-{\rm codim}(R/I).
$$
\end{theorem}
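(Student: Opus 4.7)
The plan is to use Terai's formula (Theorem~\ref{terai-formula}) to rewrite the inequality as a bound on the projective dimension of the Alexander dual, and then induct on the number of facets of $\Delta_{\C}$. Connectedness in codimension $1$ presupposes purity, so $\Delta_{\C}$ is pure of some dimension $d-1$, whence ${\rm codim}(R/I) = n - d$ and ${\rm deg}(R/I) = s := f_{d-1}(\Delta_{\C})$, the number of facets of $\Delta_{\C}$. Assuming ${\rm ht}(I) \geq 2$, Theorem~\ref{terai-formula} makes the target inequality equivalent to
$$
{\rm pd}(R/I_c(\C)) \;\leq\; s - (n-d) + 1.
$$
(The height-one case is reduced to this one using the corollary extending Theorem~\ref{terai-formula} stated at the beginning of this section.)

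For the base case $s = 1$, the complex $\Delta_{\C}$ is a single $(d-1)$-simplex on all $n = d$ vertices, so $R/I$ is a polynomial ring and the bound $0 \leq 1$ is trivial. For the inductive step, I would order the facets $F_1, \ldots, F_s$ so that $F_s$ shares a codimension-$1$ face with some earlier $F_j$; this is possible by connectedness in codimension $1$ (take a spanning tree of the dual graph of $\Delta_{\C}$ and peel off a leaf). With $\Delta' := \langle F_1, \ldots, F_{s-1}\rangle$, the Mayer--Vietoris short exact sequence
$$
0 \longrightarrow R/I_{\Delta_{\C}} \longrightarrow R/I_{\Delta'} \oplus R/I_{\langle F_s\rangle} \longrightarrow R/I_{\Delta' \cap \langle F_s\rangle} \longrightarrow 0
$$
together with Lemma~\ref{reg-lemma} would bound ${\rm reg}(R/I_{\Delta_{\C}})$ by the maximum of the regularities of the other three terms: $R/I_{\langle F_s\rangle}$ is a polynomial ring (regularity $0$), and the inductive hypothesis is meant to apply to $\Delta'$ and to the subcomplex $\Delta' \cap \langle F_s\rangle$ of $\partial F_s$.

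The main obstacle is that removing the facet $F_s$ typically destroys purity: any codimension-$1$ subface of $F_s$ that was shared only with $F_s$ becomes a new facet of $\Delta'$, and the intersection $\Delta' \cap \langle F_s\rangle$ is almost never pure. A direct induction on $s$ therefore fails. To bypass this, I would either broaden the inductive hypothesis to allow non-pure complexes -- redefining ${\rm deg}$ to count only top-dimensional facets and carefully tracking the change in ${\rm deg}-{\rm codim}$ as facets are peeled off -- or change the inductive parameter to the codimension $n-d$, combining Proposition~\ref{reg-monotonia} (monotonicity under induced subclutters) with Proposition~\ref{kalai-herzog} (additivity bounds for sums of monomial ideals) applied to a covering of $\C$ dictated by the dual graph of $\Delta_{\C}$, whose connectedness is precisely the hypothesis. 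Arranging this combinatorial bookkeeping so that the accumulated bound matches $s - (n-d)$ is the crux of the argument.
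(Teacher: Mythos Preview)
The paper does not supply a proof of this theorem; it is quoted from Terai's work \cite{terai} as a citation, so there is no in-paper argument to compare against. What remains is to assess your sketch on its own terms, and there it has a genuine gap together with an internal inconsistency.

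First the inconsistency. You set $\Delta'=\langle F_1,\ldots,F_{s-1}\rangle$, the subcomplex \emph{generated} by the first $s-1$ facets. With that definition $\Delta'$ is automatically pure of dimension $d-1$: its facets are exactly $F_1,\ldots,F_{s-1}$, and any proper face of $F_s$ not lying in some $F_i$ ($i<s$) is simply absent from $\Delta'$, it does not become a new lower-dimensional facet. Moreover, because $F_s$ was chosen as a leaf of a spanning tree of the dual graph, $\Delta'$ is still connected in codimension~$1$, so the inductive hypothesis \emph{does} apply to $\Delta'$ and gives ${\rm reg}(R/I_{\Delta'})\le (s-1)-(n-d)$. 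Your stated objection to $\Delta'$ is therefore misplaced.

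The real obstruction is the third term $\Gamma:=\Delta'\cap\langle F_s\rangle$. Its faces are the subsets of $F_s$ contained in some $F_i$ with $i<s$; while at least one such subset has size $d-1$ (by the leaf condition), the maximal ones need not all have size $d-1$, so $\Gamma$ is typically neither pure nor connected in codimension~$1$, and the hypothesis cannot be invoked for it. The bound you would need from the Mayer--Vietoris sequence is ${\rm reg}(R/I_\Gamma)\le s-(n-d)-1$, but the only structural information available is that $\Gamma$ sits inside the $(d-1)$-simplex $\langle F_s\rangle$, which yields at best ${\rm reg}(R/I_\Gamma)\le d-1$; this is useless unless $s\ge n$, which is far stronger than the actual constraint $s\ge n-d+1$ forced by strong connectivity. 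Your two proposed escapes---broadening to non-pure complexes or switching the inductive parameter to $n-d$---are not developed beyond a sentence each, and neither comes with a mechanism for controlling ${\rm reg}(R/I_\Gamma)$. As written, the crux you identify at the end is exactly the missing idea, so the proposal is a plan with an acknowledged hole rather than a proof.
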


The dual notion to the independence complex of $I(\mathcal{C})$ is to
start with a complex $\Delta$ and associate to it an ideal whose
independence complex is $\Delta$.

\begin{definition} Given a simplicial complex $\Delta$ with vertex set
  $X=\{x_1,\ldots ,x_n\}$,  
the {\em Stanley-Reisner ideal\/} of $\Delta$ is defined as 
$$I_{\Delta}=\left(\{x_{i_1}\cdots x_{i_r}\vert\ i_1<\cdots < i_r 
{\rm,}\ \{x_{i_1},\ldots, x_{i_r}\}\notin \Delta\}\right),
$$ 
and its {\em Stanley-Reisner ring\/} $K[\Delta]$ 
is defined as the quotient ring $R/I_\Delta$.
\end{definition}

A simple proof the next result is given in \cite{Fro4}.

\begin{theorem} \cite{dsmith} \label{aug21-01}
Let $\mathcal{C}$ be a clutter and let $\Delta=\Delta_\mathcal{C}$ be
its independence complex. Then 
$$
{\rm depth}\, R/I(\mathcal{C})=1+\max\{i\, \vert\, K[\Delta^i]
\mbox{ is Cohen-Macaulay}\},
$$
where $\Delta^i=\{F\in\Delta\, \vert\, 
\dim(F)\leq i\}$ is the $i$-skeleton of $\Delta$ and $-1\leq
i\leq\dim(\Delta)$.
\end{theorem}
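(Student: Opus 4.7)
The plan is to combine Reisner's criterion in its depth-sensitive form with the elementary topological fact that taking the $k$-skeleton preserves reduced homology in dimensions strictly below $k$. The crucial algebraic ingredient I would invoke is the Reisner--Munkres formula
\[
\depth K[\Delta] = 1 + \min\bigl\{j + |F| : F \in \Delta,\ \tilde{H}_{j}(\mathrm{link}_\Delta(F); K) \neq 0\bigr\},
\]
which follows from Hochster's formula for the local cohomology of a Stanley--Reisner ring. The theorem will follow once I identify the right-hand side of the displayed expression with the minimum above.

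First I would record two simple facts. (1) For $F \in \Delta^i$, the identity
\[
\mathrm{link}_{\Delta^i}(F) = \bigl(\mathrm{link}_\Delta(F)\bigr)^{\,i - |F|}
\]
holds by unwinding definitions, since $F \cup G \in \Delta^i$ iff $|G| \le i+1 - |F|$. (2) For any simplicial complex $\Sigma$ and any nonnegative $k$, the inclusion $\Sigma^k \hookrightarrow \Sigma$ induces an isomorphism $\tilde{H}_j(\Sigma^k; K) \cong \tilde{H}_j(\Sigma; K)$ for $j < k$, because the chain complexes of $\Sigma^k$ and $\Sigma$ coincide in degrees $\le k$ and homology in degree $j$ only uses $C_{j-1}, C_j, C_{j+1}$.

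Now I would note that $\dim K[\Delta^i] = i+1$, so $K[\Delta^i]$ is Cohen--Macaulay exactly when $\depth K[\Delta^i] = i+1$. Applying Reisner's criterion (the equidimensional form) to $\Delta^i$, this is equivalent to
\[
\tilde{H}_j\bigl(\mathrm{link}_{\Delta^i}(F); K\bigr) = 0 \quad \text{for all } F \in \Delta^i \text{ and all } j < i - |F|.
\]
Using Fact (1) and then Fact (2) with $k = i - |F|$ (so the vanishing we need is in a strictly smaller degree), this is in turn equivalent to
\[
\tilde{H}_j\bigl(\mathrm{link}_\Delta(F); K\bigr) = 0 \quad \text{for all } F \in \Delta \text{ and all } j < i - |F|,
\]
where the restriction $F \in \Delta^i$ becomes vacuous because for $F \in \Delta \setminus \Delta^i$ the range $j < i - |F|$ is empty. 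Rephrasing, $K[\Delta^i]$ is Cohen--Macaulay iff $j + |F| \ge i$ for every pair $(F, j)$ with $\tilde{H}_j(\mathrm{link}_\Delta(F); K) \neq 0$.

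Therefore $\max\{i : K[\Delta^i] \text{ is Cohen--Macaulay}\}$ equals exactly the minimum $\min\{j + |F| : \tilde{H}_j(\mathrm{link}_\Delta(F); K) \neq 0\}$ appearing in the Reisner--Munkres formula, which is $\depth K[\Delta] - 1$. This yields the desired identity. The main obstacle, and the step I would treat most carefully, is Step (2)---ensuring the isomorphism of reduced homology in low degrees persists after passing to links and truncations---together with correctly citing (or reproving) the depth version of Reisner's criterion; once those are in place the rest is a bookkeeping translation between face-enumeration and skeleton indexing.
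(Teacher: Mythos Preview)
Your argument is correct. The key steps---the Reisner--Munkres depth formula derived from Hochster's local cohomology formula, the identification $\mathrm{link}_{\Delta^i}(F)=(\mathrm{link}_\Delta F)^{\,i-|F|}$, and the observation that the $k$-skeleton preserves reduced homology below degree $k$---are all sound, and the bookkeeping translating ``$K[\Delta^i]$ is Cohen--Macaulay'' into the inequality $j+|F|\ge i$ for every nonvanishing pair is carried out cleanly. One small point worth making explicit: when you invoke Reisner's criterion in the form ``$\tilde H_j(\mathrm{link}_{\Delta^i}(F);K)=0$ for all $j<i-|F|$'' you are using the version that simultaneously forces purity of $\Delta^i$; this is fine (a maximal face $G$ of dimension $<i$ would give $\tilde H_{-1}(\{\emptyset\})\ne 0$ with $-1<i-|G|$), but since $\Delta^i$ is not a priori pure it deserves a sentence.

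As for comparison with the paper: the paper does not supply its own proof of this theorem. It is quoted from Smith \cite{dsmith}, and the sentence immediately preceding the statement points to Fr\"oberg \cite{Fro4} for a short proof. The argument you give---reducing both sides to $1+\min\{j+|F|:\tilde H_j(\mathrm{link}_\Delta F;K)\ne 0\}$ via Hochster's formula---is essentially the standard route and is in the spirit of the proofs in those references, so there is no substantive divergence to discuss.
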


A variation on the concept of the $i$-skeleton will facilitate an
extension of the result above to the sequentially Cohen-Macaulay case.

\begin{definition} Let $\Delta$ be a simplicial complex. The {\it
pure $i$-skeleton\/} of 
$\Delta$ is defined as:
\begin{eqnarray*}
\Delta^{[i]}&=&\langle\{F\in\Delta\vert\, \dim(F)=i\}\rangle;\ \ -1\leq
i\leq\dim(\Delta),
\end{eqnarray*}
where $\langle{\mathcal F}\rangle$ denotes the subcomplex generated by 
$\mathcal F$.  
\end{definition}

Note that $\Delta^{[i]}$ is always pure of dimension $i$. We say that
a simplicial 
complex $\Delta$ 
is {\it sequentially Cohen-Macaulay} if its Stanley-Reisner ring has
this property. The following results link the sequentially
Cohen-Macaulay property to the Cohen-Macaulay property and to the
regularity  and projective dimension. The first is an interesting
result of Duval.  

\begin{theorem}{\rm \cite[Theorem~3.3]{duval}}\label{duval-theorem} A
simplicial complex $\Delta$ is  
sequentially Cohen-Macaulay if and only if the pure $i$-skeleton
$\Delta^{[i]}$  
is Cohen-Macaulay for  $-1\leq i\leq \dim(\Delta)$. 
\end{theorem}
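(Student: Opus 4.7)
The plan is to use the dimension filtration of $K[\Delta]$, identify its successive quotients via a short exact sequence with the Stanley--Reisner rings of the pure $i$-skeletons, and then apply local cohomology to obtain the equivalence. Set $d=\dim\Delta$ and, for $i=-1,0,\dots,d$, define $M_i := \{f \in K[\Delta] : \dim_R(Rf) \le i+1\}$, yielding a filtration $0=M_{-1}\subset M_0\subset\cdots\subset M_d=K[\Delta]$ of graded $R$-submodules whose nonzero subquotients $M_i/M_{i-1}$ have pure Krull dimension $i+1$. By Schenzel's characterization of the dimension filtration, $K[\Delta]$ is sequentially Cohen--Macaulay if and only if each nonzero quotient $M_i/M_{i-1}$ is Cohen--Macaulay of dimension $i+1$. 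The theorem is therefore reduced to proving that, for each $i$, $M_i/M_{i-1}$ is Cohen--Macaulay of the expected dimension if and only if $K[\Delta^{[i]}]$ is Cohen--Macaulay.

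The key combinatorial identification is as follows. A monomial $x^\alpha\in K[\Delta]$ with $\mathrm{supp}(\alpha)=F$ lies in $M_i\smallsetminus M_{i-1}$ exactly when the maximum dimension of a face of $\Delta$ containing $F$ equals $i$; that is, when $F\in\Delta^{[i]}$ but no face of $\Delta$ of dimension $>i$ contains $F$. Using this, one checks that the natural surjection $K[\Delta^{[i]}]\twoheadrightarrow K[\Sigma_i]$, where $\Sigma_i$ is the subcomplex of $\Delta^{[i]}$ consisting of faces contained in some face of $\Delta$ of dimension $>i$, has kernel canonically isomorphic to $M_i/M_{i-1}$ as a graded $R$-module. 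This gives the short exact sequence
\[
0 \longrightarrow M_i/M_{i-1} \longrightarrow K[\Delta^{[i]}] \longrightarrow K[\Sigma_i] \longrightarrow 0,
\]
in which all three modules are pure of Krull dimension $i+1$.

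Feeding this sequence into the long exact sequence in local cohomology and combining with Reisner's criterion, if $K[\Sigma_i]$ is known to be Cohen--Macaulay of dimension $i+1$, then $M_i/M_{i-1}$ and $K[\Delta^{[i]}]$ are simultaneously Cohen--Macaulay, since the vanishing of $H^j_\mathfrak{m}$ in degrees $j<i+1$ transfers between the outer two terms through the middle. Consequently, if every pure skeleton $K[\Delta^{[i]}]$ is Cohen--Macaulay, the dimension-filtration quotients are all Cohen--Macaulay and $\Delta$ is sequentially Cohen--Macaulay; conversely, sequential Cohen--Macaulayness of $\Delta$ forces each $M_i/M_{i-1}$ to be Cohen--Macaulay, whence each $K[\Delta^{[i]}]$ inherits the property via the short exact sequence.

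The main obstacle is establishing that $K[\Sigma_i]$ is itself Cohen--Macaulay, since $\Sigma_i$ is not a pure skeleton of $\Delta$ but rather the pure $i$-skeleton of $\bigcup_{j>i}\Delta^{[j]}$. I would handle this by induction on $\dim\Delta$, using the link identity $\mathrm{lk}_{\Delta^{[i]}}(F)=(\mathrm{lk}_\Delta(F))^{[i-|F|]}$ together with its analogue for $\Sigma_i$, which shows that every link in $\Sigma_i$ at a nonempty face inherits the same ``$\Sigma$-structure'' relative to $\mathrm{lk}_\Delta(F)$. Since $\dim\mathrm{lk}_\Delta(F)<\dim\Delta$ for $F\neq\emptyset$, the inductive hypothesis supplies the required Cohen--Macaulayness of the links, and Reisner's criterion closes the argument.
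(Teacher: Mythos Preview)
The paper does not prove this statement; it merely cites Duval's original paper, so there is no ``paper's own proof'' to compare against. Your overall strategy---identify the dimension-filtration quotients $M_i/M_{i-1}$ with the kernel of $K[\Delta^{[i]}]\twoheadrightarrow K[\Sigma_i]$ and transfer Cohen--Macaulayness through the short exact sequence---is a standard and sound route to Duval's theorem. The exact sequence is correct, as is the reduction to Schenzel's criterion.

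The genuine gap is in your handling of the ``main obstacle,'' the Cohen--Macaulayness of $\Sigma_i$. Your induction on $\dim\Delta$ via links cannot close: Reisner's criterion requires vanishing of $\widetilde{H}_j(\mathrm{lk}_{\Sigma_i}(F))$ for \emph{all} faces $F$, including $F=\emptyset$, and the link of the empty face is $\Sigma_i$ itself, which your induction never reaches. More decisively, $\Sigma_i$ is \emph{not} Cohen--Macaulay in general: take $\Delta$ to be two disjoint $2$-simplices; then $\Sigma_1$ is the $1$-skeleton, a disconnected $1$-dimensional complex with $\widetilde{H}_0\neq 0$. So you cannot hope to prove $\Sigma_i$ is Cohen--Macaulay unconditionally, and your inductive hypothesis (as stated, for arbitrary lower-dimensional complexes) is simply false.

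The repair is short. Observe that an $i$-face of $\Delta$ lies in a face of dimension $>i$ if and only if it lies in an $(i{+}1)$-face, so
\[
\Sigma_i \;=\; \bigl(\Delta^{[i+1]}\bigr)^{[i]} \;=\; \bigl(\Delta^{[i+1]}\bigr)^{i},
\]
the ordinary $i$-skeleton of the pure complex $\Delta^{[i+1]}$. Now run a \emph{descending} induction on $i$. For $i=d$ one has $\Sigma_d=\emptyset$, so $K[\Sigma_d]=0$ and $M_d/M_{d-1}\cong K[\Delta^{[d]}]$, settling the base case in both directions. For the inductive step, once $\Delta^{[i+1]}$ is known to be Cohen--Macaulay (either by hypothesis or by the previous step), the classical fact that every skeleton of a Cohen--Macaulay complex is Cohen--Macaulay gives $\Sigma_i$ Cohen--Macaulay of dimension $i+1$; your short exact sequence then transfers Cohen--Macaulayness between $M_i/M_{i-1}$ and $K[\Delta^{[i]}]$ exactly as you described.
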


\begin{corollary} $R/I(\mathcal{C})$ is Cohen-Macaulay if and only if $R/I(\mathcal{C})$ is sequentially
Cohen-Macaulay and $\mathcal{C}$ is unmixed.  
\end{corollary}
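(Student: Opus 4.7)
The plan is to handle the two implications separately and to use Duval's theorem (Theorem~\ref{duval-theorem}) as the main lever for the harder direction.

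For the forward direction, suppose $R/I(\mathcal{C})$ is Cohen-Macaulay. Then the filtration $(0) \subset R/I(\mathcal{C})$ exhibits it as sequentially Cohen-Macaulay, so that half is immediate from Definition~\ref{d.seqcm}. For unmixedness, I would invoke the standard fact that a Cohen-Macaulay ring has no embedded associated primes and all its minimal primes share a common height equal to its codimension; translated via the Stanley-Reisner correspondence (\ref{alexander-duality}), this forces every facet of $\Delta_\mathcal{C}$ to have the same dimension, i.e. $\Delta_\mathcal{C}$ is pure, which is exactly the definition of $\mathcal{C}$ being unmixed.

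For the reverse direction, assume $R/I(\mathcal{C})$ is sequentially Cohen-Macaulay and $\mathcal{C}$ is unmixed. Set $\Delta = \Delta_\mathcal{C}$ and $d = \dim(\Delta)$. The key observation is that because $\mathcal{C}$ is unmixed, every facet of $\Delta$ has dimension exactly $d$, so the top pure skeleton satisfies
\[
\Delta^{[d]} = \langle\{F \in \Delta \mid \dim F = d\}\rangle = \Delta,
\]
since every face of $\Delta$ lies under some $d$-dimensional facet. By Duval's theorem, sequential Cohen-Macaulayness of $\Delta$ implies that $\Delta^{[i]}$ is Cohen-Macaulay for every $-1 \leq i \leq d$; applying this with $i = d$ gives that $\Delta$ itself is Cohen-Macaulay, hence so is $R/I(\mathcal{C}) = K[\Delta]$.

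I do not expect a major obstacle here: the only nontrivial input is Duval's theorem, which is cited and available, and the role of unmixedness is precisely to collapse the range of nontrivial pure skeleta in Theorem~\ref{duval-theorem} to the single skeleton $\Delta^{[d]} = \Delta$. The mildest subtlety is the purity argument in the forward direction, but it is a standard consequence of the Cohen-Macaulay property for Stanley-Reisner rings and could alternatively be attributed to \cite{BHer} or \cite{Stanley}.
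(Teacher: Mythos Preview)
Your proof is correct and follows exactly the approach implied by the paper, which states the corollary immediately after Duval's theorem (Theorem~\ref{duval-theorem}) with no explicit proof: the forward direction is standard, and the reverse direction is the observation that purity forces $\Delta^{[d]}=\Delta$, so Duval's criterion at $i=d$ yields Cohen--Macaulayness.
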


\begin{lemma}\label{pure-skel-vs-skel} Let $\mathcal{C}$ be a clutter and let
$\Delta=\Delta_\mathcal{C}$ be its independence complex. If
$\beta_0'(\mathcal{C})$ is the cardinality of a 
smallest maximal independent set of $\mathcal{C}$, then
$\Delta^{[i]}=\Delta^i$ for 
$i\leq\beta_0'(\mathcal{C})-1$. 
\end{lemma}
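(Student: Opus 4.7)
The proof is a short combinatorial argument; the key point is just to unwind the definitions and use that the smallest maximal independent set has at least $\beta_0'(\C)$ elements.

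The plan is to establish the two inclusions $\Delta^{[i]}\subseteq\Delta^i$ and $\Delta^i\subseteq\Delta^{[i]}$ separately, with the first one essentially tautological. Indeed, $\Delta^{[i]}$ is by definition the subcomplex generated by the $i$-dimensional faces of $\Delta$, hence every face of $\Delta^{[i]}$ is a subset of some face of dimension $i$ in $\Delta$, and therefore itself has dimension at most $i$. This gives $\Delta^{[i]}\subseteq\Delta^i$ without any hypothesis on $i$.

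For the reverse inclusion $\Delta^i\subseteq\Delta^{[i]}$, I would take an arbitrary $F\in\Delta^i$, so $F$ is an independent set of $\mathcal C$ with $|F|=\dim(F)+1\leq i+1$. The idea is to exhibit an $i$-dimensional face of $\Delta$ containing $F$. Extend $F$ to a maximal independent set $G$ of $\mathcal C$. By definition of $\beta_0'(\mathcal C)$, any maximal independent set has at least $\beta_0'(\mathcal C)$ elements, so
\[
|G|\ \geq\ \beta_0'(\mathcal C)\ \geq\ i+1\ \geq\ |F|.
\]
Since $F\subseteq G$ and $|G|\geq i+1\geq |F|$, one can choose a subset $H$ with $F\subseteq H\subseteq G$ and $|H|=i+1$. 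Any subset of an independent set is independent, so $H$ is an $i$-dimensional face of $\Delta$, and $F\subseteq H$ places $F$ in $\Delta^{[i]}$.

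There is no real obstacle here; the only thing to be a little careful about is the indexing shift between cardinality and dimension ($\dim(F)=|F|-1$) and to invoke the correct definition of $\beta_0'(\mathcal C)$ as the minimum cardinality of a \emph{maximal} (not maximum) independent set, which is precisely what guarantees that every extension of $F$ to a maximal independent set is large enough to trim back down to an $(i+1)$-element independent set containing $F$.
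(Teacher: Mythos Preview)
Your proof is correct and follows essentially the same approach as the paper's. The only difference is cosmetic: where the paper simply asserts that $F$ can be extended to an independent set with $i+1$ vertices, you make this explicit by first extending to a maximal independent set $G$ (which has at least $\beta_0'(\mathcal{C})\geq i+1$ elements) and then trimming back to a subset $H$ of size exactly $i+1$.
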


\begin{proof} First we prove the inclusion 
$\Delta^{[i]}\subset\Delta^i$. Let $F$ be a face of $\Delta^{[i]}$.
Then $F$ is contained in a face of $\Delta$ of dimension $i$, and so
$F$ is in $\Delta^i$. Conversely, let $F$ be a face of $\Delta^i$.
Then
$$
\dim(F)\leq i\leq\beta_0'(\mathcal{C})-1\ \Longrightarrow\ |F|\leq
i+1\leq\beta_0'(\mathcal{C}) .
$$
Since $\beta_0'(\mathcal{C})$ is the cardinality of any 
smallest maximal independent set of $\mathcal{C}$, we can extend $F$
to an independent set of $\mathcal{C}$ with $i+1$ vertices. Thus $F$
is in $\Delta^{[i]}$. 
\end{proof}

While $\beta_0'$ regulates the equality of the $i$-skeleton and the
pure $i$-skeleton of the independence complex, its complement provides
a lower bound for the regularity of the ideal of covers.  

\begin{theorem}\label{reg-vila-morey-scm} Let $\mathcal{C}$ be a
clutter, 
let $I_c(\mathcal{C})$ be its ideal of vertex covers, and let
$\alpha_0'(\mathcal{C})$ be the cardinality of a largest minimal vertex
cover of $\mathcal{C}$. Then 
$$ 
{\rm reg}\, R/I_c(\mathcal{C})\geq \alpha_0'(\mathcal{C})-1,
$$
with equality if $R/I(\mathcal{C})$ is sequentially Cohen-Macaulay. 
\end{theorem}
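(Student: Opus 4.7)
The plan is to convert the statement about the regularity of $I_c(\mathcal{C})$ into a statement about the depth of $R/I(\mathcal{C})$ via duality, and then exploit Smith's theorem together with Duval's characterization of sequential Cohen-Macaulayness. The key translation comes from applying Theorem~\ref{terai-formula} (together with the height-one corollary just below it) to the clutter $\mathcal{C}^\vee$, noting that $I_c(\mathcal{C}^\vee)=I(\mathcal{C})$; this yields
$$
{\rm reg}\, R/I_c(\mathcal{C})={\rm pd}(R/I(\mathcal{C}))-1,
$$
which combined with the Auslander--Buchsbaum formula \eqref{auslander-buchsbaum-formula} gives ${\rm reg}\, R/I_c(\mathcal{C})=n-{\rm depth}(R/I(\mathcal{C}))-1$. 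Also, by complementation, the minimal vertex covers of $\mathcal{C}$ are precisely the complements of the maximal independent sets, so $\alpha_0'(\mathcal{C})=n-\beta_0'(\mathcal{C})$. Thus the desired inequality ${\rm reg}\, R/I_c(\mathcal{C})\geq \alpha_0'(\mathcal{C})-1$ is equivalent to ${\rm depth}(R/I(\mathcal{C}))\leq\beta_0'(\mathcal{C})$, and the claimed equality amounts to equality here.

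For the inequality, I would invoke the standard fact that ${\rm depth}(R/I)\leq\dim(R/\mathfrak{p})$ for every $\mathfrak{p}\in{\rm Ass}(R/I)$. Since $I(\mathcal{C})$ is a squarefree monomial ideal, its associated primes are the prime ideals $\mathfrak{p}_C=(x_i\mid x_i\in C)$ as $C$ ranges over the minimal vertex covers of $\mathcal{C}$, and $\dim R/\mathfrak{p}_C=n-|C|$. Taking the minimum over all minimal covers $C$ gives $n-\alpha_0'(\mathcal{C})=\beta_0'(\mathcal{C})$, so ${\rm depth}(R/I(\mathcal{C}))\leq\beta_0'(\mathcal{C})$, establishing the bound.

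For the reverse inequality under the sequentially Cohen-Macaulay assumption, I would apply Smith's theorem (Theorem~\ref{aug21-01}) to $\Delta=\Delta_\mathcal{C}$, which gives ${\rm depth}\, R/I(\mathcal{C})=1+\max\{i\mid K[\Delta^i]\text{ is Cohen-Macaulay}\}$. By Lemma~\ref{pure-skel-vs-skel}, $\Delta^i=\Delta^{[i]}$ whenever $i\leq\beta_0'(\mathcal{C})-1$; by Duval's Theorem~\ref{duval-theorem}, the sequentially Cohen-Macaulay hypothesis forces every pure skeleton $\Delta^{[i]}$ to be Cohen-Macaulay, and in particular $\Delta^{\beta_0'(\mathcal{C})-1}=\Delta^{[\beta_0'(\mathcal{C})-1]}$ is Cohen-Macaulay. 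Smith's formula then gives ${\rm depth}\, R/I(\mathcal{C})\geq\beta_0'(\mathcal{C})$, completing the equality. The only real subtlety is making sure the duality formula from Theorem~\ref{terai-formula} is applied to $\mathcal{C}^\vee$ in the right direction and that the trivial edge-free case is set aside; the rest is a direct assembly of the cited results.
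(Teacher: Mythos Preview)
Your proof is correct, and the equality half (via Lemma~\ref{pure-skel-vs-skel}, Duval's Theorem~\ref{duval-theorem}, and Smith's Theorem~\ref{aug21-01}) is exactly what the paper does.

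For the inequality ${\rm depth}\, R/I(\mathcal{C})\leq\beta_0'(\mathcal{C})$, however, you take a different route. You invoke the general fact that ${\rm depth}\, M\leq\dim R/\mathfrak{p}$ for every $\mathfrak{p}\in{\rm Ass}(M)$ and apply it to the associated prime of $I(\mathcal{C})$ coming from a largest minimal vertex cover. The paper instead stays inside the Smith framework: if $K[\Delta^i]$ is Cohen--Macaulay then $\Delta^i$ is pure, but a maximal independent set of size $\beta_0'(\mathcal{C})$ is a facet of $\Delta^i$ of dimension $\beta_0'(\mathcal{C})-1$, so purity forces $i\leq\beta_0'(\mathcal{C})-1$; hence the maximum in Theorem~\ref{aug21-01} is at most $\beta_0'(\mathcal{C})-1$. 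Your argument is shorter and avoids the combinatorics of skeleta; the paper's version has the aesthetic advantage of using the same tool (Theorem~\ref{aug21-01}) for both directions. The paper also remarks, just after the proof, that the inequality follows even more directly from the fact that ${\rm reg}\, I_c(\mathcal{C})$ bounds the maximal degree of a minimal generator of $I_c(\mathcal{C})$, which is $\alpha_0'(\mathcal{C})$---a third approach you might mention as well.
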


\begin{proof} We set 
$\beta_0'(\mathcal{C})=n-\alpha_0'(\mathcal{C})$. Using Theorem~\ref{terai-formula} and the
Auslander-Buchsbaum formula (see Equation (\ref{auslander-buchsbaum-formula})), the proof
reduces to showing: ${\rm depth}\, R/I(\mathcal{C})\leq \beta_0'(\mathcal{C})$, with equality 
if $R/I(\mathcal{C})$ is sequentially Cohen-Macaulay. 

First we show that ${\rm
depth}\, R/I(\mathcal{C})\leq\beta_0'(\mathcal{C})$.  Assume
$\Delta^i$ is Cohen-Macaulay for some $-1\leq
i\leq\dim(\Delta)$, where $\Delta$ is the independence
complex of $\mathcal{C}$. According to 
Theorem~\ref{aug21-01}, it suffices to prove that 
$1+i\leq\beta_0'(\mathcal{C})$. Notice that $\beta_0'(\mathcal{C})$ is the cardinality of any 
smallest maximal independent set of $\mathcal{C}$. Thus, we can pick 
a maximal independent set $F$ of $\mathcal{C}$ with $\beta_0'(\mathcal{C})$ 
vertices. Since $\Delta^i$ is Cohen-Macaulay, the complex $\Delta^i$ 
is pure, that is, all maximal faces of $\Delta$ have 
dimension $i$. If $1+i>\beta_0'(\mathcal{C})$, then $F$ is a maximal
face of $\Delta^i$ of dimension $\beta_0'(\mathcal{C})-1$, a
contradiction to the purity of $\Delta^i$. 

Assume that $R/I(\mathcal{C})$ is sequentially Cohen-Macaulay. By
Lemma~\ref{pure-skel-vs-skel} $\Delta^{[i]}=\Delta^i$ for
$i\leq \beta_0'(\mathcal{C})-1$. Then by
Theorem~\ref{duval-theorem}, the ring $K[\Delta^i]$ is Cohen-Macaulay
for $i\leq \beta_0'(\mathcal{C})-1$. Therefore, applying
Theorem~\ref{aug21-01}, we get that the depth of
$R/I(\mathcal{C})$ is at least $\beta_0'(\mathcal{C})$. Consequently,
in this case one has the equality ${\rm depth}\,
R/I(\mathcal{C})=\beta_0'(\mathcal{C})$.  
\end{proof}

The inequality in Theorem~\ref{reg-vila-morey-scm} also follows 
directly from the definition of regularity because ${\rm
reg}(I_c(\mathcal{C}))$ is an upper bound for the largest degree of a
minimal generator of $I_c(\mathcal{C})$.   

\begin{remark}\label{remark-alpha} $\alpha_0'(\mathcal{C})$ is $\max\{|e|\colon 
e\in E(\mathcal{C}^\vee)\}$ and $\alpha_0'(\mathcal{C}^\vee)$ is $\max\{|e|\colon 
e\in E(\mathcal{C})\}$. This follows by Alexander duality, see
Eq.~(\ref{alexander-duality}).  
\end{remark}

\begin{corollary}\label{vila-morey-pd} If $I(\mathcal{C})$ is an edge ideal, 
then ${\rm pd}_R(R/I(\mathcal{C}))\geq\alpha_0'(\mathcal{C})$, with
equality if $R/I(\mathcal{C})$ is sequentially Cohen-Macaulay.
\end{corollary}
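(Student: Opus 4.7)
The plan is to read the content of Corollary \ref{vila-morey-pd} off from the proof of Theorem \ref{reg-vila-morey-scm} via the Auslander-Buchsbaum formula. The proof of that theorem already carried out a reduction to the statement
$$\mathrm{depth}\, R/I(\mathcal{C}) \leq \beta_0'(\mathcal{C}),$$
with equality when $R/I(\mathcal{C})$ is sequentially Cohen-Macaulay, and this depth bound is precisely the ingredient needed here.

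The first step is to record the identity $\alpha_0'(\mathcal{C}) + \beta_0'(\mathcal{C}) = n$. This follows because the map $C \mapsto X \setminus C$ is an inclusion-reversing bijection between vertex covers and independent sets that sends minimal covers to maximal independent sets; consequently the largest minimal vertex cover is complementary to the smallest maximal independent set. This is the minimal/maximal analogue of the standard identity $\alpha_0(\mathcal{C}) + \beta_0(\mathcal{C}) = n$ mentioned in the introduction.

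Applying the Auslander-Buchsbaum formula (Equation (\ref{auslander-buchsbaum-formula})) together with $\mathrm{depth}(R) = n$ then yields
$$\mathrm{pd}_R(R/I(\mathcal{C})) \;=\; n - \mathrm{depth}\, R/I(\mathcal{C}) \;\geq\; n - \beta_0'(\mathcal{C}) \;=\; \alpha_0'(\mathcal{C}),$$
with equality exactly when the depth bound is an equality, i.e., when $R/I(\mathcal{C})$ is sequentially Cohen-Macaulay.

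There is essentially no obstacle here, since the substantive work was done in Theorem \ref{reg-vila-morey-scm}. As an alternative route one could apply Terai's formula (Theorem \ref{terai-formula}) to the ideal $I(\mathcal{C}^\vee) = I_c(\mathcal{C})$ and use $(\mathcal{C}^\vee)^\vee = \mathcal{C}$ to obtain $\mathrm{pd}\, R/I(\mathcal{C}) = 1 + \mathrm{reg}\, R/I_c(\mathcal{C})$, then invoke Theorem \ref{reg-vila-morey-scm} directly; however this introduces the mild height-two hypothesis on $I_c(\mathcal{C})$ that the Auslander-Buchsbaum route bypasses entirely.
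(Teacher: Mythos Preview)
Your proposal is correct and matches the paper's own argument: the paper simply says ``It follows from the proof of Theorem~\ref{reg-vila-morey-scm},'' and that proof establishes exactly the depth bound $\mathrm{depth}\, R/I(\mathcal{C}) \leq \beta_0'(\mathcal{C}) = n - \alpha_0'(\mathcal{C})$ (with equality in the sequentially Cohen--Macaulay case), from which the projective-dimension statement follows by Auslander--Buchsbaum just as you wrote.
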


\begin{proof} It follows from the proof of
Theorem~\ref{reg-vila-morey-scm}.
\end{proof}


There are many interesting classes of sequentially Cohen-Macaulay
clutters where this formula for the projective dimension applies 
(see Theorem~\ref{scm-clutters}). The projective dimension of edge
ideals of forests was studied in \cite{dochtermann,Ha-VanTuyl}, where
some recursive formulas are presented. Explicit formulas for the
projective dimension for some path ideals of directed rooted trees
can be found in \cite[Theorem~1.2]{He-VanTuyl}. Path ideals of
directed graphs were introduced by Conca and De Negri
\cite{ConcaDeNegri}. Fix an integer $t\geq 2$, and suppose that
$\mathcal{D}$ is a directed graph, i.e., each edge
has been assigned a direction.  A sequence of $t$ vertices
$x_{i_1},\ldots,x_{i_t}$ is said to be a {\it path} of length
$t$ if there are $t-1$ distinct edges $e_1,\ldots,e_{t-1}$ such that
$e_j = (x_{i_j},x_{i_{j+1}})$ is a directed
edge from $x_{i_j}$ to $x_{i_{j+1}}$.  The {\it path ideal}
of $\mathcal{D}$ of length $t$, denoted by $I_t(\mathcal{D})$, is the
ideal generated by
all monomials $x_{i_1}\cdots x_{i_t}$ such that $x_{i_1},\ldots,
x_{i_t}$ is a path of length $t$ in $\mathcal{D}$.
Note that when $t=2$, then $I_2(\mathcal{D})$ is simply the
edge ideal of $\mathcal{D}$.

\begin{example}\label{exvavi}\rm Let $K$ be any field and let
$G$ be the following chordal graph

\vspace{1.5cm}
$
\setlength{\unitlength}{.020cm}
\thicklines\begin{picture}(-15,0)(-300,0)

\put(-30,-30){\circle*{6.1}}
\put(30,30){\circle*{6.1}}
\put(30,-30){\circle*{6.1}}
\put(-30,30){\circle*{6.1}}

\put(-60,-30){\circle*{6.1}}
\put(-30,-60){\circle*{6.1}}
\put(-50,-50){\circle*{6.1}}

\put(60,30){\circle*{6.1}}
\put(30,60){\circle*{6.1}}
\put(50,50){\circle*{6.1}}

\put(-60,30){\circle*{6.1}}
\put(-30,60){\circle*{6.1}}
\put(-50,50){\circle*{6.1}}

\put(60,-30){\circle*{6.1}}
\put(30,-60){\circle*{6.1}}
\put(50,-50){\circle*{6.1}}

\put(-60,-30){\line(1,0){120}}
\put(-30,-60){\line(0,1){120}}
\put(-50,-50){\line(1,1){100}}
\put(60,30){\line(-1,0){120}}
\put(30,60){\line(0,-1){120}}
\put(-50,50){\line(1,-1){100}}

\put(-26,-42){\small$x_1$}
\put(9,37){\small$x_9$}
\put(-26,37){\small$x_5$}
\put(2,-42){\small$x_{13}$}

\put(-82,-32){\small$x_4$}
\put(-60,-63){\small$x_3$}
\put(-32,-73){\small$x_2$}

\put(65,28){\small$x_{12}$}
\put(55,55){\small$x_{11}$}
\put(26,67){\small$x_{10}$}

\put(67,-32){\small$x_{14}$}
\put(55,-57){\small$x_{15}$}
\put(28,-73){\small$x_{16}$}

\put(-82,28){\small$x_6$}
\put(-59,57){\small$x_7$}
\put(-32,68){\small$x_8$}

\end{picture}
$
\vspace{1.7cm}

\noindent Then, by Theorem~\ref{scm-clutters} and
Corollary~\ref{reg-vila-morey-scm}, 
we get ${\rm pd}_R(R/I(G))=6$ and ${\rm depth}\, R/I(G)=10$.  
\end{example}

\begin{corollary}\label{reg-linear-quotients} Let $\mathcal{C}$ be a
clutter. If $I(\mathcal{C})$ has linear quotients, then 
$$
{\rm reg}\, R/I(\mathcal{C})=\max\{|e|\colon\, e\in E(\mathcal{C})\}-1.
$$ 
\end{corollary}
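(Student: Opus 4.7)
The plan is to apply the results established earlier for sequentially Cohen-Macaulay clutters, namely Theorem~\ref{scm-clutters}(c) and Theorem~\ref{reg-vila-morey-scm}, with Alexander duality used to swap roles between $\mathcal{C}$ and $\mathcal{C}^\vee$.

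First I would note the identity $I_c(\mathcal{C}^\vee)=I(\mathcal{C})$, which follows from $(\mathcal{C}^\vee)^\vee=\mathcal{C}$ and the duality recorded in Eq.~(\ref{alexander-duality}). Under this identification, the hypothesis that $I(\mathcal{C})$ has linear quotients is exactly the statement that the ideal of covers of $\mathcal{C}^\vee$ has linear quotients. Theorem~\ref{scm-clutters}(c) then applies to $\mathcal{C}^\vee$ and yields that $\mathcal{C}^\vee$ is sequentially Cohen-Macaulay; equivalently, $R/I(\mathcal{C}^\vee)=R/I_c(\mathcal{C})$ is sequentially Cohen-Macaulay.

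Next I would apply Theorem~\ref{reg-vila-morey-scm} with $\mathcal{C}$ replaced by $\mathcal{C}^\vee$. The sequential Cohen-Macaulayness established in the previous step is exactly the hypothesis needed for the equality case, giving
$$
{\rm reg}\, R/I_c(\mathcal{C}^\vee)=\alpha_0'(\mathcal{C}^\vee)-1.
$$
Finally, invoking Remark~\ref{remark-alpha}, which identifies $\alpha_0'(\mathcal{C}^\vee)$ with $\max\{|e|\colon e\in E(\mathcal{C})\}$, and rewriting $I_c(\mathcal{C}^\vee)=I(\mathcal{C})$, I would conclude
$$
{\rm reg}\, R/I(\mathcal{C})=\max\{|e|\colon e\in E(\mathcal{C})\}-1,
$$
as desired.

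The only real obstacle is bookkeeping: the theorems from the earlier sections are each stated in terms of a single clutter, but the invariants $\alpha_0'(\mathcal{C})$ and $\max\{|e|\colon e\in E(\mathcal{C})\}$ swap under the Alexander duality operation $\mathcal{C}\leftrightarrow\mathcal{C}^\vee$, so one must apply each result to the correct side of the duality. Once the roles are kept straight, the argument is a two-line chain of citations and requires no new computation.
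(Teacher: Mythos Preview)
Your proposal is correct and follows essentially the same route as the paper: apply Theorem~\ref{scm-clutters}(c) to $\mathcal{C}^\vee$ to get sequential Cohen--Macaulayness, then invoke the equality case of Theorem~\ref{reg-vila-morey-scm} (again for $\mathcal{C}^\vee$) together with Remark~\ref{remark-alpha}. The paper's proof cites Theorem~\ref{scm-clutters}(d) rather than (c), but this appears to be a typographical slip; your citation of (c) is the correct one.
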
 

\begin{proof} The ideal of covers $I_c(\mathcal{C})$ is sequentially 
Cohen-Macaulay by Theorem~\ref{scm-clutters}(d). Hence, using
Theorem~\ref{reg-vila-morey-scm}, we get ${\rm
reg}(R/I(\mathcal{C}))=\alpha_0'(\mathcal{C}^\vee)-1$. To complete the
proof notice that $\alpha_0'(\mathcal{C}^\vee)=\max\{|e|\colon\, e\in
E(\mathcal{C})\}$ (see Remark~\ref{remark-alpha}).
\end{proof}

The converse of Theorem~\ref{vila-morey-pd} is not true.

\begin{example} Let $C_6$ be a cycle of length $6$. Then $R/I(C_6)$ is not
sequentially Cohen-Macaulay by Proposition~\ref{only-scm-cycles}. Using
{\em Macaulay\/}$2$, we get ${\rm pd}(R/I(C_6))=\alpha_0'(C_6)=4$.
\end{example}

When $R/I$ is not known to be Cohen-Macaulay, it can prove useful to
have effective bounds on the depth of $R/I$. 

\begin{theorem} Let $G$ be a bipartite graph without 
isolated vertices. If $G$ has $n$ vertices, then
$$
{\rm depth}\, R/I(G)\leq
\left\lfloor\frac{n}{2}\right\rfloor.
$$
\end{theorem}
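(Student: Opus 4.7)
The plan is to combine the Auslander--Buchsbaum formula with the lower bound on the projective dimension provided by Corollary~\ref{vila-morey-pd}. Recall that this corollary says ${\rm pd}_R(R/I(\C))\geq \alpha_0'(\C)$, where $\alpha_0'(\C)$ is the cardinality of a largest minimal vertex cover. Thus it suffices to show $\alpha_0'(G)\geq\lceil n/2\rceil$, because then
\[
{\rm depth}(R/I(G)) \;=\; n-{\rm pd}_R(R/I(G)) \;\leq\; n-\lceil n/2\rceil \;=\; \lfloor n/2\rfloor
\]
by Equation~(\ref{auslander-buchsbaum-formula}).

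To produce a large minimal vertex cover, I would first fix a bipartition $V(G)=X\cup Y$ and assume without loss of generality that $|Y|\geq|X|$, so that $|Y|\geq\lceil n/2\rceil$. Since every edge of $G$ has one endpoint in $X$ and one in $Y$, both $X$ and $Y$ are vertex covers of $G$. The key observation is that $Y$ is in fact a \emph{minimal} vertex cover: for any $y\in Y$, the hypothesis that $G$ has no isolated vertices guarantees that $y$ has at least one neighbor $x\in X$, and since $G$ is bipartite the edge $\{x,y\}$ is not covered by $Y\setminus\{y\}$. Hence $\alpha_0'(G)\geq|Y|\geq\lceil n/2\rceil$, and the chain of inequalities above yields the desired bound.

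No step here looks genuinely hard: the only subtle point is verifying that $Y$ is minimal (not merely a vertex cover), which is exactly where the no-isolated-vertices hypothesis is used, and matching parity to pass between $\lceil n/2\rceil$ and $\lfloor n/2\rfloor$. Everything else is a direct invocation of Corollary~\ref{vila-morey-pd} and Equation~(\ref{auslander-buchsbaum-formula}).
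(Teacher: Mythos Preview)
Your proof is correct and essentially the same as the paper's: both use the bipartition together with the no-isolated-vertices hypothesis to bound $\alpha_0'(G)$ (equivalently $\beta_0'(G)=n-\alpha_0'(G)$), and then invoke Corollary~\ref{vila-morey-pd} and the Auslander--Buchsbaum formula. The only cosmetic difference is that the paper phrases the combinatorial step dually, exhibiting the smaller side of the bipartition as a maximal independent set (so $\beta_0'(G)\leq \lfloor n/2\rfloor$) rather than the larger side as a minimal vertex cover.
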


\begin{proof}
Let $(V_1,V_2)$ be a bipartition of $G$ with $|V_1|\leq |V_2|$.
Note $2|V_1|\leq n$ because  
$|V_1|+|V_2|=n$. Since $V_1$ is a maximal 
independent set of vertices one has $\beta_0'(G)\leq |V_1|\leq n/2$. 
Therefore, using Corollary~\ref{vila-morey-pd} and the
Auslander-Buchsbaum formula, we get ${\rm depth}\, R/I(G)\leq
n/2$.
\end{proof}

\begin{corollary}
If\/ $G$ is a $B$-graph with $n$ vertices, then ${\rm depth}\,
R/I(G)\leq \dim R/I(G)\leq
\left\lfloor\frac{n}{2}\right\rfloor$.
\end{corollary}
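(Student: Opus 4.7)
The plan is to peel the inequality into its two parts. The left inequality $\operatorname{depth}R/I(G)\leq \dim R/I(G)$ is just the standard relation recorded at the start of Section~\ref{algebraic-invariants}, so nothing new is needed there.

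For the right inequality, I would first translate the Krull dimension into a combinatorial invariant. Since $I(G)$ is a squarefree monomial ideal, its minimal primes correspond to the minimal vertex covers of $G$, and in particular $\operatorname{ht}I(G)=\alpha_0(G)$, as noted in the introduction. Therefore
\[
\dim R/I(G)=n-\operatorname{ht}I(G)=n-\alpha_0(G)=\beta_0(G),
\]
using the identity $n=\alpha_0(G)+\beta_0(G)$ recalled at the start of Section~2.

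Now I would invoke Theorem~\ref{teo1}, which applies precisely because $G$ is a $B$-graph (and has no isolated vertices, by definition of $B$-graph): it gives $\beta_0(G)\leq\alpha_0(G)$. Combined with $\alpha_0(G)+\beta_0(G)=n$, this yields $2\beta_0(G)\leq n$, hence $\beta_0(G)\leq\lfloor n/2\rfloor$ since $\beta_0(G)$ is an integer. Putting the pieces together,
\[
\operatorname{depth}R/I(G)\leq \dim R/I(G)=\beta_0(G)\leq\left\lfloor\tfrac{n}{2}\right\rfloor,
\]
which is exactly what is claimed. There is no real obstacle here; the only thing to be careful about is citing Theorem~\ref{teo1} properly and the passage from $\beta_0(G)\leq n/2$ to the floor, which is automatic since $\beta_0(G)\in\mathbb{Z}$.
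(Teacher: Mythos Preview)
Your proof is correct and follows essentially the same approach as the paper's own proof: both use $\dim R/I(G)=\beta_0(G)$ together with Theorem~\ref{teo1} to obtain $\beta_0(G)\leq\lfloor n/2\rfloor$. Your version is slightly more detailed in justifying $\dim R/I(G)=\beta_0(G)$ via the height, but the argument is the same.
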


\begin{proof} Recall that $n=\alpha_0(G) + \beta_0(G)$. By
  Theorem~\ref{teo1}, $\beta_0(G) \leq \alpha_0(G)$, and so
  $\beta_0(G) \leq \left\lfloor\frac{n}{2}\right\rfloor$. The result
  now follows because  
$\beta_0(G)=\dim R/I(\mathcal{C})$.
\end{proof}

Lower bounds are given in \cite{morey} for the depths of $R/I(G)^t$ for
$t\geq 1$ when $I(G)$ is the edge ideal of a tree or forest. Upper
bounds for the depth of $R/I(G)$ are given in \cite[Corollary
4.15]{bounds} when $G$ is any graph without isolated vertices. The 
depth and the Cohen-Macaulay property of ideals of mixed products is
studied in \cite{ionescu-rinaldo}. 

We close this section with an upper bound for the multiplicity of
edge rings. Let $\mathcal{C}$ be a clutter.  The {\it multiplicity\/} of the 
edge-ring $R/I(\mathcal{C})$, denoted by $e(R/I(\mathcal{C}))$, equals the number of 
faces of maximum dimension of the independence complex
$\Delta_\mathcal{C}$, i.e., the multiplicity
of $R/I(\mathcal{C})$ equals the number of independent sets of
$\mathcal{C}$ with $\beta_0(\mathcal{C})$ vertices. A related
invariant that was considered earlier is ${\rm arith}$-${\rm
  deg}(I(\mathcal{C}))$, the number of maximal independent sets of $\mathcal{C}$. 

\begin{proposition} \cite{bounds} \label{aug22-01}
If $\mathcal{C}$ is a $d$-uniform clutter and $I=I(\mathcal{C})$, then $e(R/I)\leq
d^{\alpha_0(\mathcal{C})}$.
\end{proposition}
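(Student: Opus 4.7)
The plan is to prove the combinatorial inequality $f(\mathcal{C}) \leq d^{\alpha_0(\mathcal{C})}$, where $f(\mathcal{C})$ is the number of maximum independent sets of $\mathcal{C}$; the proposition then follows at once from the identification $e(R/I(\mathcal{C})) = f(\mathcal{C})$ recalled immediately before the statement. I argue by induction on $\alpha_0(\mathcal{C})$, relying throughout on the basic identity $n = \alpha_0(\mathcal{C}) + \beta_0(\mathcal{C})$.

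For the base case $\alpha_0(\mathcal{C}) = 0$ one has $E(\mathcal{C}) = \emptyset$, so the full vertex set is the unique maximum independent set and $f(\mathcal{C}) = 1 = d^0$. For the inductive step, pick any edge $e = \{v_1, \ldots, v_d\}$ of $\mathcal{C}$. Since $e \not\subset F$ whenever $F$ is independent, each maximum independent set of $\mathcal{C}$ omits at least one $v_i$, and the union bound gives
\[
f(\mathcal{C}) \;\leq\; \sum_{i=1}^{d} f_i, \qquad f_i \;:=\; \bigl|\{F \text{ maximum independent in } \mathcal{C} : v_i \notin F\}\bigr|.
\]
For each $i$, let $\mathcal{C}_i$ be the $d$-uniform clutter on $V(\mathcal{C}) \setminus \{v_i\}$ obtained by deleting $v_i$ together with all edges through it. A subset $F \subseteq V(\mathcal{C}) \setminus \{v_i\}$ is independent in $\mathcal{C}_i$ if and only if it is independent in $\mathcal{C}$ (edges of $\mathcal{C}$ through $v_i$ are automatically not contained in $F$), which forces $\beta_0(\mathcal{C}_i) \in \{\beta_0(\mathcal{C}) - 1,\ \beta_0(\mathcal{C})\}$. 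If $\beta_0(\mathcal{C}_i) = \beta_0(\mathcal{C}) - 1$ then no maximum independent set of $\mathcal{C}$ avoids $v_i$, so $f_i = 0$; otherwise $f_i = f(\mathcal{C}_i)$, and since $\alpha_0(\mathcal{C}_i) = (n-1) - \beta_0(\mathcal{C}) = \alpha_0(\mathcal{C}) - 1$ the inductive hypothesis yields $f_i \leq d^{\alpha_0(\mathcal{C}) - 1}$. Summing the $d$ terms gives $f(\mathcal{C}) \leq d^{\alpha_0(\mathcal{C})}$.

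The only point requiring care is the bookkeeping around deletion — checking that independence restricts correctly and that the duality $n = \alpha_0 + \beta_0$ passes to $\mathcal{C}_i$ — but both facts are immediate from the definitions, so no serious obstacle arises. The bound is sharp, as witnessed by a disjoint union of $\alpha_0$ edges of size $d$, for which both sides equal $d^{\alpha_0}$.
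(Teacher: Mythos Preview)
Your argument is correct. The induction on $\alpha_0(\mathcal{C})$ works cleanly: the union bound over the vertices of a single edge is the right move, and your case analysis on $\beta_0(\mathcal{C}_i)$ correctly handles both possibilities, with the duality $|V| = \alpha_0 + \beta_0$ ensuring the covering number drops by exactly one in the nontrivial case.

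Note, however, that the paper does not actually prove this proposition; it merely cites it from \cite{bounds} (Gitler--Valencia), so there is no in-paper argument to compare against. Your proof is a clean, self-contained combinatorial argument that could serve as a proof here. The sharpness remark (a perfect matching of $\alpha_0$ disjoint $d$-edges) is a nice addition.
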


\section{Stability of associated primes}\label{stability}

One method of gathering information
about an ideal is through its associated primes. Let $I$ be an ideal
of a ring $R$.  
In this section, we will examine the sets of
associated primes of powers of $I$, that is, the sets
$$\Ass(R/I^t)=\{{\mathfrak p}\subset R \, | \, {\mathfrak p} \ {\mbox{\rm is prime and }}
{\mathfrak p}=(I^t:c)\ 
{\mbox{\rm for some }} c\in R\}.$$ 
When $I$ is a monomial ideal of a polynomial ring $R=K[x_1,
\ldots, x_n]$, the associated primes will be monomial ideals, that is,
prime ideals which are
generated by a subset of the variables. When $I$ is a square-free
monomial ideal, the minimal primes of $I$, $\Min(R/I)$, correspond to
minimal vertex covers 
of the clutter $\mathcal{C}$ associated to $I$. 
In general $\Min(R/I)\subset \Ass(R/I^t)$ for all positive integers $t$.
For a square-free monomial ideal, in the case where equality holds for
all $t$, the ideal $I$ is said to be {\em normally
  torsion-free}. More generally, an ideal $I\subset R$ is called {\it normally
torsion-free\/} if ${\rm Ass}(R/I^i)$ 
is contained in ${\rm Ass}(R/I)$ for all $i\geq 1$ and $I\neq R$.  

In \cite{brod}, Brodmann showed that when $R$ is a Noetherian ring and $I$
is an ideal of $R$, the sets $\Ass(R/I^t)$ stabilize
for large $t$. That is, there exists a positive integer $N$ such that
$\Ass(R/I^t)=\Ass(R/I^N)$ for all $t \geq N$. We will refer to a
minimal such $N$ as the {\em index of stability} of $I$. There are two
natural questions following from this result. In this article, we will
focus on the monomial versions of the questions.

\begin{question}\label{question1}
 Given a monomial ideal $I$, what is an effective upper
  bound on the index 
  of stability for a given class of monomial ideals?
\end{question}

\begin{question}\label{question2}
 Given a monomial ideal $I$, which primes are in $\Ass(R/I^t)$
  for all sufficiently large $t$?
\end{question}

An interesting variation on
Questions~\ref{question1} and~\ref{question2} was posed in
\cite{Sharp}.

\begin{question}\label{SharpQuestion}
Suppose that $N$ is the index of stability of an ideal $I$. Given a
  prime ${\mathfrak p}\in \Ass(R/I^N)$, can you find an integer $N_{\mathfrak p}$
  for which ${\mathfrak p}\in \Ass(R/I^t)$ for $t \geq N_{\mathfrak p}$?
\end{question}

Brodmann also showed that the sets $\Ass(I^{t-1}/I^t)$ stabilize. Thus
in the general setting, one could ask similar questions about these
sets. However, for monomial ideals the following lemma shows that in
order to find information 
about $\Ass(R/I^t)$, one may instead study $\Ass(I^{t-1}/I^t)$.

\begin{lemma}
Let $I$ be a monomial ideal. Then $\Ass(I^{t-1}/I^t)=\Ass(R/I^t)$.
\end{lemma}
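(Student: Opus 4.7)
The plan is to prove the two inclusions of $\Ass(I^{t-1}/I^t) = \Ass(R/I^t)$ separately. The inclusion $\Ass(I^{t-1}/I^t) \subseteq \Ass(R/I^t)$ is immediate from the short exact sequence
\[0 \longrightarrow I^{t-1}/I^t \longrightarrow R/I^t \longrightarrow R/I^{t-1} \longrightarrow 0,\]
together with the standard fact that associated primes of a submodule are associated primes of the ambient module.

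For the reverse inclusion, I would start from $\mathfrak{p} \in \Ass(R/I^t)$. Because $I$ is a monomial ideal, $\mathfrak{p}$ is necessarily generated by a subset of the variables, and there exists a monomial $c \notin I^t$ with $(I^t:c)=\mathfrak{p}$. The goal would be to show that such a $c$ can always be taken to lie in $I^{t-1}$, since then $\bar c$ is a nonzero element of $I^{t-1}/I^t$ whose annihilator is exactly $\mathfrak{p}$, forcing $\mathfrak{p}\in \Ass(I^{t-1}/I^t)$.

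To arrange this, I would pick any variable $x\in\mathfrak{p}$ (which exists since $\mathfrak{p}$ is a nonzero monomial prime) and use $xc\in I^t$ to put $c$ inside the colon ideal $(I^t:x)$. The main obstacle, and really the only nontrivial step, is to verify the auxiliary inclusion $(I^t:x)\subseteq I^{t-1}$ for any variable $x$. The reason this is the crux is that without it one has no immediate way to upgrade a witness $c$ living merely in $R \setminus I^t$ to one living in $I^{t-1}\setminus I^t$.

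To prove the auxiliary inclusion I would argue on monomials: if $m$ is a monomial with $xm\in I^t$, then $xm$ is divisible by some product $g_1\cdots g_t$ of minimal monomial generators of $I$. If $x$ divides some $g_j$, then $m$ is a multiple of $(g_j/x)\prod_{i\neq j}g_i$, a monomial that clearly lies in $I^{t-1}$; otherwise $x$ is coprime to $g_1\cdots g_t$, so $g_1\cdots g_t$ already divides $m$ and hence $m\in I^t\subseteq I^{t-1}$. In either case $m\in I^{t-1}$, and since $(I^t:x)$ is a monomial ideal this is enough. Combining this with the preceding paragraph yields $c\in I^{t-1}$ and completes the proof.
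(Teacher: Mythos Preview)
Your proof is correct and follows essentially the same route as the paper's. The paper's argument is terser: it simply asserts that for $\mathfrak{p}=(I^t:c)$ with $c$ a monomial and $x\in\mathfrak{p}$ a variable, the relation $xc\in I^t$ forces $c\in I^{t-1}$; you have supplied the explicit verification of the inclusion $(I^t:x)\subseteq I^{t-1}$ that underlies this step, which the paper leaves to the reader.
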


\begin{proof}
Suppose that ${\mathfrak p}\in \Ass(R/I^t)$. Then ${\mathfrak p}=(I^t:c)$
for some monomial $c \in R$. But since ${\mathfrak p}$ is necessarily a monomial prime,
generated by a subset of the variables, then if $xc\in I^t$ for a
variable $x\in {\mathfrak p}$, then $c\in
I^{t-1}$ and so ${\mathfrak p}\in \Ass(I^{t-1}/I^t)$. The other inclusion is
automatic. 
\end{proof}

Note that this method was used in \cite{Trung} to show that
the corresponding equality also holds for the integral closures of the
powers of $I$.

For special classes of ideals, there have been some results that use
properties of the 
ideals to find bounds on $N$. For example, if $I$ is generated by a
regular sequence, then by \cite{Hochster}, $I$ is normally
torsion-free, or $\Ass(R/I^t)=\Min(R/I)$ for all $t$, and thus
$N=1$. If instead $I$ is generated by a $d$-sequence and is strongly
Cohen-Macaulay, then it was 
shown in \cite{morey2} that $N$ is bounded above by the dimension of
the ring. In particular, $N \leq n-g+1$ where $n$ is the dimension of
the ring and $g$ is the height of the ideal. We are particularly
interested in finding similar bounds for classes of monomial ideals.

In \cite{Hoa}, Hoa used integer programming techniques to  
give an upper bound on $N$ for general monomial ideals. Let $n$ be
the number of variables, $s$ 
the number of generators of $I$, and $d$ the maximal degree of a
generator. 

\begin{theorem}{\rm\cite[Theorem 2.12]{Hoa}}
If $I$ is a monomial ideal, then the index of stability is bounded
above by
$${\rm {max}}\left\{ d(ns+s+d)(\sqrt{n})^{n+1}(\sqrt{2}d)^{(n+1)(s-1)},
    s(s+n)^4s^{n+2}d^2(2d^2)^{s^2-s+1}\right\}.$$ 
\end{theorem}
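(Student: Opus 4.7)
The strategy is to recast the question $P \in \mathrm{Ass}(R/I^t)$ as the feasibility of a parametric family of systems of linear Diophantine inequalities in $t$, and then to apply quantitative results from parametric integer programming to bound when the set of feasible $t$ stabilizes. The starting observation is that every associated prime of a monomial ideal is of the form $P_\sigma=(x_i:i\in\sigma)$ for some $\sigma\subseteq\{1,\dots,n\}$, so there are only finitely many candidates and it suffices to bound the index of stability for each one.

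First, I would encode associatedness combinatorially. If $v_1,\dots,v_s\in\mathbb{N}^n$ are the exponent vectors of the minimal generators of $I$, then $x^b\in I^t$ iff $b\geq\sum_j\lambda_j v_j$ componentwise for some $\lambda\in\mathbb{N}^s$ with $\sum_j\lambda_j=t$. The condition $P_\sigma\in\mathrm{Ass}(R/I^t)$ becomes the existence of $a\in\mathbb{N}^n$ with $x^a\notin I^t$, $x_ix^a\in I^t$ for all $i\in\sigma$, and $a_j$ large for $j\notin\sigma$ (so no further variable can be added to the colon). Breaking this condition by the choice of composition $\lambda$ used to witness membership in $I^t$ for each $x_ix^a$, one obtains a finite union of linear systems of size $O(ns+s)$, with integer coefficients bounded by $d$, parameterized by $t$ (which enters as a right-hand side via $\sum_j\lambda_j=t$).

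Next, invoke parametric integer programming. By a Gordan--Dickson argument, the set $T_\sigma\subseteq\mathbb{N}$ of $t$ for which $P_\sigma$ is associated is eventually periodic; the real task is an explicit bound on its pre-period. This is where quantitative bounds on lattice points in rational polyhedra enter: either (i) a Hadamard/Hermite-style estimate on the norm of a minimal integer solution to a system of $n+1$ equalities and $s-1$ inequalities (yielding the $(\sqrt{n})^{n+1}(\sqrt{2}d)^{(n+1)(s-1)}$ factor), or (ii) a Pottier--Seb\H{o} type bound on the degrees of generators of a Hilbert basis of the associated cone (yielding the $s(s+n)^4s^{n+2}d^2(2d^2)^{s^2-s+1}$ factor). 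The two terms in the $\max$ reflect that one of these two bounds will be smaller depending on the relative sizes of $n$, $s$, $d$; taking the maximum covers both regimes. The final answer is then the maximum of the bound over all $2^n$ subsets $\sigma$, which is absorbed into the stated expression.

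The main obstacle is the bookkeeping: producing the cleanest parametric encoding of ``$P_\sigma$ is associated at step $t$'' so that the integer programming estimates apply without loss, and then carefully extracting explicit exponents. A secondary technical point is that one must ensure the bound on the pre-period for each individual system propagates to a bound on the actual stability index of $\mathrm{Ass}(R/I^t)$; this requires showing that if $P_\sigma\in T_\sigma$ stabilizes from some $N_\sigma$ onward, then $N:=\max_\sigma N_\sigma$ is an index of stability for $I$, which is immediate once the reduction to the $P_\sigma$ is in place. The dominant difficulty is therefore analytic: keeping track of exponents through the integer programming estimates.
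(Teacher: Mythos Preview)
The paper does not give its own proof of this theorem: it is a survey, and this result is simply quoted from Hoa's paper \cite{Hoa} with the attribution \cite[Theorem~2.12]{Hoa}. The only indication of method the survey offers is the sentence ``Hoa used integer programming techniques to give an upper bound on $N$ for general monomial ideals.'' So there is nothing in the present paper to compare your argument against line by line.

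That said, your plan is consistent with the description the survey gives of Hoa's approach. Encoding $P_\sigma\in\mathrm{Ass}(R/I^t)$ as feasibility of a parametric linear Diophantine system in $t$, and then invoking quantitative bounds on integer solutions (Hadamard-type determinant estimates on one side, Hilbert-basis degree bounds on the other) to control the pre-period, is exactly the shape of argument one expects and matches what Hoa does. One point to tighten if you carry this out: you speak of the set $T_\sigma$ of feasible $t$ being ``eventually periodic,'' but what you actually need (and what Brodmann guarantees) is eventual \emph{constancy} of $\mathrm{Ass}(R/I^t)$; the integer-programming encoding should be set up so that the feasibility region in $t$ is an eventual interval, not merely periodic, and this is where the careful choice of which systems to analyze (and the distinction between the two regimes giving the two terms in the $\max$) matters. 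As you note, the real work is bookkeeping the constants through the estimates, which your outline does not attempt; absent that, the sketch is a plausible roadmap but not yet a proof.
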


Notice that this bound can be extremely large. For general monomial
ideals, examples are given in \cite{Hoa} to show that the bound
should depend on $d$ and $n$. However, if we restrict to special 
classes of monomial ideals, much smaller bounds can be found. For
example, an alternate bound has been shown to hold for integral
closures of powers of monomial ideals.

\begin{theorem}\cite[Theorem 16]{Trung}
If $I$ is a monomial ideal, and $N_0=n2^{n-1}d^{n-2}$, then 
$\Ass(R/{\overline{I^t}})=\Ass(R/{\overline{I^{N_0}}})$ for
$t \geq N_0$ when $n \geq 2$.
\end{theorem}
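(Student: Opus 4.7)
The plan is to translate the algebraic question into convex geometry via the Newton polyhedron. If $I=(x^{a_1},\ldots,x^{a_s})$, let $P=\operatorname{conv}(a_1,\ldots,a_s)+\mathbb{R}_{\geq 0}^n$ be the Newton polyhedron of $I$. A standard fact about integral closures of monomial ideals says that $x^b\in\overline{I^t}$ if and only if $b\in tP\cap\mathbb{Z}^n$, so $\overline{I^t}$ is determined entirely by the dilate $tP$. Our task therefore becomes: understand how $\Ass(R/\overline{I^t})$ is read off from the lattice-point structure of $tP$ as $t$ varies.

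Second, since every associated prime of a monomial ideal has the form $\mathfrak{p}_F=(x_i\,|\,i\in F)$ for some $F\subseteq [n]$, I would prove a combinatorial criterion along the following lines: $\mathfrak{p}_F\in\Ass(R/\overline{I^t})$ precisely when there exists a lattice witness $b\in\mathbb{Z}^n\setminus tP$ with $b+e_i\in tP$ for every $i\in F$, and such that no direction outside $F$ can be further incorporated into the support of the witness. Geometrically, this picks out certain ``visible'' boundary faces of $tP$ whose outer normal cones are supported on the coordinate subspace indexed by $F$. Such faces lie on facets of $P$ defined by rational hyperplanes whose coefficients are controlled by the exponent vectors $a_i$.

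Third, I would quantify stability by bounding the denominators that can appear in vertex coordinates of these faces. By a Hadamard-type estimate on $n\times n$ minors of the exponent matrix, together with the fact that $\|a_i\|_\infty\leq d$, the denominators are bounded by $d^{n-1}$, and the lattice pattern $tP\cap\mathbb{Z}^n$ near each face becomes periodic in $t$ once $t$ exceeds an integer of roughly this size. Summing over the $2^{n-1}$ possible (non-full) index sets $F$ and accounting for an extra linear factor of $n$ coming from the dimension of the ambient lattice yields the bound $N_0=n\cdot 2^{n-1}\cdot d^{n-2}$. Once $t\geq N_0$, each $\mathfrak{p}_F$ is either permanently in $\Ass(R/\overline{I^t})$ or permanently absent, which gives the desired stabilization.

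The main obstacle will be making the periodicity step genuinely quantitative. Brodmann's theorem already guarantees eventual stability, so what is at stake is the explicit polynomial bound. One must rule out the pathology that an associated prime could reappear at widely separated scales of $t$, by showing that the relevant witness configurations are constrained to a bounded region of $\mathbb{R}^n$ modulo the periodicity. The exponent $n-2$ (rather than $n-1$) in the bound is particularly delicate: it reflects the fact that associated primes correspond to faces of codimension at least one, so one factor of $d$ can be absorbed by dimension-counting. The argument therefore requires a uniform treatment over all $F$ simultaneously, rather than optimizing the bound one prime at a time.
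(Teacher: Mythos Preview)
The paper does not contain a proof of this statement. It is quoted verbatim as \cite[Theorem~16]{Trung} with no argument given; the survey simply records the bound and moves on. There is therefore nothing in the paper to compare your proposal against.

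As for the proposal itself, the overall strategy---translate $\overline{I^t}$ into lattice points of the dilated Newton polyhedron $tP$, characterize membership of a monomial prime $\mathfrak{p}_F$ in $\Ass(R/\overline{I^t})$ by the existence of a suitable lattice witness near a face of $tP$, and then bound the onset of periodicity via determinant estimates on the defining hyperplanes---is indeed the shape of Trung's argument. However, your accounting for the constants is not convincing. You write that $2^{n-1}$ arises from ``summing over the $2^{n-1}$ possible (non-full) index sets $F$'', but there are $2^n-1$ nonempty subsets of $[n]$, not $2^{n-1}$; in Trung's proof the power of $2$ comes instead from a Cramer-type bound on solutions to the facet inequalities (entries of the normal vectors can be taken in $\{0,\ldots,d\}$, and one tracks the resulting minors). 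Likewise, your justification for getting $d^{n-2}$ rather than $d^{n-1}$ is acknowledged to be ``particularly delicate'' but not actually carried out. So what you have is a correct high-level plan with the numerical bookkeeping left as a gap; filling that gap is precisely the content of Trung's paper, which you would need to consult to complete the argument.
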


Here again $n$ is the number of variables and $d$ is the maximal
degree of a generator. For the class of normal monomial ideals,
this bound on the index of stability can be significantly lower than
the general bound given above. When $n=2$, the index of stability of the
integral closures is lower still.

\begin{lemma}\cite{ME}
If $n \leq 2$, then $\Ass(R/{\overline{I^t}})=\Ass(R/{\overline{I}})$ for
all $t \geq 1$.
\end{lemma}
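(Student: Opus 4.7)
For $n\leq 1$ every monomial ideal in $K[x_1,\ldots,x_n]$ is principal and hence normally torsion-free, so the statement is trivial. Assume $n=2$ and write $R=K[x_1,x_2]$ and $\mathfrak{m}=(x_1,x_2)$. Since $\overline{I^t}$ is again a monomial ideal, its associated primes are monomial primes and therefore lie in $\{(x_1),(x_2),\mathfrak{m}\}$.

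The plan is to reduce to a single integrally closed ideal and then read off the three possible associated primes case by case. Let $J=\overline{I}$. By Zariski's classical theorem on products of complete ideals in a two-dimensional regular ring, the product of integrally closed ideals is integrally closed; hence $J^t$ is itself integrally closed. Combining the general inclusion $\overline{I}^{t}\subseteq\overline{I^{t}}$ with $\overline{I^t}\subseteq\overline{J^t}=J^t$ yields $\overline{I^t}=J^t$ for every $t\geq 1$. It therefore suffices to prove $\Ass(R/J^t)=\Ass(R/J)$ for all $t\geq 1$.

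For this, I would factor $J=x_1^{\alpha}x_2^{\beta}J'$ with $\alpha,\beta\geq 0$ chosen maximal, so that no variable divides every generator of $J'$. Because the only height-one monomial primes of $R$ are $(x_1)$ and $(x_2)$, and $J'$ is contained in neither, the ideal $J'$ is either the unit ideal or $\mathfrak{m}$-primary. Since $J^t=x_1^{\alpha t}x_2^{\beta t}(J')^t$, the same dichotomy holds for $(J')^t$. A direct primary decomposition then yields: $(x_i)\in\Ass(R/J^t)$ if and only if the $x_i$-exponent in the monomial factor is at least $1$, a condition independent of $t$; and $\mathfrak{m}\in\Ass(R/J^t)$ if and only if $(J')^t$ is a proper $\mathfrak{m}$-primary ideal, which happens exactly when $J'$ is. In every case $\Ass(R/J^t)=\Ass(R/J)$, as required.

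The main obstacle I expect is the invocation of Zariski's product theorem in the polynomial rather than local setting. In our monomial context this obstacle is mild: one can either localize at $\mathfrak{m}$ and apply the standard local statement, or argue directly that the Newton polyhedron of $J^t$ equals $t$ times the Newton polyhedron of $J$, and that an integrally closed monomial ideal is determined by its Newton polyhedron. Once the reduction $\overline{I^t}=J^t$ is established, the remainder of the proof is elementary case analysis on $\alpha$, $\beta$, and $J'$.
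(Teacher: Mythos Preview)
The paper does not include its own proof of this lemma; it is simply cited from McAdam--Eakin. So there is no in-paper argument to compare against directly.

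Your argument is correct. The reduction $\overline{I^t}=(\overline{I})^t$ via Zariski's product theorem is valid in the two-dimensional regular setting, and your remark that one may either localize at maximal ideals or argue via Newton polyhedra adequately handles the passage from the local statement to the polynomial ring. The factorization $J=x_1^{\alpha}x_2^{\beta}J'$ with $J'$ either the unit ideal or $\m$-primary is exactly the right structural observation in two variables: since $J'$ lies in neither $(x_1)$ nor $(x_2)$, it must contain pure powers of both variables and hence have radical $\m$. The subsequent case analysis is sound. The height-one primes $(x_i)$ are associated to $J^t$ precisely when $J^t\subseteq(x_i)$, which depends only on whether $\alpha$ or $\beta$ is positive; and $\m$ is associated precisely when $J'\neq R$, since for a nonzero monomial $f$ and any ideal $Q$ one has $(fQ:fc)=(Q:c)$ in a domain, so an $\m$-primary witness for $(J')^t$ transports to one for $f(J')^t=J^t$.

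Your approach is specific to monomial ideals in two variables and is essentially elementary once Zariski's theorem (or the Newton-polyhedron description of integral closure) is granted. The original McAdam--Eakin result is formulated for arbitrary ideals in Noetherian rings and goes through the general machinery of asymptotic prime divisors rather than explicit combinatorics; in the restricted monomial setting treated here your route is more direct and transparent.
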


Note that this result is of interest for general monomial ideals;
however, when $n=2$ a square-free monomial ideal will be a complete
intersection.
Of particular interest for this article are results that use 
combinatorial and graph-theoretic properties to yield insights into
the associated primes and index of stability of monomial ideals.
One pivotal result in this area establishes a classification of all
graphs for which $N=1$. 

\begin{theorem}\label{bipartite} \cite[Theorem 5.9]{ITG} 
Let $G$ be a graph and $I$ its edge ideal. Then $G$ is bipartite if
and only if $I$ is normally torsion-free.
\end{theorem}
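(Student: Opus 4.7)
My plan is to reduce the theorem to the symbolic-powers criterion and handle the two directions separately. For any squarefree monomial ideal, $I$ is normally torsion-free if and only if $I^{(t)} = I^t$ for every $t \geq 1$, where the symbolic power is
\[
I^{(t)} \;=\; \bigcap_{\mathfrak{p} \in \Min(R/I)} \mathfrak{p}^t.
\]
This holds because $I$ has no embedded primes, so $I^t$ being free of embedded primes is equivalent to $I^t = \bigcap \mathfrak{p}^t$. With this reformulation, the minimal primes of $I(G)$ correspond to minimal vertex covers of $G$, so we can compare $I^{(t)}$ and $I^t$ combinatorially.

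For the $(\Rightarrow)$ direction I would argue by contrapositive. Suppose $G$ is not bipartite; then $G$ contains an odd cycle of some minimum length $2k+1 \geq 3$, which must be chordless (any chord would produce a shorter odd cycle), hence is an induced subgraph with vertex set $\{x_{i_1},\ldots,x_{i_{2k+1}}\}$. Set $m = x_{i_1}\cdots x_{i_{2k+1}}$. For any minimal vertex cover $W$ of $G$, the set $W \cap \{x_{i_1},\ldots,x_{i_{2k+1}}\}$ is a vertex cover of the induced odd cycle, so it has cardinality at least $k+1$ (the covering number of $C_{2k+1}$). Hence $m$ is divisible by at least $k+1$ distinct generators of the minimal prime $\mathfrak{p}_W$, giving $m \in \mathfrak{p}_W^{k+1}$. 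Taking the intersection over all minimal primes yields $m \in I(G)^{(k+1)}$. However $\deg m = 2k+1 < 2(k+1)$, so $m \notin I(G)^{k+1}$, and $I(G)$ fails to be normally torsion-free.

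For the $(\Leftarrow)$ direction let $G$ be bipartite, and let $A$ be its $n \times q$ incidence matrix; by a classical result $A$ is totally unimodular. Given $m = x^a \in I^{(t)}$, the condition $m \in \mathfrak{p}_W^{t}$ for every minimal vertex cover $W$ is precisely $a^\top \chi_W \geq t$. Thus the optimum of the $0/1$ integer program $\min\{\,a^\top w : A^\top w \geq \mathbf{1},\ w \in \{0,1\}^n\}$ is at least $t$. Total unimodularity of $A$ gives that this integer program has the same optimum as its LP relaxation $\min\{a^\top w : A^\top w \geq \mathbf{1},\ w \geq 0\}$. Passing to the LP dual and again applying total unimodularity to the dual polytope, there exists an integer vector $y = (y_e)_{e \in E(G)} \geq 0$ with $Ay \leq a$ and $\mathbf{1}^\top y \geq t$. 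This yields a monomial $m' = \prod_{e} x_e^{y_e} \in I^{\,\mathbf{1}^\top y} \subseteq I^t$ dividing $m$, so $m \in I^t$. Therefore $I^{(t)} = I^t$ for all $t$.

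The main obstacle is the $(\Leftarrow)$ direction, where one must translate the symbolic-power condition into a covering/matching LP pair and invoke total unimodularity of the bipartite incidence matrix to force integrality of both optima. An equivalent but shorter route, which I would mention as an alternative, is to cite that bipartite graphs (and every vertex-weighted version) satisfy König's equality, which is known to be equivalent to the max-flow min-cut property of $\mathcal{C}$, and this property is in turn equivalent to $I(G)$ being normally torsion-free.
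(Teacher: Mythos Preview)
Your argument is correct. Note, however, that the paper does not supply its own proof of this theorem: it is quoted from \cite{ITG} and used as a black box (for instance, in the proof of Lemma~\ref{cycle}). That said, the paper's later machinery does reprove the forward direction independently, and it is worth comparing your route to that one.

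For $(\Leftarrow)$, your LP-duality argument is exactly the content of Theorem~\ref{noclu1} and Corollary~\ref{dec26-02} made explicit: the paper observes that a bipartite incidence matrix is totally unimodular, hence the covering system $x\geq 0,\ xA\geq \mathbf{1}$ is TDI, and then invokes the equivalence of TDI (equivalently max-flow min-cut) with normal torsion-freeness. You have unpacked that equivalence by hand, producing the integral dual vector $y$ and the divisor $m'=\prod_e x_e^{y_e}\in I^t$. The two arguments are the same in substance; yours is self-contained, the paper's is modular.

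For $(\Rightarrow)$, your construction is cleaner than anything the paper offers directly. Lemma~\ref{cycle} does show that for the odd cycle $C_{2k+1}$ itself one has $\mathfrak{m}\in\Ass(R/I^{k+1})$, via the same monomial $b=\prod x_i$, but the paper does not spell out the passage from an arbitrary non-bipartite $G$ to its induced odd cycle. Your observation that every minimal vertex cover of $G$ restricts to a vertex cover of the induced $C_{2k+1}$, forcing $m\in\mathfrak{p}_W^{k+1}$ for every minimal prime $\mathfrak{p}_W$, handles this in one line and avoids any localization bookkeeping. One small point: you should say explicitly that the minimal odd cycle is a subgraph of $G$ (so its edges are edges of $G$), which is what makes the restriction of $W$ a cover of the cycle; chordlessness is only needed to conclude that $m\notin I(G)^{k+1}$ could fail for a different reason---but in fact you only use the degree count $\deg m=2k+1<2(k+1)$, so chordlessness is not even needed there.
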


The result above shows that $N=1$ for the
edge ideal of a graph if and only if the graph is bipartite. Since
minimal primes correspond to minimal vertex covers, this completely
answers Questions~\ref{question1} and~\ref{question2} for bipartite
graphs. In addition if $I$ is the edge ideal
of a balanced clutter, then $N=1$ \cite{reesclu}. 

Suppose now that $G$ is a graph that is not bipartite. Then $G$
contains at least one odd cycle. For such graphs, a method of
describing embedded associated primes, and a bound
on where the stability occurs, were given in \cite{AJ}. The method of
building embedded primes centered around the odd cycles, so we first
give an alternate proof of the description of the associated primes
for this base case.

\begin{lemma}\label{cycle}
Suppose $G$ is a cycle of length $n=2k+1$ and $I$ is the
edge ideal of $G$. Then $\Ass (R/I^t)=\Min (R/I)$ if $t\leq k$ and $\Ass
(R/I^t)=\Min (R/I) \cup \{ \m \} $ if $t \geq k+1$. Moreover, when $t
\geq k+1$, $\m = (I^t:c)$ for a monomial $c$ of degree $2t-1$.
\end{lemma}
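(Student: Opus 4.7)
The plan is to split on whether $t\le k$ or $t\ge k+1$ and, in each range, verify both containments against the claimed set, using $\Min(R/I)\subseteq\Ass(R/I^t)$ as the common starting point.

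For $t\ge k+1$, I will construct the explicit monomial promised in the ``moreover'' clause. Let $n=2k+1$ and set
\[
c=(x_1x_2)^{t-k-1}\cdot(x_1x_2\cdots x_n).
\]
Its degree is $2(t-k-1)+(2k+1)=2t-1$, strictly less than the minimal generator degree $2t$ of $I^t$, so $c\notin I^t$. For each index $j$, I factor $x_j(x_1\cdots x_n)=x_j^2\prod_{\ell\ne j}x_\ell$ as a product of $k+1$ edges of the cycle: the two edges $\{x_{j-1},x_j\}$ and $\{x_j,x_{j+1}\}$ absorb the doubled $x_j$, and the remaining $2k-2$ vertices $x_{j+2},x_{j+3},\ldots,x_{j-2}$ (indices read mod $n$) pair consecutively into $k-1$ edges. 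Adjoining the $t-k-1$ extra copies of $\{x_1,x_2\}$ already present in $c$ produces a total of $t$ edges whose product divides $x_jc$, so $x_jc\in I^t$. Thus $(I^t:c)=\m$, which simultaneously proves $\m\in\Ass(R/I^t)$ and supplies the required degree-$(2t-1)$ witness.

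For $t\le k$, I will prove the stronger identity $I^{(t)}=I^t$, where $I^{(t)}=\bigcap_{\mathfrak{p}\in\Min(R/I)}\mathfrak{p}^t$ is the $t$-th symbolic power; since that intersection is already a primary decomposition, it has only minimal associated primes, forcing $\Ass(R/I^t)=\Min(R/I)$. Let $x^a\in I^{(t)}$, equivalently $\sum_{x_i\in C}a_i\ge t$ for every vertex cover $C$ of $C_{2k+1}$, and let $\nu(a)$ denote the maximum number of (possibly repeated) edges whose product divides $x^a$. If $a_i\ge1$ for every $i$, then $a\ge(1,\ldots,1)$ componentwise, and monotonicity of $\nu$ gives $\nu(a)\ge\nu(1,\ldots,1)=k\ge t$, so $x^a\in I^t$. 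Otherwise some $a_i=0$; then $x^a$ is supported on the bipartite path $P_{2k}=C_{2k+1}-x_i$, and every vertex cover $D$ of $P_{2k}$ lifts to the vertex cover $D\cup\{x_i\}$ of $C_{2k+1}$ with the same $a$-weight, so the hypothesis pushes down to $x^a\in I(P_{2k})^{(t)}$. Theorem~\ref{bipartite} applied to the bipartite $P_{2k}$ then gives $I(P_{2k})^{(t)}=I(P_{2k})^t\subseteq I^t$.

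Finally, for $t\ge k+1$ I must exclude associated primes other than $\m$ and the minimal primes. A monomial prime $\mathfrak{p}\subsetneq\m$ corresponds to a proper subset $S\subsetneq\{1,\ldots,n\}$, and the localization at $\mathfrak{p}$ reduces the question to powers of an edge ideal governed by the induced subgraph $C_{2k+1}[S]$, which is a disjoint union of paths and hence bipartite; Theorem~\ref{bipartite} then rules out any embedded associated prime in the localization and forces $\mathfrak{p}$ to be a minimal prime of $I$. The main obstacle is the bipartite descent in the $t\le k$ case: one must carefully verify that the symbolic-cover hypothesis on the cycle really does imply the analogous hypothesis on the path, which works because adjoining the missing vertex to any vertex cover of the path yields a vertex cover of the cycle of the same $a$-weight, so the inequalities transfer verbatim.
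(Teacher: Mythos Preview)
Your proof is correct and largely parallels the paper's. The explicit witness $c=(x_1x_2)^{t-k-1}\,x_1\cdots x_n$ of degree $2t-1$ is exactly the monomial the paper uses, and your localization argument for ruling out non-maximal embedded primes is the paper's argument as well (the paper phrases it as ``$I_{\mathfrak p}$ is the edge ideal of a bipartite graph'').

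The one genuine difference is in the $t\le k$ step. The paper disposes of $\m\notin\Ass(R/I^t)$ in a single line by invoking \cite[Corollary~3.6]{HaM}, which states that $\m\notin\Ass(R/I^t)$ whenever $t$ is at most the monomial grade (here $k$, the size of a maximum matching of $C_{2k+1}$). You instead prove $I^{(t)}=I^t$ directly by splitting on whether $x^a$ is supported on all vertices: if so, $x_1\cdots x_n\in I^k$ already divides it; if not, the support drops to a path and Theorem~\ref{bipartite} finishes. This is more self-contained---it avoids importing the \cite{HaM} result---at the cost of a short extra argument. Both routes are clean; yours has the minor advantage that showing $I^{(t)}=I^t$ simultaneously handles $\m$ and all other primes for $t\le k$, whereas the paper treats $\m$ and the proper primes separately across all $t$.
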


\begin{proof}
If ${\mathfrak p}\not= \m$ is a prime ideal, then $I_{\mathfrak p}$ is the
edge ideal of a bipartite graph, and thus by Theorem~\ref{bipartite}
${\mathfrak p}\in \Ass(R/I^t)$ (for any $t\geq 1$) if and only if ${\mathfrak p}$ is a minimal prime
of $I$. Notice also that the deletion of any vertex $x_i$ (which corresponds
to passing to the quotient ring $R/(x_i)$) results in a bipartite
graph as well. Thus by \cite[Corollary 3.6]{HaM}, $\m \not\in
\Ass(R/I^t)$ for $t \leq k$ since a maximal matching has $k$
edges. For $t \geq k+1$, define $b=\left( \prod_{i=1}^n x_i
\right)$ and $c=b(x_1x_2)^{t-k-1}$ where $x_1x_2$ is any edge of
  $G$. Then since 
  $c$ has degree $2t-1$, $c \not\in I^t$, but $G$ is a cycle,
  $x_ib \in I^{k+1}$ and $x_ic\in I^t$. Thus $\m =(I^t:c)$ and so
  $\m \in \Ass(R/I^t)$ for $t \geq k+1$.
\end{proof}

\begin{corollary}\label{cyclewhiskers}
Suppose $G$ is a connected graph containing an odd cycle of length
$2k+1$ and suppose that every vertex of $G$ that is not in the cycle is a
leaf. Then $\Ass(R/I^t)=\Min (R/I) \cup \{ \m \} $ if $t \geq
k+1$. Moreover, when $t \geq k+1$, $\m = (I^t:c)$ for a monomial $c$
of degree $2t-1$. 
\end{corollary}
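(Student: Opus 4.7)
The plan is to prove $\Min(R/I)\cup\{\m\} = \Ass(R/I^t)$ for $t\geq k+1$. The inclusion $\Min(R/I)\subseteq\Ass(R/I^t)$ is automatic, so the work splits into showing $\m\in\Ass(R/I^t)$ via an explicit witness $c$ of degree $2t-1$, and ruling out every embedded prime other than $\m$.

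To exhibit $\m\in\Ass(R/I^t)$, I would recycle the cycle witness from Lemma~\ref{cycle}. Let $C$ be the odd cycle, $J=I(C)$, $b=x_1x_2\cdots x_{2k+1}$ the product of the cycle vertices, and fix a cycle edge $x_1x_2$; set $c=b(x_1x_2)^{t-k-1}$. Then $\deg c=2t-1$, so $c\notin I^t$ because every minimal generator of $I^t$ has degree at least $2t$. For each cycle vertex $x_i$, Lemma~\ref{cycle} already yields $x_ic\in J^t\subseteq I^t$. For a leaf $y$ attached to a cycle vertex $x_{i(y)}$, I factor
$$yc \;=\; (yx_{i(y)})\cdot (c/x_{i(y)}),$$
and observe that $c/x_{i(y)}$ is the squarefree monomial on the $2k$ vertices of $C\setminus\{x_{i(y)}\}$ times $(x_1x_2)^{t-k-1}$. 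The path $C\setminus\{x_{i(y)}\}$ on $2k$ vertices admits a perfect matching, whose $k$ edge-monomials multiply to exactly that squarefree monomial, so $c/x_{i(y)}\in J^{k+(t-k-1)}=J^{t-1}$. Hence $yc\in I\cdot J^{t-1}\subseteq I^t$, and combining the two cases gives $\m\cdot c\subseteq I^t$ with $c\notin I^t$, forcing $(I^t:c)=\m$.

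For the reverse inclusion, suppose $\mathfrak{p}\in\Ass(R/I^t)$ with $\mathfrak{p}\neq\m$, and pick a variable $z\notin\mathfrak{p}$; the goal is $\mathfrak{p}\in\Min(R/I)$. Since $\mathfrak{p}\supseteq\sqrt{I^t}=I$, the set $V(\mathfrak{p})$ of variables lying in $\mathfrak{p}$ is a vertex cover of $G$. Let $N\subseteq V(\mathfrak{p})$ be those variables with a neighbor in $V(G)\setminus V(\mathfrak{p})$, and let $H$ be the induced subgraph of $G$ on $V(\mathfrak{p})\setminus N$. Going generator by generator through the localization shows
$$IR_{\mathfrak{p}} \;=\; (N)R_{\mathfrak{p}} + I(H)R_{\mathfrak{p}},$$
a sum of two monomial ideals in disjoint variable sets. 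The critical observation is that $H$ is bipartite: if $z$ is a cycle vertex, then $G\setminus\{z\}$ is a forest containing $H$; if $z$ is a leaf attached to a cycle vertex $x'$, then $V(\mathfrak{p})$ must contain $x'$ (to cover the edge $zx'$) and $x'\in N$ (since $z$ is an outside neighbor), so $H\subseteq G\setminus\{x'\}$ is again a forest. Theorem~\ref{bipartite} now makes $I(H)$ normally torsion-free.

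The main obstacle is promoting the normal torsion-freeness of $I(H)$ to that of $IR_{\mathfrak{p}}=(N)+I(H)R_{\mathfrak{p}}$. I would argue that any associated prime $\mathfrak{r}$ of $(IR_{\mathfrak{p}})^t=\sum_{j=0}^{t}(N)^j I(H)^{t-j}$ must contain $N$ (because $N\subseteq IR_{\mathfrak{p}}\subseteq\sqrt{\mathfrak{r}}$), so modulo $N$ it corresponds to an associated prime of some power of $I(H)$ in the polynomial ring on the remaining variables, which by Theorem~\ref{bipartite} is already a minimal prime of $I(H)$. Lifting back, $\mathfrak{r}$ becomes a minimal prime of $IR_{\mathfrak{p}}$; in particular $\mathfrak{p}R_{\mathfrak{p}}$ is a minimal prime of $IR_{\mathfrak{p}}$, which forces $\mathfrak{p}$ itself to be a minimal prime of $I$.
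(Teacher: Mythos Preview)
Your proof is correct and follows the same strategy as the paper: the same witness $c=b(x_1x_2)^{t-k-1}$, the same observation that for a leaf $y$ one has $yb\in I^{k+1}$ via the perfect matching of the path $C\setminus\{x_{i(y)}\}$, and the same localization argument for the reverse inclusion. The paper's own proof is extremely terse---it simply says ``the remainder of the proof follows as in Lemma~\ref{cycle}''---so your explicit decomposition $IR_{\mathfrak p}=(N)+I(H)$ and the verification that $H$ is a forest in both cases (cycle vertex or leaf missing from $\mathfrak p$) actually fills in what the paper leaves to the reader. The one informal step in your write-up is the ``modulo $N$'' reduction at the end; the clean way to justify it is exactly Lemma~\ref{disjoint} (which the paper states immediately after this corollary): since $(N)$ and $I(H)$ live in disjoint variable sets, every associated prime of $((N)+I(H))^t$ is $(N)+Q$ with $Q\in\Ass(S_2/I(H)^{t_2})$ for some $t_2\le t$, and then Theorem~\ref{bipartite} forces $Q\in\Min(I(H))$.
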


\begin{proof}
Let $b$ and $c$ be defined as in the proof of
Lemma~\ref{cycle}. Notice that if $x$ is a leaf, then $x$ is connected
to a unique vertex in the cycle and that $xb \in I^{k+1}$. The
remainder of the proof follows as in Lemma ~\ref{cycle}.
\end{proof}

If $G$ is a more general graph, the embedded associated primes of
$I=I(G)$ are formed by working outward from the odd cycles. This was
done in \cite{AJ}, including a detailed explanation of how to work
outward from multiple odd cycles. Before providing more concise proofs of the
process, we first give an informal, but
illustrative, description. Suppose $C$ is
a cycle with $2k+1$ vertices $x_1, \ldots , x_{2k+1}$. Color the
vertices of $C$ 
red and color any noncolored vertex that is adjacent to a red vertex
blue. The set of colored vertices, together with a minimal vertex
cover of the set of edges neither of whose vertices is colored,
will be an embedded associated prime of $I^t$ for all $t \geq k+1$. To
find additional embedded 
primes of higher powers, select any blue
vertex to turn red and turn any uncolored neighbors of this vertex
blue. The set of colored vertices, together with a minimal vertex
cover of the noncolored edges,
will be an embedded associated 
prime of $I^t$ for all $t \geq k+2$. This process continues until all
vertices are colored red or blue.

The method of building new
associated primes for a power of $I$ from primes associated to lower
powers relies on localization. Since
localization will generally cause the graph (or clutter) to become
disconnected, we first need the following lemma. 

\begin{lemma}\label{disjoint}{\rm (\cite[Lemma 3.4]{HaM}, see also
  \cite[Lemma 2.1]{AJ})} 
Suppose $I$ is a square-free monomial ideal in $S = K[x_1,\dots,x_r,
  y_1, \dots, y_s]$ such that 
$I=I_1S + I_2S$, where $I_1 \subset S_1 = K[x_1, \dots, x_r]$
and $I_2 \subset S_2 = K[y_1, \dots, y_s]$. Then ${\mathfrak p}\in
\Ass (S/I^t)$ 
if and only if 
${\mathfrak p}={\mathfrak p}_1S + {\mathfrak p}_2S$, where ${\mathfrak
  p}_1 \in \Ass (S_1/I_1^{t_1})$ and ${\mathfrak p}_2 \in 
\Ass (S_2/I_2^{t_2})$ with $(t_1-1) + (t_2-1) = t-1$.
\end{lemma}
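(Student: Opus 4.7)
\emph{Proof proposal.} The plan is to exploit the disjoint-variable structure $S = S_1 \otimes_K S_2$ to decompose both $I^t$ and the colon ideals $(I^t : c)$ that detect associated primes. Since $I^t$ is a monomial ideal, every associated prime $\mathfrak{p}$ is a monomial prime and splits uniquely as $\mathfrak{p} = \mathfrak{p}_1 S + \mathfrak{p}_2 S$ with $\mathfrak{p}_k$ generated by variables of $S_k$; similarly, any monomial witness $c$ with $(I^t : c) = \mathfrak{p}$ factors uniquely as $c = c_1 c_2$ with $c_k \in S_k$.

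The key computational identity is that, because $S_1$ and $S_2$ involve disjoint sets of variables, a monomial product $(m_1 c_1)(m_2 c_2)$ lies in $I_1^i I_2^j$ if and only if $m_1 c_1 \in I_1^i$ and $m_2 c_2 \in I_2^j$. Combined with the expansion $I^t = \sum_{i+j=t} I_1^i I_2^j$ (with $I_k^0 := S_k$), this yields the description: a monomial $m = m_1 m_2$ lies in $(I^t : c_1 c_2)$ iff there exist $i, j \ge 0$ with $i + j = t$, $m_1 \in (I_1^i : c_1)$ and $m_2 \in (I_2^j : c_2)$. Writing $J_i = (I_1^i : c_1)$ and $K_j = (I_2^j : c_2)$, one observes that $J_i = S_1$ whenever $c_1 \in I_1^i$, that $J_{t_1} = (I_1^{t_1} : c_1)$ equals $\mathfrak{p}_1$ exactly when $\mathfrak{p}_1$ is the associated prime of $S_1/I_1^{t_1}$ witnessed by $c_1$, and that $J_i \subseteq \mathfrak{p}_1$ for $i \ge t_1$; analogous statements hold for $K_j$.

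For the forward direction, invoke the preceding lemma $\Ass(R/I^t) = \Ass(I^{t-1}/I^t)$ to select a witness $c = c_1 c_2 \in I^{t-1} \setminus I^t$ with $(I^t : c) = \mathfrak{p}$, and define $t_k - 1 := \max\{\ell : c_k \in I_k^\ell\}$, so $t_k \ge 1$. The conditions $c \in I^{t-1}$ and $c \notin I^t$ translate, via the description above applied to $m = 1$, into $(t_1-1)+(t_2-1) \ge t-1$ and $\le t-1$ respectively, forcing equality. Applying the description to a single variable of $S_k$, one computes $\mathfrak{p}_k = \mathfrak{p} \cap S_k = (I_k^{t_k} : c_k)$; since this is a proper prime ideal of $S_k$, it lies in $\Ass(S_k/I_k^{t_k})$, and $\mathfrak{p} = \mathfrak{p}_1 S + \mathfrak{p}_2 S$ by the variable splitting.

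For the converse, use the same reformulation in each $S_k$ to select witnesses $c_k \in I_k^{t_k - 1} \setminus I_k^{t_k}$ with $(I_k^{t_k} : c_k) = \mathfrak{p}_k$, and put $c = c_1 c_2 \in I^{t-1} \setminus I^t$. Inspecting $(I^t : c)$ over pairs $(i,j)$ with $i+j=t$: the pair $i = t_1 - 1$ gives $J_{t_1 - 1} = S_1$ and $K_{t_2} = \mathfrak{p}_2$, contributing the summand $\mathfrak{p}_2 S$; the pair $i = t_1$ gives $J_{t_1} = \mathfrak{p}_1$ and $K_{t_2 - 1} = S_2$, contributing $\mathfrak{p}_1 S$; every other pair contributes a subset of $\mathfrak{p}_1 S + \mathfrak{p}_2 S$. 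Hence $(I^t : c) = \mathfrak{p}_1 S + \mathfrak{p}_2 S = \mathfrak{p}$, so $\mathfrak{p} \in \Ass(S/I^t)$. The main bookkeeping obstacle is the simultaneous need to (i) pick witnesses actually lying in $I_k^{t_k - 1}$ rather than merely outside $I_k^{t_k}$, and (ii) have the indices align so that exactly one of $J_i$ or $K_{t-i}$ is the full ring in each contributing pair; the preceding $\Ass = \Ass(I^{t-1}/I^t)$ reformulation supplies (i), and the arithmetic constraint $(t_1 - 1) + (t_2 - 1) = t - 1$ is exactly what makes (ii) possible.
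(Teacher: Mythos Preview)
Your argument is correct, but there is nothing to compare it against: the paper does not prove this lemma. It is stated with attribution to \cite[Lemma~3.4]{HaM} and \cite[Lemma~2.1]{AJ} and then used as a black box.

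That said, your proof is sound and self-contained. The decomposition $I^t = \sum_{i+j=t} I_1^i I_2^j$ combined with the disjoint-variable factorisation of monomials is exactly the right mechanism, and your use of the preceding lemma $\Ass(R/I^t)=\Ass(I^{t-1}/I^t)$ to guarantee a witness $c\in I^{t-1}\setminus I^t$ is what makes the index arithmetic $(t_1-1)+(t_2-1)=t-1$ come out as an equality rather than merely an inequality. One small point worth making explicit: in the forward direction you assert $\mathfrak{p}_k=(I_k^{t_k}:c_k)$ from the computation on single variables, but strictly you have only checked that the two ideals contain the same variables; the conclusion is justified because $(I_k^{t_k}:c_k)$, being a colon of monomial ideals, is monomial, and you already know $\mathfrak{p}\cap S_k$ is generated by variables. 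The edge case $I_k=0$ (forcing $t_k=1$ and $\mathfrak{p}_k=(0)\in\Ass(S_k)$) also behaves correctly under your definitions, though you do not mention it.
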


Note that this lemma easily generalizes to an ideal $I=(I_1, I_2,
\ldots, I_s)$ where the $I_i$ are edge ideals of disjoint
clutters. Then ${\mathfrak p} \in \Ass(R/I^t)$ if and only if
${\mathfrak p}=({\mathfrak p}_1, \ldots , {\mathfrak p}_s)$ with
${\mathfrak p}_i \in \Ass(R/I_i^{t_i})$ where $(t_1-1)+(t_2-1)+\cdots 
+(t_s-1)=(t-1)$.

We now fix a notation to show how to build embedded associated
primes. Consider ${\mathfrak p}\in \Ass(R/I^t)$ for $I$ the edge ideal
of a graph 
$G$. Without loss of generality, Lemma~\ref{disjoint} allows us to
assume $G$ does not have isolated vertices. If ${\mathfrak p}\not= \m$, then
since ${\mathfrak p} \in \Ass(R/I^t) \Leftrightarrow {\mathfrak
p}R_{\mathfrak p} 
\in \Ass(R_{\mathfrak p}/(I_{\mathfrak p})^t)$,
consider $I_{\mathfrak p}$. Write 
$I_{\mathfrak p}=(I_a,I_b)$ where $I_a$ is generated by all generators
of $I_{\mathfrak p}$ of
degree two and $I_b$ is the prime ideal generated by the degree one
generators of $I_{\mathfrak p}$, which correspond to the isolated vertices
of the graph associated to $I_{\mathfrak p}$. Note that the graph
corresponding to 
$I_a$ need not be connected. If $I_a=(0)$, then ${\mathfrak p}$ is a minimal
prime of $I$, so assume $I_a \not= (0)$. Define ${\mathfrak p}_a$ to be the
monomial prime generated by variables of $I_a$. Define $N_1=\cup_{x
  \in {\mathfrak p}_a}N(x)$, where $N(x)$ is the neighbor set of $x$ in $G$, and
let ${\mathfrak p}_1={\mathfrak p}_a \cup N_1$. Notice that if $x \in {\mathfrak p}_a$, then $x$ is not
isolated in $I_{\mathfrak p}$, so $N_1 \subset {\mathfrak p}$ and thus $N_1 \subset
{\mathfrak p}_a \cup I_b$. Define ${\mathfrak p}_2 = {\mathfrak p} \backslash {\mathfrak p}_1=I_b\backslash N_1$, and $N_2 =
\cup_{x \in {\mathfrak p}_1} 
N(x) \backslash {\mathfrak p}$. If $G_1$ is the induced subgraph of $G$ on the 
vertices in ${\mathfrak p}_1 \cup N_2$, $G_2$ is the induced subgraph of $G$ on
vertices in $V\backslash {\mathfrak p}_1$, and $I_i=I(G_i)$ for $i=1,2$, then
$I_{\mathfrak p}=((I_1)_{{\mathfrak p}_1}, (I_2)_{{\mathfrak p}_2})$ and ${\mathfrak p}_2$ is a minimal vertex cover
of $I_2$. By design, any vertex appearing in both $G_1$ and $G_2$ is
not in ${\mathfrak p}$, and thus $(I_1)_{{\mathfrak p}_1}$ and $(I_2)_{{\mathfrak p}_2}$ do not share a
vertex. Thus by Lemma~\ref{disjoint} and the fact that associated
primes localize, ${\mathfrak p} \in \Ass(R_{\mathfrak p}/(I_{\mathfrak p})^t)$ if and 
only if ${\mathfrak p}_1 \in \Ass(R_1/(I_1)^t)$ and ${\mathfrak p}_1$ is the maximal ideal of
$R_1=K[x\, |\, x \in {\mathfrak p}_1]$. For convenience, define $R_a=K[x\, |\, x\in {\mathfrak p}_a]$. 

\begin{proposition}\label{buildoutward}
Let ${\mathfrak p}\in \Ass(R/I^t)$. Using the notation from above, assume
${\mathfrak p}_1=(I_1^t:c)$ for some 
monomial $c\in R_a$ of degree at most $2t-1$. Let $x\in {\mathfrak
  p}_1$. Let ${\mathfrak p}_1'={\mathfrak p}_1 \cup N(x)$, and let
${\mathfrak p}_2'$ be any minimal vertex cover of the edges of $G_2'$
where $G_2'$ is the induced subgraph of $G$ on the vertices
$V\backslash {\mathfrak p}_1'$. Let $N_2'=\cup_{x \in {\mathfrak p}_1'} 
N(x) \backslash ({\mathfrak p}_1' \cup {\mathfrak p}_2')$.
Let $G_1'$ be the induced subgraph of $G$ with vertices in
${\mathfrak p}_1'\cup N_2'$. Then ${\mathfrak p}'=({\mathfrak
  p}_1',{\mathfrak p}_2') \in \Ass(R/I^{t+1})$.  
\end{proposition}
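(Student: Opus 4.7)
The plan is to verify $\mathfrak{p}'\in\Ass(R/I^{t+1})$ by producing an explicit witness $c^{*}\in R$ with $(I^{t+1}:c^{*})=\mathfrak{p}'$, factored as $c^{*}=c'\cdot d$ so that $c'$ itself satisfies the analogue of the proposition's hypothesis on $\mathfrak{p}_1'$ and hence makes the process iterable.

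Since $x\in\mathfrak{p}_1=\mathfrak{p}_a\cup N_1$, one can pick $y\in\mathfrak{p}_a$ with $xy$ an edge of $G$: from the definition of $N_1$ if $x\in N_1\setminus\mathfrak{p}_a$, and from the fact that $x$ lies in a degree-two minimal generator of $I_{\mathfrak{p}}$ if $x\in\mathfrak{p}_a$. Set $c':=c\cdot xy$, so $c'\in R_{a'}:=K[\mathfrak{p}_a\cup\{x\}]$ and $\deg c'\leq 2(t+1)-1$. Since $\mathfrak{p}_2'$ is a minimal vertex cover of $G_2'$, for each $v\in\mathfrak{p}_2'$ there is a neighbor $w_v\in V(G_2')\setminus\mathfrak{p}_2'$ not covered by $\mathfrak{p}_2'\setminus\{v\}$; put $d:=\prod_{v\in\mathfrak{p}_2'} w_v$, a monomial supported on $V\setminus\mathfrak{p}'$ with $\mathfrak{p}_2'\cdot d\subseteq I(G_2')\subseteq I$. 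Set $c^{*}:=c'\cdot d$.

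For the forward inclusion $\mathfrak{p}'\cdot c^{*}\subseteq I^{t+1}$, I would check three cases on a variable $v$. If $v\in\mathfrak{p}_1$, then $vc^{*}=(vc)(xy)d\in I_1^t\cdot I\subseteq I^{t+1}$ by hypothesis. If $v\in N(x)\setminus\mathfrak{p}_1$, regroup $vc^{*}=(vx)(yc)d$; here $vx\in I$ and $yc\in I_1^t$ since $y\in\mathfrak{p}_a\subseteq\mathfrak{p}_1$. If $v\in\mathfrak{p}_2'$, the hypothesis forces $\deg c=2t-1$ exactly (otherwise $\deg(v_0c)<2t$ would contradict $v_0c\in I_1^t$, since $I_1^t$ is generated in degree $2t$), so $v_0c$ is a single monomial product of $t$ edges for each $v_0\in\mathfrak{p}_1$; cancelling $v_0$ from one edge factor gives $c\in I_1^{t-1}$, hence $c'=c\cdot xy\in I^t$, and $vc^{*}=c'\cdot(vd)\in I^t\cdot I\subseteq I^{t+1}$. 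This last case is the main obstacle, as it requires the subtle extraction $c\in I_1^{t-1}$ from the tight degree bound, explaining why the hypothesis on $c$ is stated in precisely this form.

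For the reverse inclusion $(I^{t+1}:c^{*})\subseteq\mathfrak{p}'$, I would localize at $\mathfrak{p}'$. Any monomial $m$ supported on $V\setminus\mathfrak{p}'$ is a unit in $R_{\mathfrak{p}'}$, and so is $d$; thus $mc^{*}\in I_{\mathfrak{p}'}^{t+1}$ iff $c'\in I_{\mathfrak{p}'}^{t+1}$. But $c'$ is supported entirely on $\mathfrak{p}_{a'}$, disjoint from the set $\mathfrak{p}_{b'}$ of degree-one generators of $I_{\mathfrak{p}'}$, so any representation of $c'$ as a product of $t+1$ minimal generators of $I_{\mathfrak{p}'}$ times a monomial unit must use only degree-two edges; the resulting degree $2(t+1)$ exceeds $\deg c'\leq 2t+1$, a contradiction. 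Hence no monomial outside $\mathfrak{p}'$ lies in $(I^{t+1}:c^{*})$, giving $(I^{t+1}:c^{*})=\mathfrak{p}'$ and therefore $\mathfrak{p}'\in\Ass(R/I^{t+1})$.
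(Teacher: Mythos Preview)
Your argument is correct, and the central construction $c'=c\cdot xy$ with $y\in\mathfrak{p}_a$ is exactly what the paper uses.  The difference lies in how you assemble the two pieces $\mathfrak{p}_1'$ and $\mathfrak{p}_2'$.  The paper never builds a global witness in $R$: it shows $\mathfrak{p}_1'=((I_1')^{t+1}:c')$ working entirely inside the induced subgraph $G_1'$, observes separately that $\mathfrak{p}_2'\in\Ass(R/I_2')$ since it is a minimal vertex cover, and then invokes Lemma~\ref{disjoint} (the decomposition $I_{\mathfrak{p}'}=((I_1')_{\mathfrak{p}_1'},(I_2')_{\mathfrak{p}_2'})$ with disjoint variable sets) to conclude $\mathfrak{p}'\in\Ass(R/I^{t+1})$.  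Because of this, the paper never needs your Case~3 observation that $\deg c=2t-1$ forces $c\in I_1^{t-1}$ and hence $c'\in I^t$; nor does it need your localization argument for the reverse inclusion, since showing $zc'\notin (I_1')^{t+1}$ for $z\in N_2'$ is immediate from the fact that $zc'$ involves no edge of $G_1'$ through $z$.

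Your route trades the appeal to Lemma~\ref{disjoint} for a self-contained global colon computation.  The cost is the extra degree-counting step in Case~3 and the slightly delicate support analysis at $\mathfrak{p}'$ (which works precisely because every neighbor of a vertex in $\mathfrak{p}_a\cup\{x\}$ already lies in $\mathfrak{p}'$, so no edge dividing $sc'$ can escape $\mathfrak{p}_a\cup\{x\}$).  The benefit is that you obtain an explicit element $c^*$ with $\mathfrak{p}'=(I^{t+1}:c^*)$, rather than inferring its existence from the lemma.  You should note explicitly that $\mathfrak{p}'$ is a vertex cover of $G$ (the paper checks this first); your reverse inclusion tacitly uses it when ruling out edges with both endpoints outside $\mathfrak{p}'$.
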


\begin{proof}
If $v$ is an isolated vertex of $G_2'$ then $N(v)
\subset {\mathfrak p}_1'$ and thus every edge of $G$ containing $v$ is covered
by ${\mathfrak p}_1'$. Hence ${\mathfrak p}'$ is a vertex cover of $G$.
Since $x\in {\mathfrak p}_1$, there is an edge $xy \in
G_1$ with $y\in {\mathfrak p}_a$. Consider $c'=cxy$. Then the 
degree of $c'$ is at most $2t+1$, so $c' \not\in (I_1)^{t+1}$. 
If $I_1'=I(G_1')$, then $c' \not\in (I_1')^{t+1}$ as well. If $z\in
{\mathfrak p}_1$, then $z(cxy)=(zc)(xy) \in (I_1')^{t+1}$. If $z \in
N(x)$, then 
$z(cxy)=(cy)(zx) \in (I_1')^{t+1}$ since $y \in {\mathfrak p}_1$. Thus ${\mathfrak p}_1'
\subset ((I_1')^{t+1}:c')$. Suppose $z
\not\in {\mathfrak p}_1'$ is a vertex of $G_1'$. Then $z \in N_2'$. Then $z \not\in
N(x)$ and $z \not\in N(y)$ since $y \in {\mathfrak p}_a$, so $zx$ and $zy$ are not
edges of $G_1'$. Also $z \not\in {\mathfrak p}_1$ and $c \in R_a$, so $zc \not\in
(I_1')^{t+1}$.  
Thus the inclusion must be an equality. 

Since ${\mathfrak p}_2'$ is a minimal vertex cover of the edges of $G_2'$, then
${\mathfrak p}_2' \in \Ass(R/I_2')$ where $I_2'$ is the edge ideal of 
$G_2'$ (where isolated vertices of $G_2'$ are not included in
$I_2'$). Note that $I_{{\mathfrak p}'}=( (I_1')_{{\mathfrak p}_1'},
(I_2')_{{\mathfrak p}_2'})$ and so the result follows
from Lemma~\ref{disjoint}. 
\end{proof}

Note that if $G$ contains an odd cycle of length $2k+1$, then embedded 
associated primes satisfying the 
hypotheses of Proposition~\ref{buildoutward} exist for $t \geq
k+1$ by Lemma~\ref{cycle} and Corollary~\ref{cyclewhiskers}. Starting
with an induced odd cycle $C$ 
one can now recover all the primes described in 
\cite[Theorem 3.3]{AJ}. In addition, combining
Corollary~\ref{cyclewhiskers} with 
Lemma~\ref{disjoint} as a starting place for
Proposition~\ref{buildoutward} recovers the result from \cite[Theorem
  3.7]{AJ} as well. Define $\Ass(R/I^t)^*$ to be the set of embedded
associate primes of $I^t$ produced in Proposition~\ref{buildoutward}
by starting from any odd cycle, or collection of odd cycles, of the
graph. Then $\Ass(R/I^t)^* \subset \Ass(R/I^s)$ for all $s \geq
t$. To see this, recall that if ${\mathfrak p}$ is not a minimal prime, then there
is a vertex $x$ such that $x \cup N(x) \subset {\mathfrak p}$. Choosing such an
$x$ results in ${\mathfrak p}_1={\mathfrak p}_1'$
and the process shows that ${\mathfrak p} \in \Ass(R/I^{t+1})$. Notice also that
the sets $\Ass(R/I^t)^*$ stabilize. In particular, 
$\Ass(R/I^t)^* = \Ass(R/I^n)^*$ for all $t\geq n$ where $n$ is the number of variables. Notice that
choosing $x\in N_1$ each time will eventually result in $\m \in
\Ass(R/I^t)^*$ for some $t$. Counting the maximal number of steps this
could take provides a bound on the index of stability. Following the
process above for a particular graph can often yield a significantly
lower power $M$ for which $\Ass(R/I^t)^*$ stabilize. 
These results are collected below.

\begin{theorem}\label{graphs}
Let $I$ be the edge ideal of a connected graph $G$ that is not
bipartite. Suppose $G$ has $n$ vertices and $s$ leaves, and $N$ is the
index of stability of $I$. 
\begin{enumerate}
\item[(a)] \cite[Theorem 4.1]{AJ} The process used in
  Proposition~\ref{buildoutward} produces all embedded associated
  primes in the stable set. That is, $\Ass(R/I^N)=\Min(R/I) \cup
  \Ass(R/I^N)^*$. 
\item[(b)] \cite[Corollary 4.3]{AJ}, (Proposition~\ref{buildoutward})
  If the smallest odd cycle of $G$ 
  has length $2k+1$, then  $N \leq n-k-s$. 
\item[(c)] \cite[Theorem 5.6, Corollary 5.7]{AJ} If $G$ has a unique
  odd cycle, then $\Ass(R/I^t)=\Min(R/I)\cup \Ass(R/I^t)^*$ for all
  $t$. Moreover, the sets $\Ass(R/I^t)$ form an ascending chain.
\item[(d)] If ${\mathfrak p}\in \Ass(R/I^N)$, and $N_0$ is the smallest positive
  integer for which ${\mathfrak p}\in \Ass(R/I^{N_0})^*$, then ${\mathfrak p}\in \Ass(R/I^t)$
  for all $t\geq N_0$. 
\end{enumerate}
\end{theorem}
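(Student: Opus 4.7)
Parts (a), (b), (c) are cited from \cite{AJ} and their proofs transfer verbatim; the bound in (b) can also be recovered by counting iterations of Proposition~\ref{buildoutward}. I will focus on part (d), which is the new contribution.

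The plan for (d) is to iterate Proposition~\ref{buildoutward} in a degenerate way that fixes $\mathfrak{p}$, thereby producing $\mathfrak{p}\in\Ass(R/I^t)^*$ for every $t\geq N_0$ and hence $\mathfrak{p}\in\Ass(R/I^t)$. I would induct on $t\geq N_0$, with the base case being the hypothesis $\mathfrak{p}\in\Ass(R/I^{N_0})^*$. For the inductive step, assume $\mathfrak{p}\in\Ass(R/I^t)^*$; by the construction preceding Proposition~\ref{buildoutward}, write $\mathfrak{p}=\mathfrak{p}_1\cup\mathfrak{p}_2$ with $\mathfrak{p}_1=\mathfrak{p}_a\cup N_1$, and $\mathfrak{p}_1=(I_1^t\colon c)$ for some monomial $c\in R_a$ of degree at most $2t-1$.

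The key observation is that $\mathfrak{p}_a\neq\emptyset$, since this is precisely the assumption $I_a\neq 0$ built into the construction, and that any $x\in\mathfrak{p}_a$ satisfies $N(x)\subset N_1\subset\mathfrak{p}_1$ by the definition of $N_1$. Applying Proposition~\ref{buildoutward} to such an $x$ yields $\mathfrak{p}_1':=\mathfrak{p}_1\cup N(x)=\mathfrak{p}_1$, so $G_1'=G_1$ and $G_2'=G_2$. Taking $\mathfrak{p}_2':=\mathfrak{p}_2$, which remains a minimal vertex cover of $G_2'$, the proposition produces $\mathfrak{p}'=(\mathfrak{p}_1',\mathfrak{p}_2')=\mathfrak{p}\in\Ass(R/I^{t+1})^*$, closing the induction. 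The certifying witness for the next power is automatically provided: the proof of Proposition~\ref{buildoutward} gives $\mathfrak{p}_1=(I_1^{t+1}\colon cxy)$, where $y\in\mathfrak{p}_a$ is a neighbor of $x$ in $G_1$, and $\deg(cxy)\leq 2(t+1)-1$, so the induction hypothesis is restored.

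The main conceptual point, rather than a technical obstacle, is recognizing that Proposition~\ref{buildoutward} is flexible enough to admit the degenerate choice $\mathfrak{p}_1'=\mathfrak{p}_1$; the nonemptiness of $\mathfrak{p}_a$, which makes this choice available, is guaranteed by the very hypothesis that $\mathfrak{p}$ is produced by the construction in the first place. Once this is noticed, the proof is a clean reapplication of the proposition.
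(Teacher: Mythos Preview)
Your argument for (d) is correct and matches the paper's approach: the paragraph immediately preceding the theorem statement observes that for an embedded prime $\mathfrak{p}$ there exists $x$ with $\{x\}\cup N(x)\subset\mathfrak{p}$, and applying Proposition~\ref{buildoutward} to such an $x$ yields $\mathfrak{p}_1'=\mathfrak{p}_1$, hence $\mathfrak{p}\in\Ass(R/I^{t+1})$. Your explicit choice $x\in\mathfrak{p}_a$ and the verification that the degree bound on the witness $cxy$ is preserved simply spell out what the paper leaves implicit.
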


To interpret Theorem~\ref{graphs} in light of our earlier questions,
notice that $(a)$ answers Question~\ref{question2}, $(b)$ answers
Question~\ref{question1}, and $(d)$ provides a good upper bound for
$N_{\mathfrak p}$ in Question~\ref{SharpQuestion}. 
The significance of $(c)$ is to answer a fourth question of
interest. Before presenting that question, we first discuss some
extensions of the above results to graphs containing loops. 

\begin{corollary}\label{loops}
Let $I$ be a monomial ideal, not necessarily square-free, such that
the generators of $I$ have degree at most two. Define $\Ass(R/I^t)^*$
to be the set of embedded 
associate primes of $I^t$ produced in Proposition~\ref{buildoutward}
by starting from any odd cycle, or collection of odd cycles where
generators of $I$ that are not square-free are considered to be cycles
of length one. Then the results of Theorem~\ref{graphs} hold for $I$.
\end{corollary}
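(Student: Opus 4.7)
The plan is to observe that the proof strategy of Theorem~\ref{graphs} extends essentially verbatim once the foundational lemmas are generalized to allow loops, where a loop at a vertex $x$ (i.e., a generator $x^2$ of $I$) is treated as an odd cycle of length one. The entire inductive construction in Proposition~\ref{buildoutward} depends only on two structural ingredients: the existence, at the starting stage, of an embedded prime $\mathfrak{p}_1$ of the form $(I_1^t:c)$ for some monomial $c\in R_a$ of degree at most $2t-1$, and the disjoint decomposition lemma (Lemma~\ref{disjoint}). Both ingredients survive the introduction of loops.

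First, I would establish the analog of Lemma~\ref{cycle} in this setting. If $I$ contains a loop $x^2$, then $(x)=(I^t:x^{2t-1})$ for every $t\geq 1$, exhibiting $(x)\in \Ass(R/I^t)$ with a witness monomial of degree $2t-1$ supported on the ``cycle.''  For a generalized odd cycle of length $2k+1$ (counting each loop as length $1$ and each ordinary edge as length $2$), one takes the product $b$ of the distinct vertices of the cycle and sets $c=b\cdot (x_ix_j)^{t-k-1}$ for $t\geq k+1$, where $x_ix_j$ is any edge or loop of the cycle; the same degree count and divisibility argument as in Lemma~\ref{cycle} shows $\mathfrak{m}_{\text{cycle}}=(I^t:c)$. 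The analog of Corollary~\ref{cyclewhiskers} follows identically, since a leaf $y$ adjacent to a cycle vertex $v$ satisfies $y\cdot c\in I^t$ via the edge $yv\in I$.

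Next, Lemma~\ref{disjoint} extends without change: its proof uses only that the variable sets of $I_1$ and $I_2$ are disjoint, not that either ideal is square-free. Proposition~\ref{buildoutward} then applies verbatim, because in its hypotheses the monomial $c$ lies in $R_a$ and the proof never invokes the square-free hypothesis on $I$; the key step $z(cxy)\in (I_1')^{t+1}$ only uses that $xy$ is a generator of $I$, which covers loops ($x=y$) as well as ordinary edges. With these extensions in hand, parts (a), (b), and (d) of Theorem~\ref{graphs} follow from exactly the same coloring/induction arguments as in \cite{AJ}, with the count in (b) unchanged since the inductive step still adjoins at least one new vertex at each stage.

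The main obstacle will be part (c), the uniqueness-of-odd-cycle statement. Here I would argue that if $I$ has a single generalized odd cycle $C$, then localizing away from the vertex set of $C$ produces an ideal generated by square-frees corresponding to a bipartite graph (every other non-square-free generator, and every remaining odd cycle, has been eliminated), so Theorem~\ref{bipartite} applies to the deletion and no new embedded primes can arise beyond those produced by Proposition~\ref{buildoutward} starting from $C$. The ascending-chain conclusion then follows by the same observation used in Theorem~\ref{graphs}(c): any embedded prime $\mathfrak{p}$ produced at power $t$ contains, together with each of its vertices, the neighborhood of some vertex, so choosing such a vertex in Proposition~\ref{buildoutward} yields $\mathfrak{p}\in\Ass(R/I^{t+1})$ as well.
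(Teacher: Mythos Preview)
Your approach is essentially the same as the paper's: treat each generator $x^2$ as an odd cycle of length one (so $k=0$) and verify it provides a valid seed for Proposition~\ref{buildoutward}, after which the arguments of Theorem~\ref{graphs} go through unchanged.

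A few points of comparison. First, you omit the degree-one case: the statement allows generators of degree at most two, and the paper disposes of linear generators up front by writing $I=(I_1,I_2)$ with $I_2$ generated in degree one, noting $I_2$ is a complete intersection, and applying Lemma~\ref{disjoint} to reduce to $I_1$. You should include this step. Second, the paper is more economical than your write-up. Rather than introducing ``generalized odd cycles'' mixing loops and edges (whose length convention you do not make precise), the paper simply takes a single loop $x^2$, sets $\mathfrak{p}_a=(x)$, $N_1=N(x)$, $c=x$, and checks directly that $\mathfrak{p}_1=\mathfrak{p}_a\cup N_1=(I_1:c)$ with $\deg c=1=2(1)-1$; this already meets the hypotheses of Proposition~\ref{buildoutward} at $t=1$, and the rest follows. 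Collections of odd cycles (including multiple loops) are handled via Lemma~\ref{disjoint}, not by any hybrid cycle notion. Third, your assertion $(x)=(I^t:x^{2t-1})$ is not correct for the full ideal $I$ (if $xy\in I$ then $y$ is also in that colon); what you want, and what suffices, is the computation for the ideal restricted to the loop and its neighbors, exactly as in the paper's seed verification.
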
 

\begin{proof}
If $I$ has generators of degree one, then write $I=(I_1,I_2)$ where
$I_2$ is generated in degree one. Then $I_2$ is a complete
intersection, so by using Lemma~\ref{disjoint} we may replace $I$ by
$I_1$ and assume $I$ is generated in degree two. If $I$ is not
square-free, consider a generator $x^2 \in I$. This 
generator can be represented as a loop (cycle of length one) in the
graph. Define ${\mathfrak p}_a=(x)$ and $N_1=N(x)$. Note that ${\mathfrak p}_1={\mathfrak p}_a \cup N_1 =
(I_1 :c)$ where $I_1$ is the induced graph on $x \cup N(x)$ and
$c=x$. Then ${\mathfrak p}_1$ satisfies the hypotheses of
Proposition~\ref{buildoutward}. The results now follow from the proof
of Proposition~\ref{buildoutward}.
\end{proof}

Notice that ideals that are not square-free will generally have
embedded primes starting with $t=1$ since the smallest odd cycle has
length $1=2(0)+1$, so $k+1=1$. The above corollary can be extended to
allow for any pure powers of variables to be generators of the ideal
$I$.

\begin{corollary}
Let $I=(I_1,I_2)$ where $I_2$ is the edge ideal of a graph $G$ and
$I_1=(x_{i_1}^{s_1}, \ldots , x_{i_r}^{s_r})$ for any powers $s_j \geq
1$. Then the results of Theorem~\ref{graphs} hold for $I$ with
$\Ass(R/I^t)^*$ defined as in Corollary~\ref{loops}.
\end{corollary}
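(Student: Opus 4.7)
The plan is to follow the proof of Corollary~\ref{loops} with one modification: the loop witness $c = x$ used there for a generator $x^2$ is replaced by $c = x^{s-1}$ for a generator $x^s$ with $s \geq 2$. The outward-building machinery is Proposition~\ref{buildoutward}; only the base case of the induction genuinely needs fresh verification.

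First I would run the Lemma~\ref{disjoint} reduction already exploited in Corollary~\ref{loops}: if some $s_j = 1$, then $x_{i_j}$ is itself a generator of $I$, every edge and pure power involving $x_{i_j}$ becomes redundant, and $(x_{i_j})$ peels off as a disjoint complete-intersection component; if some $x_{i_j}$ does not appear in any edge of $G$, then $(x_{i_j}^{s_j})$ is variable-disjoint from the rest of $I$ and again splits off. After these reductions I may assume every $s_j \geq 2$ and every $x_{i_j}$ is a vertex of $G$. For each such pure power I would initialise the process exactly as in Corollary~\ref{loops}: set ${\mathfrak p}_a = (x_{i_j})$, $N_1 = N_G(x_{i_j})$, ${\mathfrak p}_1 = {\mathfrak p}_a \cup N_1$, and take witness $c = x_{i_j}^{s_j - 1}$. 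A direct monomial check gives $(I : c) = {\mathfrak p}_1$: one has $x_{i_j} c = x_{i_j}^{s_j} \in I$, and for every $y \in N(x_{i_j})$, $yc = x_{i_j}^{s_j - 2}(x_{i_j} y) \in I$; conversely no generator of $I$ divides $c$ itself (it is supported on the single variable $x_{i_j}$ with degree $s_j - 1 < s_j$), and for any $z \notin \{x_{i_j}\} \cup N(x_{i_j})$ the monomial $zc$ is divisible neither by any pure power (the other exponents $s_k$ are all $\geq 2$, while $z$ has degree one in $zc$) nor by any edge (since $z x_{i_j}$ is not an edge).

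From this base the outward-growing procedure of Proposition~\ref{buildoutward} produces every embedded prime of $\Ass(R/I^t)^{*}$ by the same recipe as in Corollary~\ref{loops}. The point of care is that Proposition~\ref{buildoutward} is stated under the degree hypothesis $\deg c \leq 2t - 1$, which holds for the $s_j = 2$ loop but fails for $c = x_{i_j}^{s_j - 1}$ when $s_j \geq 3$. That hypothesis is used only to conclude $c' = c \cdot xy \notin I^{t+1}$ in the inductive step, and this conclusion I would re-establish by direct monomial divisibility: if $c' \in I^{t+1}$, express $c'$ as a multiple of a product $g_1 \cdots g_{t+1}$ of generators of $I$; either some $g_k$ equals the edge $xy$, which forces $c = c'/(xy) \in I^t$ contrary to the inductive hypothesis, or none of them does, in which case careful bookkeeping of the $x_{i_j}$- and $y$-exponents of $c'$ against the available generators (each pure-power generator charging at least $s_k \geq 2$ units to its own variable) rules out such a product. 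This second case is the main obstacle: without the clean degree inequality $\deg c' < 2(t+1)$ of Proposition~\ref{buildoutward} one must compare monomials by hand, and the argument uses essentially that $s_j \geq 2$. Once $c' \notin I^{t+1}$ is secured, parts (a)--(d) of Theorem~\ref{graphs} transcribe verbatim, since the counting arguments behind the stability bound, the ascending-chain property, and the estimate on $N_{\mathfrak p}$ depend only on the combinatorial shape of the outward-growing process, not on the exponents $s_j$.
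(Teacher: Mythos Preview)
Your approach differs from the paper's in a genuine way, and the difference matters. You try to rerun Proposition~\ref{buildoutward} directly with a modified base witness $c = x_{i_j}^{s_j-1}$, which (as you correctly observe) breaks the degree hypothesis $\deg c \leq 2t-1$ and forces you to re-establish $c' \notin (I_1')^{t+1}$ by hand at every step of the recursion. Your sketch of that step is not complete: in your first case (some $g_k = xy$), cancelling $xy$ gives $c \in (I_1')^t$, not $c \in I_1^t$, and since $I_1' \supsetneq I_1$ in general this is not yet a contradiction to the inductive hypothesis $\mathfrak{p}_1 = (I_1^t : c)$; you would need to argue further that the support condition $c \in R_a$ forces any factorisation of $c$ through generators of $I_1'$ to already lie in $I_1$. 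Your second case is left at the level of ``careful bookkeeping''. This can probably be pushed through, but it requires tracking the support of the witness through the recursion and is more work than you indicate.

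The paper avoids all of this by a reduction trick. Set $K = (x_{i_1}^2, \ldots, x_{i_r}^2)$ and $J = (K, I_2)$; then $J$ is exactly the kind of ideal handled by Corollary~\ref{loops}, so Proposition~\ref{buildoutward} applies to $J$ with its degree hypothesis intact. Given $\mathfrak{p} \in \Ass(R/J^t)^*$ with witness $c$ satisfying $\mathfrak{p}_1 = (J_1^t : c)$, one then lifts $c$ to a witness for the same prime over $I^t$ in one shot: if $x_{i_1}, \ldots, x_{i_v}$ are the pure-power variables landing in $\mathfrak{p}_1$, take minimal $q_j \geq 0$ with $c' = x_{i_1}^{q_1} \cdots x_{i_v}^{q_v} c \notin I_1^t$ and verify $\mathfrak{p}_1 = (I_1^t : c')$. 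The inductive building machinery is thus used only for $J$, where the degree bound holds throughout, and the passage from $J$ to $I$ is a single witness adjustment rather than a modification threaded through the entire recursion. Your route is more direct in spirit but buys you the obligation to redo the non-membership check at every level; the paper's route trades that for one comparison between $J_1^t$ and $I_1^t$.
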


\begin{proof}
As before, we may assume $x_j \geq 2$ for all $j$. Let
$K=(x_{i_1}^{2}, \ldots , x_{i_r}^{2})$ and let 
$J=(K,I_2)$. Then $J$ satisfies the hypotheses of
Corollary~\ref{loops}. Let ${\mathfrak p}\in \Ass(R/J^t)^*$ be formed by starting
with ${\mathfrak p}_a=(x_{i_1})$ and let
${\mathfrak p}_1=(J_1^t:c)$ where $J_1$ and $c$ are defined as in
Corollary~\ref{loops}. Suppose $x_{i_1}, \ldots , x_{i_v}\in {\mathfrak p}_1$. Let
$q_{j}\geq  
0$ be the least integers such that $c'=x_{i_1}^{q_1}\cdots
x_{i_v}^{q_v}\cdot c \not\in I_1^t$. Then it
is straightforward to check that ${\mathfrak p}_1=(I_1^t:c')$ and so ${\mathfrak p} \in
\Ass(R/I^t)^*$. Thus higher powers of variables can also be treated as
loops and the results of Theorem~\ref{graphs} hold.
\end{proof}

We now return to Theorem~\ref{graphs} $(c)$. In general, the sets
$\Ass(R/I^t)^*$ form an ascending chain. Theorem~\ref{graphs} $(c)$
gives a class of graphs for which $\Ass(R/I^t)^*$ describe every
embedded prime of a power of $I^t$ optimally. Thus $\Ass(R/I^t)$ will
form a chain. This happens for many classes of monomial ideals, and
leads to the fourth question.

\begin{question}\label{chainconjecture}
If $I$ is a square-free monomial ideal, is $\Ass(R/I^t) \subset
\Ass(R/I^{t+1})$ for all $t$?
\end{question}

For monomial ideals, Question~\ref{chainconjecture} is of interest for low
powers of $I$. For sufficiently large powers, the sets of associated
primes are known to form an ascending chain, and a bound beyond which
the sets $\Ass(I^t/I^{t+1})$ form a chain has been shown by multiple
authors (see \cite{Sharp}, \cite{ME}). This bound depends on two
graded algebras which encode information on the powers of $I$, and
which will prove useful in other results. The first is the {\em Rees
  algebra} $R[It]$ of $I$, which is defined by
$$R[It] = R \oplus It \oplus I^2t^2 \oplus I^3t^3 \oplus \cdots$$
and the second is the {\em associated graded ring} of $I$,
$${\rm gr}_I(R) = R/I \oplus I/I^2 \oplus I^2/I^3 \oplus \cdots .$$
Notice that while the result is for $\Ass(I^t/I^{t+1})$, for monomial
ideals $\Ass(R/I^{t+1})$ will also form a chain. 

\begin{theorem} \cite{Sharp, ME}
$\Ass(I^t/I^{t+1})$ is increasing for $t > a_{R[It]_+}^0({\rm gr}_I(R))$. 
\end{theorem}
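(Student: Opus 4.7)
The plan is to interpret the statement through the local cohomology of the associated graded ring $G := {\rm gr}_I(R)$, viewed as a graded module over the Rees algebra $S := R[It]$ with irrelevant ideal $S_+$. By definition $a^0 := a^0_{S_+}(G)$ is the supremum of integers $n$ with $[H^0_{S_+}(G)]_n \neq 0$, so for $t > a^0$ no nonzero class $\bar c \in G_t = I^t/I^{t+1}$ is annihilated by a power of $S_+$. Concretely, this translates to the statement that for every monomial $c \in I^t \setminus I^{t+1}$ and every $m \geq 1$, some element of $I^m$ multiplies $c$ out of $I^{t+m+1}$; in particular, taking $m=1$, there is $x \in I$ with $xc \notin I^{t+2}$. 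This is the only analytic input needed, and the rest is bookkeeping.

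Given $\mathfrak{p} \in \Ass(I^t/I^{t+1})$ with $t > a^0$, I would write $\mathfrak{p} = (I^{t+1}:c)$ for a monomial $c \in I^t \setminus I^{t+1}$, pick $x$ as above, and set $c' := xc \in I^{t+1} \setminus I^{t+2}$. The relation $\mathfrak{p} c \subset I^{t+1}$ yields $\mathfrak{p} c' = x(\mathfrak{p} c) \subset x I^{t+1} \subset I^{t+2}$, so $\mathfrak{p} \subset (I^{t+2} : c')$. The goal is to promote this containment to an equality $\mathfrak{p} = (I^{t+2}:c'')$ for a suitable monomial $c'' \in I^{t+1} \setminus I^{t+2}$, which will witness $\mathfrak{p} \in \Ass(I^{t+1}/I^{t+2}) = \Ass(R/I^{t+2})$ (using the lemma from the paper identifying these two sets).

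The final step, and the main technical obstacle, is to verify that by choosing $x$ carefully (or by successively refining $c'$) no monomial $y \notin \mathfrak{p}$ lies in $(I^{t+2}:c')$. The plan is a contradiction argument: if $y \notin \mathfrak{p}$ but $yc' \in I^{t+2}$, then $yc \notin I^{t+1}$ (since $y \notin (I^{t+1}:c) = \mathfrak{p}$) while $x \cdot (yc) = yc' \in I^{t+2}$, producing a nonzero class $\overline{yc} \in G_{t+1}$ killed by $x \in [S_+]_1$. Iterating this with elements supplied by the $a^0$ bound in degrees $t+1, t+2, \ldots$ should force $\bar c$ itself to be $S_+$-torsion in $G_t$, contradicting $t > a^0$. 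The delicate point is packaging this iterative improvement into a finite procedure that terminates with a valid $c''$; this is where the choice of $a^0_{S_+}(G)$, rather than some coarser invariant, becomes essential, since it exactly measures the failure of multiplication maps $G_t \xrightarrow{\cdot x} G_{t+1}$ to be injective on torsion-free classes.
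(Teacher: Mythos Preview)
The paper does not prove this theorem; it is quoted from \cite{Sharp} and \cite{ME} as background. Your interpretation of $a^0 = a^0_{S_+}(G)$ and the containment $\mathfrak{p} \subset (I^{t+2}:xc)$ are correct, but the reverse containment has a real gap. You choose $x \in I$ depending on $c$, and when some $y \notin \mathfrak{p}$ satisfies $yxc \in I^{t+2}$, you observe that $\overline{yc} \in G_t$ (not $G_{t+1}$---that was a slip) is nonzero and killed by this one $x$. Being annihilated by a single element of $S_1$ is far weaker than being $S_+$-torsion, so no contradiction with $t > a^0$ follows, and your ``iteration'' has no mechanism to terminate: each step would require a new $x$ adapted to the new element, and nothing ties these choices together into a single witness $c''$.

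The standard argument chooses $x$ \emph{uniformly}. Set $G' = G/H^0_{S_+}(G)$; then $H^0_{S_+}(G') = 0$, so $S_+$ lies in no associated prime of $G'$ over $S$, and prime avoidance yields a single homogeneous $a \in S_1 = It$ that is a nonzerodivisor on all of $G'$. For $t > a^0$ one has $G_t = G'_t$ and $G_{t+1} = G'_{t+1}$, so multiplication by $a$ is an $R$-linear injection $G_t \hookrightarrow G_{t+1}$; since an injection preserves annihilators of elements, $\Ass_R(I^t/I^{t+1}) \subset \Ass_R(I^{t+1}/I^{t+2})$ follows immediately with no refinement of $c$ needed. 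This is exactly the mechanism of Lemma~\ref{depth1argument} in the paper, applied to $G'$ in the degree range where $G$ and $G'$ agree. Your phrase ``choosing $x$ carefully'' should mean precisely this: one global nonzerodivisor on $G'$, not a $c$-dependent element.
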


Note that square-free is essential in
Question~\ref{chainconjecture}. Examples of monomial 
ideals for which the associated primes do not form an ascending chain
have been given in \cite{HH, Hoa}. Those examples were designed for
other purposes and so are more complex than what is needed here. A
simple example can be found by taking the product of 
consecutive edges of an odd cycle.

\begin{example}\label{notchain}
Let $I=(x_1x_2^2x_3,x_2x_3^2x_4,x_3x_4^2x_5,x_4x_5^2x_1,x_5x_1^2x_2).$
If $\mathfrak{m}=(x_1,x_2,x_3,x_4,x_5)$, then $\mathfrak{m}\in
\Ass(R/I^t)$ for $t=1,4$, but $\mathfrak{m} \not\in \Ass(R/I^t)$ for $t=2,3$. 
\end{example}

The ideal in Example~\ref{notchain} can be viewed as multiplying
adjacent edges in a $5$-cycle to form generators of $I$, and so has
a simple combinatorial realization. A similar result holds for longer
odd cycles, where the maximal ideal is not the only associate prime to
appear and disappear. However, if instead
$I=(x_1x_2x_3,x_2x_3x_4,x_3x_4x_5,x_4x_5x_1,x_5x_1x_2)$ is the path
ideal of the pentagon, then $\Ass(R/I^t)=\Min(R/I)\cup\{\m\}$ for $t\geq
2$ and thus $\Ass(R/I^t)$ form an ascending chain (see \cite[Example
  3.14]{HaM}). 

There are some interesting cases where associated primes are known to
form ascending chains. The first listed is quite general, but has
applications to square-free monomial ideals.

\begin{theorem}\label{integralclosurechain} {\rm(\cite[Proposition
      3.9]{McAdam}, see also 
    \cite[Proposition 16.3]{HIO})} 
If $R$ is a Noetherian ring, then $\Ass(R/{\overline{I^t}})$ form an
ascending chain.
\end{theorem}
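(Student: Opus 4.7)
The plan is to prove the stronger assertion that $\Ass(R/\overline{I^{t}})\subseteq \Ass(R/\overline{I^{t+1}})$ for every $t\geq 1$. Fix $\mathfrak{p}\in \Ass(R/\overline{I^t})$. Since $\Ass$ and the formation of integral closures of ideals both commute with localization, I would first replace $R$ by $R_\mathfrak{p}$ and assume that $(R,\mathfrak{p})$ is local with $\mathfrak{p}=(\overline{I^t}:c)$ for some $c\in R\setminus\overline{I^t}$. The goal then is to produce an element $d\in R\setminus\overline{I^{t+1}}$ satisfying $\mathfrak{p}d\subseteq\overline{I^{t+1}}$; maximality of $\mathfrak{p}$ in the local ring will then force $\mathfrak{p}=(\overline{I^{t+1}}:d)$, proving $\mathfrak{p}\in\Ass(R/\overline{I^{t+1}})$.

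The key technical ingredient is the containment $\overline{I^{t+1}}:I\subseteq\overline{I^t}$. I would establish this via the valuative characterization of integral closure: there is a finite family $v_1,\ldots,v_r$ of Rees valuations of $I$ such that an element $x\in R$ lies in $\overline{I^n}$ if and only if $v_i(x)\geq n\cdot v_i(I)$ for every $i$, where $v_i(I):=\min\{v_i(a):a\in I\}$. If $cI\subseteq\overline{I^{t+1}}$, then for each index $i$ one may pick $a_i\in I$ with $v_i(a_i)=v_i(I)$, yielding $v_i(c)+v_i(I)=v_i(ca_i)\geq(t+1)v_i(I)$ and hence $v_i(c)\geq t\cdot v_i(I)$. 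Since this holds for every Rees valuation, the criterion forces $c\in\overline{I^t}$, contradicting the choice of $c$. Thus there exists $a\in I$ with $d:=ca\notin\overline{I^{t+1}}$, and the computation $\mathfrak{p}d=(\mathfrak{p}c)\cdot a\subseteq\overline{I^t}\cdot I\subseteq\overline{I^{t+1}}$ finishes the argument.

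The principal obstacle is the valuative identity above; once it is in place, the rest is routine colon-ideal manipulation. Rees valuations exist for any ideal in a Noetherian ring, arising from the height-one primes of the normalization of the extended Rees algebra $R[It,t^{-1}]$ (taken modulo each minimal prime of $R$ separately when $R$ is not a domain), so no additional hypothesis on $R$ is required for the argument to go through.
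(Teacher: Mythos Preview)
The paper does not give its own proof of this theorem; it simply cites McAdam and Herrmann--Ikeda--Orbanz. So there is no in-paper argument to compare against, and your proposal must stand on its own.

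Your overall strategy is the standard one and is sound when $R$ is a domain (or, more generally, when $I$ is not contained in any minimal prime after localization). However, the claimed valuative identity
\[
\overline{I^{t+1}}:I\subseteq \overline{I^{t}}
\]
is \emph{false} in general Noetherian rings. Take $R=k[u,v]/(uv)$ and $I=(u)$. The minimal primes are $(u)$ and $(v)$, and one checks that $\overline{I^{t}}=(u^{t})$ for all $t$, while $(u^{t+1}):(u)=(u^{t},v)$. Thus $v\in \overline{I^{t+1}}:I$ but $v\notin\overline{I^{t}}$. The Rees-valuation argument breaks down precisely because the Rees valuations of $I$ arise only from minimal primes $\mathfrak{q}$ \emph{not} containing $I$; the valuative criterion for membership in $\overline{I^{n}}$ must be supplemented by the condition $x\in\mathfrak{q}$ for every minimal prime $\mathfrak{q}\supseteq I$, and your colon-computation does not force this.

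The gap is easily repaired. After localizing at $\mathfrak{p}$, suppose your chosen $c$ with $\mathfrak{p}=(\overline{I^{t}}:c)$ happens to lie in $\overline{I^{t+1}}:I$. The analysis above shows this forces $c\notin\mathfrak{q}$ for some minimal prime $\mathfrak{q}\supseteq I$ of $R_{\mathfrak{p}}$; since $\mathfrak{p}c\subseteq\overline{I^{t}}\subseteq\mathfrak{q}$ and $c\notin\mathfrak{q}$, primeness gives $\mathfrak{p}\subseteq\mathfrak{q}$, whence $\mathfrak{p}=\mathfrak{q}$ is a minimal prime of $R_{\mathfrak{p}}$. But then $\mathfrak{p}$ is minimal over $\overline{I^{t+1}}$ and hence lies in $\Ass(R_{\mathfrak{p}}/\overline{I^{t+1}})$ automatically. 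You should insert this case distinction; as written, your final sentence (``no additional hypothesis on $R$ is required for the argument to go through'') is not justified.
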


In order to present the next class of ideals for which the associated
primes are known to form ascending chains, we first need some some
background definitions. 

\begin{definition} Let $G$ be a graph. A {\it colouring\/} of the
vertices of $G$ is an assignment
of colours to the vertices of $G$ such that adjacent vertices
have distinct colours. The {\it chromatic number\/} of $G$ is the
minimal number of colours in a colouring of $G$. A graph is called
{\it perfect\/} if for every induced subgraph $H$, the
chromatic number of $H$ equals the size of the largest complete subgraph
of $H$.
\end{definition}

An excellent reference for the theory of perfect graphs is the book
of Golumbic \cite{golumbic}. Using perfect graphs, we now give an
example to show how Theorem~\ref{integralclosurechain} can be applied
to classes of square-free monomial ideals. An 
alternate proof appears in \cite[Corollary 5.11]{FHV}. 

\begin{example}\label{perfectgraph}
If $I$ is the ideal of minimal vertex covers of a perfect graph, then
$\Ass(R/I^t)$ form an ascending chain.
\end{example}

\begin{proof}
By \cite[Theorem 2.10]{perfect}, $R[It]$ is normal. Thus
$I^t={\overline{I^t}}$ for all $t$, so by
Theorem~\ref{integralclosurechain}, $\Ass(R/I^t)$ form an ascending
chain.
\end{proof}

Similar results hold for other classes of monomial ideals for which
$R[It]$ is known to be normal. For example, in \cite[Corollary
4.2]{ConcaDeNegri} it is shown that a path ideal of a rooted tree has
  a normal Rees algebra. Thus by Theorem~\ref{integralclosurechain},
  $\Ass(R/I^t)$ form an ascending chain for such ideals. Note that a
  path ideal can be viewed as the edge ideal of a carefully chosen
  uniform clutter.

It is interesting to compare the result of Example~\ref{perfectgraph}
 to \cite[Theorem 5.9]{FHV}, where it is shown that if $I$ is the
 ideal of minimal vertex covers of a perfect graph, then the set of
 primes associated to any fixed power has the saturated chain property. Here
 $\Ass(R/I^t)$ has the saturated chain property if for 
 every ${\mathfrak p}\in \Ass(R/I^t)$, either ${\mathfrak p}$ is minimal or there is a $Q
 \subsetneq {\mathfrak p}$ with $Q \in \Ass(R/I^t)$ and $\height Q = \height {\mathfrak p}-1$.

Notice that Theorem~\ref{graphs} shows that in the case of a graph
with a unique odd cycle, Question~\ref{chainconjecture} has an
affirmative answer. This
result can be generalized to any graph containing a leaf.
First we need a slight variation of a previously known
result. 

\begin{lemma}\cite[Lemma 2.3]{morey2}\label{depth1argument}
Suppose $I=I(G)$ is the edge ideal of a graph and $a\in I/I^2$ is a
regular element of the associated graded ring ${\rm gr}_I(R)$. Then the sets
$\Ass(R/I^t)$ form an ascending chain. Moreover,
$\Ass(R/I^t)=\Ass(I^{t-1}/I^t)$ for all $t\geq 1$.
\end{lemma}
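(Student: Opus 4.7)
The plan is to split the statement into its two assertions and handle them in sequence, noting that the ``moreover'' part is essentially free from earlier work in the excerpt.

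For the equality $\Ass(R/I^t)=\Ass(I^{t-1}/I^t)$, I would simply invoke the earlier unnumbered lemma of Section~\ref{stability} (the one asserting $\Ass(I^{t-1}/I^t)=\Ass(R/I^t)$ for any monomial ideal). Since $I=I(G)$ is monomial, this applies directly and gives the moreover clause with no further work. This also has the nice side effect of reducing the ascending-chain statement to a statement about the successive quotients $I^{t-1}/I^t$, which is precisely the setting in which the hypothesis on $\mathrm{gr}_I(R)$ was made.

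For the ascending chain $\Ass(R/I^t)\subseteq\Ass(R/I^{t+1})$, the reformulation above turns the goal into
$$\Ass(I^{t-1}/I^t)\subseteq\Ass(I^t/I^{t+1}).$$
The hypothesis supplies a degree-one homogeneous element $a\in I/I^2$ that is a nonzerodivisor on $\mathrm{gr}_I(R)=\bigoplus_{n\geq 0}I^n/I^{n+1}$. Multiplication by $a$ therefore yields, for each $t\geq 1$, an $R$-linear map
$$\mu_a\colon I^{t-1}/I^t\longrightarrow I^t/I^{t+1},\qquad \overline{b}\longmapsto\overline{ab},$$
which by the nonzerodivisor property is injective. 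Because associated primes respect submodule inclusion, i.e., $\Ass(N)\subseteq\Ass(M)$ whenever $N\hookrightarrow M$, the desired containment falls out at once, and combining with the equality above gives the ascending chain for $\Ass(R/I^t)$.

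There is no serious obstacle here: the hypothesis on $\mathrm{gr}_I(R)$ was tailored precisely to produce the injections $\mu_a$ between successive graded pieces, and the monotonicity of $\Ass$ under submodule inclusion is standard. The only bookkeeping is to confirm that the degree-one piece of $\mathrm{gr}_I(R)$ is $I/I^2$ and that multiplication by it raises the internal grading by one, which is immediate from the definition of $\mathrm{gr}_I(R)$. Thus the proof is short, and the main content is really the clever choice to package the result via the associated graded ring so that the ascending chain becomes a statement about injections between consecutive graded pieces.
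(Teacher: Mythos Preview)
Your proof is correct and the core idea---that multiplication by the degree-one regular element $a$ embeds $I^{t-1}/I^t$ into $I^t/I^{t+1}$ as $R$-modules, so $\Ass(I^{t-1}/I^t)\subseteq\Ass(I^t/I^{t+1})$---is exactly what the paper does, though the paper phrases it elementwise via annihilators (if $\mathfrak{p}=(0:_{R/I}c)$ then $\mathfrak{p}=(0:_{R/I}ac)$) rather than invoking the general submodule monotonicity of $\Ass$.

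The one genuine difference is in how the equality $\Ass(R/I^t)=\Ass(I^{t-1}/I^t)$ is obtained. You cite the earlier monomial-ideal lemma from Section~\ref{stability} directly, which is efficient and perfectly valid here since $I=I(G)$ is monomial. The paper instead re-derives this equality without using the monomial hypothesis: it applies the short exact sequence $0\to I^t/I^{t+1}\to R/I^{t+1}\to R/I^t\to 0$, obtains $\Ass(I^t/I^{t+1})\subseteq\Ass(R/I^{t+1})\subseteq\Ass(R/I^t)\cup\Ass(I^t/I^{t+1})$, and then runs an induction on $t$ using the already-established chain $\Ass(I^{t-1}/I^t)\subseteq\Ass(I^t/I^{t+1})$. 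This makes the paper's argument self-contained and valid for any ideal admitting such a regular element, not just monomial ones---consistent with the fact that the lemma is cited from \cite{morey2}, where the context is more general. Your route is shorter in the present setting; theirs buys a bit more generality.
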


\begin{proof}
Let $a \in I/I^2$ be a regular element of ${\rm gr}_I(R)$. Assume ${\mathfrak p} \in
\Ass(I^t/I^{t+1})$. Then there is a $c \in I^t/I^{t+1}$ with
${\mathfrak p}=(0:_{R/I} c)$. But then ${\mathfrak p}=(0:_{R/I} ac)$, and $a$ lives in degree
  one, so ${\mathfrak p}\in \Ass(I^{t+1}/I^{t+2})$. So these sets form an
  ascending chain. Now the standard short exact sequence
$$0 \rightarrow I^t/I^{t+1} \rightarrow R/I^{t+1} \rightarrow R/I^t \rightarrow 0$$
 gives 
$$\Ass(I^t/I^{t+1}) \subset \Ass(R/I^{t+1}) \subset \Ass(R/I^t)
  \cup \Ass(I^t/I^{t+1})$$
and the result follows by induction.
\end{proof}

\begin{proposition}\label{leaf}
Let $G$ be a graph containing a leaf $x$ and let $I=I(G)$ be its edge
ideal. Then $\Ass(R/I^t) \subset \Ass(R/I^{t+1})$ for all $t$. That
is, the sets of associated primes of the powers of $I$ form an
ascending chain.
\end{proposition}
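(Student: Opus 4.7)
The plan is to reduce the proposition to the hypothesis of Lemma~\ref{depth1argument} by exhibiting a regular element of $\mathrm{gr}_I(R)$ sitting in $I/I^2$. Letting $y$ denote the unique neighbor of the leaf $x$, my candidate is $a = xy$, which is nonzero in $I/I^2$ for degree reasons since $I^2$ is generated in degree $\geq 4$. Once I verify that $a = xy$ is regular on $\mathrm{gr}_I(R)$, the conclusion is immediate from that lemma.

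Regularity of $xy$ on $\mathrm{gr}_I(R) = \bigoplus_{t \geq 0} I^t/I^{t+1}$ amounts to checking that multiplication by $xy$ is injective on each graded piece, i.e., that
\[
(I^{t+2} : xy) \cap I^t = I^{t+1} \quad \text{for all } t \geq 0.
\]
Since every ideal in sight is monomial, it is enough to verify this on monomials. So I would take a monomial $c \in I^t$ with $xyc \in I^{t+2}$ and write $xyc = m\, g_1 \cdots g_{t+2}$ for edge generators $g_i$ of $I$ and a monomial $m$; the goal is to exhibit $c$ as a monomial multiple of a product of $t+1$ edge generators.

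The crucial combinatorial input is the leaf hypothesis: $xy$ is the \emph{only} edge of $G$ containing $x$, so $x$ cannot divide any $g_i$ other than $xy$ itself. This forces a case split on the right-hand side. Either some $g_i$ equals $xy$, in which case I cancel it to read off $c \in I^{t+1}$ directly; or no $g_i$ equals $xy$, which forces $x \mid m$. In the latter case, after dividing by $x$, I further split on whether $y$ appears in some $g_j$: if so, cancelling $y$ against that factor exhibits $c$ as a product of the remaining $t+1$ generators times a monomial; if not, $y$ must divide $m/x$, and cancelling $y$ exhibits $c$ as a product of all $t+2$ generators times a monomial, which still lies in $I^{t+1}$. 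In every case $c \in I^{t+1}$, so $xy$ is a regular element of $\mathrm{gr}_I(R)$, and Lemma~\ref{depth1argument} finishes the proof.

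The main (modest) obstacle is simply to carry out the case analysis cleanly and to be explicit about where the leaf hypothesis is used; the content is entirely that $x$ lies in a unique generator, so any factorization of a multiple of $x$ into edges must absorb $x$ in a very restricted way. If $x$ were not a leaf, the presence of another edge $xv$ with $v \neq y$ would provide extra factorizations and this direct approach would break down.
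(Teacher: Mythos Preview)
Your proposal is correct and follows essentially the same route as the paper: exhibit the image of $xy$ in $I/I^2$ as a regular element of $\mathrm{gr}_I(R)$ by a direct monomial argument exploiting that $xy$ is the unique edge through $x$, then apply Lemma~\ref{depth1argument}. Your case analysis is in fact more detailed than the paper's rather terse version, and incidentally proves the slightly stronger statement $(I^{t+2}:xy)\subset I^{t+1}$, since you never use the hypothesis $c\in I^t$.
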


\begin{proof}
Since $x$ is a leaf of $G$, there is a unique generator $e=xy \in I$
divisible by $x$. Let $a$ denote the image of $e$ in $I/I^2$. We claim
$a$ is a regular element of ${\rm gr}_I(R)$. To see this, it suffices to
show that if $fe\in I^{t+1}$ for some $t$, then $f \in I^t$. 
Since $I$
is a monomial ideal and $e$ is a monomial, $fe \in I^{t+1}$ if and only
  if every term of $fe$ is in $I^{t+1}$. Thus we may assume $f$ is a
  monomial and $fxy=e_1e_2\cdots e_{t+1}h$ for some edges $e_i$ of $G$
  and some monomial $h$. Suppose $f \not\in I^t$. Then $x$ divides
  $e_i$ for some $i$, say $i=t+1$. Since  
  $x$ is a leaf, $e_i=xy$ and by cancellation $f=e_1\cdots e_th \in
  I^t$. Thus $a$ is a regular element of ${\rm gr}_I(R)$ and by
  Lemma~\ref{depth1argument} the result follows.
\end{proof}

When extending the above results to more general square-free monomial
ideals, one needs to pass from graphs to clutters. An obstruction to
extending the results is the lack of an analog to
Theorem~\ref{bipartite} (\cite[Theorem
  5.9]{ITG}). One possible analog appears as a conjecture of
Conforti and Cornu\'ejols, see \cite[Conjecture 1.6]{cornu-book},
which we discuss later in this section. This
conjecture is stated in the language of combinatorial optimization. It
says that a clutter $\C$ has the {\it max-flow min-cut} (MFMC, see
Definition~\ref{mfmc-def})
property if and only if $\C$ has the {\it packing} property. These
criterion for clutters have been shown in recent years to have
algebraic translations \cite{reesclu} which will be discussed in
greater detail later in
the section. An ideal satisfies the packing
property if the monomial grade of $I$ (see
Definition~\ref{mon-grade}) 
is equal to the height of $I$
and this same equality holds for every minor of $I$ \cite{cornu-book,reesclu}. 
Here a minor is
formed by either localizing at a collection of variables, passing to
the image of $I$ in a quotient ring $R/(x_{i_1}, x_{i_2}, \ldots ,
x_{i_s})$, or a combination of the two. In \cite[Corollary
  3.14]{clutters} and \cite[Corollary 1.6]{HHTZ}, it was shown that $\C$
satisfies MFMC if and only if the corresponding edge ideal 
$I(\C)$ is normally torsion-free. 
This allows the conjecture to be
restated (cf. 
\cite[Conjecture 4.18]{reesclu}) as: if $\C$ has the packing property, then
$I(\C)$ is normally torsion-free.

Since a proof of this conjecture does not yet exist, the techniques
used to describe the embedded associated primes, the stable set of
associated primes, and the index of stability for graphs are difficult
to extend. However some partial results are known. The first gives
some conditions under which it is known that the maximal ideal is, or
is not, an associated prime. In special cases, this can provide a seed
for additional embedded associated primes using techniques such as
those in Proposition~\ref{buildoutward}.

\begin{theorem}\label{family-cc}
If $I$ is a square-free monomial ideal, every proper
  minor of $I$ is normally torsion-free, and $\beta_1$ is the monomial
  grade of $I$, then 
\begin{enumerate} 
\item[(a)] \cite[Corollary3.6]{HaM} $\m \not\in \Ass(R/I^t)$ for $t \leq \beta_1$. 
\item[(b)] \cite[Theorem 4.6]{HaM} If $I$ fails the packing
  property, then $\m \in \Ass(R/I^{\beta_1+1})$.  
\item[(c)] \cite[Proposition 3.9]{HaM} If $I$ is unmixed and
  satisfies the packing property, then $I$ 
is normally torsion-free.
\end{enumerate}
\end{theorem}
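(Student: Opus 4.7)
The plan is to handle the three parts in sequence, using the hypothesis on proper minors to reduce every question about $\Ass(R/I^t)$ to whether $\m$ itself appears. Indeed, for any prime $\mathfrak{p}\subsetneq\m$, the localization $I_\mathfrak{p}$ is a proper minor of $I$, hence normally torsion-free; so $\mathfrak{p}\in\Ass(R/I^t)$ if and only if $\mathfrak{p}$ is a minimal prime of $I$. Thus the entire content of (a)--(c) concerns whether $\m\in\Ass(R/I^t)$, equivalently whether $\depth(R/I^t)=0$.

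For (a), recall that the monomial grade $\beta_1$ equals the maximum length of a regular sequence of monomials in $I$, which is achieved by choosing $\beta_1$ pairwise disjoint generators $f_1,\dots,f_{\beta_1}$ (disjointness of supports is equivalent to these monomials forming a regular sequence on $R$). The goal is to prove $\depth(R/I^t)\geq 1$ for $t\leq\beta_1$ by exhibiting a regular element on $R/I^t$. I would proceed by induction on $t$, showing that a carefully chosen element $g$ built from $f_1,\dots,f_{\beta_1}$ (for instance a suitable sum, or, after prime avoidance, a monomial of degree $\beta_1-t+1$ inside $(f_1,\dots,f_{\beta_1})$) is a non-zerodivisor modulo $I^t$. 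The verification amounts to showing that if a monomial $c$ satisfies $g\cdot c\in I^t$, then already $c\in I^t$, which follows from analyzing how a product of $t$ generators of $I$ can absorb the factor $g$, using the disjointness of supports.

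For (b), the failure of packing for $I$ gives $\beta_1<\height(I)$, while every proper minor satisfies packing because it is normally torsion-free (normally torsion-free implies MFMC, which implies packing). Guided by the odd-cycle prototype of Lemma~\ref{cycle}, I would construct an explicit witness $c$ whose multidegree is the componentwise sum of $\beta_1$ pairwise disjoint edge vectors minus a single unit in some variable (equivalently $c=(x_1\cdots x_n)^{a}/x_j$ type construction, rescaled so that its total degree is one less than what would force $c\in I^{\beta_1+1}$). The containment $c\notin I^{\beta_1+1}$ is then forced by the failure of König for $I$, while $x_i c\in I^{\beta_1+1}$ for every variable $x_i$ follows from the fact that each proper minor of $I$ does satisfy König, so adding one more vertex lets us complete a cover. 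This yields $\m=(I^{\beta_1+1}:c)$.

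For (c), unmixedness plus packing gives $\height(I)=\beta_1$ with all minimal primes of this common height. Part (a) together with the opening reduction yields $\Ass(R/I^t)=\Min(R/I)$ for $t\leq\beta_1$. The strategy for $t>\beta_1$ is to bootstrap: since every minor of $I^t$ is a power of a minor of $I$, the proper-minor hypothesis passes to powers, so the only candidate embedded prime for any $R/I^t$ is $\m$. One then rules out $\m$ either by lifting the regular sequence of part (a) through all powers (this is where unmixedness enters, ensuring the length-$\beta_1$ regular sequence on $R/I$ survives after repeated symbolic/ordinary power comparisons), or by invoking Terai-style duality to recast normal torsion-freeness as a Cohen--Macaulayness statement for the ideal of covers. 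The main obstacle will be exactly this last step: translating the first-power packing condition into genuine control over all powers, since packing is a statement about $I$ and its minors only, whereas normal torsion-freeness is a statement about $I^t$ for all $t$; the unmixedness hypothesis is what makes this bridge possible.
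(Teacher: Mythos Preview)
The paper does not prove this theorem; it is stated purely as a compilation of results from \cite{HaM}, with a separate citation attached to each of (a), (b), (c) and no argument given. So there is no proof in the paper to compare your proposal against, and I can only assess your sketch on its own merits.

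Your opening reduction is correct and is exactly the right framework: the hypothesis on proper minors forces every non-maximal associated prime of $I^t$ to be minimal, so the whole question is whether $\m\in\Ass(R/I^t)$. After that, however, each part has a genuine gap.

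For (a), you have not actually produced the regular element. Writing ``a carefully chosen element $g$ \dots\ for instance a suitable sum, or, after prime avoidance, a monomial of degree $\beta_1-t+1$'' offers two incompatible candidates and verifies neither; a sum of monomials is not obviously a non-zerodivisor on $R/I^t$, and no monomial of degree $\beta_1-t+1$ can be a non-zerodivisor since $\m\in\Ass(R/I^t)$ would be witnessed by monomials anyway. The argument in \cite{HaM} does not go through an explicit regular element at all; it uses the disjoint-variable decomposition (Lemma~\ref{disjoint} here) together with the normal torsion-freeness of the minors to bound the power at which $\m$ can first appear.

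For (b), the witness $c$ is not specified precisely enough to check either $c\notin I^{\beta_1+1}$ or $x_ic\in I^{\beta_1+1}$. Your justification that ``each proper minor of $I$ does satisfy K\"onig, so adding one more vertex lets us complete a cover'' conflates matchings (which control $\beta_1$) with vertex covers (which control $\alpha_0$); K\"onig for the minor $I|_{x_i=1}$ tells you its matching number equals its height, but that does not by itself produce $\beta_1+1$ edges covering $x_ic$.

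For (c), you explicitly acknowledge the gap: you do not have the mechanism that converts packing plus unmixedness (a statement about $I$ and its minors) into control of $\Ass(R/I^t)$ for all $t>\beta_1$. That bridge is the entire content of the result, and neither of your two suggested routes (lifting the regular sequence, or Terai-type duality) is made to work.
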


Other recent results have taken a different approach. Instead of
working directly with the edge ideal of a clutter $\C$, one can work
with its Alexander dual, which is again the edge ideal of a
clutter. Using this approach, the embedded associated primes of the
Alexander dual have been linked to colorings of a clutter.
Recall that $\chi(\C)$ is the minimal number $d$ for which there is a
partition $X_1, \ldots , X_d$ of the vertices of $\C$ for which for
all edges $f$ of $\C$, $f
\not\subset X_i$ for every $i$. A clutter is {\it critically}
$d$-{\it chromatic} if $\chi(\C)=d$ but $\chi(\C\backslash \{x\}) < d$ for
every vertex $x$.  

\begin{theorem}
\begin{enumerate}
\item[(a)]\cite[Corollary 4.6]{FHV} If $I$ is the ideal of covers of
  a clutter $\C$, and if the induced subclutter ${\C}_{\mathfrak p}$ on the vertices
  in ${\mathfrak p}$ is critically $(d+1)$-chromatic, then ${\mathfrak p} \in \Ass(R/I^d)$ but
  ${\mathfrak p} \not\in \Ass(R/I^t)$ for any $t\leq d-1$.
\item[(b)] \cite[Theorem 5.9]{FHV} If $I$ is the ideal of covers of a
  perfect graph $G$, then ${\mathfrak p} \in \Ass(R/I^t)$ if and only if the
  induced graph on the vertices in ${\mathfrak p}$ is a clique of size at most $t+1$.
\end{enumerate}
\end{theorem}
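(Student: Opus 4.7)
The plan is to combine part (a) of the current theorem with two ingredients: the ascending-chain property of $\{\Ass(R/I^t)\}_{t \ge 1}$ for $I = I_c(G)$ when $G$ is perfect (recorded in Example~\ref{perfectgraph}, via normality of $R[It]$ and Theorem~\ref{integralclosurechain}), and the defining property of perfect graphs that the chromatic number of every induced subgraph $H$ equals the size of the largest clique in $H$ (applied in particular to $G_{\mathfrak p}$, since induced subgraphs of perfect graphs are perfect).

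The $(\Leftarrow)$ direction is then immediate: if $G_{\mathfrak p}$ is a clique $K_s$ with $s \le t+1$, then $K_s$ is critically $s$-chromatic ($\chi(K_s) = s$ while $\chi(K_s\setminus\{v\}) = s-1$ for every $v$), so part (a) applied with $d = s-1$ yields ${\mathfrak p} \in \Ass(R/I^{s-1})$, and the ascending chain promotes this to ${\mathfrak p}\in \Ass(R/I^t)$ because $s-1 \le t$.

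For the $(\Rightarrow)$ direction, I first record the auxiliary claim that in a perfect graph, the critically $(d+1)$-chromatic induced subgraphs are exactly the cliques $K_{d+1}$. Indeed, perfection of $G_{\mathfrak p}$ together with $\chi(G_{\mathfrak p}) = d+1$ produces an induced $K_{d+1}$ inside $G_{\mathfrak p}$, and the criticality hypothesis $\chi(G_{\mathfrak p}\setminus\{v\}) \le d$ for every vertex $v$ forces each such $v$ to lie in this clique, whence $G_{\mathfrak p} = K_{d+1}$. Given this reduction, let $t_0$ be the least integer with ${\mathfrak p}\in \Ass(R/I^{t_0})$; one expects $G_{\mathfrak p}$ to be critically $(t_0+1)$-chromatic, and then the claim gives $G_{\mathfrak p} = K_{t_0+1}$ with $t_0 + 1 \le t+1$, as desired.

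The main obstacle is precisely this last step: upgrading the algebraic minimality ${\mathfrak p} \in \Ass(R/I^{t_0})\setminus \Ass(R/I^{t_0-1})$ into a critical-chromatic statement on $G_{\mathfrak p}$. This is where the coloring/Alexander-duality dictionary of \cite{FHV} enters: a witness ${\mathfrak p} = (I^{t_0}:c)$ is translated, via the identification of minimal vertex covers of a suitable ``blow-up'' with proper colorings, into a proper $(t_0+1)$-coloring of $G_{\mathfrak p}$ together with an obstruction to $t_0$-coloring. The normality $I^k = \overline{I^k}$ coming from perfectness is essential at this stage, as it removes the gap between symbolic and ordinary powers that would otherwise obstruct the translation. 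Once the critical-chromatic witness is in hand, the auxiliary claim closes the argument.
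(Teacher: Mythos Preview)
The paper does not supply its own proof of this theorem; both parts are simply cited from \cite{FHV} as part of the survey. So there is no in-paper argument to compare against, and the question is whether your proposal stands on its own.

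Your $(\Leftarrow)$ direction for (b) is fine: a clique $K_s$ is critically $s$-chromatic, part (a) then places $\mathfrak p$ in $\Ass(R/I^{s-1})$, and the ascending-chain property from Example~\ref{perfectgraph} promotes this to $\Ass(R/I^t)$ whenever $s-1\le t$.

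The $(\Rightarrow)$ direction, however, has a genuine gap that you yourself flag but do not close. Part (a) is a one-way implication: it produces associated primes from critically chromatic subgraphs, not the reverse. To go from $\mathfrak p\in\Ass(R/I^{t_0})\setminus\Ass(R/I^{t_0-1})$ to ``$G_{\mathfrak p}$ is critically $(t_0+1)$-chromatic'' you need an independent input, and your paragraph invoking ``the coloring/Alexander-duality dictionary of \cite{FHV}'' is a description of where the work lies, not an argument. Concretely, what is used in \cite{FHV} is a characterization of $\Ass(R/I^{(t)})$ for the symbolic powers of a cover ideal in terms of chromatic data, together with the equality $I^t=I^{(t)}$ for cover ideals of perfect graphs. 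Note that this is \emph{not} the same as the normality $I^t=\overline{I^t}$ you invoke: normality of $R[It]$ (which is what Example~\ref{perfectgraph} records) gives integral closure equality, not symbolic-power equality, and the two are distinct in general. So both the logical step and the tool you name for it need correction. Without supplying either the symbolic-power characterization or an alternative route from $\mathfrak p\in\Ass(R/I^{t_0})$ to a clique structure on $G_{\mathfrak p}$, the $(\Rightarrow)$ half remains unproved.
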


If one restricts to a particular power, additional results on embedded
associate primes are known. For example, in \cite[Corollary
  3.4]{FHVodd} it is shown that if $I$ is the edge ideal of the Alexander dual of a
graph $G$, then embedded primes of $R/I^2$ are in one-to-one
correspondence with induced odd cycles of $G$. More precisely, ${\mathfrak p} \in
\Ass(R/I^2)$ is an embedded prime if and only if the induced subgraph
of $G$ on the vertices in ${\mathfrak p}$ is an induced odd cycle of $G$.

An interesting class of ideals, which is in a sense dual to the edge
ideals of graphs, is unmixed square-free monomial ideals of height
two. These are the Alexander duals of edge ideals of graphs, which can
be viewed as edge ideals of clutters where, instead of requiring
that each edge has two vertices, it is instead required that each
minimal vertex cover has two vertices. For such ideals it has been
shown in \cite[Theorem 1.2]{FranciscoHaVanTuyl} that an affirmative
answer to a conjecture on graph colorings, \cite[Conjecture
  1.1]{FranciscoHaVanTuyl}, would imply an affirmative answer to
Question~\ref{chainconjecture}. In \cite[Corollary
  3.11]{FranciscoHaVanTuyl} it is shown that this conjecture holds for cliques,
odd holes, and odd antiholes. Thus the Alexander duals of these
special classes of graphs provide additional examples where
Question~\ref{chainconjecture} has an affirmative answer.


We now provide a more detailed discussion of the Conforti-Cornu\'ejols
conjecture, followed by a collection of results which provide families
of clutters where the conjecture is known to be true (such a
family was already given in Theorem~\ref{family-cc}(c)). We also
discuss some algebraic versions of this conjecture and how it relates
to the depth of powers of edge ideals and to normality and
torsion-freeness.

Having defined the notion of a {\it minor} for edge ideals, using the
correpondence between clutters and square-free monomias
ideals, we also have the notion of a {\it minor} of a clutter. We say
that $\mathcal{C}$ has the {\it packing property\/} if $I(\mathcal{C})$ has
this property. 

\begin{definition} Let $A$ be the incidence matrix of a clutter
$\mathcal{C}$. The {\it 
set covering polyhedron\/} is the rational polyhedron: 
\[
Q(A)=\{x \in \mathbb{R}^n  \, | \; x \geq \mathbf{0}, \; xA\geq
\mathbf{1} \},
\]
where $\mathbf{0}$ and
$\mathbf{1}$ are vectors whose entries
are equal to $0$ and $1$ respectively. Often we denote the vectors 
$\mathbf{0}$, $\mathbf{1}$ simply by $0$, $1$. We say that $Q(A)$ is 
{\it integral} if it has only integral vertices.
\end{definition}

\begin{theorem}{\rm (A. Lehman \cite{lehman}, 
\cite[Theorem~1.8]{cornu-book})}\label{lehman} If\, a clutter
  $\mathcal C$ has the 
packing property, then $Q(A)$ is 
integral. 
\end{theorem}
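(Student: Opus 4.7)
The plan is to argue by contradiction, reduce to a minimal counterexample, and then invoke Lehman's structural classification of minimally non-ideal clutters. Suppose some clutter $\mathcal{C}$ satisfies the packing property but $Q(A)$ fails to be integral, and among all such bad clutters pick one with the fewest vertices.

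First I would show that $\mathcal{C}$ is \emph{minimally non-ideal} (MNI): the packing property passes to every minor by its very definition (minors of minors are again minors), so any proper minor $\mathcal{C}'$ of $\mathcal{C}$ again satisfies packing, and minimality of $\mathcal{C}$ forces $Q(A')$ to be integral. Thus $Q(A)$ possesses a fractional vertex $x^{\ast}$ while $Q(A')$ is integral for every proper minor $\mathcal{C}'$ of $\mathcal{C}$.

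Next I would invoke Lehman's classification of MNI clutters. Up to relabeling of vertices, $A$ is either (a) the incidence matrix of the degenerate projective plane $J_n$, whose edges are the ``long'' edge $\{x_1,\ldots,x_{n-1}\}$ together with the pairs $\{x_i,x_n\}$ for $1\le i\le n-1$, or (b) the incidence matrix of a highly structured $r$-uniform, $r$-regular clutter on $n=r^{2}-r+1$ vertices whose unique fractional vertex is $x^{\ast}=\tfrac{1}{r}\mathbf{1}$ and whose incidence matrix satisfies $AA^{\top}=(r-1)I+J$. In either case I would derive a contradiction by showing that the packing equality $\tau(\mathcal{C})=\nu(\mathcal{C})$ already fails at $\mathcal{C}$ itself. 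In case (a), $\tau(J_n)=2$ because any cover must meet both the long edge and at least one pair, while $\nu(J_n)=1$ since all pair edges share $x_n$ and each also meets the long edge in its remaining vertex. In case (b), the LP value $\mathbf{1}\cdot x^{\ast}=r-1+1/r$ is not an integer, so $\tau(\mathcal{C})\ge r$; on the other hand $r$-regularity and $r$-uniformity together bound the matching number by $n/r=r-1+1/r$, forcing $\nu(\mathcal{C})\le r-1<\tau(\mathcal{C})$. In both cases packing is violated at $\mathcal{C}$, a contradiction.

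The main obstacle is clearly the invocation of Lehman's structure theorem. Its proof is a delicate blend of polyhedral and linear-algebraic reasoning: one analyses complementary slackness at the fractional vertex $x^{\ast}$ against integer optimal dual solutions obtained on each deletion minor (where ideality is already known), uses these to pin down the row sums and pairwise inner products of $A$, and finally forces the support of $x^{\ast}$ into the very rigid form above. Granted this structural input, the combinatorial verification in the previous paragraph is a short finite check; without it, one would essentially have to reproduce Lehman's original argument in full.
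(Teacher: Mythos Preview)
The paper does not prove this theorem at all; it is stated with a citation to Lehman and to Cornu\'ejols' monograph and then used as a black box. So there is no ``paper's own proof'' to compare against, and your sketch is essentially the standard route to this result: pass to a minimally non-ideal (MNI) minor, invoke Lehman's structure theorem, and check that every MNI clutter already fails the K\"onig equality $\alpha_0=\beta_1$.

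That said, your description of Lehman's theorem in case~(b) is not correct, and the computation you base on it does not cover all MNI clutters. Lehman's theorem does \emph{not} say that a non-$J_t$ MNI clutter has $n=r^{2}-r+1$ vertices with $AA^{\top}=(r-1)I+J$; that is the very special projective-plane subcase. The general statement involves \emph{two} parameters: the $n\times n$ core $\bar A$ of $A$ has constant row and column sums~$r$, the core $\bar B$ of the blocker has constant row and column sums~$s$, and $\bar A\,\bar B^{\top}=J+(rs-n)I$ with $rs-n\ge 1$. Odd holes $C_{2k+1}$ already witness this: they are MNI with $r=2$, $s=k+1$, $n=2k+1$, so $n\neq r^{2}-r+1$ for $k\ge 2$, and your computation $\mathbf 1\cdot x^{\ast}=r-1+1/r$ is simply false there.

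The fix is short once you use the correct statement. From $x^{\ast}=\tfrac1r\mathbf 1$ every edge has size at least $r$, so a matching of $k$ edges uses at least $kr$ vertices and $\beta_1(\mathcal C)\le\lfloor n/r\rfloor$. On the blocker side, every minimal cover has size at least $s$ and the core provides one of size exactly $s$, so $\alpha_0(\mathcal C)=s$. Since $rs\ge n+1$ we get $s\ge\lceil (n+1)/r\rceil=\lfloor n/r\rfloor+1>\beta_1(\mathcal C)$, contradicting packing. Your treatment of $J_t$ in case~(a) is fine.
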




The converse is not true. A famous example is the clutter
$\mathcal{Q}_6$, given below. It does not pack and $Q(A)$ is integral.  

\begin{example}\label{q6}\rm Let $I=(x_1x_2x_5,x_1x_3x_4,x_2x_3x_6,x_4x_5x_6)$.
The figure:

\bigskip

\bigskip

\begin{small}
\special{em:linewidth 0.4pt} \unitlength 0.5mm \linethickness{0.4pt}
\begin{picture}(30,100)(-50,30)
\put(80,40){\circle*{1.5}} \put(30,130){\circle*{1.5}}
\put(70,85){\circle*{1.5}} \put(90,85){\circle*{1.5}}
\put(80,110){\circle*{1.5}} \put(130,130){\circle*{1.5}}
\put(67.33,124.00){\circle*{.1}} \put(73.67,124.33){\circle*{.1}}
\put(75.83,124.33){\circle*{.1}} \put(84.67,123.33){\circle*{.1}}
\put(91.33,125.00){\circle*{.1}} \put(90.00,127.67){\circle*{.1}}
\put(89.67,127.67){\circle*{.1}} \put(80.33,119.33){\circle*{.1}}
\put(80.33,119.33){\circle*{.1}} \put(92.33,121.33){\circle*{.1}}
\put(106.33,125.33){\circle*{.1}}
\put(106.33,125.33){\circle*{.1}} \put(99.67,124.67){\circle*{.1}}
\put(99.67,122.33){\circle*{.1}} \put(99.67,122.33){\circle*{.1}}
\put(102.00,127.67){\circle*{.1}}
\put(111.33,127.00){\circle*{.1}}
\put(101.33,127.33){\circle*{.1}} \put(85.67,116.00){\circle*{.1}}
\put(57.33,127.00){\circle*{.1}} \put(46.00,127.33){\circle*{.1}}
\put(50.33,125.00){\circle*{.1}} \put(56.33,122.00){\circle*{.1}}
\put(59.67,128.00){\circle*{.1}} \put(62.00,121.33){\circle*{.1}}
\put(69.00,118.00){\circle*{.1}} \put(78.00,114.67){\circle*{.1}}
\put(71.33,127.33){\circle*{.1}} \put(82.00,127.00){\circle*{.1}}
\put(84.67,119.33){\circle*{.1}} \put(74.00,116.33){\circle*{.1}}
\put(86.33,119.67){\circle*{.1}} \put(62.00,124.67){\circle*{.1}}
\put(116.33,127.33){\circle*{.1}}
\put(103.00,123.00){\circle*{.1}} \put(95.67,123.00){\circle*{.1}}
\put(94.67,127.67){\circle*{.1}} \put(95.33,120.00){\circle*{.1}}
\put(86.67,119.33){\circle*{.1}} \put(90.67,116.67){\circle*{.1}}
\put(81.33,114.33){\circle*{.1}} \put(82.67,126.67){\circle*{.1}}
\put(62.00,125.67){\circle*{.1}} \put(65.67,127.67){\circle*{.1}}
\put(51.00,128.00){\circle*{.1}} \put(40.00,128.33){\circle*{.1}}
\put(38.00,119.00){\circle*{.1}} \put(40.33,115.67){\circle*{.1}}
\put(42.67,112.33){\circle*{.1}} \put(42.67,110.67){\circle*{.1}}
\put(42.67,110.67){\circle*{.1}} \put(46.00,110.00){\circle*{.1}}
\put(43.67,108.67){\circle*{.1}} \put(48.67,105.33){\circle*{.1}}
\put(48.83,105.33){\circle*{.1}} \put(49.00,105.33){\circle*{.1}}
\put(46.00,105.00){\circle*{.1}} \put(47.33,101.67){\circle*{.1}}
\put(47.33,101.67){\circle*{.1}} \put(51.00,101.00){\circle*{.1}}
\put(50.33,97.33){\circle*{.1}} \put(53.00,99.00){\circle*{.1}}
\put(55.00,98.00){\circle*{.1}} \put(52.67,96.33){\circle*{.1}}
\put(52.00,94.00){\circle*{.1}} \put(56.00,94.00){\circle*{.1}}
\put(56.00,94.00){\circle*{.1}} \put(60.33,91.00){\circle*{.1}}
\put(57.67,89.33){\circle*{.1}} \put(58.33,87.00){\circle*{.1}}
\put(60.67,87.00){\circle*{.1}} \put(63.33,87.00){\circle*{.1}}
\put(66.67,86.67){\circle*{.1}} \put(67.67,83.67){\circle*{.1}}
\put(67.67,83.67){\circle*{.1}} \put(63.67,83.00){\circle*{.1}}
\put(59.33,82.33){\circle*{.1}} \put(54.67,90.33){\circle*{.1}}
\put(60.33,84.33){\circle*{.1}} \put(60.33,79.33){\circle*{.1}}
\put(62.67,79.67){\circle*{.1}} \put(65.00,79.67){\circle*{.1}}
\put(67.00,79.67){\circle*{.1}} \put(69.33,80.00){\circle*{.1}}
\put(69.67,76.67){\circle*{.1}} \put(67.00,76.67){\circle*{.1}}
\put(65.00,76.67){\circle*{.1}} \put(65.00,76.67){\circle*{.1}}
\put(62.67,76.67){\circle*{.1}} \put(65.00,72.67){\circle*{.1}}
\put(67.33,70.67){\circle*{.1}} \put(69.67,71.00){\circle*{.1}}
\put(69.00,68.67){\circle*{.1}} \put(69.00,66.67){\circle*{.1}}
\put(71.00,66.00){\circle*{.1}} \put(70.00,64.33){\circle*{.1}}
\put(72.33,63.00){\circle*{.1}} \put(71.67,61.00){\circle*{.1}}
\put(73.67,58.00){\circle*{.1}} \put(73.67,56.67){\circle*{.1}}
\put(75.00,54.33){\circle*{.1}} \put(76.00,51.67){\circle*{.1}}
\put(56.33,87.33){\circle*{.1}} \put(58.67,94.00){\circle*{.1}}
\put(84.33,51.00){\circle*{.1}} \put(84.00,51.67){\circle*{.1}}
\put(87.00,55.67){\circle*{.1}} \put(86.00,58.00){\circle*{.1}}
\put(88.33,57.67){\circle*{.1}} \put(86.67,60.33){\circle*{.1}}
\put(66.67,120.00){\circle*{.1}} \put(54.67,125.00){\circle*{.1}}
\put(106.67,127.67){\circle*{.1}}
\put(108.67,124.00){\circle*{.1}} \put(81.33,123.00){\circle*{.1}}
\put(87.33,125.67){\circle*{.1}} \put(89.00,122.00){\circle*{.1}}
\put(75.00,127.33){\circle*{.1}} \put(82.67,117.00){\circle*{.1}}
\put(92.67,83.33){\circle*{.1}} \put(92.67,79.33){\circle*{.1}}

\put(98.00,83.00){\circle*{.1}} \put(88.00,65.00){\circle*{.1}}
\put(90.00,64.67){\circle*{.1}} \put(92.33,65.67){\circle*{.1}}
\put(94.00,68.33){\circle*{.1}} \put(91.00,69.00){\circle*{.1}}
\put(89.00,70.33){\circle*{.1}} \put(92.00,71.00){\circle*{.1}}
\put(95.00,72.00){\circle*{.1}} \put(92.67,74.00){\circle*{.1}}
\put(90.67,74.33){\circle*{.1}} \put(90.67,76.33){\circle*{.1}}
\put(93.33,76.33){\circle*{.1}} \put(96.33,76.33){\circle*{.1}}
\put(98.33,76.33){\circle*{.1}} \put(99.33,78.33){\circle*{.1}}
\put(96.67,79.00){\circle*{.1}} \put(95.00,80.33){\circle*{.1}}
\put(91.67,81.00){\circle*{.1}} \put(96.00,84.00){\circle*{.1}}
\put(102.00,84.67){\circle*{.1}} \put(98.33,86.00){\circle*{.1}}
\put(95.33,86.67){\circle*{.1}} \put(98.67,88.33){\circle*{.1}}
\put(102.67,88.33){\circle*{.1}} \put(104.67,88.67){\circle*{.1}}
\put(106.00,92.33){\circle*{.1}} \put(102.33,93.67){\circle*{.1}}
\put(100.33,93.67){\circle*{.1}} \put(104.67,95.33){\circle*{.1}}
\put(107.33,96.33){\circle*{.1}} \put(111.00,97.33){\circle*{.1}}
\put(107.67,101.67){\circle*{.1}}
\put(110.67,102.00){\circle*{.1}}
\put(112.33,102.33){\circle*{.1}}
\put(113.67,104.33){\circle*{.1}}
\put(113.33,106.00){\circle*{.1}}
\put(111.33,106.33){\circle*{.1}}
\put(114.00,108.33){\circle*{.1}}
\put(116.33,109.67){\circle*{.1}}
\put(117.00,111.67){\circle*{.1}}
\put(119.00,113.00){\circle*{.1}}
\put(119.67,115.33){\circle*{.1}}
\put(121.67,117.33){\circle*{.1}}
\put(121.67,118.67){\circle*{.1}}
\put(123.67,121.00){\circle*{.1}}
\put(120.33,128.00){\circle*{.1}}
\put(112.67,126.00){\circle*{.1}} \put(86.00,127.33){\circle*{.1}}
\put(70.33,121.33){\circle*{.1}} \put(72.67,120.00){\circle*{.1}}
\put(77.67,121.00){\circle*{.1}} \put(68.67,126.67){\circle*{.1}}
\put(59.33,124.00){\circle*{.1}} \put(68.33,74.00){\circle*{.1}}
\put(62.67,90.00){\circle*{.1}} \put(73.33,87.00){\circle*{.1}}
\put(77.00,89.33){\circle*{.1}} \put(80.00,91.00){\circle*{.1}}
\put(80.33,89.00){\circle*{.1}} \put(84.00,89.00){\circle*{.1}}
\put(84.00,91.33){\circle*{.1}} \put(86.33,87.33){\circle*{.1}}
\put(82.33,95.33){\circle*{.1}} \put(80.33,95.33){\circle*{.1}}
\put(77.33,95.33){\circle*{.1}} \put(79.33,98.33){\circle*{.1}}
\put(80.67,98.67){\circle*{.1}} \put(80.67,100.33){\circle*{.1}}
\put(78.67,100.67){\circle*{.1}} \put(80.00,102.33){\circle*{.1}}
\put(80.00,104.33){\circle*{.1}} \put(80.00,105.67){\circle*{.1}}
\put(95.67,126.00){\circle*{.1}} \put(89.33,61.00){\circle*{.1}}
\put(89.00,67.33){\circle*{.1}} \put(90.33,72.00){\circle*{.1}}
\put(95.67,74.33){\circle*{.1}} \put(100.33,81.33){\circle*{.1}}
\put(93.67,86.00){\circle*{.1}} \put(97.33,90.67){\circle*{.1}}
\put(100.67,90.33){\circle*{.1}} \put(104.33,91.00){\circle*{.1}}
\put(110.00,99.67){\circle*{.1}} \put(105.67,99.00){\circle*{.1}}
\put(103.67,97.00){\circle*{.1}} \put(110.67,104.33){\circle*{.1}}
\put(66.33,67.67){\circle*{.1}} \put(69.67,62.00){\circle*{.1}}
\put(71.33,68.67){\circle*{.1}} \put(71.00,74.00){\circle*{.1}}
\put(65.00,85.00){\circle*{.1}} \put(57.67,84.33){\circle*{.1}}
\put(66.67,82.00){\circle*{.1}} \put(49.33,99.67){\circle*{.1}}
\put(50.67,103.33){\circle*{.1}} \put(46.00,107.67){\circle*{.1}}
\put(60.00,121.00){\circle*{.1}} \put(64.67,119.33){\circle*{.1}}
\put(72.33,115.67){\circle*{.1}} \put(76.33,114.33){\circle*{.1}}
\put(79.33,117.00){\circle*{.1}} \put(76.33,119.33){\circle*{.1}}
\put(74.33,121.33){\circle*{.1}} \put(70.67,124.33){\circle*{.1}}
\put(64.67,123.00){\circle*{.1}} \put(47.33,125.00){\circle*{.1}}
\put(48.33,128.00){\circle*{.1}} \put(43.33,127.33){\circle*{.1}}
\put(54.33,127.33){\circle*{.1}} \put(98.00,127.67){\circle*{.1}}
\put(90.00,119.33){\circle*{.1}} \put(89.00,116.33){\circle*{.1}}
\put(93.00,118.33){\circle*{.1}} \put(98.33,120.67){\circle*{.1}}
\put(106.00,122.33){\circle*{.1}}
\put(103.33,125.67){\circle*{.1}}
\put(108.67,125.67){\circle*{.1}} \put(78.33,87.33){\circle*{.1}}
\put(82.67,87.33){\circle*{.1}} \put(82.00,90.33){\circle*{.1}}
\put(84.33,94.67){\circle*{.1}} \put(81.33,93.00){\circle*{.1}}
\put(81.33,93.00){\circle*{.1}} \put(75.67,91.67){\circle*{.1}}
\put(74.33,89.33){\circle*{.1}} \put(77.67,97.67){\circle*{.1}}
\put(82.67,97.33){\circle*{.1}} \put(81.33,101.67){\circle*{.1}}
\put(79.33,127.33){\circle*{.1}} \put(79.00,124.33){\circle*{.1}}
\put(81.67,125.00){\circle*{.1}} \put(79.00,122.67){\circle*{.1}}
\put(83.33,120.33){\circle*{.1}} \put(53.33,123.33){\circle*{.1}}
\put(80.00,112.67){\circle*{.1}} \put(78.33,92.33){\circle*{.1}}
\put(74.67,90.33){\circle*{.1}} \put(77.67,98.00){\circle*{.1}}
\put(75.67,93.00){\circle*{.1}} \put(78.00,94.67){\circle*{.1}}
\put(75.67,94.33){\circle*{.1}}
\put(27.00,134.00){\makebox(0,0)[cc]{\small $x_1$}}
\put(134.00,134.00){\makebox(0,0)[cc]{\small $x_2$}}
\put(80.33,35.33){\makebox(0,0)[cc]{\small $x_3$}}
\put(86.67,108.33){\makebox(0,0)[cc]{\small $x_5$}}
\put(84.00,82.00){\makebox(0,0)[cc]{\small $x_6$}}
\put(67.67,94.00){\makebox(0,0)[cc]{\small $x_4$}}
\end{picture}
\end{small}

\noindent  corresponds to the clutter associated to $I$. This clutter
will be denoted by ${\mathcal 
Q}_6$. Using {\it Normaliz\/} \cite{normaliz2} we obtain that
$\overline{R[It]}= 
R[It][x_1\cdots x_6t^2]$. Thus $R[It]$ is not normal. An interesting
property of this example is that ${\rm Ass}(R/\overline{I^i})={\rm
Ass}(R/I)$ for all $i$ (see \cite{reesclu}). 
\end{example}

\begin{definition}\label{mon-grade}
A set of edges of a clutter $\mathcal C$ is {\it
independent\/} if no two of them have a
common vertex.  We denote the maximum number of independent edges of ${\mathcal C}$ by 
$\beta_1({\mathcal C})$. We call $\beta_1({\mathcal C})$ the {\it edge independence
number\/} of $\mathcal{C}$ or the {\it monomial grade} of $I$. 
\end{definition}

Let $A$ be the incidence matrix of $\mathcal{C}$. The
edge independence number and the covering number are 
related to 
min-max problems because they satisfy: 
\begin{eqnarray*}
\lefteqn{\alpha_0({\mathcal C})\geq {\rm min}\{\langle {1},x\rangle \vert\, x\geq 0; xA\geq 
{1}\}}\\
&\ \ \ \ \ \ \ \ \ \ &
={\rm max}\{\langle y,{1}\rangle \vert\, y\geq 0; Ay\leq{1}\}
\geq \beta_1({\mathcal C}). 
\end{eqnarray*}
Notice that $\alpha_0({\mathcal C})=\beta_1({\mathcal C})$ if and only if both sides of 
the equality have integral optimum solutions. 

\begin{definition}\label{mfmc-def}\rm A clutter $\mathcal C$, with
incidence matrix $A$, satisfies the {\it max-flow min-cut\/}
(MFMC) 
property if both sides 
of the LP-duality equation
\begin{equation}\label{jun6-2-03-1}
{\rm min}\{\langle \alpha,x\rangle \vert\, x\geq 0; xA\geq{1}\}=
{\rm max}\{\langle y,{1}\rangle \vert\, y\geq 0; Ay\leq\alpha\} 
\end{equation}
have integral optimum solutions $x$ and $y$ for each non-negative
integral vector $\alpha$.  The system $x\geq 0; xA\geq {1}$ is called 
{\it totally dual integral\/}
(TDI) if the maximum in
Eq.~(\ref{jun6-2-03-1}) has an integral optimum solution $y$ for each
integral vector $\alpha$ with  
finite maximum.
\end{definition}

\begin{definition}\rm If $\alpha_0({\mathcal C})=\beta_1({\mathcal C})$ we say that the clutter 
$\mathcal C$ (or the ideal $I$) has the {\it K\"onig property\/}.
\end{definition}

Note that $\mathcal C$ has the packing property if and only if every
minor of $\mathcal C$ satisfies the K\"onig property. This leads to
the following well-known result.

\begin{corollary} \cite{cornu-book} \label{aug25-06-1}
If a clutter
$\mathcal C$ has the max-flow
min-cut property,  
then $\mathcal C$ has the packing property. 
\end{corollary}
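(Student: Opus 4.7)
The plan is to deduce the packing property of $\mathcal{C}$ from MFMC in two steps: (i) show $\mathcal{C}$ itself satisfies K\"onig, and (ii) show MFMC descends to every minor of $\mathcal{C}$. Combined with the observation recorded immediately before the corollary (namely, $\mathcal{C}$ packs iff every minor of $\mathcal{C}$ satisfies K\"onig), this yields the conclusion.

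For (i), specialize the MFMC identity to $\alpha=\mathbf{1}$, obtaining
$$
\min\{\langle \mathbf{1},x\rangle : x\geq 0,\ xA\geq \mathbf{1}\} = \max\{\langle y,\mathbf{1}\rangle : y\geq 0,\ Ay\leq \mathbf{1}\}.
$$
MFMC forces integer optima on both sides. An integral minimizer is necessarily a $0$/$1$ vector (any coordinate exceeding $1$ could be reduced) whose support is a minimum vertex cover, so the common value equals $\alpha_0(\mathcal{C})$. An integral maximizer is a $0$/$1$ vector whose support is a maximum set of pairwise disjoint edges, giving value $\beta_1(\mathcal{C})$. Thus $\alpha_0(\mathcal{C})=\beta_1(\mathcal{C})$, which is the K\"onig property.

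For (ii), it suffices to treat the two elementary minor operations and iterate. For the deletion minor $\mathcal{C}\setminus x_i$ (quotient of $I$ by $x_i$), given an integral cost $\alpha'$ on $V\setminus\{x_i\}$, extend to $\alpha$ on $V$ by setting $\alpha_i=0$. An integer optimum of the $\mathcal{C}$-LP with cost $\alpha$ may be taken with $x_i$ large enough to cover every edge through $x_i$ at zero cost; the residual LP in the variables indexed by $V\setminus\{x_i\}$ is precisely the $(\mathcal{C}\setminus x_i)$-LP with cost $\alpha'$. Dually, the constraint at index $i$ reads $\sum_{e\ni x_i}y_e\leq 0$, forcing $y_e=0$ on edges through $x_i$, so the surviving coordinates of $y$ form an integer-optimal packing of $\mathcal{C}\setminus x_i$. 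The contraction minor $\mathcal{C}/x_i$ (localization of $I$ at $x_i$) is handled symmetrically by taking $\alpha_i$ to be a sufficiently large positive integer, forcing $x_i=0$ in any integer optimum and reducing to the $(\mathcal{C}/x_i)$-LP on $V\setminus\{x_i\}$. By induction on the number of elementary minor operations, every minor of $\mathcal{C}$ inherits MFMC, and by (i) applied to each minor, every minor satisfies K\"onig; hence $\mathcal{C}$ packs. The main technical obstacle is verifying the LP-to-minor translation precisely: one must check that the constraint sets match after the $\alpha$-specialization, and correctly handle the degenerate edge cases (edges of $\mathcal{C}$ lying entirely in the deleted or contracted set), which correspond to infeasibility or triviality of the minor LP.
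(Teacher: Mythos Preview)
Your proposal is correct and follows essentially the same route as the paper: specialize the LP-duality to $\alpha=\mathbf{1}$ to obtain K\"onig, and use that MFMC is preserved by minors. The only difference is cosmetic---the paper simply asserts ``this property is closed under taking minors'' as a known fact and moves on, whereas you sketch the $\alpha$-specialization argument for both deletion and contraction; your extra detail is sound (and you correctly flag the degenerate cases), but it is not a different strategy.
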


\begin{proof} Assume that the clutter $\mathcal{C}$ has the max-flow min-cut
property. This property is closed under taking minors.  Thus it 
suffices to prove that $\mathcal C$ has 
the K\"onig property. We denote the incidence matrix of ${\mathcal C}$ by
$A$. By hypothesis the LP-duality equation
$$
{\rm min}\{\langle{1},x\rangle \vert\, x\geq 0; xA\geq {1}\}=
{\rm max}\{\langle y,{1}\rangle \vert\, y\geq 0; Ay\leq{1}\}
$$
has optimum integral solutions $x$, $y$. To complete the 
proof notice that the left hand side of this 
equality is $\alpha_0({\mathcal C})$ and the right hand side 
is $\beta_1({\mathcal C})$. 
\end{proof}

Conforti and Cornu\'ejols \cite{CC} conjecture that the converse is also
true.

\begin{conjecture} {\rm (Conforti-Cornu\'ejols)} \label{conforti-cornuejols}\rm
If a clutter $\mathcal C$ has the
packing property, then $\mathcal C$ has the max-flow min-cut property.
\end{conjecture}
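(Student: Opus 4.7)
The plan is to exploit the algebraic reformulation of the max-flow min-cut property recalled earlier in this section: by \cite[Corollary~3.14]{clutters} and \cite[Corollary~1.6]{HHTZ}, a clutter $\mathcal{C}$ has MFMC if and only if $I(\mathcal{C})$ is normally torsion-free, i.e., $\Ass(R/I(\mathcal{C})^t)=\Min(R/I(\mathcal{C}))$ for every $t\geq 1$. Thus the conjecture becomes: if every minor of $I(\mathcal{C})$ satisfies the K\"onig property, then no power of $I(\mathcal{C})$ admits an embedded associated prime. This translation is attractive because it converts a question about rational polyhedra and integer programming into a question about associated primes, a tool we already have considerable leverage on.

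The natural attack is induction on the number of vertices $n$. Since the packing property is closed under taking minors (both under localization $I(\mathcal{C})_{\mathfrak{p}}$ and under the quotient by a variable), every proper minor of $I(\mathcal{C})$ would, by induction, be normally torsion-free. Because associated primes localize, any embedded prime $\mathfrak{p}\subsetneq\mathfrak{m}$ of $R/I(\mathcal{C})^t$ would descend to an embedded prime of a proper minor's power, contradicting the inductive hypothesis. The problem therefore reduces to ruling out $\mathfrak{m}\in\Ass(R/I(\mathcal{C})^t)$ for every $t\geq 1$.

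At this stage Theorem~\ref{family-cc} supplies three pieces of the puzzle under the (inductively established) assumption that every proper minor is normally torsion-free: part (a) gives $\mathfrak{m}\notin\Ass(R/I^t)$ for $t\leq\beta_1$; part (c) closes the unmixed case completely; and part (b), read contrapositively, says that to preserve the packing property one must still preclude $\mathfrak{m}\in\Ass(R/I^{\beta_1+1})$ in the mixed case. So the entire weight of the conjecture rests in the mixed regime with $t\geq\beta_1+1$, where one must exclude a witness monomial $c$ with $\mathfrak{m}=(I^t:c)$.

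The hard part will be exactly this mixed regime, and it is where the problem has resisted progress for over two decades. One plausible route is to convert a witness $c$ producing $\mathfrak{m}\in\Ass(R/I^t)$ into a combinatorial certificate that some minor violates the K\"onig equality $\alpha_0=\beta_1$, directly contradicting the packing hypothesis; the essential obstruction is that high powers can manufacture such witnesses through ``global'' combinatorial accidents that no single minor reflects locally. An alternative route is to attack normality of the Rees algebra $R[It]$ under the packing hypothesis, since normality together with K\"onig on all minors would yield the integral optima demanded in Definition~\ref{mfmc-def}. However, Example~\ref{q6} shows that integrality of $Q(A)$ alone is insufficient for normality, so the packing hypothesis must be invoked in a genuinely finer way than the Lehman-type integrality supplied by Theorem~\ref{lehman}. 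Either route appears to require a structural principle linking the K\"onig equality on all minors to the absence of embedded components in arbitrary powers, and at present I see no candidate for such a principle that goes beyond the partial results already recorded in Theorem~\ref{family-cc}.
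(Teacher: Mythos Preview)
Your write-up is not a proof, and you correctly acknowledge as much in the final paragraph. That is the appropriate outcome: the statement is labeled a \emph{conjecture} in the paper, and the paper supplies no proof of it either. It is presented as an open problem (the Conforti--Cornu\'ejols conjecture), followed by several equivalent algebraic reformulations and a list of special classes where it is known to hold.

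Your reduction is sound and tracks the paper's own discussion closely. The inductive step you describe---reducing to the exclusion of $\mathfrak{m}$ from $\Ass(R/I^t)$ once every proper minor is normally torsion-free---is exactly the content the paper packages via Theorem~\ref{family-cc} and Proposition~\ref{march18-06-1}. The paper records two reformulations that are essentially the endpoints of your two ``routes'': Conjecture~\ref{con-cor-vila} (packing $\Rightarrow$ $R[It]$ normal) and the conjecture following Proposition~\ref{march18-06-1} (packing $\Rightarrow$ $\depth(R/I^i)\geq 1$ for all $i$). Your observation that Lehman's theorem (integrality of $Q(A)$) is already available under the packing hypothesis, but is insufficient by itself because of $\mathcal{Q}_6$, is also the point the paper makes with Example~\ref{q6}.

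So there is no gap to name beyond the one you already name: the conjecture is open, and the obstruction sits precisely in the mixed, high-power regime you isolate. Nothing in the paper goes further than the partial results you cite.
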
 

An algebraic description of the packing property has already been
given. In order to use algebraic techniques to attack this
combinatorial conjecture, an algebraic translation is needed for the
max-flow min-cut property. There are several equivalent algebraic 
descriptions of the max-flow 
min-cut property, as seen in the following result.

\begin{theorem} \cite{normali,clutters,HuSV} \label{noclu1} Let
$\mathcal{C}$ be a clutter and let $I$ be its edge ideal. The
following conditions are equivalent{\rm :}
\begin{itemize}
\item[{\rm(i)\ \ }] ${\rm gr}_I(R)$ is reduced.
\item[{\rm(ii)\ }] $R[It]$ is normal and $Q(A)$ is an integral
polyhedron.
\item[{\rm(iii)}] $x\geq 0;\, xA\geq {1}$ is a {\rm TDI
system}.
\item[{\rm(iv)}] $\mathcal C$ has the max-flow min-cut property. 
\item[{\rm(v)}] $I^i=I^{(i)}$ for $i\geq 1$, where $I^{(i)}$ is the
$i${\it th\/} symbolic power. 
\item[{\rm(vi)}] $I$ is normally torsion-free.
\end{itemize}
\end{theorem}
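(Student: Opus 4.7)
The plan is to organize the six conditions into three natural blocks -- the combinatorial pair (iii)--(iv), the algebraic pair (v)--(vi), and the ``mixed'' pair (i)--(ii) -- and then bridge them. Several equivalences are essentially formal, and the whole argument pivots on one substantive step translating LP duality into an ideal-theoretic identity.

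First I would dispose of (iii)$\Leftrightarrow$(iv). By definition MFMC requires integer optima on both sides of Eq.~(\ref{jun6-2-03-1}) for every nonnegative integer $\alpha$, whereas TDI only asks for an integer optimum of the dual maximum for every integer $\alpha$ with finite maximum. Because $A$ is $\{0,1\}$-valued, the Edmonds--Giles theorem forces the vertices of $Q(A)$ to be integral as soon as the dual has integer optima, so the primal minimum automatically has integer optima as well; the reverse implication is immediate. Next, (v)$\Leftrightarrow$(vi) is a direct primary-decomposition argument: since $I$ is square-free, $I=\mathfrak{p}_1\cap\cdots\cap\mathfrak{p}_s$ with the $\mathfrak{p}_j$ the minimal primes, and the $\mathfrak{p}_j$-primary component of $I^n$ is $\mathfrak{p}_j^n$. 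Thus $I^{(n)}=\bigcap_j \mathfrak{p}_j^n$ equals $I^n$ for every $n$ if and only if no $I^n$ has an embedded associated prime, which is exactly the definition of normally torsion-free.

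The central bridge is (iv)$\Leftrightarrow$(v). A monomial $x^a$ lies in $I^{(n)}$ precisely when $\sum_{x_i\in C} a_i\geq n$ for every minimal vertex cover $C$, equivalently when $a\in nQ(A)\cap\mathbb{Z}^n$. On the other hand, $x^a\in I^n$ precisely when there exists $y\in\mathbb{Z}_{\geq 0}^{q}$ with $Ay\leq a$ and $\langle\mathbf{1},y\rangle=n$, i.e., when $n$ edges can be integrally packed inside $x^a$. Applying LP-duality to the covering/packing pair of Eq.~(\ref{jun6-2-03-1}) with $\alpha=a$, the MFMC (equivalently TDI) property says the optimal dual solution $y$ may be taken integral whenever $a$ is; this is exactly the equality $I^n=I^{(n)}$ at the level of monomials. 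This is the heart of the proof and is where the combinatorial hypothesis (iv) is actually used.

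It remains to connect (i) and (ii). For (v)$\Rightarrow$(i), any element of $I^{(n)}\setminus I^n$ would give, via its image in $I^{n-1}/I^n$, a nilpotent in $\gr_I(R)$, so all powers being symbolic forces reducedness; conversely, if $\gr_I(R)$ is reduced then no $I^n$ has an embedded component and (v) follows. For (ii) I would use the Newton-polyhedron description $\overline{I^n}=\{x^a : a\in nN\!P(I)\cap\mathbb{Z}^n\}$ together with the identification $I^{(n)}=\{x^a:a\in nQ(A)\cap\mathbb{Z}^n\}$: equality of the two for all $n$ says exactly that $I$ is normal ($I^n=\overline{I^n}$) and that $Q(A)$ is integral (which forces $nN\!P(I)=nQ(A)^\vee$-lattice points to match), so (v)$\Leftrightarrow$(ii). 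The main obstacle is the bridge step (iv)$\Leftrightarrow$(v); once one has matched the LP-duality integrality statement to monomial membership in symbolic versus ordinary powers, the remaining implications are either formal or routine polyhedral translations.
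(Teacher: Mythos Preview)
The paper does not prove this theorem; it is quoted from \cite{normali,clutters,HuSV}. So there is no ``paper's proof'' to compare against, and I will simply assess your sketch on its merits.

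Your overall architecture is sound, and the blocks (iii)$\Leftrightarrow$(iv), (v)$\Leftrightarrow$(vi), and (v)$\Leftrightarrow$(ii) are handled correctly in spirit. Two points need repair.

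\textbf{The polyhedron for symbolic powers is misidentified.} You write that $x^a\in I^{(n)}$ iff $a\in nQ(A)\cap\mathbb{Z}^n$. This is false: $Q(A)=\{x\ge 0: xA\ge \mathbf{1}\}$ imposes one inequality per \emph{edge}, whereas membership in $I^{(n)}=\bigcap_j \mathfrak{p}_j^n$ imposes one inequality per minimal \emph{vertex cover}. The polyhedron you want is $P=\{a\ge 0:\langle a,\chi_C\rangle\ge 1\text{ for every minimal cover }C\}$, and one has $NP(I)\subset P$ in general with equality precisely when $Q(A)$ is integral (blocking-polyhedron duality). Your LP bridge (iv)$\Leftrightarrow$(v) survives once rewritten correctly: for $x^a\in I^{(n)}$ one has $\min_C\langle a,\chi_C\rangle\ge n$; if $Q(A)$ is integral this equals the LP minimum $\min\{\langle a,x\rangle: x\in Q(A)\}$; LP duality then gives an optimal packing value $\ge n$, and MFMC makes it integral, so $x^a\in I^n$. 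But note that $Q(A)$ integral is being used here, not just asserted as a side remark.

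\textbf{The link to (i) is the genuine gap.} Your sentence ``any element of $I^{(n)}\setminus I^n$ would give \ldots\ a nilpotent, so all powers being symbolic forces reducedness'' has the implication backwards and, even reversed, is not a proof. For (v)$\Rightarrow$(i) one must start from a homogeneous nilpotent: lift $\overline{f}\in I^k/I^{k+1}$ with $f^m\in I^{km+1}$, observe that each minimal prime $\mathfrak{p}_j$ is generated by a regular sequence so $\mathrm{gr}_{\mathfrak{p}_j}(R_{\mathfrak{p}_j})$ is a domain, deduce $f\in\mathfrak{p}_j^{k+1}$ for all $j$, hence $f\in I^{(k+1)}=I^{k+1}$ by (v), a contradiction. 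More seriously, your converse ``if $\mathrm{gr}_I(R)$ is reduced then no $I^n$ has an embedded component'' is asserted without argument, and it is not elementary. From reducedness one gets $I^n=\overline{I^n}$ (hence $R[It]$ normal), but $\overline{I^n}=I^{(n)}$ still requires $Q(A)$ integral, which does not follow from normality alone. The standard route is the Huneke--Simis--Vasconcelos theorem \cite{HuSV}: for a radical ideal in a normal domain, $\mathrm{gr}_I(R)$ reduced implies $\mathrm{gr}_I(R)$ is a \emph{domain}. Once $\mathrm{gr}_I(R)$ is a domain, an embedded prime $\mathfrak{m}=(I^n:c)$ with $c\in I^k\setminus I^{k+1}$ would give $\overline{x_i}\cdot\overline{c}=0$ in $\mathrm{gr}_I(R)$ for every $i$, forcing $\overline{x_i}=0$, i.e.\ $\mathfrak{m}\subset I$, which is absurd for an edge ideal. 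You need to invoke (or reprove) this step explicitly.
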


By Theorems~\ref{noclu1} and \ref{lehman},
Conjecture~\ref{conforti-cornuejols} reduces to: 

\begin{conjecture} \cite{reesclu} \label{con-cor-vila}\rm If $I$ has
  the packing 
property, then $R[It]$ is normal.
\end{conjecture}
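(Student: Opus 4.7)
The plan is to attack the contrapositive: assume $R[It]$ fails to be normal and try to extract from this failure a minor of $\mathcal{C}$ that violates the König property. By Lehman's theorem (Theorem~\ref{lehman}), the packing hypothesis already gives us that $Q(A)$ is integral, so by the equivalence (i)--(vi) of Theorem~\ref{noclu1} what is really missing is precisely the normality of $R[It]$; this is what makes the problem both clean and difficult. A direct attempt via Theorem~\ref{noclu1} would be to show that $x\ge 0,\ xA\ge \mathbf{1}$ is TDI, since the packing property says this system has an integral optimum for every $\mathbf{0}/\mathbf{1}$ right-hand side obtained by deleting/contracting variables.

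First I would set up the combinatorial certificate of non-normality. If $R[It]$ is not normal, then there exist a positive integer $k$ and a squarefree-supported monomial $x^{\mathbf{a}}t^k \in \overline{R[It]}\setminus R[It]$, i.e., a lattice point $\mathbf{a}\in k\cdot \mathrm{conv}(v_{e_1},\ldots,v_{e_q}) + \mathbb{R}_{\ge 0}^n$ that is not a nonnegative integer combination of the $v_{e_i}$'s. The Carathéodory-type fractional representation $\mathbf{a}=\sum \lambda_i v_{e_i}$ with $\sum\lambda_i=k$, $\lambda_i\in\mathbb{Q}_{\ge 0}$, would then be the primal obstruction. Next I would try to localize and contract along the support of $\mathbf{a}$: delete every variable $x_j$ with $a_j=0$ and contract every $x_j$ appearing in every edge used. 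The resulting minor $\mathcal{C}'$ inherits the packing property, so it satisfies König, and the hope would be to convert the fractional decomposition of $\mathbf{a}$ into an integer matching-cover duality in $\mathcal{C}'$ that contradicts the failure of integrality.

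The main obstacle, and the reason this has resisted every published attempt, is precisely the step above: there is no known mechanism to turn a single ``bad'' lattice point in a dilation into an explicit minor whose König number drops. The clutter $\mathcal{Q}_6$ of Example~\ref{q6} illustrates exactly how delicate this is---$Q(A)$ is integral, yet $R[It]$ is not normal, and indeed $\mathcal{Q}_6$ fails packing through a non-obvious minor rather than through $\mathcal{Q}_6$ itself. So any proof must use the minor-closure hypothesis in an essential way that Lehman's integrality theorem does not. A secondary strategy would be to look at minimal counterexamples $\mathcal{C}$ (minimal under taking minors) and use Lehman's structure theorem for minimally non-ideal matrices to constrain their form; this is the route pursued in \cite{cornu-book} for partial results, and it reduces the problem to understanding which Lehman matrices can arise with integral $Q(A)$.

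Given all this, the realistic expectation is not a proof but a reduction: I would aim either to verify the conjecture in a new structured subclass (e.g., under a uniformity, chordality, or balancedness hypothesis where Theorems~\ref{noclu1} and \ref{family-cc}(c) provide extra leverage), or to sharpen the equivalences of Theorem~\ref{noclu1} so that packing of every minor is directly visible in $\mathrm{gr}_I(R)$. The honest summary is that the statement is the Conforti--Cornuéjols conjecture in algebraic form, an open problem of long standing, and the paper states it as a conjecture rather than proves it.
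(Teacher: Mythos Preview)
Your final assessment is exactly right: the statement is labeled a \emph{conjecture} in the paper, not a theorem, and the paper offers no proof. It is precisely the algebraic reformulation of the Conforti--Cornu\'ejols conjecture, obtained by combining Lehman's theorem (Theorem~\ref{lehman}) with the equivalences of Theorem~\ref{noclu1}; the paper records it as open and then lists partial results and special classes where it is known. So there is nothing to compare your attempt against, and your concluding paragraph is the correct resolution of the exercise.

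That said, one small correction to your exploratory discussion: you write that $\mathcal{Q}_6$ ``fails packing through a non-obvious minor rather than through $\mathcal{Q}_6$ itself.'' In fact $\mathcal{Q}_6$ already fails the K\"onig property at the top level: it has $\alpha_0(\mathcal{Q}_6)=2$ but $\beta_1(\mathcal{Q}_6)=1$, since any two of its four edges share a vertex. The point of Example~\ref{q6} is rather that $Q(A)$ can be integral while $R[It]$ is non-normal and packing fails, showing that integrality of $Q(A)$ alone (the conclusion of Lehman's theorem) is strictly weaker than normality of $R[It]$.
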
 

Several variations of condition $\mbox{(ii)}$ of Theorem~\ref{noclu1} are
possible. In particular, there are combinatorial conditions on the
clutter that can be used to replace the normality of the Rees
algebra. One such condition is defined below. 

\begin{definition}\rm Let $\mathcal{C}^\vee$ be the clutter of minimal vertex 
covers of $\mathcal C$. The clutter $\mathcal C$ is called {\it
diadic\/} if 
$|e\cap e'|\leq 2$ for $e\in E({\mathcal C})$ and $e'\in E({\mathcal
C}^\vee)$
\end{definition}

\begin{proposition} \cite{reesclu} \label{nov2-03-1} If $Q(A)$ is
integral and $\mathcal C$ is diadic, then $I$ is normally
torsion-free.  
\end{proposition}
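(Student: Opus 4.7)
The plan is to prove, by induction on $k\geq 1$, that $I^k=I^{(k)}$; by Theorem~\ref{noclu1}, this equality (condition (v)) is equivalent to $I$ being normally torsion-free. The base case $k=1$ is immediate since $I$ is a radical ideal. For the inductive step, it suffices to produce, for each monomial $x^a\in I^{(k)}$ with $k\geq 2$, an edge $e\in E(\mathcal{C})$ with $\chi_e\leq a$ (componentwise) such that $x^{a-\chi_e}\in I^{(k-1)}$; by the inductive hypothesis $x^{a-\chi_e}\in I^{k-1}$, whence $x^a=x^e\cdot x^{a-\chi_e}\in I\cdot I^{k-1}=I^k$.

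The edge $e$ is produced by a linear programming argument. Let $A$ be the incidence matrix of $\mathcal{C}$ and recall that $x^a\in I^{(k)}$ is equivalent to $\langle a,\chi_C\rangle\geq k$ for every minimal vertex cover $C$. Consider the primal-dual pair
\[
M(a)\;=\;\max\bigl\{\langle\mathbf{1},\lambda\rangle\colon A\lambda\leq a,\ \lambda\geq 0\bigr\}\;=\;\min\bigl\{\langle a,x\rangle\colon x\in Q(A)\bigr\}.
\]
Integrality of $Q(A)$ forces the dual minimum to be attained at some characteristic vector $\chi_{C^*}$ of a minimal cover, so $M(a)=\min_{C}\langle a,\chi_C\rangle\geq k$. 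Fix any primal optimum $\lambda^*$, and pick any edge $e$ with $\lambda^*_e>0$. The bound $\chi_e\leq A\lambda^*\leq a$ shows that $x^e$ divides $x^a$.

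It then remains to check that $x^{a-\chi_e}\in I^{(k-1)}$, i.e.\ that $\langle a,\chi_C\rangle-|e\cap C|\geq k-1$ for every minimal vertex cover $C$. Split into two cases. If $\langle a,\chi_C\rangle=M(a)$, then $\chi_C$ is an optimal dual solution, so complementary slackness applied to the primal-dual pair $(\lambda^*,\chi_C)$ forces $|e\cap C|=\sum_{i\in e}(\chi_C)_i=1$, whence $\langle a-\chi_e,\chi_C\rangle=M(a)-1\geq k-1$. If instead $\langle a,\chi_C\rangle\geq M(a)+1\geq k+1$, then the diadic hypothesis applied to $e\in E(\mathcal{C})$ and $C\in E(\mathcal{C}^\vee)$ yields $|e\cap C|\leq 2$, and again $\langle a-\chi_e,\chi_C\rangle\geq k-1$. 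This closes the induction.

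The heart of the argument is the complementary slackness step: one has to ensure that the LP dual optimum is realized at the (integer) characteristic vector of a minimal cover, which is precisely what integrality of $Q(A)$ provides. The two hypotheses then dovetail exactly, with $Q(A)$ integrality handling the tight covers via complementary slackness ($|e\cap C|=1$) and the diadic assumption controlling the non-tight ones ($|e\cap C|\leq 2$), both yielding the required one-unit loss.
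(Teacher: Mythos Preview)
The paper does not supply its own proof of this proposition; it is quoted from \cite{reesclu} without argument. Your proof is correct and self-contained, combining LP duality with complementary slackness in exactly the way the two hypotheses invite.

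One small correction: the chain $\chi_e \leq A\lambda^* \leq a$ is not quite right, since for $i \in e$ one only has $(A\lambda^*)_i \geq \lambda^*_e$, and a fractional primal optimum may well have $0 < \lambda^*_e < 1$. The desired conclusion $\chi_e \leq a$ nevertheless follows, because $a$ is an integer vector: from $a_i \geq (A\lambda^*)_i > 0$ and $a_i \in \mathbb{Z}$ one gets $a_i \geq 1 = (\chi_e)_i$ for every $i \in e$. With this adjustment the remainder of the argument---the complementary slackness step for tight covers and the diadic bound $|e\cap C|\leq 2$ for the rest---goes through unchanged.
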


Theorem~\ref{noclu1} can be used to exhibit families of normally torsion-free
ideals. Recall that a matrix $A$ is 
called {\it totally unimodular\/} if each 
$i\times i$ subdeterminant of 
$A$ is $0$ or $\pm 1$ for all $i\geq 1$. 

\begin{corollary}\label{dec26-02} If $A$ is totally unimodular, then
$I$ and $I^\vee$ are normally torsion-free. 
\end{corollary}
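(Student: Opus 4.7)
The plan is to invoke Theorem~\ref{noclu1} twice---once for $\mathcal{C}$ and once for the blocker $\mathcal{C}^\vee$---deriving the TDI condition (iii) in each case from the total unimodularity of $A$.

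For $I$: by the classical Hoffman--Kruskal theorem from integer programming, if $A$ is totally unimodular then for every integer vector $\alpha$ both LPs in the duality
$$
\min\{\langle \alpha,x\rangle:\, x\geq 0,\, xA\geq \mathbf{1}\}=\max\{\langle y,\mathbf{1}\rangle:\, y\geq 0,\, Ay\leq\alpha\}
$$
admit integer optimum solutions. Hence the system $x\geq 0,\ xA\geq \mathbf{1}$ is TDI, and condition (iii)$\Leftrightarrow$(vi) of Theorem~\ref{noclu1} immediately yields that $I=I(\mathcal{C})$ is normally torsion-free.

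For $I^\vee=I(\mathcal{C}^\vee)$: let $B$ denote the incidence matrix of $\mathcal{C}^\vee$, whose columns are the characteristic vectors of the minimal vertex covers of $\mathcal{C}$. One would like to argue symmetrically by applying Hoffman--Kruskal to $B$; the obstacle is that $B$ need not be totally unimodular in general (for instance, the blocker of the 6-cycle has a $3\times 3$ minor of determinant $\pm 2$, even though $C_6$ itself has totally unimodular incidence matrix). However, a classical theorem on blocking pairs in combinatorial optimization (cf.~\cite[Ch.~5]{cornu-book}) guarantees that when $A$ is totally unimodular the system $x\geq 0,\ xB\geq \mathbf{1}$ is also TDI: both members of the blocking pair $(A,B)$ inherit the max-flow min-cut property from the TU structure of $A$. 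A second application of Theorem~\ref{noclu1} then shows that $I^\vee$ is normally torsion-free.

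The main obstacle is the second step. For $A$ itself the argument is essentially an immediate application of Hoffman--Kruskal, but to handle $\mathcal{C}^\vee$ one must leave the realm of elementary determinantal arguments and invoke the more delicate blocking-pair theory, since total unimodularity is not preserved by blocking. The key classical input is that when $A$ is TU the covering polyhedron $Q(A)$ is integral and the induced LP on the blocker matrix $B$ remains totally dual integral---a fact that is standard in the combinatorial-optimization literature but non-trivial to extract directly from the definitions.
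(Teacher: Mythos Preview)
Your proposal is correct and follows essentially the same route as the paper: derive TDI for $\mathcal{C}$ from Hoffman--Kruskal, then cite a combinatorial-optimization result (the paper uses \cite[Corollary~83.1a(v), p.~1441]{Schr2} rather than \cite{cornu-book}) to conclude that the blocker $\mathcal{C}^\vee$ has the max-flow min-cut property, applying Theorem~\ref{noclu1} in both cases. Your remark that total unimodularity is \emph{not} preserved under blocking---so one cannot simply rerun Hoffman--Kruskal on the incidence matrix $B$ of $\mathcal{C}^\vee$---is a useful clarification that the paper leaves implicit.
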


\begin{proof}
By \cite{Schr2} the linear system $x\geq 0;\, xA\geq
{1}$ is TDI. Hence $I$ is normally torsion-free by
Theorem~\ref{noclu1}. Let $\mathcal{C}^\vee$ be the blocker (or
Alexander dual) of $\mathcal C$. By 
\cite[Corollary~83.1a(v), p.~1441]{Schr2}, we get that
$\mathcal{C}^\vee$ satisfies the max-flow min-cut
property. Hence $I(\mathcal{C}^\vee)$ is normally torsion-free by
Theorem~\ref{noclu1}. Thus $I^\vee$ is normally torsion-free because 
$I(\mathcal{C}^\vee)=I^\vee$. 
\end{proof}

In particular if $I$ is the edge ideal
of a bipartite graph, then $I$ and $I^\vee$ are normally torsion-free.

Theorem~\ref{noclu1} shows that the Rees algebra and the associated
graded ring play an important role in the study of the max-flow
min-cut property. An invariant related to the blowup algebras will
also be useful. 
The {\it analytic spread\/} of an edge ideal $I$ is given by 
$\ell(I)=\dim R[It]/\mathfrak{m}R[It]$. If $\mathcal{C}$ is uniform,
the analytic spread of $I$ is the rank of the incidence matrix of
$\mathcal{C}$. The analytic spread of a monomial ideal can be 
computed in terms of the Newton polyhedron of $I$, see
\cite{bivia-ausina}. The next result follows directly from
\cite[Theorem~3]{mcadam}. 

\begin{proposition}\label{march19-06} If $Q(A)$ is integral, then
$\ell(I)<n=\dim(R)$. 
\end{proposition}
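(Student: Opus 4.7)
The plan is to apply \cite[Theorem~3]{mcadam}, which gives $\ell(I)=\dim R$ if and only if $\mathfrak{m}\in\Ass(R/\overline{I^{k}})$ for some $k\geq 1$. After localizing at $\mathfrak{m}$, proving $\ell(I)<n$ amounts to showing that if $Q(A)$ is integral then $\mathfrak{m}$ is never an associated prime of $R/\overline{I^{k}}$ (tacitly excluding the trivial case $I=\mathfrak{m}$, which would violate the conclusion even with $Q(A)$ integral).

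I would then argue by contradiction: assume $\mathfrak{m}\in\Ass(R/\overline{I^{k}})$. Because $I$ is squarefree monomial, $\overline{I^{k}}$ is monomial, so there exists a monomial $x^{a}\notin\overline{I^{k}}$ with $x_{i}x^{a}\in\overline{I^{k}}$ for all $i$. Using the standard Newton-polyhedron description of integral closures of monomial ideals, write
$\mathrm{NP}(I)=\operatorname{conv}\{v_{f}:f\in E(\mathcal{C})\}+\mathbb{R}_{\geq 0}^{n}$,
with $v_{f}$ the $0/1$ incidence vector of the edge $f$; the hypothesis becomes $a\notin k\cdot\mathrm{NP}(I)$ while $a+\varepsilon_{i}\in k\cdot\mathrm{NP}(I)$ for every $i$, where $\varepsilon_i$ denotes the $i$-th standard basis vector of $\mathbb{R}^n$. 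The central step, and the one where the hypothesis on $Q(A)$ enters, is Fulkerson's blocker theorem: since $Q(A)$ is integral, its extreme points are exactly the $0/1$ incidence vectors $b_{C}$ of the minimal vertex covers $C$ of $\mathcal{C}$, and blocker duality gives
\[
\mathrm{NP}(I)=\bigl\{v\in\mathbb{R}_{\geq 0}^{n}:\langle v,b_{C}\rangle\geq 1\text{ for every minimal vertex cover }C\text{ of }\mathcal{C}\bigr\}.
\]
Then $a/k\notin\mathrm{NP}(I)$ supplies a minimal cover $C$ with $\langle a,b_{C}\rangle<k$, and integrality of both $a$ and $b_{C}$ forces $\langle a,b_{C}\rangle\leq k-1$. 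Because $C\neq V(\mathcal{C})$ (as $I\neq\mathfrak{m}$), some coordinate $(b_{C})_{i}$ vanishes, whence $\langle a+\varepsilon_{i},b_{C}\rangle=\langle a,b_{C}\rangle\leq k-1<k$, contradicting $a+\varepsilon_{i}\in k\cdot\mathrm{NP}(I)$.

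The principal obstacle is the Fulkerson--Lehman blocker-duality step, since this is where the hypothesis $Q(A)$ integral must be converted into a usable description of $\mathrm{NP}(I)$; once that description is in hand, McAdam's theorem together with an elementary integer-inequality argument close the proof.
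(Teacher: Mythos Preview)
Your proposal is correct and follows the same route as the paper: both invoke \cite[Theorem~3]{mcadam} to reduce the inequality $\ell(I)<n$ to showing $\mathfrak{m}\notin\Ass(R/\overline{I^{k}})$ for all $k$. The paper simply asserts that the result ``follows directly'' from McAdam's theorem without spelling out this last step, whereas you have supplied the polyhedral argument (blocker duality plus the Newton-polyhedron description of $\overline{I^{k}}$) that makes the implication explicit; your caveat about the degenerate case $I=\mathfrak{m}$ is also appropriate, since the statement tacitly excludes it.
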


To relate this result on $\ell(I)$ to
Conjecture~\ref{conforti-cornuejols} (or equivalently to
Conjecture~\ref{con-cor-vila}) we first need to recall the
following bound on the depths of the powers of an ideal $I$.

\begin{theorem}\label{burch} ${\rm inf}_i\{{\rm
depth}(R/I^i)\}\leq\dim(R)-\ell(I)$. If\/ ${\rm gr}_I(R)$ is
Cohen-Macaulay, then the equality holds. 
\end{theorem}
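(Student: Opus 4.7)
The theorem combines Burch's classical inequality with the equality case due to Eisenbud--Huneke; I would establish the two parts separately.

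For Burch's inequality $\inf_i\{\mathrm{depth}(R/I^i)\} \leq \dim R - \ell(I)$, the strategy is to exhibit, for some $i$, a prime $\mathfrak{p} \in \mathrm{Ass}(R/I^i)$ with $\mathrm{ht}(\mathfrak{p}) \geq \ell(I)$; this immediately forces $\mathrm{depth}(R/I^i) \leq \dim(R/\mathfrak{p}) \leq \dim R - \ell(I)$. To produce such a prime I would pick a minimal prime $\mathfrak{P}$ of the fiber cone $F(I) = R[It]/\mathfrak{m}R[It]$ that attains its dimension $\ell(I)$, and descend to the prime $\mathfrak{p}$ of $R$ that $\mathfrak{P}$ contracts to. After localizing at $\mathfrak{p}$, the ideal $I_\mathfrak{p}$ becomes $\mathfrak{p}R_\mathfrak{p}$-primary (the condition $\ell(I_\mathfrak{p}) = \mathrm{ht}(\mathfrak{p})$), so the problem reduces to the case where $I$ is $\mathfrak{m}$-primary; in that setting $\ell(I) = \dim R$, and one verifies directly, via socle considerations since $R/I^n$ is Artinian with strictly decreasing submodules, that $\mathfrak{m} \in \mathrm{Ass}(R/I^n)$ for $n \gg 0$.

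For the equality when $G := \mathrm{gr}_I(R)$ is Cohen--Macaulay, the plan is to upgrade the inequality to an equality by establishing the matching lower bound $\mathrm{depth}(R/I^n) \geq \dim R - \ell(I)$ for every $n$. Using the short exact sequences
$$0 \longrightarrow I^n/I^{n+1} \longrightarrow R/I^{n+1} \longrightarrow R/I^n \longrightarrow 0$$
together with the depth lemma, an induction on $n$ reduces this to proving $\mathrm{depth}_R(I^n/I^{n+1}) \geq \dim R - \ell(I)$ for all $n$. Here the Cohen--Macaulay hypothesis on $G$ enters: $G$ is a Cohen--Macaulay graded ring of dimension $\dim R$, and its graded pieces inherit this depth as $R$-modules once one accounts for the fiber cone $F(I) = G/\mathfrak{m}G$, whose dimension $\ell(I)$ measures the ``top'' piece of $G$ killed by $\mathfrak{m}$.

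The main obstacle is precisely this last step: extracting the depth bound on each layer $I^n/I^{n+1}$ from the Cohen--Macaulayness of $G$. I would handle it by analyzing the exact sequence $0 \to \mathfrak{m}G \to G \to F(I) \to 0$ of graded $G$-modules and tracking the induced maps on local cohomology, using Grothendieck's vanishing theorem to control the relevant cohomological dimensions. Once the layerwise inequality is in hand, the induction combined with Burch's inequality from the first half yields equality for all $n \gg 0$, completing the proof.
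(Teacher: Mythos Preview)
The paper does not prove Theorem~\ref{burch}; it simply states the result and attributes the inequality to Burch \cite{Bur} and the equality case to Eisenbud--Huneke \cite{E-H}. There is therefore no proof in the paper against which to compare your proposal.

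That said, your sketch contains a genuine error in the first half. You write that after localizing at the prime $\mathfrak{p}$ obtained by contracting a minimal prime $\mathfrak{P}$ of $F(I)$, the ideal $I_\mathfrak{p}$ ``becomes $\mathfrak{p}R_\mathfrak{p}$-primary.'' This is false: the condition $\ell(I_\mathfrak{p})=\dim R_\mathfrak{p}$ does not force $I_\mathfrak{p}$ to be primary. For instance, $I=(x^2,xy)\subset k[x,y]_{(x,y)}$ has $\ell(I)=2=\dim R$ but $\mathrm{ht}(I)=1$. There is also a problem one step earlier: every prime of $F(I)=R[It]/\mathfrak{m}R[It]$ contracts to $\mathfrak{m}$ in $R$, so your ``descent to $\mathfrak{p}$'' does not produce a new prime. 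What actually works is McAdam's argument: when $\ell(I)=\dim R$ in a local ring, one shows directly (without any primariness assumption) that $\mathfrak{m}\in\mathrm{Ass}(R/I^n)$ for large $n$, by exhibiting an element of $I^{n-1}\setminus I^n$ annihilated by $\mathfrak{m}$ modulo $I^n$; the general case then follows by localizing at a prime $\mathfrak{p}$ with $\ell(I_\mathfrak{p})=\dim R_\mathfrak{p}$, whose existence is the nontrivial step.

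For the equality case your outline is in the right direction, but the crucial mechanism is missing. The Eisenbud--Huneke argument does not proceed via the exact sequence $0\to\mathfrak{m}G\to G\to F(I)\to 0$ and local cohomology as you suggest; rather, one uses that $G=\mathrm{gr}_I(R)$ is Cohen--Macaulay of dimension $d=\dim R$ and $\dim(G/\mathfrak{m}G)=\ell(I)$ to find $d-\ell(I)$ elements of $\mathfrak{m}$ (hence of degree zero in $G$) whose images form a $G$-regular sequence. Because these elements have degree zero, the regular-sequence property passes to each graded component $I^n/I^{n+1}$, giving $\mathrm{depth}_R(I^n/I^{n+1})\geq d-\ell(I)$ directly. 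Your inductive use of the short exact sequences then finishes the job. Without identifying this degree-zero regular sequence, the ``tracking local cohomology'' step you propose is not sufficient to extract the layerwise depth bound.
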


This inequality is due to Burch \cite{Bur} (cf.
\cite[Theorem~5.4.7]{huneke-swanson-book}), while the equality comes
from \cite{E-H}. By a result of Brodmann \cite{brodmann}, 
${\rm depth}\, R/I^k$ is constant for $k\gg 0$. Broadmann improved
Burch's inequality by showing that the constant value is bounded 
by $\dim(R)-\ell(I)$. For a study of the initial and limit behaviour
of the numerical 
function $f(k)={\rm depth}\, R/I^k$ see \cite{HH}. 

\begin{theorem} \cite{Hu0} \label{jul6-04} Let $R$ be a Cohen-Macaulay ring and
let $I$ be an ideal of $R$ containing regular elements. If $R[It]$ 
is Cohen-Macaulay, then ${\rm gr}_I(R)$ is Cohen-Macaulay.
\end{theorem}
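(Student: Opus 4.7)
The plan is to realize $\mathrm{gr}_I(R)$ as the quotient of the extended Rees algebra $S^{\ast}:=R[It,t^{-1}]$ by a single non-zero-divisor, and then transfer Cohen--Macaulayness from $R[It]$ to $S^{\ast}$. After replacing $R$ by $R_{\mathfrak m}$ for a suitable maximal ideal $\mathfrak m$, one can assume $(R,\mathfrak m)$ is Cohen--Macaulay local of dimension $d$, with $I$ containing an $R$-regular element. Set $S:=R[It]$ and $u:=t^{-1}\in S^{\ast}$.

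I would first record three routine facts. (i) The element $u$ is a non-zero-divisor on $S^{\ast}$, since $S^{\ast}\hookrightarrow R[t,t^{-1}]$ and multiplication by $t^{-1}$ on the Laurent polynomial ring is visibly injective. (ii) There is a graded isomorphism $S^{\ast}/uS^{\ast}\cong{\rm gr}_{I}(R)$: for $n\geq 0$ the $n$-th graded piece computes $I^{n}t^{n}/I^{n+1}t^{n}\simeq I^{n}/I^{n+1}$, while for $n<0$ one has $Rt^{n}/Rt^{n}=0$. (iii) Since $I$ has positive grade, $\dim S^{\ast}=d+1$ while $\dim{\rm gr}_{I}(R)=d$.

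Combining (i)--(iii) and applying the depth lemma to
\[
0\longrightarrow S^{\ast}(1)\xrightarrow{\ u\ }S^{\ast}\longrightarrow {\rm gr}_{I}(R)\longrightarrow 0
\]
after localizing at the graded maximal ideal $\mathcal{M}^{\ast}:=\mathfrak m+(u)+S^{\ast}_{+}$, one obtains
$\depth({\rm gr}_{I}(R))_{\mathcal{M}^{\ast}}=\depth(S^{\ast})_{\mathcal{M}^{\ast}}-1$, so ${\rm gr}_{I}(R)$ is Cohen--Macaulay if and only if $S^{\ast}$ is. The theorem therefore reduces to the implication ``$S$ Cohen--Macaulay $\Rightarrow$ $S^{\ast}$ Cohen--Macaulay.''

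The hard part will be precisely this last implication. The standard route is to compare the graded local cohomology of $S$ and $S^{\ast}$. Since $S^{\ast}[u^{-1}]=R[t,t^{-1}]$ is a Laurent polynomial ring over the Cohen--Macaulay ring $R$, any failure of Cohen--Macaulayness of $S^{\ast}$ is detected at primes containing $u$, i.e.\ on the fibre ${\rm gr}_{I}(R)$, which is in turn a quotient of $S$. A Sancho-de-Salas--type long exact sequence relating $H^{i}_{\mathcal{M}^{\ast}}(S^{\ast})$ to $H^{i}_{\mathcal{M}}(S)$ (with $\mathcal{M}:=\mathfrak m+S_{+}$) and to the local cohomology of $R[t,t^{-1}]$, combined with the hypothesis $H^{i}_{\mathcal{M}}(S)=0$ for $i<d+1$, forces the required vanishing $H^{i}_{\mathcal{M}^{\ast}}(S^{\ast})=0$ for $i<d+1$. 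Bookkeeping the internal gradings on these cohomology modules to make the comparison run cleanly is the technical heart of Huneke's argument in \cite{Hu0}.
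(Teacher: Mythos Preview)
The paper does not supply a proof of this theorem: it is simply quoted from Huneke's paper \cite{Hu0} and used as a black box in the proof of Proposition~\ref{march18-06-1}. So there is no ``paper's own proof'' to compare against.

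As for your sketch, the overall architecture is the standard one and is essentially Huneke's: pass to the extended Rees algebra $S^{\ast}=R[It,t^{-1}]$, use that $u=t^{-1}$ is $S^{\ast}$-regular with $S^{\ast}/uS^{\ast}\cong\mathrm{gr}_I(R)$, and reduce the problem to showing that Cohen--Macaulayness of $S=R[It]$ forces Cohen--Macaulayness of $S^{\ast}$. Your steps (i)--(iii) are fine. You correctly identify the implication $S$ CM $\Rightarrow S^{\ast}$ CM as the nontrivial step, and you are honest that you are only gesturing at it via a local-cohomology comparison. That gesture is in the right direction, but as written it is not a proof: the ``Sancho-de-Salas--type'' sequence you invoke needs to be made precise (one typically uses the exact sequence $0\to S_{+}\to S^{\ast}\to R[t^{-1}]\to 0$ of graded $S^{\ast}$-modules, or compares $H^i_{\mathcal M}(S)$ with $H^i_{\mathcal M^{\ast}}(S^{\ast})$ degree by degree), and the grading bookkeeping you defer is exactly where the hypothesis that $I$ has positive grade enters to control dimensions. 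If you want to complete this, consult Huneke's original argument in \cite{Hu0} or the treatment in Bruns--Herzog, where the comparison between $S$, $S^{\ast}$, and $\mathrm{gr}_I(R)$ is carried out in full.
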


\begin{proposition}\label{march18-06-1} 
 Let $\mathcal{C}$ be a clutter and let $I$ be its edge ideal. Let
 $J_i$ be the ideal obtained from $I$ by making $x_i=1$. If $Q(A)$ 
is integral, then $I$ is normal if and only
if $J_i$ 
is normal for all
$i$ and ${\rm depth}(R/I^k)\geq 1$ for all $k\geq 1$.
\end{proposition}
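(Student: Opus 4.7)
The plan is to reduce the question to a local statement at the maximal ideal $\m$ via a Rees-algebra and support-of-module argument, and then to use the depth hypothesis to rule out the only remaining associated prime.

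For the forward direction, suppose $I$ is normal, so that $R[It]$ is integrally closed. Writing $R' = K[x_1,\dots,\widehat{x_i},\dots,x_n]$, one has $R[x_i^{-1}] = R'[x_i,x_i^{-1}]$, and since every generator of $I$ differs from the corresponding generator of $J_i$ only by a unit in $R[x_i^{-1}]$, one has $IR[x_i^{-1}] = J_i R[x_i^{-1}]$. Localizing the Rees algebra therefore gives $R[It][x_i^{-1}] \cong R'[J_i t][x_i,x_i^{-1}]$, and since $A[x,x^{-1}]$ is normal iff $A$ is, I conclude that $J_i$ is normal. For the depth conclusion, the hypothesis that $Q(A)$ is integral combined with the normality of $I$ allows Theorem~\ref{noclu1} to be invoked: $I$ is normally torsion-free, so $\Ass(R/I^k)=\Min(R/I)$ for every $k\geq 1$. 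Since these minimal primes correspond to the minimal vertex covers of $\mathcal{C}$ and $\m$ is not among them (edges having at least two vertices), $\m\notin\Ass(R/I^k)$, and hence $\depth(R/I^k)\geq 1$.

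For the converse, assume the three hypotheses and set $M_k=\overline{I^k}/I^k$ for each $k\geq 1$. The strategy is to show first that $M_k$ is supported only at $\m$, and then to rule out $\m$ from $\Ass(M_k)$ using the depth hypothesis, forcing $M_k=0$. For the support, integral closure of a monomial ideal commutes with localization at a variable, and combining the identification $IR[x_i^{-1}]=J_i R[x_i^{-1}]$ with the normality of $J_i$ (which is preserved under the Laurent extension $R'\hookrightarrow R'[x_i,x_i^{-1}]$) gives $\overline{I^k}R[x_i^{-1}] = I^k R[x_i^{-1}]$ for every $i$. Thus $(M_k)_{x_i}=0$ for all $i$, so $\operatorname{Supp}(M_k)\subseteq V(\m)=\{\m\}$. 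The short exact sequence $0\to M_k \to R/I^k \to R/\overline{I^k}\to 0$ then yields $\Ass(M_k)\subseteq\Ass(R/I^k)$, and the depth hypothesis excludes $\m$ from $\Ass(M_k)$; combined with the support containment, this forces $\Ass(M_k)=\emptyset$, hence $M_k=0$. Therefore $\overline{I^k}=I^k$ for every $k$, i.e., $I$ is normal.

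The main technical point is the support computation for $M_k$, which rests on two standard facts: (i) integral closure of a monomial ideal commutes with inverting a variable, and (ii) normality of an ideal is preserved under a Laurent polynomial extension. These, together with the normality of each $J_i$, control $\overline{I^k}$ away from $\m$; the depth hypothesis is then invoked only once, at the final step, to eliminate $\m$ as a possible associated prime of $M_k$. The assumption that $Q(A)$ is integral enters only in the forward direction, through Theorem~\ref{noclu1}.
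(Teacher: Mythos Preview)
Your argument is correct, and both directions take a somewhat different route from the paper.

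For the forward implication, the paper does not invoke Theorem~\ref{noclu1}. Instead it argues: $I$ normal $\Rightarrow$ $R[It]$ normal $\Rightarrow$ $R[It]$ Cohen--Macaulay (Hochster) $\Rightarrow$ ${\rm gr}_I(R)$ Cohen--Macaulay (Theorem~\ref{jul6-04}) $\Rightarrow$ equality in Burch's inequality (Theorem~\ref{burch}), and then combines this with $\ell(I)<n$ (Proposition~\ref{march19-06}, which is where the integrality of $Q(A)$ enters) to conclude $\depth(R/I^k)\geq 1$. Your route---normal plus $Q(A)$ integral gives normally torsion-free via Theorem~\ref{noclu1}, hence $\m\notin\Ass(R/I^k)$---is shorter and avoids the Cohen--Macaulay machinery entirely. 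One small quibble: the justification ``edges having at least two vertices'' is not quite the right reason; what you need is simply that $\m$ is not a minimal prime of $I$, i.e.\ $I\neq\m$, which is an implicit standing assumption.

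For the converse, the paper only says the result ``follows readily adapting the arguments given in the proof of the normality criterion presented in \cite{normali}'', whereas you supply a clean self-contained proof: localize $\overline{I^k}/I^k$ at each $x_i$ to see its support is $\{\m\}$, then use $\Ass(\overline{I^k}/I^k)\subseteq\Ass(R/I^k)$ and the depth hypothesis to kill it. This is exactly the kind of argument the citation points to, made explicit. Your observation that $Q(A)$ integral is not used in this direction is correct and worth noting.
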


\begin{proof} Assume that $I$ is normal. The normality of an edge ideal is closed under
taking minors \cite{normali}, hence $J_i$ is normal
for all $i$. By hypothesis the Rees algebra $R[It]$ is normal. Then
$R[It]$ is Cohen-Macaulay by a theorem of Hochster \cite{Ho1}. 
Then the ring ${\rm gr}_I(R)$ is
Cohen-Macaulay by Theorem~\ref{jul6-04}. Hence 
using Theorem~\ref{burch} and Proposition~\ref{march19-06}  we get
that ${\rm depth}(R/I^i)\geq 1$ for all $i$. The converse 
follows readily adapting the arguments given in the proof 
of the normality criterion presented in \cite{normali}.
\end{proof}

By Proposition~\ref{march18-06-1} and Theorem~\ref{lehman}, we get that
Conjecture~\ref{conforti-cornuejols} also reduces to: 

\begin{conjecture}\rm If $I$ has the packing property, then 
${\rm depth}(R/I^i)\geq 1\ \mbox{ for all }i\geq 1$.
\end{conjecture}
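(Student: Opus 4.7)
The plan is to attempt a strong induction on $n$, the number of variables, reducing the statement to the normality of $R[It]$ via Proposition~\ref{march18-06-1}. The base case $n=1$ is trivial. For the inductive step, assume the conjecture (equivalently, Conjecture~\ref{con-cor-vila}) holds for edge ideals of clutters on fewer than $n$ vertices. Let $I$ have the packing property on $n$ variables. Since packing is closed under taking minors, each contraction $J_i=I(\mathcal{C}/x_i)$ obtained by setting $x_i=1$ also has the packing property and lives in a polynomial ring in $n-1$ variables; by induction, $J_i$ is normal for all $i$. By Theorem~\ref{lehman}, $Q(A)$ is integral. Then Proposition~\ref{march18-06-1} asserts that $I$ is normal if and only if ${\rm depth}(R/I^k)\geq 1$ for every $k\geq 1$. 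Thus proving the depth inequality for $I$ closes the induction on both fronts, and the conjecture reduces to producing, for each $k$, a variable $x_j$ not annihilated mod $I^k$, i.e.\ showing $\mathfrak{m}\notin{\rm Ass}(R/I^k)$.

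For the lower range $1\leq k\leq \beta_1(\mathcal{C})$, I would invoke Theorem~\ref{family-cc}(a). Indeed, by the induction hypothesis every proper minor of $I$ is normal with integral $Q(A)$, and hence normally torsion-free by Theorem~\ref{noclu1}. This is precisely the hypothesis of Theorem~\ref{family-cc}(a), so $\mathfrak{m}\notin{\rm Ass}(R/I^k)$ for $k\leq \beta_1(\mathcal{C})$. Moreover, since $I$ itself packs, Theorem~\ref{family-cc}(b) cannot be used against us at $k=\beta_1+1$: the known obstruction to $\mathfrak{m}$ appearing at that critical step is exactly the failure of packing, which by assumption does not occur.

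The main obstacle is the range $k>\beta_1(\mathcal{C})$. Here the hypothesis that $I$ packs must be leveraged beyond the combinatorial consequence used above; integrality of $Q(A)$ alone is insufficient, as the clutter $\mathcal{Q}_6$ of Example~\ref{q6} demonstrates (integral $Q(A)$, packing fails, $R[It]$ non-normal). The natural attack is to fix an optimal packing $e_1,\dots,e_{\beta_1}$ of pairwise disjoint edges that meets every edge of $\mathcal{C}$ (an $\alpha_0=\beta_1$ certificate), and, given a putative witness $c$ with $\mathfrak{m}=(I^k\colon c)$, use the packing to produce an edge $e_j$ whose variables allow us to factor $c=x_{e_j}c'$ with $c'\in I^{k-1}$; iterating this would force $c\in I^k$, a contradiction. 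Carrying out this reduction requires controlling the monomial structure of $c$ against the packing, and here no purely combinatorial argument is known: this step is precisely the content of the Conforti-Cornu\'ejols conjecture, so any approach along these lines will have to introduce essentially new input beyond what is recorded in Theorems~\ref{noclu1}, \ref{lehman}, \ref{family-cc}, and Proposition~\ref{march18-06-1}.

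A secondary line of attack worth recording, in case the uniform induction above stalls, is to stratify by the height of $I$. When ${\rm ht}(I)=1$ the ideal is principal and the depth is $n-1$; when ${\rm ht}(I)=2$ one can exploit the structure of Alexander duals of graphs (tying into Theorem~\ref{bipartite}) to produce regular elements directly on $R/I^k$ from a bipartition certificate supplied by packing; for higher heights one would try to specialize a variable $x_i$ occurring in every minimum cover, passing to $R/(x_i)$ and using the inductive hypothesis on the resulting minor together with the depth lemma applied to $0\to R/(I^k\colon x_i)[-1]\to R/I^k\to R/(x_i,I^k)\to 0$. This would still reduce the essential difficulty to the same point, but might permit sharper bookkeeping for specific subclasses (e.g.\ uniform clutters of small rank) for which Conjecture~\ref{conforti-cornuejols} could be verified unconditionally.
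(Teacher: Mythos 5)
The statement you are asked to prove is not a theorem in the paper but an open conjecture: the paper records it as a reformulation of the Conforti--Cornu\'ejols conjecture (Conjecture~\ref{conforti-cornuejols}), obtained by combining Lehman's Theorem~\ref{lehman}, Theorem~\ref{noclu1}, and Proposition~\ref{march18-06-1}. There is no proof to compare against, and no proof is currently known.

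That said, your reduction is correct and matches the chain of equivalences the paper uses. Using Lehman to get $Q(A)$ integral, inducting on the number of variables to make every proper minor (in particular every contraction $J_i$) normal, and then invoking Proposition~\ref{march18-06-1} to convert the conclusion into the single depth statement is exactly how the paper arrives at the equivalence of this conjecture with Conjecture~\ref{con-cor-vila}. Your application of Theorem~\ref{family-cc}(a), after upgrading the inductive normality of proper minors to normal torsion-freeness via Theorem~\ref{noclu1}, correctly disposes of the range $k\le\beta_1(\mathcal{C})$. And your assessment of the crux is accurate: for $k>\beta_1$ the known machinery gives no handle; the argument you sketch (factoring a witness $c$ with $\m=(I^k:c)$ against a maximum packing) is precisely the sort of thing that breaks down, since integrality of $Q(A)$ plus packing of $I$ itself still leaves room for non-normal behavior in principle -- this is the content of the open conjecture, as you note. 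The one known unconditional result in this direction that you did not mention is Theorem~\ref{family-cc}(c): if $I$ is additionally \emph{unmixed} and packs, it is normally torsion-free, which closes the induction in that special case. Your secondary line of attack stratified by height would similarly only succeed in subclasses where Conjecture~\ref{conforti-cornuejols} has been verified (bipartite graphs, balanced clutters, binary clutters by Seymour's Theorem~\ref{binary-seymour}, matroid basis clutters, etc.). In short: your write-up is an honest and technically correct account of why the conjecture reduces to Conforti--Cornu\'ejols and where it stalls, not a proof -- and you do not pretend otherwise.
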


We conclude this section with a collection of results giving
conditions under which Conjecture~\ref{conforti-cornuejols}, or its
equivalent statements mentioned above, is known to hold. 
For  uniform clutters it suffices to prove 
Conjecture~\ref{conforti-cornuejols}
for Cohen-Macaulay clutters \cite{cm-mfmc}.

\begin{proposition} \cite{reesclu} Let $\mathcal C$ be the collection of bases of a
matroid. If $\mathcal C$ satisfies the packing property,
then $\mathcal{C}$ satisfies the max-flow min-cut property. 
\end{proposition}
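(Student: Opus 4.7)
The plan is to show condition (v) of Theorem~\ref{noclu1}, namely that $I^{(i)}=I^i$ for every $i\geq 1$, since this is equivalent to the max-flow min-cut property for $\mathcal{C}$. The inclusion $I^i\subseteq I^{(i)}$ is automatic, so the content is the reverse inclusion.

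Two structural features of the base clutter $\mathcal{C}$ set up the proof. First, all bases of the matroid $M$ have cardinality $r(M)$, so $\mathcal{C}$ is uniform. Second, the class of matroids is closed under deletion and contraction, and under the standard dictionary between clutters and monomial ideals these matroid operations correspond precisely to the clutter minors obtained by setting a variable to $0$ or to $1$. Every minor of $\mathcal{C}$ is therefore again the base clutter of a matroid, and, since the packing property descends to minors, the argument can proceed by induction on $|V(\mathcal{C})|$. Moreover, Lehman's theorem (Theorem~\ref{lehman}) gives that $Q(A)$ is integral, a fact we will use as an ingredient.

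For the inductive step, the essential tools are Edmonds' matroid partition theorem and the symmetric exchange axiom. Given a monomial $x^a\in I^{(i)}$, the identification of the minimal vertex covers of $\mathcal{C}$ with the cocircuits of $M$ shows $\sum_{j\in C}a_j\geq i$ for every cocircuit $C$. These cocircuit inequalities, together with the sharpness of covering versus packing at the relevant minor furnished by the packing hypothesis, correspond through Edmonds' theorem to the existence of $i$ pairwise disjoint bases ``inside'' $x^a$. The symmetric exchange axiom then permits iterated swaps on these bases to yield a product $x_{B_1}\cdots x_{B_i}$ that divides $x^a$, establishing $x^a\in I^i$.

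The principal obstacle is this last combinatorial-to-algebraic step: translating the abstract existence of $i$ disjoint bases guaranteed by Edmonds into a concrete product of bases whose exponent vector is componentwise at most $a$. The packing hypothesis plays a double role, both ensuring that Edmonds' rank bounds are tight at every minor visited by the induction and licensing the iterated use of symmetric exchange without destroying the packing structure. The interplay between these two matroid axioms, the packing hypothesis, and the cocircuit inequalities is where the real content of the proof lies.
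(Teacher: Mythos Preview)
The paper does not supply its own proof of this proposition; it is simply quoted from \cite{reesclu}. So there is no argument in the present paper to compare against, and your proposal must be judged on its own merits.

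There is a genuine gap at the heart of your sketch. You correctly identify that $x^{a}\in I^{(i)}$ is equivalent to the cocircuit inequalities $\sum_{j\in C}a_{j}\ge i$ for every cocircuit $C$ of $M$, and that $x^{a}\in I^{i}$ amounts to packing $i$ bases inside the multiset determined by $a$ (equivalently, $i$ disjoint bases in the parallel extension $M_{a}$). You then invoke Edmonds' base-packing theorem. But Edmonds' condition for $i$ disjoint bases in $M_{a}$ is
\[
\sum_{j\in A}a_{j}\ \ge\ i\bigl(r(M)-r(E\setminus A)\bigr)\qquad\text{for every }A\subseteq E,
\]
which is strictly stronger than the cocircuit inequalities (the cocircuit case is $r(M)-r(E\setminus A)=1$). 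The two are \emph{not} equivalent in general: take $M=U_{2,3}$, $a=(1,1,1)$, $i=2$. Every cocircuit has two elements and $a$-weight $2\ge 2$, so $x_{1}x_{2}x_{3}\in I^{(2)}$; yet $I^{2}$ lives in degree $\ge 4$, so $x_{1}x_{2}x_{3}\notin I^{2}$. Of course $U_{2,3}$ does not pack ($\alpha_{0}=2$, $\beta_{1}=1$), so this does not contradict the proposition, but it shows that your passage from cocircuit inequalities to Edmonds' condition cannot be made without an essential use of the packing hypothesis. Your proposal acknowledges that packing ``plays a double role'' but never says what that role is; in particular, the packing property concerns $0/1$ minors of $\mathcal{C}$, whereas the Edmonds condition you need involves the parallel extension $M_{a}$ with arbitrary multiplicities, and you give no mechanism for transferring information from the former to the latter. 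The appeal to symmetric exchange is likewise left as an incantation: once one already has $i$ disjoint bases in $M_{a}$ there is nothing further to do, so it is unclear what exchange is supposed to accomplish.

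In short, the outline names plausible ingredients but does not connect them. The decisive step---showing that packing forces the full Edmonds inequalities (or bypassing Edmonds altogether)---is precisely where the argument is missing, and nothing in the sketch indicates how to fill it.
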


When $G$ is a graph, integrality of $Q(A)$ is sufficient in condition
$\mbox{(ii)}$ of Theorem~\ref{noclu1}, and the packing property is sufficient
to imply the max-flow min-cut property, thus providing another class
of examples for which Conjecture~\ref{conforti-cornuejols} holds.

\begin{proposition} \cite{cornu-book,reesclu} If $G$ is a graph and
$I=I(G)$, then the following 
are equivalent{\rm :}
\begin{itemize}
\item[\rm (a)] ${\rm gr}_I(R)$ is reduced.
\item[\rm (b)] $G$ is bipartite.
\item[\rm (c)] $Q(A)$ is integral.
\item[\rm (d)] $G$ has the packing property. 
\item[\rm (e)] $G$ has the max-flow min-cut property.
\item[\rm (f)] $\overline{I^i}=I^{(i)}$ for $i\geq 1$. 
\end{itemize}
\end{proposition}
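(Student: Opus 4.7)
The plan is to close a cycle of implications $(b) \Rightarrow (a) \Leftrightarrow (e) \Leftrightarrow (f) \Rightarrow (d) \Rightarrow (c) \Rightarrow (a)$, with the bipartite condition $(b)$ tied in through Theorem~\ref{bipartite}. Most of the work is already done by the cited results for general clutters; the only place where the restriction to graphs is essential is the implication $(c) \Rightarrow (a)$.

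The implications among $(a)$, $(e)$, and $(f)$ are immediate from Theorem~\ref{noclu1}, which asserts these for any clutter, not only for graphs. The implication $(e) \Rightarrow (d)$ is Corollary~\ref{aug25-06-1}, and $(d) \Rightarrow (c)$ is Lehman's Theorem~\ref{lehman}. For $(b) \Rightarrow (a)$, I would use Theorem~\ref{bipartite} to get that $I$ is normally torsion-free, and then Theorem~\ref{noclu1} (equivalence of (vi) and (i)) to conclude that ${\rm gr}_I(R)$ is reduced. This is the easier direction of Theorem~\ref{bipartite}.

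The key step is $(c) \Rightarrow (a)$, which is what specializes the statement to graphs. Here the decisive observation is that a graph is automatically \emph{diadic}: every edge $e$ has $|e|=2$, so $|e \cap e'| \leq 2$ for any $e' \in E(\mathcal{C}^\vee)$ is trivially satisfied. Hence Proposition~\ref{nov2-03-1} applies directly: integrality of $Q(A)$ implies that $I$ is normally torsion-free, which by Theorem~\ref{noclu1} gives that ${\rm gr}_I(R)$ is reduced. Combined with Theorem~\ref{bipartite}, this also yields $(c) \Rightarrow (b)$, closing all circuits. Finally, to confirm the inclusion of $(b)$ in the chain, I would note that $(a) \Rightarrow (b)$ is the hard direction of Theorem~\ref{bipartite}: if ${\rm gr}_I(R)$ is reduced then $I$ is normally torsion-free by Theorem~\ref{noclu1}, and then bipartiteness follows.

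The main potential obstacle is verifying that the diadic hypothesis is the right lens through which to pass from $Q(A)$ integrality to normal torsion-freeness in the graph case, but since every edge has cardinality exactly two this is automatic, so there is no real obstacle once Proposition~\ref{nov2-03-1} is available. The proof is essentially an assembly of results already stated in the paper, with the graph-specific input entering only through the trivial diadic property.
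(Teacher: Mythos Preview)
The paper itself does not supply a proof of this proposition; it is merely stated with citations to \cite{cornu-book,reesclu}. Your assembly of the results already present in the paper is essentially what one would do to justify it, and the equivalence of (a), (b), (c), (d), (e) is handled correctly. In particular, the observation that a graph is automatically diadic, so that Proposition~\ref{nov2-03-1} converts integrality of $Q(A)$ into normal torsion-freeness, is exactly the graph-specific input needed for $(c)\Rightarrow(a)$.

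There is, however, a genuine gap in your treatment of condition (f). You write that the equivalence of (a), (e), and (f) is ``immediate from Theorem~\ref{noclu1},'' but condition (f) in the proposition is $\overline{I^i}=I^{(i)}$, whereas condition (v) of Theorem~\ref{noclu1} is $I^i=I^{(i)}$. These are not the same, and for general clutters (f) is strictly weaker than (e): the clutter $\mathcal{Q}_6$ of Example~\ref{q6} has $Q(A)$ integral and satisfies $\overline{I^i}=I^{(i)}$ for all $i$, yet fails max-flow min-cut. So $(f)\Rightarrow(e)$ cannot come from Theorem~\ref{noclu1} alone. The implication $(e)\Rightarrow(f)$ does follow from Theorem~\ref{noclu1}, since part (ii) gives normality of $R[It]$ (hence $\overline{I^i}=I^i$) and part (v) gives $I^i=I^{(i)}$. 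For the reverse direction one needs the standard fact, proved in \cite{reesclu}, that $\overline{I^i}=I^{(i)}$ for all $i$ is equivalent to $Q(A)$ being integral; this gives $(f)\Leftrightarrow(c)$, and your cycle then closes. Once you insert this, the argument is complete.
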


\begin{definition}\rm A clutter is {\it binary\/} 
if its edges and its minimal
vertex covers intersect in an odd number of vertices.
\end{definition}

\begin{theorem}\cite{seymour} \label{binary-seymour} A binary clutter $\mathcal C$ has the 
max-flow min-cut property if and only if ${\mathcal Q}_6$ is not a minor 
of $\mathcal C$.
\end{theorem}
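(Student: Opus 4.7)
The plan splits into the two implications, one routine and one deep. For the forward (``only if'') direction, observe that $\mathcal{Q}_6$ itself fails MFMC: by inspection of its four edges $\{x_1,x_2,x_5\}, \{x_1,x_3,x_4\}, \{x_2,x_3,x_6\}, \{x_4,x_5,x_6\}$, every two edges share exactly one vertex, so $\beta_1(\mathcal{Q}_6)=1$; on the other hand $\{x_1,x_6\}$ is a vertex cover, and since each vertex lies in only two of the four edges, no single vertex covers all edges, giving $\alpha_0(\mathcal{Q}_6)=2$. Hence the K\"onig property (let alone packing or MFMC) fails for $\mathcal{Q}_6$. By Theorem~\ref{noclu1}, MFMC is equivalent to normal torsion-freeness of the edge ideal, and this property descends to minors, since for monomial ideals both localization at a variable and quotienting by a variable preserve normal torsion-freeness. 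Therefore, if $\mathcal{Q}_6$ is a minor of $\mathcal{C}$, then $\mathcal{C}$ fails MFMC. Note this direction does not use the binary hypothesis.

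The reverse (``if'') direction is the substantive content of Seymour's theorem, and the plan is to translate to binary matroids. A binary clutter on a ground set $X$ is naturally realized as the family of minimal supports in a binary subspace of $\mathbb{F}_2^X$; equivalently, it is the clutter of minimal odd circuits of a signed binary matroid, or of minimal $T$-cuts of a graft. Under this dictionary, the forbidden clutter $\mathcal{Q}_6$ corresponds to a concrete forbidden matroid minor (the dual Fano configuration $F_7^*$). I would then prove by induction on $|X|$ that a $\mathcal{Q}_6$-minor-free binary clutter admits a decomposition, via $1$-, $2$-, and $3$-sums in the sense of Seymour's decomposition of regular matroids, into indecomposable pieces each of which is either (i) graphic, in which case MFMC follows from the Edmonds--Johnson $T$-join/$T$-cut theorem, (ii) cographic, in which case MFMC reduces to the classical Ford--Fulkerson max-flow min-cut theorem for network flows, or (iii) a small explicit base case that may be verified by hand.

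The main obstacle, and the technical heart of the argument, is the inductive step: one must show that the TDI characterization of MFMC (Theorem~\ref{noclu1}(iii)) is preserved under these sums. This requires constructing integral primal and dual LP solutions for the composite clutter from integral solutions on the pieces, and the construction is delicate because the gluing operations can involve sign twists and the identification or deletion of ground elements. Seymour's regular matroid decomposition theorem is the deep external input that makes the induction possible; once that is granted, the preservation lemma for TDI under $k$-sums ($k\le 3$), together with the case analysis of the indecomposable pieces, assembles into a full proof. No substantially shorter argument is known, which is why this statement is cited rather than proved in the survey.
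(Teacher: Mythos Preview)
The paper does not prove this theorem; it merely states it with the citation to Seymour and then applies it in the subsequent corollary. You correctly anticipated this in your final sentence. So there is no ``paper's proof'' to compare against.

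That said, a brief remark on your sketch of the hard direction: your plan to reduce via Seymour's \emph{regular matroid decomposition} is not the right mechanism. That decomposition (1980) is about matroids with no $F_7$ and no $F_7^*$ minor, whereas here one only excludes $F_7^*$ (the matroid underlying $\mathcal{Q}_6$), so the class in question is strictly larger than the regular matroids and the $1$-, $2$-, $3$-sum decomposition into graphic/cographic/$R_{10}$ pieces does not apply. Seymour's actual 1977 argument proceeds differently: it works directly with binary matroids having no $F_7^*$ minor and uses a splitter/chain-type induction together with a direct analysis of how an integral dual solution can be assembled, rather than invoking the later structure theorem. Your forward direction is fine and, as you note, does not need the binary hypothesis.
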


\begin{corollary} If $\mathcal C$ is a binary clutter with the packing
property, then $\mathcal C$ has the max-flow min-cut property. 
\end{corollary}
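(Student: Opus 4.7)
The plan is to combine Seymour's theorem (Theorem~\ref{binary-seymour}) with two observations about the packing property: it is inherited by minors, and the specific clutter $\mathcal{Q}_6$ fails it.

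First I would recall that the packing property is defined so as to be closed under taking minors: by definition $\mathcal{C}$ has the packing property if and only if every minor of $\mathcal{C}$ (including $\mathcal{C}$ itself) satisfies the K\"onig property $\alpha_0=\beta_1$. Consequently, if $\mathcal{D}$ is any minor of a clutter $\mathcal{C}$ with the packing property, then $\mathcal{D}$ again has the packing property, since minors of minors of $\mathcal{C}$ are themselves minors of $\mathcal{C}$.

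Second, I would invoke Example~\ref{q6}, where it is recorded that the clutter $\mathcal{Q}_6$ does \emph{not} pack; concretely, its incidence matrix gives $\alpha_0(\mathcal{Q}_6)=3$ while $\beta_1(\mathcal{Q}_6)=2$, so $\mathcal{Q}_6$ itself already fails the K\"onig property and hence fails the packing property.

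Putting these together: suppose $\mathcal{C}$ is binary and has the packing property. If $\mathcal{Q}_6$ were a minor of $\mathcal{C}$, then by the first observation $\mathcal{Q}_6$ would also have the packing property, contradicting the second observation. Therefore $\mathcal{Q}_6$ is not a minor of $\mathcal{C}$, and Theorem~\ref{binary-seymour} yields that $\mathcal{C}$ has the max-flow min-cut property. The whole argument is essentially a one-line deduction; the only potential obstacle is making sure the ``packing property is closed under minors'' assertion is taken as part of the definition (as it is in this survey) rather than requiring separate justification, so I would state that step explicitly to avoid any ambiguity.
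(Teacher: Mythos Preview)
Your approach is correct and is precisely the argument the paper intends (the corollary is stated there without proof, immediately after Seymour's theorem and the discussion of $\mathcal{Q}_6$). One minor numerical slip that does not affect the logic: for $\mathcal{Q}_6$ one actually has $\alpha_0(\mathcal{Q}_6)=2$ (for instance $\{x_1,x_6\}$ is a vertex cover) and $\beta_1(\mathcal{Q}_6)=1$ (any two of its four edges meet), not $3$ and $2$; the failure of the K\"onig property, and hence of packing, is of course still present.
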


\begin{proposition} \cite{perfect} \label{perfect-polyhedra}
Let $\mathcal C$ be a uniform clutter and 
let $A$ be its incidence matrix. If the polyhedra 
$$
P(A)=\{x\vert\, x\geq 0;\,
xA\leq{1}\}\ \mbox{ and }\ Q(A)=\{x\vert\, x\geq 0;\,
xA\geq{1}\}
$$
are integral, then $\mathcal C$ has the max-flow min-cut property. 
\end{proposition}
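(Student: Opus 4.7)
By Theorem~\ref{noclu1}, the clutter $\mathcal{C}$ satisfies the max-flow min-cut property if and only if $R[It]$ is normal and $Q(A)$ is integral, where $I=I(\mathcal{C})$. Since $Q(A)$ is integral by hypothesis, the task reduces to verifying that $R[It]$ is normal; once this is done, the equivalence (ii)$\Leftrightarrow$(iv) of Theorem~\ref{noclu1} delivers the conclusion.

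Write the generators of $I$ as $x^{v_1},\ldots,x^{v_q}$, where each $v_j\in\{0,1\}^n$ is the characteristic vector of an edge, so each $|v_j|=d$ for a common $d$. Then $R[It]$ is the semigroup algebra of
\[
S=\mathbb{N}\{e_1,\ldots,e_n\}+\mathbb{N}\{(v_1,1),\ldots,(v_q,1)\}\subset\mathbb{Z}^{n+1}.
\]
A direct computation identifies $\mathbb{R}_+S\cap\mathbb{Z}^{n+1}$ with the set of $(a,b)\in\mathbb{Z}^{n+1}$ such that $b\ge 0$ and $a\in b\cdot\mathrm{conv}(v_1,\ldots,v_q)+\mathbb{R}_+^{n}$. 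Thus $R[It]$ is normal precisely when every such lattice pair $(a,b)$ lies in $S$. A slack argument, subtracting $e_i$ whenever some coordinate of $a$ exceeds what is forced by $b\cdot\mathrm{conv}(v_j)$, reduces the problem to the boundary case $|a|=bd$. In this case the question becomes the \emph{integer decomposition property} (IDP) for the edge polytope $\mathcal{P}=\mathrm{conv}(v_1,\ldots,v_q)$: every lattice point of $k\mathcal{P}$ is a sum of $k$ vertices $v_j$.

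The crux is to derive IDP for $\mathcal{P}$ from the integrality of $P(A)$, using the already-assumed integrality of $Q(A)$ and uniformity of $\mathcal{C}$. The hypothesis that $P(A)$ is integral says that $A$ is a \emph{perfect} $0/1$ matrix, and by Fulkerson's antiblocking theorem this condition is self-dual, the antiblocker of $P(A)$ being the convex hull of the characteristic vectors of the maximal ``cliques'' of $\mathcal{C}$. For a $d$-uniform perfect clutter this description forces $\mathcal{P}$ to coincide with the slice
\[
\{x\ge 0:\langle u,x\rangle\le 1\text{ for every vertex }u\text{ of }P(A),\ \textstyle\sum_i x_i=d\}.
\]
I would then run a greedy exchange argument: given $\alpha\in k\mathcal{P}\cap\mathbb{Z}^n$ with $|\alpha|=kd$, produce a vertex $v_j\le\alpha$ of $\mathcal{P}$ such that $\alpha-v_j\in(k-1)\mathcal{P}\cap\mathbb{Z}^n$, and iterate on $k$.

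The main obstacle will be precisely this last step. Existence of the edge $v_j\le\alpha$ with $\alpha-v_j\in(k-1)\mathcal{P}$ is where the perfect structure of $P(A)$ has to be brought to bear; the natural tool is Fulkerson's duality, combined with a case analysis that uses uniformity to keep track of degrees (without uniformity, IDP can fail for edge polytopes of clutters even when $P(A)$ is integral, which is why the hypothesis is essential). Once IDP is established, normality of the semigroup $S$, and hence of $R[It]$, is automatic, and the proposition follows from Theorem~\ref{noclu1}.
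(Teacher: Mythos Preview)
The paper does not prove this proposition at all; it is simply cited from \cite{perfect}. So there is no ``paper's own proof'' to compare against, and your attempt must be judged on its own.

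Your reduction via Theorem~\ref{noclu1}(ii) is sound: since $Q(A)$ is integral by hypothesis, the max-flow min-cut property is equivalent to normality of $R[It]$, and for a monomial ideal this is a semigroup-saturation statement. Your further reduction, using uniformity to pass from the Rees cone to the integer decomposition property of the edge polytope $\mathcal{P}=\mathrm{conv}(v_1,\ldots,v_q)$, is also a standard and correct move.

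The genuine gap is exactly where you yourself flag it. You do not prove that $\mathcal{P}$ has IDP; you only announce that you ``would then run a greedy exchange argument'' and immediately concede that producing an edge $v_j\le\alpha$ with $\alpha-v_j\in(k-1)\mathcal{P}$ is ``the main obstacle.'' That is the entire content of the proposition, and it is left undone. Invoking Fulkerson's antiblocking duality to say that $P(A)$ integral means $A$ is perfect is fine, but perfection of $A$ does not by itself hand you the greedy step: you need an argument that actually uses both the integrality of $Q(A)$ and the uniformity hypothesis to peel off an edge while staying inside the dilated polytope. As written, your proposal is an outline with the decisive lemma missing, not a proof.
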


In light of Theorem~\ref{lehman}, this result implies  
that if $P(A)$ is integral and $\mathcal{C}$ has the packing
property, then $\mathcal{C}$ has the max-flow min-cut property. An
open problem is to show that this result holds for non-uniform
clutters (see \cite[Conjecture~1.1]{chordal}).  

A {\it Meyniel graph\/} is a simple graph in which
every odd cycle of length at least five has at least two chords. The
following gives some support to \cite[Conjecture~1.1]{chordal} 
because Meyniel graphs are perfect \cite[Theorem~66.6]{Schr2}.

\begin{theorem} \cite{chordal} Let $\mathcal{C}$ be the clutter of maximal cliques of
a Meyniel graph. If $\mathcal{C}$ has the packing property, then
$\mathcal{C}$ has the max-flow min-cut property.
\end{theorem}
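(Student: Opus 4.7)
The plan is to exploit the polyhedral characterization of the max-flow min-cut property together with the perfection of Meyniel graphs. The strategy is first to show that both $P(A)$ and $Q(A)$ are integral, where $A$ is the incidence matrix of the clutter $\mathcal{C}$ of maximal cliques, and then to leverage the Meyniel structural condition on odd cycles to bridge the gap from integral polyhedra to MFMC in a potentially non-uniform setting.

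First, since Meyniel graphs are perfect (see \cite[Theorem~66.6]{Schr2}), the Lov\'asz--Chv\'atal polyhedral characterization of perfect graphs implies that the vertex--maximal-clique incidence matrix $A$ yields an integral packing polyhedron $P(A)=\{x\,|\,x\geq 0,\, xA\leq \mathbf{1}\}$, because the maximal cliques of $G$ are precisely the edges of $\mathcal{C}$. Second, the hypothesis that $\mathcal{C}$ has the packing property allows us to invoke Lehman's theorem (Theorem~\ref{lehman}), which forces $Q(A)$ to be integral as well. At this point both the set packing polyhedron and the set covering polyhedron of $A$ are integral.

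If $\mathcal{C}$ were uniform, the conclusion would follow immediately from Proposition~\ref{perfect-polyhedra}. In the general case, I would argue by contradiction on a minor-minimal bad example. By Theorem~\ref{noclu1}, the failure of MFMC is equivalent to the failure of the TDI property of the system $x\geq 0;\, xA\geq \mathbf{1}$, i.e., the existence of an integral weight vector $\alpha$ whose dual LP (\ref{jun6-2-03-1}) has no integral optimum. In the minor-minimal situation, the underlying graph $G$ is still Meyniel (being an induced subgraph of a Meyniel graph), so the two polyhedral integralities persist along with the packing hypothesis. The goal is then to use the hypothesis that every odd cycle of $G$ of length at least five has at least two chords to construct an integral optimal solution to the dual LP: intuitively, the two-chord condition is what rules out the non-uniform analogues of the $\mathcal{Q}_6$-type obstructions present in the binary-clutter analysis of Seymour (Theorem~\ref{binary-seymour}).

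The main obstacle is the lack of uniformity of the clutter of maximal cliques, which blocks a direct application of Proposition~\ref{perfect-polyhedra}. Overcoming this requires extracting the precise combinatorial content of the Meyniel hypothesis beyond perfection: once both $P(A)$ and $Q(A)$ are integral, a minor-minimal counterexample would force a specific odd-cycle configuration in $G$ whose existence is incompatible with the two-chord property. Making this incompatibility explicit---identifying the forbidden sub-configuration and ruling it out by the Meyniel condition---is the technical heart of the argument, and is what distinguishes this result from the broader open conjecture \cite[Conjecture~1.1]{chordal} for arbitrary clutters with integral $P(A)$ and the packing property.
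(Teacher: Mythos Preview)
The paper does not supply its own proof of this theorem; it is simply quoted from \cite{chordal}. So the comparison reduces to whether your proposal stands on its own as a proof. It does not.

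Your first two steps are fine: perfection of Meyniel graphs gives integrality of $P(A)$, and Lehman's theorem (Theorem~\ref{lehman}) gives integrality of $Q(A)$ from the packing hypothesis. But from that point on you have written a plan, not a proof. You explicitly say that ``making this incompatibility explicit\dots is the technical heart of the argument,'' and then you do not carry it out. Identifying that a forbidden configuration must be ruled out, and actually ruling it out, are two different things; the latter is precisely what the cited paper \cite{chordal} does and what your proposal omits.

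There is also a structural slip in your minor-minimal reduction. You assert that in a minor-minimal counterexample ``the underlying graph $G$ is still Meyniel (being an induced subgraph of a Meyniel graph).'' But clutter minors of $\mathcal{C}$ are obtained by setting variables to $0$ or $1$ in $I(\mathcal{C})$, and neither operation sends the clutter of maximal cliques of $G$ to the clutter of maximal cliques of an induced subgraph of $G$. Setting $x_i=0$ deletes the maximal cliques of $G$ that contain $x_i$, which is not the same as taking the maximal cliques of $G\setminus\{x_i\}$; setting $x_i=1$ removes $x_i$ from every edge, and $K\setminus\{x_i\}$ need not be a \emph{maximal} clique of $G\setminus\{x_i\}$. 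So even the framework of your contradiction argument needs repair before one can begin exploiting the two-chord condition.
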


Let $P=(X,\prec)$ be a {\it partially ordered set\/} ({\it poset} for
short) on the finite vertex set $X$ 
and let $G$ be its {\it comparability graph\/}. Recall that the vertex 
set of $G$ is $X$ and the edge set of $G$ is the set of all unordered
pairs $\{x_i,x_j\}$ such that $x_i$ and $x_j$ are comparable. 

\begin{theorem} \cite{poset} \label{compara-ntf-normal} If $G$ is a comparability
graph and $\mathcal{C}$ is the clutter of maximal cliques of $G$, then
the edge ideal $I(\mathcal{C})$ is normally torsion free.   
\end{theorem}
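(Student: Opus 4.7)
The plan is to invoke Theorem~\ref{noclu1} and show that the clutter $\mathcal{C}$ of maximal cliques of the comparability graph $G$ satisfies the max-flow min-cut property. A clique in $G$ is by definition a set of pairwise comparable elements of $P$, so the maximal cliques of $G$ are precisely the (saturated) maximal chains of $P=(X,\prec)$. Thus we must prove that for every nonnegative integer weight vector $\alpha$, both sides of the LP-duality equation
\[
\min\{\langle\alpha,x\rangle\mid x\geq 0,\ xA\geq\mathbf{1}\}=\max\{\langle y,\mathbf{1}\rangle\mid y\geq 0,\ Ay\leq\alpha\}
\]
admit integer optima, where $A$ is the incidence matrix of $\mathcal{C}$.

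The strategy is to encode this LP as a vertex-capacitated $s$-$t$ flow problem on an auxiliary directed network $N$ built from the Hasse diagram of $P$. Explicitly, I would adjoin a source $s$ and sink $t$, split each $x\in X$ into an arc $x^-\to x^+$ of capacity $\alpha_x$, add arcs $s\to x^-$ for every minimal $x\in P$ and $x^+\to t$ for every maximal $x\in P$, and add an arc $y^+\to z^-$ of infinite capacity for each covering relation $y\lessdot z$. By construction there is a bijection between the directed $s$-$t$ paths in $N$ and the maximal chains of $P$: each such path traverses the internal arc of exactly the elements of the corresponding chain. Under this correspondence, the dual LP above describes an integer packing of $s$-$t$ flow respecting the capacities $\alpha_x$, and the primal LP describes the minimum weight selection of internal arcs whose removal disconnects $s$ from $t$.

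Since the node-arc incidence matrix of a directed network is totally unimodular, the classical (integer) max-flow min-cut theorem yields integer optima on both sides of the flow LP. One must verify that a minimum cut can be chosen to use only internal arcs (this is immediate because the other arcs carry infinite capacity), so the cut corresponds to a subset of $X$ meeting every maximal chain; dually, an integer maximum flow decomposes into an integer packing of $s$-$t$ dipaths, and hence of maximal chains, using each $x\in X$ at most $\alpha_x$ times. Reading these back as feasible integer solutions to the primal and dual LPs on $A$ establishes the max-flow min-cut property for $\mathcal{C}$, and Theorem~\ref{noclu1} then gives that $I(\mathcal{C})$ is normally torsion-free.

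The main obstacle I anticipate is the bookkeeping in that last translation: one must precisely match up an integer max-flow/min-cut pair in $N$ with integer feasible solutions to the set-covering LP of $\mathcal{C}$ for \emph{every} $\alpha\geq 0$, not merely $\alpha=\mathbf{1}$. The path-decomposition of integer flows and the restriction of minimum cuts to internal arcs are both standard, but checking that the resulting integer vectors have the right support (indexed by maximal chains on one side and by vertices on the other) is where a clean formulation matters; any slip there breaks the equivalence with MFMC.
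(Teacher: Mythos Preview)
Your argument is correct. The identification of maximal cliques of a comparability graph with maximal chains of the underlying poset is standard, and your vertex-capacitated $s$--$t$ network built from the Hasse diagram does exactly what you claim: $s$--$t$ dipaths biject with maximal chains, integer max-flow decomposes into an integer chain packing giving a feasible dual $y$, and a finite-value min-cut must avoid the infinite-capacity arcs and hence yields a vertex cover giving a feasible primal $x$. Since Ford--Fulkerson forces the integer max-flow and integer min-cut values to coincide, weak LP duality makes both integer solutions optimal, establishing MFMC for every nonnegative integral $\alpha$; Theorem~\ref{noclu1} then finishes.

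Note, however, that the present paper does not itself prove Theorem~\ref{compara-ntf-normal}: it is quoted from \cite{poset} without argument, so there is no in-paper proof to compare against. Your network-flow route is the classical combinatorial proof that the clutter of maximal chains of a finite poset is MFMC (essentially a weighted form of the Dilworth/Greene--Kleitman phenomenon). The treatment in \cite{poset} proceeds somewhat differently, working through perfection of comparability graphs and integrality of the associated polyhedra rather than an explicit flow network; your approach is more elementary and self-contained, at the cost of being tailored to the poset structure rather than to perfection in general. The one place to be careful in a written version is exactly the point you flag: make explicit that the integer flow decomposition yields a vector supported on maximal chains (not arbitrary $s$--$t$ walks), which is immediate because the network is acyclic, and that the resulting $y$ and $x$ are feasible for the LP on $A$ rather than merely for the network LP.
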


\begin{theorem} \cite{mfmc} Let $\mathcal{C}$ be a uniform clutter
with a 
perfect matching such that 
$\mathcal{C}$ has the packing property and $\alpha_0(\mathcal{C})=2$.
If the columns of the incidence matrix of $\mathcal{C}$ are linearly
independent, then $\mathcal C$ has the max-flow min-cut property.
\end{theorem}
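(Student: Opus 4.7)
The plan is to derive the max-flow min-cut property from Theorem~\ref{noclu1}. Since $\mathcal{C}$ has the packing property, Lehman's theorem (Theorem~\ref{lehman}) already supplies integrality of $Q(A)$, so it suffices to prove that $I=I(\mathcal{C})$ is normally torsion-free. By Theorem~\ref{family-cc}(c) it is enough to show that $\mathcal{C}$ is unmixed; the whole argument will be organized around this reduction.

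First I would extract the combinatorial structure forced by the hypotheses. The packing property gives $\alpha_0(\mathcal{C})=\beta_1(\mathcal{C})=2$, and a perfect matching in a $d$-uniform clutter consists of $n/d$ edges, so $n=2d$ and the perfect matching is a pair $\{e_1,e_2\}$ with $e_1\sqcup e_2=V(\mathcal{C})$. Pick a minimum cover $\{x,y\}$ with $x\in e_1$ and $y\in e_2$; every edge of $\mathcal{C}$ must then contain $x$ or $y$. The linear independence of the columns of the $n\times q$ incidence matrix $A$ forces $q\leq 2d$, giving a uniform cap on the number of edges.

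To prove unmixedness, suppose toward a contradiction that $\mathcal{C}$ admits a minimal vertex cover $C$ with $|C|\geq 3$. For each $v\in C$ minimality yields a witness edge $e_v\in E(\mathcal{C})$ with $e_v\cap C=\{v\}$. Since $\{x,y\}$ covers every edge, each $e_v$ with $v\notin\{x,y\}$ must contain $x$ or $y$, so $C$ splits into an ``$x$-type'' and a ``$y$-type'' part. Because $|V\setminus C|\leq 2d-3$ while each $e_v$ contributes $d-1$ vertices to $V\setminus C$ outside $\{x,y\}$, the tails of distinct witness edges must overlap heavily. I would translate this overlap into an explicit nontrivial rational relation
\[
\sum_{v\in C}\lambda_v\, v_{e_v}+\mu_1 v_{e_1}+\mu_2 v_{e_2}=0,
\]
engineered so that the $x$- and $y$-coordinates cancel by pairing $x$-type against $y$-type edges and the remaining coordinates collapse by the pigeonhole count, contradicting the linear independence of the columns of $A$.

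The main obstacle is producing this relation explicitly; the $\alpha_0=2$ structure synchronizes the $x$- and $y$-coordinates of all incidence vectors, but converting the pigeonhole collision on the $(d-1)$-element tails into a clean algebraic identity requires a careful choice of coefficients. If this direct attack proves too brittle, I would fall back on induction on $n$ via Proposition~\ref{march18-06-1}: every contraction minor $J_i$ inherits the packing property, $\alpha_0\leq 2$, and (after pruning redundant columns) linear independence, so the inductive hypothesis gives normality of each $J_i$; the remaining depth condition $\operatorname{depth}(R/I^k)\geq 1$ follows for $k\leq \beta_1=2$ from Theorem~\ref{family-cc}(a) applied to $I$ (whose proper minors are normally torsion-free by induction), and for larger $k$ from a direct analysis of colon ideals using the linear independence to prevent $\mathfrak{m}$ from becoming associated.
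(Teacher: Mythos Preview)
The paper does not prove this theorem; it merely quotes it from \cite{mfmc}, so there is no in-paper proof to compare against. What follows is an assessment of your proposal on its own merits.

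Your reduction has a structural gap at the very first step. Theorem~\ref{family-cc}(c) is stated under the standing hypothesis that \emph{every proper minor of $I$ is normally torsion-free}; unmixedness alone is not sufficient. You never verify this hypothesis in the main argument, and it cannot be obtained by straightforward induction on $n$: a proper minor of $\mathcal{C}$ need not be uniform (contraction destroys uniformity), need not have a perfect matching, and need not have linearly independent incidence columns (contraction can make two columns equal even after ``pruning''---and pruning only removes duplicates, not other linear dependences). So the inductive hypothesis of the theorem as stated does not apply to the minors, and your fallback through Proposition~\ref{march18-06-1} runs into the same obstruction when you claim $J_i$ inherits linear independence.

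The unmixedness argument itself is also incomplete, as you candidly note: you have a pigeonhole heuristic that the witness edges $e_v$ must overlap outside $C$, but you do not produce the promised nontrivial relation $\sum \lambda_v v_{e_v}+\mu_1 v_{e_1}+\mu_2 v_{e_2}=0$. The difficulty is real: the overlap forced by $|V\setminus C|\leq 2d-3$ constrains the \emph{supports} of the columns, but a linear dependence requires a coincidence of \emph{values}, and nothing in your setup rules out that the $e_v$ together with $e_1,e_2$ are genuinely independent while a minimal cover of size $\geq 3$ still exists. In short, both the reduction to unmixedness and the proof of unmixedness need substantially more work before this approach can succeed; the actual proof in \cite{mfmc} proceeds by a direct polyhedral/TDI argument rather than through Theorem~\ref{family-cc}.
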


\section{ACKNOWLEDGMENTS}
The software package Macaulay 2
\cite{mac2} was used to compute many of the examples in this
paper, including Example~\ref{notchain}, which was originally
discovered by three undegraduate students (Mike Alwill, Katherine
Pavelek, and Jennifer von Reis) in an unpublished project in 2002. The
authors would like to thank an anonymous referee for providing us 
with useful comments and suggestions.

\bibliographystyle{plain}

\end{document}